
\documentclass[12pt]{amsart}
\usepackage[a4paper,left=2.25cm,right=2.25cm,top=3cm,bottom=3cm]{geometry}
\setlength{\parskip}{1pt}
\usepackage{amssymb}
\usepackage{kbordermatrix}
\usepackage{tikz}
\usetikzlibrary{calc,decorations.markings,positioning}
\usepackage{tikz-cd}
\usepackage{hyperref}

%
%
%
\makeatletter
\newcommand{\gettikzxy}[3]{
  \tikz@scan@one@point\pgfutil@firstofone#1\relax
\pgfmathsetmacro{#2}{\the\pgf@x/\linkpatternunit}
\pgfmathsetmacro{#3}{\the\pgf@y/\linkpatternunit}
}
\tikzset{label anchor/.code={%
    \let\tikz@auto@anchor=\pgfutil@empty
    \def\tikz@anchor{#1}
  },
  label anchor/.default=center
}
\makeatother
%
\tikzset{arrow/.style={postaction={decorate,thick,decoration={markings,mark = at position #1 with {\arrow{>}}}}},arrow/.default=0.5}
\newdimen\linkpatternunit%
\newcount\linkpatternsize%
\newcount\lpsize
%
\newif\iflinkpatterninverted
\newif\iflinkpatterntikzstarted%
\newif\iflinkpatternboxed
\newif\iflinkpatternaxis
\newif\iflinkpatternstraightlines
\newif\iflinkpatternnumbered
\newif\iflinkpatternalias
\newif\iflinkpatternnode
\newcount\linkpatternfused
%
\pgfkeys{/linkpattern/.cd,inverted/.is if=linkpatterninverted,numbered/.is if=linkpatternnumbered,tikzstarted/.is if=linkpatterntikzstarted,straight lines/.is if=linkpatternstraightlines,boxed/.is if=linkpatternboxed,axis/.is if=linkpatternaxis,vertexcolor/.store in=\linkpatternvertexcolor,edgecolor/.store in=\linkpatternedgecolor,boxcolor/.code={\linkpatternboxedtrue\def\linkpatternboxcolor{#1}},tikzoptions/.style={every linkpattern/.append style={#1}},size/.code={\linkpatternsize=#1},numbering0/.code={\def\lpnumbering{#1}\def\linkpatternnumbering{#1}},numbering/.style={numbered,numbering0={#1}},unit/.code={\linkpatternunit=#1},height/.store in=\linkpatternheight,shape/.store in=\linkpatternshape,looseness/.store in=\linkpatternlooseness,squareness/.store in=\linkpatternsquareness,extra space/.store in=\linkpatternextraspace,width/.store in=\linkpatternwidth,alias/.is if=linkpatternalias,pos/.store in=\linkpatternpos,labeloptions/.style={labeloptionslist/.append style={#1}},labeloptionslist/.style={inner sep=2pt,font=\scriptsize,execute at begin node=$,execute at end node=$,label anchor={#1+180}},nodeon/.is if=linkpatternnode,node/.style={nodeon,nodeoptionslist/.append style={#1}},nodeoptionslist/.style={},
pipedream/.style={shape=pipedream,looseness=0,straight lines,numbering0=tangle},
tangle/.style={shape=tangle,numbering0=tangle},
every linkpattern/.style={x=\linkpatternunit,y=\linkpatternunit,baseline=0},
fused/.code={\linkpatternfused=#1},
vertex/.style={circle,thin,draw=black,fill=\linkpatternvertexcolor,inner sep=1.5pt},
edge/.style={very thick,draw=\linkpatternedgecolor}}
%
\linkpatterninvertedfalse%
\linkpatternnumberedfalse%
\linkpatterntikzstartedfalse%
\linkpatternboxedfalse%
\linkpatternaxistrue%
\linkpatternaliastrue%
\linkpatternunit=0.6cm%
\linkpatternsize=0%
\linkpatternfused=1%
\linkpatternstraightlinesfalse%
\def\linkpatternlooseness{0.2}
\def\linkpatternsquareness{0.35}
\def\linkpatternvertexcolor{red}%
\def\linkpatternedgecolor{blue}%
\def\linkpatternboxcolor{none}%
\def\linkpatternheight{0}
\def\linkpatternwidth{0}
\def\linkpatternshape{default}
\def\linkpatternnumbering{default}
\def\linkpatternpos{(0,0)}
\def\linkpatternextraspace{0}
\linkpatternnodefalse
%
%
\def\firstchar#1#2\empty{#1}%
\def\linkpatterndo#1#2{
\edef\param{\csname linkpattern#2\endcsname}
\edef\firstcharparam{\expandafter\firstchar\param\empty}
\expandafter\ifcat\firstcharparam a
\expandafter\ifx\csname linkpattern#1\param\endcsname\relax
\csname linkpattern#1unknown\endcsname
\else
\csname linkpattern#1\csname linkpattern#2\endcsname\endcsname
\fi
\else
\csname linkpattern#1unknown\endcsname
\fi
}%
%
\def\linkpatterncoordtangle{\ifnum\x>\lphalfsize\pgfmathparse{\lpsize+1-\x}\xdef\lpcoordx{\pgfmathresult}\xdef\lpcoordy{\lpheight}\xdef\lpangle{270}\else\xdef\lpcoordx{\x}\xdef\lpcoordy{-\lpheight}\xdef\lpangle{90}\fi}
\def\linkpatterncoordpipedream{\ifnum\x>\lphalfsize\pgfmathparse{\lpsize+1-\x-0.5}\xdef\lpcoordx{\pgfmathresult}\xdef\lpcoordy{0}\xdef\lpangle{270}\else\pgfmathparse{0.5-\x}\xdef\lpcoordy{\pgfmathresult}\xdef\lpcoordx{0}\xdef\lpangle{0}\fi}
\def\linkpatterncoordrectangle{
\ifnum\x>\lptqsize
\pgfmathparse{\lpsize+1-\x-0.5}\xdef\lpcoordx{\pgfmathresult}\xdef\lpcoordy{0}\xdef\lpangle{270}
\else\ifnum\x>\lphalfsize
\pgfmathparse{\x-\lptqsize-0.5}\xdef\lpcoordy{\pgfmathresult}\xdef\lpcoordx{\linkpatternwidth}\xdef\lpangle{180}
\else\ifnum\x>\linkpatternheight
\pgfmathparse{\x-\linkpatternheight-0.5}\xdef\lpcoordx{\pgfmathresult}\xdef\lpcoordy{-\linkpatternheight}\xdef\lpangle{90}
\else
\pgfmathparse{0.5-\x}\xdef\lpcoordy{\pgfmathresult}\xdef\lpcoordx{0}\xdef\lpangle{0}
\fi\fi\fi
}%
%
%
%
%
%
%
%
%
%
%

%
%
%
%
%
\def\linkpatternsetsizeunknown{
\global\lpsize=\linkpatternsize
\if\linkpatternheight0
\xdef\maxsep{0}
\foreach \x/\xx in \mylist%
{%
\edef\tempx{\withoutprime{\x}}
\edef\tempxx{\withoutprime{\xx}}
\pgfmathparse{max(\maxsep,abs(\tempx-\tempxx))}
\xdef\maxsep{\pgfmathresult}
}%
\pgfmathparse{0.25+0.8*\linkpatternsquareness*\maxsep}
\xdef\lpheight{\pgfmathresult}
\else
\xdef\lpheight{\linkpatternheight}
\fi
}
%
%
%
\newcount\tempsize
\def\linkpatternrightmostunknown{
\global\lpsize=0
\global\tempsize=0
\foreach\x/\labx in \linkpatternnumbering
{
\edef\tempx{\withoutprime{\x}}
\ifnum\lpsize<\tempx\global\lpsize=\tempx\fi
\global\advance\tempsize by 1
}
\ifnum\tempsize>\lpsize\global\lpsize=\tempsize\fi
}%
\def\linkpatternrightmostdefault{
\global\lpsize=0
\global\tempsize=0
\foreach \x/\y in \mylist
{
\edef\tempx{\withoutprime{\x}}
\ifnum\lpsize<\tempx\global\lpsize=\tempx\fi
\ifx\x\y
\global\advance\tempsize by 1
\else
\edef\tempy{\withoutprime{\y}}
\ifnum\lpsize<\tempy\global\lpsize=\tempy\fi%
\global\advance\tempsize by 2
\fi
}
\ifnum\tempsize>\lpsize\global\lpsize=\tempsize\fi
}%
\def\linkpatternrightmosttangle{
\global\lpsize=0
\global\tempsize=0
\foreach \x/\y in \mylist
{
\edef\tempx{\withoutprime{\x}}
\ifnum\lpsize<\tempx\global\lpsize=\tempx\fi
\ifx\x\y
\global\advance\tempsize by 1
\else
\edef\tempy{\withoutprime{\y}}
\ifnum\lpsize<\tempy\global\lpsize=\tempy\fi%
\global\advance\tempsize by 2
\fi
}
\global\advance\lpsize by\lpsize
\ifnum\tempsize>\lpsize\global\lpsize=\tempsize\fi
}%

%
%
%
%
%
%
\newcommand\linkpattern[2][]{
{
\pgfkeys{/linkpattern/.cd,#1}
\edef\mylist{#2}
\def\primetest##1'{}%
\def\hasaprime##1{\expandafter\primetest##1''}
\def\internalwithoutprime##1'{##1}%
\def\withoutprime##1{\if\hasaprime##1 %
\expandafter\internalwithoutprime##1\else ##1\fi}%
\iflinkpatternnumbered%
\iflinkpatterninverted
\tikzset{/linkpattern/lbl/.style n args={3}{label={[/linkpattern/labeloptionslist=-##1,##3] ##1:##2}}}
\else
\tikzset{/linkpattern/lbl/.style n args={3}{label={[/linkpattern/labeloptionslist=##1,##3] ##1:##2}}}
\fi
\else%
\tikzset{/linkpattern/lbl/.style={}}
\fi%
\iflinkpatterntikzstarted
\begin{scope}[/linkpattern/every linkpattern,shift=\linkpatternpos]
\else%
\begin{tikzpicture}[/linkpattern/every linkpattern]%
\fi%
\begin{scope}[local bounding box=link pattern box]
\iflinkpatterninverted%
\begin{scope}[yscale=-1]%
\fi%
\linkpatterndo{setsize}{shape}
\ifnum\lpsize=0
\linkpatterndo{rightmost}{numbering}
\fi
\pgfmathtruncatemacro{\lphalfsize}{\lpsize/2}
\linkpatterndo{numbering}{numbering}
\iflinkpatternboxed
\linkpatterndo{drawbox}{shape}
\else
\iflinkpatternaxis
\linkpatterndo{drawaxis}{shape}
\fi
\fi
\foreach\xx/\xlab/\opt in \lpnumbering
{
\ifx\xlab\opt\def\opt{}\fi
\if\hasaprime\xx %
\pgfmathtruncatemacro{\xx}{\lpsize+1-\withoutprime{\xx}}
\fi
%
%
\ifnum\linkpatternfused>1
\pgfmathsetmacro{\x}{0.4*(0.5+\linkpatternfused*(0.5+floor((\xx-1)/\linkpatternfused)))+0.6*\xx}
\else
\def\x{\xx}
\fi
\linkpatterndo{coord}{shape}
\iflinkpatternalias\def\xlabb{\xlab}\else\def\xlabb{\xx}\fi
\path (\lpcoordx,\lpcoordy) coordinate[transform shape,/linkpattern/vertex,/linkpattern/lbl={\lpangle+180}{\xlab}{\opt},alias=v\xlabb] (v\xx) ++(\lpangle:\linkpatternunit) coordinate[alias=vv\xlabb] (vv\xx); 
}
\foreach \a/\b/\c in \mylist
{
\if\hasaprime\a %
\pgfmathtruncatemacro{\a}{\lpsize+1-\withoutprime{\a}}
\fi
\draw[/linkpattern/edge]
\ifx\b\c
\else
\c
\fi
\ifx\a\b
(v\a) -- ++(0,\lpheight);
\else
\pgfextra{
\if\hasaprime\b %
\pgfmathtruncatemacro{\b}{\lpsize+1-\withoutprime{\b}}
\fi
\gettikzxy{(v\a)}{\ax}{\ay}
\gettikzxy{(v\b)}{\bx}{\by}
\gettikzxy{(vv\a)}{\axx}{\ayy}
\gettikzxy{(vv\b)}{\bxx}{\byy}
\pgfmathsetmacro{\dist}{sqrt((\ax-\bx)*(\ax-\bx)+(\ay-\by)*(\ay-\by))}
\pgfmathsetmacro{\abx}{(\axx-\ax)*\dist*\linkpatternsquareness+(\bx-\ax)*\linkpatternlooseness)}
\pgfmathsetmacro{\aby}{(\ayy-\ay)*\dist*\linkpatternsquareness+(\by-\ay)*\linkpatternlooseness)}
\pgfmathsetmacro{\bax}{(\bxx-\bx)*\dist*\linkpatternsquareness+(\ax-\bx)*\linkpatternlooseness)}
\pgfmathsetmacro{\bay}{(\byy-\by)*\dist*\linkpatternsquareness+(\ay-\by)*\linkpatternlooseness)}
}
(v\a) 
\iflinkpatternstraightlines
\pgfextra{
\pgfmathsetmacro{\t}{((\ax-\bx)*\bay-(\ay-\by)*\bax)/(\aby*\bax-\abx*\bay)}
\pgfmathsetmacro{\abx}{\t*\abx}
\pgfmathsetmacro{\aby}{\t*\aby}
}
[rounded corners] -- ++(\abx,\aby) -- (v\b);
\else
.. controls ++(\abx,\aby) and ++(\bax,\bay) .. 
\fi
(v\b);
\fi
}
\end{scope}
\iflinkpatternnode
\node[fit=(link pattern box),/linkpattern/nodeoptionslist] {};
\fi
\iflinkpatterninverted
\end{scope}
\fi
\iflinkpatterntikzstarted
\end{scope}
\else%
\end{tikzpicture}%
\fi%
}}%
%
%
\newcommand\tanglelinkpattern[3][]{%
{
\pgfkeys{/linkpattern/.cd,#1}
\begin{tikzpicture}[/linkpattern/every linkpattern,baseline=-\linkpatternunit]%
\linkpattern[#1,tikzstarted,numbered=false]{#3}
\pgfmathtruncatemacro{\lptempsize}{2*\linkpatternsize}
\iflinkpatterninverted
\begin{scope}[yshift=0.5*\linkpatternunit]
\else
\begin{scope}[yshift=-0.5*\linkpatternunit]
\fi
\linkpattern[tangle,#1,tikzstarted,size=\lptempsize,
numbering=halftangle,
height=0.5]{#2}
\end{scope}
\end{tikzpicture}%
}}
%
%
\newcommand\diag[4][]{%
\pgfkeys{/linkpattern/.cd,#1}
\iflinkpatterntikzstarted\else%
\begin{tikzpicture}[scale=0.5]
\fi%
\iflinkpatterninverted%
\begin{scope}[yscale=-1]%
\fi%
\draw (0,0) grid (#2,#3);
\edef\mylist{#4}
\foreach\y/\x/\z in \mylist
{
\ifx\x\z
\draw[decorate,decoration={zigzag,
amplitude=1pt,segment length=5pt}]
(\x-0.5,#3) -- (\x-0.5,\y-0.5) node[circle,fill=black,inner sep=2pt] {} -- (#2,\y-0.5);
\else
\node at (\x-0.5,\y-0.5) {$\z$};
\fi
}
\iflinkpatterninverted
\end{scope}
\fi
\iflinkpatterntikzstarted\else%
\end{tikzpicture}%
\fi%
}
%
\makeatletter
\tikzset{circle split part fill/.style  args={#1,#2}{%
 alias=tmp@name,
  postaction={%
    insert path={
     \pgfextra{%
     \pgfpointdiff{\pgfpointanchor{\pgf@node@name}{center}}%
                  {\pgfpointanchor{\pgf@node@name}{east}}%
     \pgfmathsetmacro\insiderad{\pgf@x}
      \fill[#1] (\pgf@node@name.base) ([xshift=-\pgflinewidth]\pgf@node@name.east) arc
                          (0:180:\insiderad-\pgflinewidth)--cycle;
      \fill[#2] (\pgf@node@name.base) ([xshift=\pgflinewidth]\pgf@node@name.west)  arc
                           (180:360:\insiderad-\pgflinewidth)--cycle;                    }}}}}  
 \makeatother
\tikzset{bdot/.style={circle,circle split,draw,circle split part fill={black,white},thin,inner sep=1pt}}%
\tikzset{wdot/.style={circle,circle split,draw,circle split part fill={white,black},thin,inner sep=1pt}}%
%
%
%
\newcommand\circlelinkpattern[2][]{
{
\pgfkeys{/linkpattern/.cd,#1}
\iflinkpatterntikzstarted\else%
\begin{tikzpicture}[/linkpattern/every linkpattern]%
\fi%
\iflinkpatterninverted%
\begin{scope}[yscale=-1]%
\fi%
\global\lpsize=\linkpatternsize
\edef\mylist{#2}
\foreach \x/\y in \mylist
{
\ifnum\x>\lpsize\global\lpsize=\x\fi
\ifnum\y>\lpsize\global\lpsize=\y\fi
}
%
\iflinkpatternaxis
\draw (0,0) circle (1);
\fi
\foreach\x in {1,...,\lpsize}
{
\pgfmathparse{(0.3*floor((\x-1)/\linkpatternfused)+0.7*((\x-0.5)/\linkpatternfused-0.5))*\linkpatternfused*360/\lpsize}
\coordinate[/linkpattern/vertex] (v\x) at (\pgfmathresult:1);
}
\foreach \x/\y/\z in \mylist
{
\ifx\y\z%
\draw[/linkpattern/edge] (v\x) .. controls ($0.5*(v\x)$) and  ($0.5*(v\y)$) .. (v\y);
\else
\draw[/linkpattern/edge] \z (v\x) .. controls ($0.5*(v\x)$) and  ($0.5*(v\y)$) .. (v\y);
\fi
}
\iflinkpatternnumbered%
\pgfmathparse{\lpsize/\linkpatternfused}
\global\lpsize=\pgfmathresult
\def\linkpatternnumbering{1,...,\lpsize}
\newdimen\angle
\foreach\x/\xx/\opt in \linkpatternnumbering
{
  \pgfmathsetmacro{\angle}{360/\lpsize*(\x-1)}
\ifx\xx\opt%
  \node[outer sep=1pt,anchor=180+\angle] at (\angle:1) {$\scriptstyle\xx$}; 
\else
  \node[outer sep=1pt,anchor=180+\angle,\opt] at (\angle:1) {$\scriptstyle\xx$}; 
\fi
}
\fi%
\iflinkpatterninverted%
\end{scope}
\fi%
\iflinkpatterntikzstarted\else%
\end{tikzpicture}%
\fi%
}}%
%
\newdimen{\loopcellsize}\setlength{\loopcellsize}{0.75cm}
\tikzset{bgplaq/.style={draw=black,fill=\linkpatternboxcolor}}
\def\plaqwest{}
\def\plaqeast{}
\def\plaqnorth{}
\def\plaqsouth{}
%

%
\pgfkeys{/linkpattern/.cd,west/.store in=\plaqwest,east/.store in=\plaqeast,north/.store in=\plaqnorth,south/.store in=\plaqsouth}
\newcommand\plaq[2][]{
\pgfkeys{/linkpattern/.cd,#1}
\begin{scope}[x=\loopcellsize,y=\loopcellsize]
\draw[bgplaq,use as bounding box] (-0.5,-0.5) rectangle ++(1,1);
\csname plaq#2\endcsname
\end{scope}
}
 \tikzset{loop/.style={matrix,row sep={\loopcellsize,between origins},column sep={\loopcellsize,between origins}}}

\newcommand\MMN{{\mathcal M}_N}
\newcommand\A{\textsc{a}}
\newcommand\B{\textsc{b}}

\newcommand\divides{\,{\mid}\,}
\newcommand\lie[1]{{\mathfrak{#1}}}
\newcommand\lien{{R_N^{\Delta=0}}}

\newcommand\iso{{\ \cong\ }}

\newcommand\calO{{\mathcal O}}
\newtheorem{Theorem}{Theorem} 
\newtheorem{Proposition}{Proposition} 
\newtheorem{Lemma}{Lemma}

\newtheorem{Corollary}{Corollary}
\newtheorem*{Corollary*}{Corollary}
 
\newtheorem*{Theorem*}{Theorem}
\theoremstyle{remark}

\newcommand\onto{\mathop{\twoheadrightarrow}}
\newcommand\into{\operatorname*{\hookrightarrow}}
\newcommand\Ex{\noindent{\em Example. }}
\newcommand\union{\bigcup}

\newcommand\Id{{\bf 1}}

\newcommand\reals{{\mathbb R}}
\newcommand\complexes{{\mathbb C}}
\newcommand\integers{{\mathbb Z}}

\newcommand\onehalf{\frac{1}{2}}
\newcommand\naturals{{\mathbb N}}

\newcommand\ol{\overline}

\newcommand\<{\langle}
\renewcommand\>{\rangle}
\newcommand\junk[1]{}

\newcommand\Coprod{\coprod}

\theoremstyle{plain}

\newcommand\dfn{\bf} 

\newcommand\GLN{{{GL_N}}}
\newcommand\glN{\lie{gl}_N}






\newcommand\der{\partial}
\newcommand\mdeg{{\rm m}\!\deg}
\newcommand\codim{{\rm co}\!\dim}
\newcommand\SN{\mathcal{S}_N}
\newcommand\MNC{{\text{Mat}_N}}
\newcommand\MnC{{M_n}}
\newcommand\rank{{\rm rank\ }}
\newcommand{\comment}[1]{$\star${\sf\textbf{#1}}$\star$}
\newcommand\Br{\mathcal{B}r}
\numberwithin{equation}{section}

\begin{document}

\title{The Brauer loop scheme and orbital varieties}
\author{Allen Knutson}
\address{Allen Knutson, Cornell, Ithaca, NY 14853, USA.}
\email{allenk@math.cornell.edu}
\thanks{AK was supported by NSF grant 0303523.}
\author{Paul Zinn-Justin}
\address{Paul Zinn-Justin, 
UPMC Univ Paris 6, CNRS UMR 7589, LPTHE,
75252 Paris Cedex, France}
\email{pzinn@lpthe.jussieu.fr}
\thanks{PZJ was supported by 
EU Marie Curie Research Training Networks ``ENRAGE'' MRTN-CT-2004-005616, 
ANR programs ``GIMP'' ANR-05-BLAN-0029-01 
and ERC grant ``LIC'' 278124.}
\thanks{The authors would like to thank P.~Di~Francesco for his participation
in the early stages of this project.}
\date{\today}

\begin{abstract}
  A. Joseph invented multidegrees in \cite{Jo} 
  to study {\dfn orbital varieties}, which are the components of an
  {\em orbital scheme}, itself constructed by intersecting a nilpotent orbit
  with a Borel subalgebra. Their multidegrees
  are known as {\em Joseph polynomials}, and these polynomials
  give a basis of a (Springer) representation of the Weyl group.
  In the case of the nilpotent orbit $\{M^2=0\}$, the orbital varieties
  can be indexed by noncrossing chord diagrams in the disc. 
  
  In this paper we study the {\em normal cone\/} to the orbital scheme 
  inside this nilpotent orbit $\{ M^2 = 0 \}$. 
  This gives a better-motivated construction of the {\em Brauer loop scheme\/}
  we introduced in \cite{KZJ}, whose components are indexed by all
  chord diagrams (now possibly with crossings) in the disc.

  The multidegrees of its components, the {\em Brauer loop varieties}, 
  were shown to reproduce
  the ground state of the {\em Brauer loop model\/} in statistical mechanics
  \cite{DFZJ06}. Here, we reformulate and slightly generalize these
  multidegrees in order to express them as solutions of the
  rational quantum Knizhnik--Zamolodchikov equation associated to the
  Brauer algebra.
  In particular, the vector of the multidegrees satisfies
  two sets of equations, corresponding to the $e_i$ and $f_i$ generators of
  the Brauer algebra.
  The proof of the analogous statement in \cite{KZJ} was slightly
  roundabout; we verified the $f_i$ equation using the geometry of
  multidegrees, and used algebraic results of \cite{DFZJ06} to show
  that it implied the $e_i$ equation. We describe here the geometric
  meaning of both $e_i$ and $f_i$ equations in our slightly
  extended setting.

  We also describe the corresponding actions at the level of orbital
  varieties:
  while only the $e_i$ equations make sense directly
  on the Joseph polynomials, 
  the $f_i$ equations also appear if one introduces
  a broader class of varieties. We explain the connection of the latter with
  matrix Schubert varieties.

\end{abstract}

\maketitle

{ \small\tableofcontents}
%
%

\section{Introduction}\label{sec:intro}

The aim of the present paper is to give a full account of the
connection between certain geometric objects and quantum integrable
models, following the ideas presented in \cite{DFZJ06,KZJ,DFZJ05b}.
In \S \ref{ssec:brbr}--\ref{ssec:ncone} we recall the {\em Brauer loop
scheme\/} we introduced in \cite{KZJ}, and give a new interpretation of it
in terms of the more common notion of {\em orbital varieties}. 
In \S\ref{ssec:qKZ} we discuss the {\em quantum Knizhnik--Zamolodchikov
equation\/}, and its relation to a refinement of the
Brauer model studied in \cite{DFZJ06}. 
{\em Multidegrees},
whose definition we recall in \S\ref{ssec:multidegrees},
allow us to connect the two, as explained in \S\ref{ssec:summary},
where we summarize our main results.
All these points are then developed in the rest of the paper.

\subsection{The Brauer loop scheme}\label{ssec:brbr}
Throughout this paper, the base field will always be $\complexes$.
Let $\text{Mat}_\integers$ (resp.\ $R_\integers$) denote the 
vector space of matrices (resp.\ upper triangular) 
matrices with rows and columns indexed by $\integers$. 
Despite the infinitude, any matrix entry in a product $X\cdot Y$,
$X,Y\in R_\integers$, is
a sum of finitely many nonzero terms, so $R_\integers$ is an algebra.
Let $\text{Mat}_{\integers \bmod N} \leq \text{Mat}_\integers$ 
denote the space of matrices with the periodicity
\begin{equation*}
X_{ij} = X_{i+N,j+N} \quad\forall i,j \in \integers;
\end{equation*}
and $R_{\integers \bmod N}$ be the corresponding subalgebra of $R_\integers$:
$R_{\integers\bmod N}=R_\integers \cap \text{Mat}_{\integers\bmod N}$.
If $\MNC$ (resp.\ $R_N$) is the space of ordinary $N\times N$ matrices
(resp.\ upper triangular matrices),
then a typical element of $R_{\integers\bmod N}$ looks like
\begin{equation*}
\begin{matrix}
 \ddots& \ddots &\ddots&\ddots\\
 & & U & L & Z &\ldots  \\
  &&& U & L & Z &\ldots  \\
  &&&& U & L & Z &\ldots \\
  &&0&&& U & L &\ldots \\
  &&&&&& U &\ldots \\
  &&&&&& \ddots
\end{matrix}
\qquad\qquad\qquad
{ U\in R_N \atop L,Z,\ldots \in \MNC}
\end{equation*}
where $U$, on the main diagonal, is upper triangular.
This subalgebra contains the ``shift'' matrix $S$ carrying $1$s just above 
the main diagonal, $S_{ij} = \delta_{i,j-1}$.

Any $M$ in the quotient algebra 
\begin{equation*}
 \MMN := R_{\integers \bmod N} / \< S^N \> 
\end{equation*}
is determined by the entries $M_{ij}$ with $0 \leq i,j-i < N$,
and this algebra is finite dimensional of dimension $N^2$.
(In terms of the picture above, only $U$ and the strict lower triangle of $L$ 
remain well-defined in the quotient. We will use often this splitting
of $\MMN$.)
As explained in \cite{KZJ}, this solvable algebra $\MMN$ is a degenerate
limit of the usual matrix algebra $\MNC$.

\newcommand\wtM{{\widetilde M}}
\newcommand\wtN{{\widetilde N}}

Define the {\dfn Brauer loop scheme} $E \subseteq \MMN$ as the space
of strictly upper triangular matrices $M$ whose square is ``zero'',
i.e., $M^2 \in \< S^N \>$. We will occasionally find it useful to
choose lifts $\wtM \in R_{\integers \bmod N}$ of its elements.
We introduced this scheme in \cite{KZJ}, under a different but 
equivalent definition.

This scheme $E$ is reducible, and we call its top-dimensional components
the {\dfn Brauer loop varieties}. They are naturally indexed by 
{\dfn link patterns} $\pi$ (meaning, involutions of $\integers/N\integers$
with at most one
fixed point), as we now recall from \cite{KZJ}. First, note that
if we pick a representative $\wtM \in R_{\integers \bmod N}$ lying over 
some $M\in E$, then for any
$i \in \integers / N \integers$, 
\begin{align*}
  (\wtM^2)_{i,i+N} &= \sum_j \wtM_{i,j}\, \wtM_{j,i+N} \\
  &= \sum_{\scriptstyle j\atop\scriptstyle i<j<i+N} M_{i,j}\, M_{j,i+N} 
  && \text{since $\wtM$ has zero diagonal,}
\end{align*}
so that $(\wtM^2)_{i,i+N}$ is well-defined (independent of the representative)
even though $\wtM_{i,i+N}$ itself is not because of
the $\< S^N \>$ ambiguity. By abuse of notation,
we shall simply denote it by $(M^2)_{i,i+N}$
in what follows.

\begin{Theorem}[Theorems 2 and 3 of \cite{KZJ}]
  \label{thm:Ecomps} \quad
  \begin{enumerate}
  \item
    Let $M \in E$.\\
    Then there exists a link pattern $\pi$ such that
    $(M^2)_{i,i+N} = (M^2)_{\pi(i),\pi(i)+N}$.
  \item For generic points of the Brauer loop scheme $E$,
    there are no additional equalities among 
    the $\{ (M^2)_{i,i+N} \}$
(i.e., the set $\{(M^2)_{i,i+N}\}$ is of cardinality $\lceil N/2 \rceil$);
so that the link pattern $\pi$ in {\rm(1)} is unique.

  \item The map 
    \begin{equation*} \{\text{top-dimensional components of $E$}\} \longrightarrow 
    \{\text{link patterns}\}, \end{equation*}
    which takes a component to the link pattern of a generic point therein,
    is a bijection.
  \end{enumerate}
\end{Theorem}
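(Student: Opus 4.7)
The strategy is to construct, for each link pattern $\pi$, an irreducible locally closed subvariety $E_\pi^\circ \subseteq E$ of the expected top dimension, to verify that the closures $E_\pi := \ol{E_\pi^\circ}$ exhaust the top-dimensional components of $E$, and to derive parts~(1)--(3) from this setup. Throughout, I fix a lift $\wtM \in R_{\integers \bmod N}$ of $M$; the condition $\wtM^2 \in \langle S^N \rangle$ is equivalent to $(\wtM^2)_{ij} = 0$ for all $1 < j - i < N$, while the entries $(M^2)_{i, i+N}$ are well-defined cyclic invariants.

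Step~1 (construction of $E_\pi$). For a link pattern $\pi$, represent $\pi$ by a chord diagram on $N$ cyclically ordered points and, for each chord $\{i, \pi(i)\}$, single out a distinguished matrix entry $M_{k,\ell}$ with $\{k \bmod N, \ell \bmod N\} = \{i, \pi(i)\}$ and $0 < \ell - k < N$. Define $E_\pi^\circ \subset E$ as the locus where all these distinguished entries are nonzero and where the equalities $(M^2)_{i,i+N} = (M^2)_{\pi(i),\pi(i)+N}$ are the \emph{only} equalities among the $(M^2)_{\cdot,\cdot+N}$. Using the action of the natural ``loop'' Borel subgroup (diagonal torus plus strictly upper-triangular unipotent, both periodic), $E_\pi^\circ$ is shown to be irreducible of dimension computable from the chord combinatorics, and this dimension turns out to be independent of $\pi$.

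Step~2 (every $M$ lies in some $E_\pi$). For part~(1), I would argue that each $M \in E$ lies in $\ol{E_\pi^\circ}$ for some $\pi$ by a degeneration argument: a generic one-parameter subgroup of the torus flows $M$ to a limit $M_0$ lying in some $E_\pi^\circ$, and by continuity the equalities $(M_0^2)_{i,i+N} = (M_0^2)_{\pi(i),\pi(i)+N}$ propagate back to $M$. Parts~(2) and~(3) follow: distinct $\pi$ give distinct generic link patterns, hence distinct components $E_\pi$, and on the dense stratum $E_\pi^\circ \subseteq E_\pi$ there are no additional equalities by construction.

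The main obstacle is the \emph{dimension count}: one must verify that each $E_\pi$ has the expected top dimension and that $E$ has no top-dimensional component outside $\bigcup_\pi E_\pi$. A clean route, suggested by the paper's abstract, is to exploit the realization of $E$ as the normal cone to the orbital scheme inside the nilpotent orbit $\{M^2 = 0\}$. The dimensions of the orbital varieties (indexed by \emph{non-crossing} link patterns) are classical, and the normal-cone degeneration enlarges the acting group, removing the non-crossing constraint and producing exactly one component per link pattern (crossings allowed). Making this outline rigorous would require either a direct Gr\"obner-style dimension estimate on the defining quadratic equations or a detailed analysis of the normal-cone degeneration.
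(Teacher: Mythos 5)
This theorem is cited verbatim from \cite{KZJ} (``Theorems 2 and 3''); the present paper does not reprove it, so there is no ``paper's own proof'' to compare against directly. What the paper does record about the actual argument --- in the proof of Proposition~\ref{prop:Xrho} and the appendices --- shows that \cite{KZJ} stratifies $E$ quite differently from your proposal: one uses the $(U,L)$ splitting to project $E$ onto the orbital scheme $\{U^2=0\}\cap R_N$, decomposes $E = \bigsqcup_\alpha F_\alpha$ into preimages of $B_N$-orbits $B_N\cdot \alpha_<$ indexed by arbitrary involutions $\alpha$ (Theorem~\ref{thm:involutions}), and proves each $F_\alpha$ is a {\em vector bundle} over that orbit, hence irreducible with $\dim F_\alpha = 2n^2 - \tfrac12\#(\text{fixed points of }\alpha)$. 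The top dimension $2n^2$ is attained exactly when $\alpha$ is fixed-point free, i.e.\ a link pattern, which delivers~(3), and the equalities in~(1)--(2) follow from tracking $(M^2)_{i,i+N}$ along the vector-bundle fibers over each orbit. Your strata $E_\pi^\circ$ are defined by imposing equalities directly on $(M^2)_{i,i+N}$ and would roughly coincide with $F_\pi$ for $\pi$ a link pattern, but the real work is precisely the fibration and dimension formula you left unproven.

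There is also a genuine logical gap in your Step~2. You propose to flow $M$ to a torus limit $M_0$, observe $M_0 \in E_\pi^\circ$, and then ``propagate back'' the equalities $(M_0^2)_{i,i+N} = (M_0^2)_{\pi(i),\pi(i)+N}$ to $M$. That inference runs the wrong way: the equalities are Zariski-closed conditions, and $M_0 \in \overline{T\cdot M}$ is a \emph{specialization} of the orbit of $M$, so closed conditions pass from $M$ to $M_0$, not from $M_0$ back to $M$. (Indeed the limit $M_0$ can be $0$, which lies in every $E_\pi^\circ$'s closure.) To make Part~(1) work, you would instead need to show that the irreducible component of $E$ containing $M$ is one of your $E_\pi$, and that each $E_\pi$ lies inside the closed locus of its equalities --- which is essentially the $F_\alpha$ fibration argument you were trying to avoid. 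In addition, your exhaustion claim (``$E$ has no top-dimensional component outside $\bigcup_\pi E_\pi$'') and the uniform dimension count are asserted, not proved; you acknowledge this, but it is the heart of the theorem, not a finishing detail. The normal-cone reinterpretation (\S\ref{sec:ncone}) is presented in this paper as a \emph{consequence/reframing} of the \cite{KZJ} results, so invoking it here would be circular unless carried out independently.
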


Brian Rothbach has shown that $E$ is equidimensional \cite{rothbach},
which allows one to drop ``top-dimensional'' from the above statement.

To a link pattern $\pi$ associate $\underline\pi\in E_\pi$ which is
the matrix with 1's at entries $(i,\pi(i))$ with representatives
mod $N$ such that $i<\pi(i)<i+N$, and 0's elsewhere.
We shall need the following (Theorem 5 of \cite{KZJ}). 

\begin{Theorem}\label{thm:compeqns}
  The irreducible component $E_\pi$ of $E$ corresponding to the link pattern
$\pi$ satisfies the following equations:
  \begin{enumerate}
  \item $M^2 = 0$.
  \item $(M^2)_{i,i+N} = (M^2)_{\pi(i),\pi(i)+N}$.
  \item For any matrix entry $(i,j)$, $i<j<i+N$,
    we have $r_{ij}(M) \leq r_{ij}(\underline\pi)$,
where $r_{ij}$ denotes the rank of the submatrix south-west of entry $(i,j)$. 
In polynomial terms, this 
    asserts the vanishing of all minors of size $r_{ij}(\underline\pi)+1$
    in the submatrix southwest of entry $(i,j)$.
  \end{enumerate}
\end{Theorem}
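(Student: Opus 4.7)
\emph{Proof plan.} Equation (1) is immediate: the defining condition $M^2\in\<S^N\>$ of $E$, i.e.\ $M^2=0$ in $\MMN$, passes at once to the component $E_\pi$. Equation (2) is a polynomial identity in the entries of $M$, hence defines a Zariski-closed subvariety of $\MMN$; by \Thmref{Ecomps}(1)--(3) this identity is satisfied at every generic point of $E_\pi$, and so extends by continuity to the whole closure $E_\pi$.

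For equation (3), the vanishing of $(r_{ij}(\underline\pi)+1)\times(r_{ij}(\underline\pi)+1)$ minors is itself a polynomial condition, so the inequality $r_{ij}(M)\leq r_{ij}(\underline\pi)$ cuts out a closed subset of $\MMN$. It therefore suffices to verify (3) on a dense open subset of $E_\pi$. Note that since $\underline\pi\in E_\pi$, lower semi-continuity of rank (i.e.\ openness of ``rank $\geq k$'') gives automatically the \emph{reverse} inequality $r_{ij}(M)\geq r_{ij}(\underline\pi)$ on a dense open subset of $E_\pi$. The actual content of (3) is therefore the equality: the generic rank of the SW submatrix on $E_\pi$ is exactly $r_{ij}(\underline\pi)$, not strictly larger.

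To establish this equality I would parameterize a dense open subset of $E_\pi$ directly from the chord structure of $\pi$. Concretely, one assigns a nonzero parameter to each pivot position $(k,\pi(k))$ with $k<\pi(k)<k+N$, solves the quadratic constraint $M^2=0$ to express the remaining entries as algebraic functions of these pivots together with a suitable set of free ``non-pivot'' parameters, and then argues combinatorially that the resulting SW submatrix at $(i,j)$ has rank equal to the number of chords entirely contained in the SW region --- which is exactly $r_{ij}(\underline\pi)$. The main obstacle is organizing this parameterization: the relations $M^2=0$ couple matrix entries intricately along each chord, and one must verify that non-pivot entries appearing in the SW submatrix lie in the row/column span of the pivot entries and hence do not raise the rank. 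An appealing alternative that sidesteps the explicit parameterization is to show instead that the subscheme of $E$ defined by (1)--(3) is itself irreducible of dimension $\dim E_\pi$; since it contains $\underline\pi$, which also lies in the irreducible component $E_\pi$, equality of the two would follow by dimension comparison, and (3) would then be automatic on $E_\pi$.
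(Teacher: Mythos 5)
The paper itself does not prove this theorem; it is quoted verbatim from [KZJ] (Theorem~5 there), so there is no proof in the present text against which to compare your plan. On its own merits: your treatment of (1) and (2) is fine, (1) being built into the definition of $E$, and (2) a Zariski-closed condition that Theorem~\ref{thm:Ecomps} guarantees at the generic point of $E_\pi$ and hence on all of it. The genuine gap is in (3), which you acknowledge but do not close: the pivot parameterization is only sketched, and the fallback of showing that (1)--(3) cut out an irreducible variety of dimension $\dim E_\pi$ is not a shortcut --- it is essentially the scheme-theoretic conjecture recorded immediately after the theorem, and independently establishing the dimension and irreducibility of that locus is at least as hard as what you are trying to prove.

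The missing ingredient is the description of $E_\pi$ from [KZJ, Theorem~3], which this paper uses freely elsewhere (see the proof of Proposition~\ref{prop:specializeForIndep}): $E_\pi = \overline{B_{\integers\bmod N}\cdot Y_\pi}$, where $Y_\pi$ is the coordinate subspace supported on the positions $(k,\pi(k))$. On the dense torus inside $Y_\pi$ the southwest submatrix at $(i,j)$ is a sub-permutation matrix of rank exactly $r_{ij}(\underline\pi)$; conjugation by the periodic upper triangular group preserves $r_{ij}$ (this is precisely the computation carried out in the proof of Theorem~\ref{thm:poset}, direction $\Longrightarrow$, transported to $R_{\integers\bmod N}$); and upper semicontinuity of rank then yields $r_{ij}(M)\leq r_{ij}(\underline\pi)$ on the closure, hence on all of $E_\pi$. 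With this lemma in hand, (3) is a few lines, and no bespoke parameterization of $E_\pi$ is needed.
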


In fact we conjectured in \cite{KZJ} that
the equations of Thm.~\ref{thm:compeqns} define $E_\pi$ as a scheme.

\subsection{The orbital varieties of \texorpdfstring{$M^2=0$}{M2=0}}\label{ssec:orbvars}
\newcommand\OO{{\mathcal O}}

Let $D \subseteq \MNC$ denote the closure of some conjugacy class of
nilpotent matrices (though we will soon specialize to $D = \{ M : M^2 = 0\}$)
and recall that $R_N$ is the space of upper triangular matrices.
In this generality, the intersection $D \cap R_N$ is called the
{\dfn orbital scheme} of $D$, and its geometric components are called
the {\dfn orbital varieties}. (Conventions differ about whether $D$ should
be a nilpotent orbit or its closure; these issues will not be relevant
for us and we will always take $D$, and its orbital varieties, to be closed.) 
The orbital scheme carries an action by conjugation of $B_N=R_N^\times$,
the group of invertible upper triangular matrices; hence each orbital
variety carries such an action too.

The orbital varieties were shown in \cite{Sp} to all have the same 
dimension $\onehalf \dim D$, and to be naturally indexed by
standard Young tableaux, on the partition determined by the Jordan 
canonical form of generic elements of $D$. 

In this paper we will only be interested in the case $D = \{ M : M^2 = 0\}$
in even dimensions $N=2n$. So the relevant partition is $(2,2,\ldots,2)$,
and the standard Young tableaux correspond in a simple way with
{\em noncrossing\/} chord diagrams. (We will correspond orbital varieties
with noncrossing chord diagrams directly in 
\S\ref{sec:orbvars}, and make no use of Young tableaux in
this paper. There are a multitude of other interpretations of this
Catalan number in \cite{Stanley}.)

The space of matrices is $(2n)^2$-dimensional, with $\dim D$ half that,
and $\dim (D\cap R_N)$ half that again, so $n^2$.
One reason this nilpotent orbit is easier
to deal with than a general one 
is that it is {\dfn spherical}: it has only finitely many
$B_N$-orbits. In particular, each component of $D\cap R_N$ is a $B_N$-orbit closure.
The set of orbits was described in \cite{M1}; we give a new way 
to index the orbits in \S\ref{sec:orbvars}. 

The Brauer loop scheme has two simple connections
to the orbital scheme $D\cap R_N$, for which we will give
deeper reasons in \S\ref{ssec:ncone}. There is an inclusion
\begin{equation*}
 R_N \to \MMN, \qquad
U \mapsto M \quad \text{ where } \quad M_{ij} = 
\begin{cases}
  U_{ij} & 1\leq i \leq j \leq N \\
  0 & 1\leq i \leq N < j   
\end{cases}
\end{equation*}
that takes $D\cap R_N \into E$, and a projection
\begin{equation*}
\MMN \to R_N, \qquad
M \mapsto U  \quad\text{ where }\quad U_{ij} = M_{ij},\ 1\leq i\le j \leq N 
\end{equation*}
that takes $E \onto D\cap R_N$. 
(In the $(U,L)$ notation from \S\ref{ssec:brbr}, 
the two maps are $U \mapsto (U,0)$, $(U,L) \mapsto U$.)
Moreover, the composite $D\cap R_N \into E \onto D\cap R_N$ 
of these two maps is the identity.

\newcommand\Spec{{\rm Spec\ }}
\newcommand\gr{{\rm gr\ }}

\subsection{The normal cone to the orbital scheme}\label{ssec:ncone}

The connection between the Brauer loop scheme and the orbital scheme
is tighter than just indicated, as already implicit in \cite{KZJ}, and as
will be discussed in detail in \S\ref{sec:ncone}. 
We first recall the definition of the {\dfn normal cone} $C_X Y$ to a
subscheme $X \subseteq Y$, both schemes affine.
Say that $Y = \Spec A$ and $X$ is cut out of $Y$ by 
the vanishing of an ideal $I \leq A$. Then $A$ is filtered by powers of
the ideal, $A \geq I \geq I^2 \geq \ldots,$ and $C_X Y$ is defined as the
$\text{Spec}$ of the associated graded algebra
$\gr A := (A/I) \oplus (I/I^2) \oplus (I^2/I^3) \oplus \ldots$.
Note that while there is no natural map $Y\onto X$ reversing the
inclusion $X\to Y$, there is a natural map $C_X Y\onto X$ reversing
a natural inclusion $X\into C_X Y$.

When $X$ and $Y$ are smooth, the projection $C_X Y \onto X$ is a vector bundle,
and the components of $C_X Y$ correspond $1:1$ to the components of $X$.
More generally, if $X^\circ,Y^\circ$ denote the smooth loci of $X$ and $Y$,
respectively, then
$C_X Y$ contains $C_{X^\circ} Y^\circ$ as an open subset, and 
the components of $C_{X^\circ} Y^\circ$ correspond $1$:$1$ to the 
(generically reduced geometric)
components of $X$. However, $C_{X^\circ} Y^\circ$ may miss some of
the components of $C_X Y$, and one can take this as a measure of 
the nonsmoothness of the embedding $X\into Y$.

\junk{
Though we won't use it, we suggest a physical description
(which we learned in a lecture of E.~Witten)
to help understand the role of the ``extra components'' of the normal cone.
Imagine that $Y$ is a phase space carrying a Hamiltonian $H+h$, 
where $H\geq 0$ grows very quickly and $h$ does not.  Then a particle
traveling on $Y$ is likely to stay near the $H=0$ locus, which we call $X$. 
So at low energies, one can generally (really, on $X^\circ$) 
treat the system as having the phase space $X$ with Hamiltonian $h$, 
plus small oscillations in the normal directions to $X^\circ$. 
However, where $H$ vanishes to high order, one can get further off $X$
without high energies, and detect more of the geometry of $Y$; 
this is exactly where $X$ is singular. Having ``extra'' components of the 
normal cone is a sign that $(X,h)$ is not a good approximation to $(Y,H+h)$,
even at low energies, near the projections of these extra components to $X$.
}

When $X,Y$ are each equidimensional, one can measure ``how extra'' a component
of $C_X Y$ is. Since the projection $C_X Y \onto X$ is surjective,
each component of $X$ is the image of some component of $C_X Y$,
but not every component of $C_X Y$ (which, like $Y$, is also
equidimensional) projects onto a component of $X$; it may project to
something lower-dimensional. So to each component of $C_X Y$ we can
associate the codimension inside $X$ of the projection of the component.

In the case at hand, 
$Y$ is the nilpotent orbit and $X$ is the orbital scheme.
We conjectured in \cite[Theorem 10]{KZJ} (in slightly different language)
that in the right coordinates,
\begin{equation*}
 C_{D \cap R_N} D = 
\{M \in \MMN : M^2 = 0\};
\end{equation*}
we were able to prove the $\subseteq$ inclusion,
and that these two schemes agree in top dimension. 
(The scheme $E$ is smaller than these, because in its definition
we impose that $M$ has diagonal entries {\em equal\/} 
to zero, rather than just squaring to zero.
Of course this makes no difference on the level of varieties.)

Here $D\cap R_N$ has many fewer components than $C_{D\cap R_N} D$
(they correspond to noncrossing chord diagrams, rather than all chord diagrams).
In Theorem \ref{thm:PsiJM} we show that the codimension inside $X$
of the projection of a component is exactly the number of crossings
in the corresponding chord diagram.

\subsection{Integrability and the \texorpdfstring{$q$}{q}KZ equation}\label{ssec:qKZ}
In \cite{dGN}, a certain Markov process on the set of (crossing) link patterns
was considered. The motivation was that the Markov matrix is actually
a {\em quantum integrable\/} transfer matrix related to the Brauer algebra
\cite{Brauer} with parameter $\beta$, 
at the special value $\beta=1$ of the parameter (for an explanation 
of the appearance of the Brauer algebra 
in solving the Yang--Baxter equation, see \cite{Jimbo-review},
as well as sect.~\ref{sec:brauer}).
A remarkable conjecture of \cite{dGN} (now a theorem) is that certain
components of the equilibrium distribution eigenvector of this Markov
process can be identified, after dividing them by the smallest
component, with degrees of certain algebraic varieties.

The model was further studied in \cite{DFZJ06}, where several important
properties were shown. First, the model can naturally be made
inhomogeneous, and the introduction of the inhomogeneities $z_i$ make
these equilibrium probabilities be polynomials (again, up to normalization)
in the variables $z_i$. On the geometric side, this correponds to
generalizing degrees to {\em multidegrees}, that is, enlarging the torus
action, as will be explained in the next section.
Secondly, the main method used in \cite{DFZJ06}, inherited from \cite{DFZJ05},
is to write certain ``exchange relations'': these express the effect
of interchange of variables $z_i$, $z_{i+1}$ as a linear operator
acting on the equilibrium distribution vector. The exchange relations
appear in multiple contexts in the study of quantum integrable models,
but in particular, supplemented with an appropriate cyclicity property,
they are related to the so-called quantum Knizhnik--Zamolodchikov
($q$KZ) equation \cite{Smi,FR}.

More progress was made in \cite{KZJ}, where {\em all\/} the components of
the Markov process eigenvector were given geometric meaning: these
are the (multi)degrees of the Brauer loop varieties.
The central role of the exchange relation, already pointed out in
\cite{DFZJ06}, is developed further in \cite{KZJ}. The present work
will complete (in \S\ref{sec:qkz}) the general program outlined
in these two prior papers: the exchange relation will be entirely
explained geometrically as the translation at the level of equivariant
cohomology of certain elementary geometric operations on the
irreducible components of the Brauer loop scheme.

Even though there exists a solution of the Yang--Baxter equation
for arbitrary values of the parameter $\beta$ of the Brauer algebra,
one cannot define a corresponding Markov process, as was the case at $\beta=1$
(the integrable transfer matrix does not possess the Markov property,
and therefore one does not expect the ground state to be simple).
One can however introduce a $q$KZ equation and try to look for certain
polynomial solutions which would generalize the equilibrium vector of
the Markov process at $\beta=1$.  This provides a much more natural
framework to study the Brauer loop scheme, and is what is considered
in the present work. The parameter $\beta$ can be thought as yet
another enlargement of the torus action (the additional circle action
was in fact mentioned in the last section of \cite{KZJ}).

\subsection{Torus actions and multidegrees}\label{ssec:multidegrees}
\newcommand\Sym{{\rm Sym\,}}
Let $T \iso (\complexes^\times)^k$ be a (complex) torus, and consider the
pairs $(X\subseteq W)$ of linear $T$-representations $W$ 
containing $T$-invariant
closed subschemes $X\subseteq W$. To each such pair $X \subseteq W$ we will 
assign a polynomial $\mdeg_W X \in \Sym T^* \iso \integers[z_1,\ldots,z_k]$
called the {\dfn multidegree} of $X$. (Here $\Sym T^*$ denotes the 
symmetric algebra on the lattice $T^*$ of characters of $T$.)
Our reference for multidegrees is \cite{MS}.

This assignment can be computed using the following properties 
(as in \cite{Jo97}):
\begin{enumerate}
\item[1.] If $X=W=\{0\}$, then $\mdeg_W X = 1$.
\item[2.] If the scheme $X$ has top-dimensional components $X_i$, 
  where $m_i>0$ denotes the multiplicity of $X_i$ in $X$, 
  then $\mdeg_W X = \sum_i m_i\ \mdeg_W X_i$. This lets one reduce from
  the case of schemes to the case of varieties (reduced irreducible schemes).
\item[3.] Assume $X$ is a variety, 
  and $H$ is a $T$-invariant hyperplane in $W$.
  \begin{enumerate}
  \item If $X\not\subset H$, then $\mdeg_W X = \mdeg_H (X\cap H)$.
  \item If $X\subset H$, then 
    $ \mdeg_W X = (\mdeg_H X) \cdot ( \text{the weight of $T$ on $W/H$}). $
  \item 
  Combining (a) and (b), we have
  $\mdeg_W (X\cap H) = (\mdeg_W X)\ (\mdeg_W H)$ when $X\not\subset H$. 
We ask
  \footnote{%
    This actually follows from the other axioms, since they imply
    that $\mdeg_W X$ can be computed from the multigraded Hilbert
    series of $X$, and it is easy to relate the Hilbert series of
    $X$ and $X\cap H$.}
  this to hold even when $H \subseteq W$ is just a $T$-invariant 
  hyper{\em surface}.
  \end{enumerate}
\end{enumerate}
One can readily see from these properties that $\mdeg_W X$ is 
homogeneous of degree $\codim_W X$, and is a positive sum of products
of the weights of $T$ on $W$. We explore this further in Lemma 
\ref{lem:mdegineqs}.

The varieties for which we will need the multidegrees are 
the orbital varieties and the Brauer loop varieties.
In both cases, $W$ will be a space of zero-diagonal matrices, 
and the $(N+2)$-dimensional torus $T$ that acts on it will have three parts: 
two dimensions by scaling certain halves of the matrices,
and the other $N$ by conjugating by 
the invertible diagonal matrices in $R_{\integers \bmod N}$. 
We will denote by $\{\A, \B, z_1,\ldots, z_N\}$
the obvious basis of the weight lattice $T^*$.

For orbital varieties, the vector space $W$ will be $R_N^{\Delta=0}$, 
the space of strictly upper triangular $N\times N$ matrices,
where the $\Delta=0$
indicates the subspace of matrices with zero diagonal.
The first circle in $T$ acts by global rescaling, and the second
acts trivially, so the $T$-weights are $\A + z_i - z_j$, $i<j \in 1,\ldots,N$. 
For each noncrossing chord diagram $\pi$, let
\begin{equation*}
 J_\pi := \mdeg_{\lien} \calO_\pi, 
\end{equation*}
which is called the {\dfn (extended) Joseph polynomial} \cite{Jo}
of the orbital variety.
(Indeed, Joseph invented multidegrees for exactly this application.)
The ``extended'' refers to the fact that Joseph did not consider the
scaling action; his polynomials correspond to the specialization $\A=0$.

For Brauer loop varieties, the vector space $W$ will be
$\MMN^{\Delta=0}$, with a similar notation.
Separate the matrix entries into the ``$U$'' group, being
those matrix entries $M_{ij}$ with $\lfloor i/N\rfloor = \lfloor j/N\rfloor$,
and the ``$L$'' group, which are the rest.
(This matches the picture in \S\ref{ssec:brbr}.)
The first circle acts by scaling $U$, and trivially on $L$,
the second circle acts trivially on $U$, and by scaling on $L$.
The weights on $\MMN$ are
$\A + z_i - z_j, \B + z_j - z_i$, $i<j=1,\ldots,N$.
For each chord diagram $\pi$, let
\begin{equation*}
\Psi_\pi := \mdeg_{\MMN^{\Delta=0}} E_\pi 
\end{equation*}
be the {\dfn Brauer loop polynomial}. 
{(In \cite{KZJ}, we did not consider until \S 8 the separate
action on the $U$ and $L$ parts, and before that recovered only the
specialization $\A=\B$ of the polynomials presented here.)

An interesting, explicitly cyclic-invariant, 
reformulation of the weights is obtained by introducing a redundant set of variables
$z_i$, $i\in\integers$, with the relations $z_{i+N}=z_i+\A-\B$. Then the weight of $M_{ij}$ is simply
$\A+z_i-z_j$ for any $i$, $j$. This notation will be used in what follows.

Note that the maps in \S\ref{ssec:orbvars} are $T$-equivariant,
which in Theorem \ref{thm:PsiJM} will help us relate Joseph and Brauer loop
polynomials.


\subsection{Summary of main results}\label{ssec:summary}
Most of our results are extensions of those in \cite{KZJ}, on both
geometric and integrable sides. For the sake
of simplicity, and except when stated otherwise, 
we assume in this paper that $N$ is an even integer. (The extension
to $N$ odd, as in \cite{KZJ}, is straightforward 
but would complicate the geometric
constructions, potentially obscuring the logic of the paper.)

On the geometric side, we reinterpret one of our descriptions from
\cite{KZJ} of the Brauer loop scheme now as being the normal cone
$C_{\{M \in R_N \,:\, M^2=0\}} \{M : M^2=0\}$ 
inside the nilpotent orbit closure $\{M : M^2=0\}$ to the orbital scheme. 
In particular this motivates study of the $B_N$-equivariant projection 
$C_{\{M \in R_N \,:\, M^2=0\}} \{M : M^2=0\} \onto \{M \in R_N \,:\, M^2=0\}$
onto the orbital scheme. In \S \ref{sec:orbvars} we give a
diagrammatic description of the poset of $B_N$-orbits on the orbital scheme
(see in particular Proposition \ref{prop:moves}),
and a formula for their multidegrees, which we call {\em Joseph--Melnikov 
polynomials}. In \S \ref{sec:ncone} we study the multidegrees of components 
of normal cones in general, and use it to show that the Joseph--Melnikov 
polynomials are the leading forms of the Brauer loop polynomials.

In \S \ref{sec:qkz} we consider a set of equations called
quantum Knizhnik--Zamolodchikov equation for the Brauer algebra.
They are a set of compatible difference equations 
which generalize the ones satisfied by the steady state of
the Brauer markov process of \cite{DFZJ06}; we also extend their 
polynomial solution $\Psi$ with one new 
polynomial parameter (Theorem \ref{thm:qkzsol}). This system contains 
``$f$'' and ``$e$'' equations \eqref{eqn:qkzd} and \eqref{eqn:qkze}
corresponding to the two types of generators of the Brauer algebra.
The main goal of the rest of the paper is to identify
this solution of the $q$KZ equation as the vector of multidegrees of
the Brauer loop scheme (Theorem \ref{thm:main}), 
and to interpret geometrically 
equations \eqref{eqn:qkzd} and \eqref{eqn:qkze}.

In \S \ref{sec:geombrauer} we begin by recalling Hotta's construction
of Springer representations via orbital schemes. 
The traditional way to do
this, as via the convolution construction on the Steinberg scheme,
involves slicing an orbital variety by a hyperplane then sweeping the
result out with an $SL_2$ (one for each simple root, $i=1,\ldots,N-1$), 
which we call ``cut then sweep''. 
Hotta's picture less traditionally allows for a ``sweep then cut'' construction,
but this gives nothing new, as the corresponding operators on 
multidegrees just differ by $2$. In either construction, a key
realization is that the new scheme constructed is a schemy union
of orbital varieties.

Moving to the Brauer loop scheme,
it becomes possible to extend to the affine root system, so $i=1,\ldots,N$.
When cutting and sweeping the Brauer loop variety $E_\pi$, 
we have to treat the case $\pi(i)=i+1$ separately from the case that the
chords from $i,i+1$ are different. (It also becomes important to break
the latter case further, according to whether the two chords from $i,i+1$
cross one another.)

In the case $\pi(i)\neq i+1$, we can sweep then cut, and derive
Equation (\ref{eqn:qkzd}), as we did in \cite{KZJ}.
Already this is different from the Hotta situation, in that we must
cut with a quadratic hypersurface, not a hyperplane, to get a
schemy union of Brauer loop varieties.
The case $\pi(i) = i+1$ is harder; we cut $E_\pi$ with a hyperplane,
and identify some of its components as intersections of other
components $E_\rho$ with a quadratic hypersurface. 
This gives an inequality on cycles,
and we develop a corresponding theory of inequalities on multidegrees.
The multidegree inequality we derive is equivalent to
(\ref{eqn:qkze}) (via Equation (\ref{eqn:qkzev3})). We conclude our
geometric argument by invoking the algebra of \S \ref{sec:qkz},
which shows that this upper bound becomes tight after sweeping.
We calculate an example after Corollary \ref{cor:poserrorterm}
to show they are not equal before sweeping. 

\begin{table}[ht]
\caption{Summary of main notations}
\begin{tabular}{|c|c|c|}
\hline \vbox to 4mm{}
Embedding space & $R_N$ & $\MMN$ \\
\hline \vbox to 4mm{}
Scheme & orbital scheme $\calO$ & Brauer loop scheme $E$ \\
\hline \vbox to 4mm{}
Components & $\calO_\pi$ & $E_\pi$ \\
 & ($\pi$ noncrossing link pattern) & ($\pi$ link pattern)\\
\hline \vbox to 4mm{}
Multidegrees & $J_\pi$ & $\Psi_\pi$ \\
\hline \vbox to 5mm{}
Algebra acting & degenerate Brauer $\bar\Br_N(2)$ & affine Brauer $\hat\Br_N(\beta)$\\
(subalgebras) & (Temperley--Lieb, nil-Hecke) & (Temperley--Lieb, affine symmetric group)\\
\hline
\end{tabular}
\end{table}

{\em Remark}: for more on the Temperley--Lieb algebra in a related context, and in particular for a discussion of the $q$KZ equation associated to the Temperley--Lieb algebra, see \cite{DFZJ05b,RTVZJ}.

\section{The orbital varieties of \texorpdfstring{$\{M^2=0\}$}{\{M2=0\}}}\label{sec:orbvars}
\subsection{The poset of \texorpdfstring{$B_N$}{B}-orbits of \texorpdfstring{$\{M^2=0\}$}{\{M2=0\}}}
\label{ssec:posetBorbits}
Recall that $D$ denotes the nilpotent orbit closure $D := \{M : M^2 = 0\}$ 
inside $\text{Mat}_N=\glN$ (in this section the parity condition on $N$ is relaxed
unless stated otherwise).
This nilpotent orbit is much easier to study than a general one,
in that it is {\dfn spherical}:
the Borel subgroup $B_N$ acts on $D$ with finitely many orbits.
(Moreover, the other nilpotent orbits of $\glN$ with this property, 
such as $\{\vec 0\}$, are all contained in $D$.)
This finite set of orbits naturally forms a ranked poset, 
with the rank given by the 
dimension of the orbit, and the partial order by inclusion of orbit closures.
A full description of this ranked poset appears in \cite{R}.

We will not study all the $B_N$-orbits on $D$,
but focus on the $B_N$-invariant orbital scheme $D \cap R_N$,
whose corresponding subposet was determined in \cite{M1,M2}.
Each component of $D\cap R_N$ (i.e., each orbital variety) itself has
finitely many $B_N$-orbits, and in particular is the closure of a
$B_N$-orbit, corresponding to a maximal element of the poset.

\begin{Theorem}\cite{M1}\label{thm:involutions}
  For each orbit $\calO$ of $B_N$ on $D\cap R_N$, there exists a unique
  involution $\pi\in \SN$ such that $\calO = B_N\cdot \pi_<$.
  (Here $\pi_<$ denotes the permutation matrix $\pi$ with its diagonal
  and lower triangle zeroed out, i.e. $(\pi_<)_{ij} = 0$ unless $i<j$.)
\end{Theorem}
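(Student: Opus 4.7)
The approach is to classify the $B_N$-orbits on $D\cap R_N$ via a ``south-west'' rank function, and then match the resulting combinatorics with involutions.

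First, $M\in R_N$ together with $M^2=0$ forces all eigenvalues of $M$ to vanish, so every element of $D\cap R_N$ lies in $\lien$. For such $M$ and $1\le i\le j\le N$, define
\[
r_{ij}(M) := \rank M_{\{i,\ldots,N\},\,\{1,\ldots,j\}},
\]
the rank of the south-west submatrix. I would first check $B_N$-invariance: left multiplication by $b\in B_N$ replaces row $k$ of $M$ by an upper-triangular combination of rows $k,k+1,\ldots,N$, while right multiplication by $b^{-1}$ replaces column $\ell$ by an upper-triangular combination of columns $1,\ldots,\ell$; neither operation changes the rank of any south-west block.

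Next, for an involution $\pi\in\SN$ a direct count gives
\[
r_{ij}(\pi_<) \;=\; \#\{\,a : i\le a<\pi(a)\le j\,\},
\]
the number of arcs of $\pi$ contained in $[i,j]$. With the convention $r_{N+1,j}=r_{i,0}=0$, the discrete second difference $r_{ij}-r_{i+1,j}-r_{i,j-1}+r_{i+1,j-1}\in\{0,1\}$ is the indicator of ``$\pi(i)=j$'', so the rank function distinguishes distinct involutions. Together with $B_N$-invariance, this already gives the uniqueness in the theorem: if $\pi_<$ and $\sigma_<$ lie in the same $B_N$-orbit, then $\pi=\sigma$.

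For existence I would induct on $N$. Given a nonzero $M\in D\cap R_N$, let $j$ be maximal with column $j$ of $M$ nonzero, and let $i$ be minimal with $M_{ij}\ne 0$. Using scalings, column operations (adding multiples of earlier columns to column $j$) and row operations (adding multiples of later rows to row $i$) coming from $B_N$-conjugation, together with the quadratic relations $(M^2)_{k\ell}=0$ to propagate the eliminations, I would bring $M$ to a form in which row $i$ and column $j$ each contain a single nonzero entry $M_{ij}=1$. The hypothesis $M^2=0$ then implies that the complementary $(N-2)\times(N-2)$ submatrix (delete row $i$ and column $j$) still squares to zero, so induction produces an involution $\pi'$ on $\{1,\ldots,N\}\smallsetminus\{i,j\}$ with that smaller matrix $B$-conjugate to $\pi'_<$; adjoining the transposition $(i,j)$ yields the required $\pi\in\SN$ with $M\in B_N\cdot\pi_<$.

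The main obstacle is the Gauss-elimination step in the induction: one must verify that, after normalizing the pivot $(i,j)$, a Levi-like subgroup of $B_N$ preserving the pivot still acts with the full upper-triangular freedom on the complementary block, and that the accumulated row/column operations do not resurrect entries in the cleared row $i$ or column $j$. The equations $M^2=0$ are used in an essential way precisely here, since without them one could only reach partial permutation matrices, not necessarily involutions.
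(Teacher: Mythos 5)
Your uniqueness argument is sound, and it aligns with the tool the paper actually uses elsewhere: the south-west rank function $r_{ij}$ is exactly the conjugation-invariant behind the paper's proof of the $\Longrightarrow$ direction of Theorem~\ref{thm:poset}, and the second-difference trick to recover the involution from $r$ is a clean way to deduce uniqueness. (The paper itself does not prove Theorem~\ref{thm:involutions}; it cites Melnikov, so there is no proof in the text to compare against directly.)

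The existence step has a genuine gap, and it is precisely the pivot choice. You take $j$ maximal with column $j$ nonzero and $i$ minimal with $M_{ij}\neq 0$. That position need \emph{not} be a jump of the rank invariant, i.e.\ one can have $r_{ij}-r_{i+1,j}-r_{i,j-1}+r_{i+1,j-1}=0$ there. Since the $r_{k\ell}$ are $B_N$-conjugation invariants (as you yourself verified), it is then \emph{impossible} to conjugate $M$ into a matrix whose only nonzero entry in row $i$ and column $j$ is $M_{ij}$: such a normal form forces the second difference at $(i,j)$ to be $1$, contradicting invariance. Concretely, take
\[
M \;=\; \begin{pmatrix}0&0&1\\ 0&0&1\\ 0&0&0\end{pmatrix}\in D\cap R_3 .
\]
Your rule picks the pivot $(1,3)$; but the rank jump is at $(2,3)$, and indeed $M$ is $B_3$-conjugate to $E_{23}$, not $E_{13}$. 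No sequence of the operations you allow (nor any use of $M^2=0$) can clear $M_{23}$ while keeping $M_{13}\neq 0$, because $r_{23}(M)=1$ is an orbit invariant.

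The repair is small but essential: choose a pivot that is forced to be a rank jump. For instance, take a nonzero entry $(i,j)$ minimizing $j-i$ (a ``minimal arc''), or equivalently $i$ maximal with row $i$ nonzero and then $j$ minimal with $M_{ij}\neq 0$. With this choice, $M_{i\ell}=0$ for $i<\ell<j$ and $M_{kj}=0$ for $i<k<j$ automatically, so after rescaling $M_{ij}=1$ one only has to clear $M_{i\ell}$ for $\ell>j$ (conjugate by $I+cE_{j\ell}$) and $M_{kj}$ for $k<i$ (conjugate by $I+cE_{ki}$); these operations do not interfere with one another or with the pivot. Once row $i$ and column $j$ are reduced to the pivot, $M^2=0$ forces $M_{j\ell}=(M^2)_{i\ell}=0$ and $M_{ki}=(M^2)_{kj}=0$, so row $j$ and column $i$ vanish for free. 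The Levi-lift you describe then does go through, on the block obtained by deleting rows \emph{and columns} $i$ and $j$ (your sketch says ``delete row $i$ and column $j$,'' which would leave an $(N-1)\times(N-1)$ matrix; you need to remove both rows $i,j$ and both columns $i,j$ to get the $(N-2)\times(N-2)$ block carrying the restricted involution).
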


We will draw these involutions as chord diagrams on the interval, 
with an arch connecting $i \leftrightarrow \pi(i)$ for $i\neq \pi(i)$,
and a vertical half-line from $i$ for each fixed point $i = \pi(i)$,
drawn so that any two {\dfn curves} (meaning, arch or half-line)
cross transversely and at most once. 
This encoding will make it easy to describe the dimension of the orbit
and the covering relations in the poset. While these were already
computed in \cite{M1,M2},
our description is sufficiently different
that we find it simpler to give independent proofs.

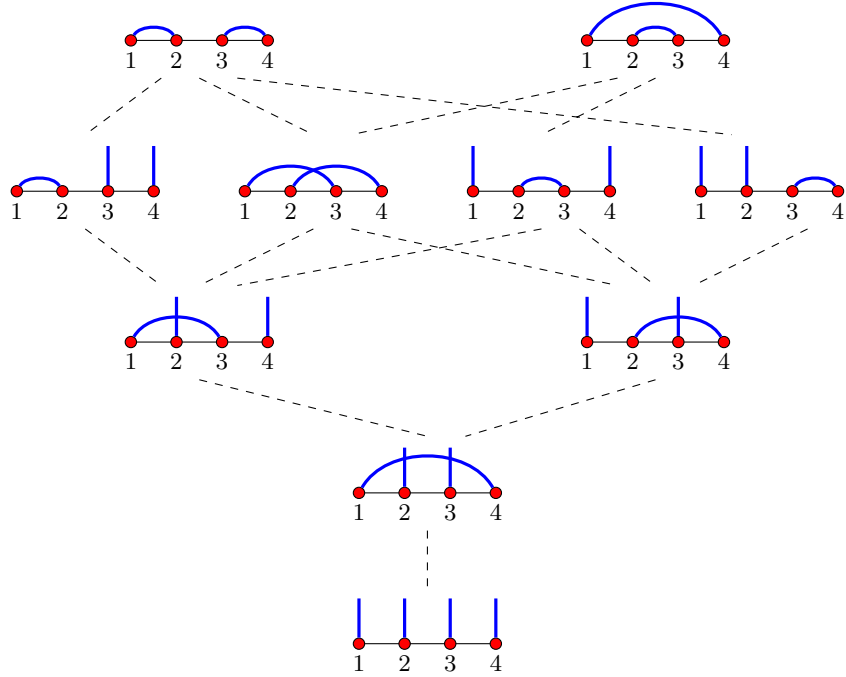
\begin{figure}[htbp]
  \centering
\begin{tikzpicture}
\linkpatternnumberedtrue
\linkpattern[tikzstarted,pos={(-2.5cm,4cm)}]{1/2,3/4}
\linkpattern[tikzstarted,pos={(3.5cm,4cm)}]{1/4,2/3}
\linkpattern[tikzstarted,pos={(-4cm,2cm)},height=1]{1/2,3/3,4/4}
\linkpattern[tikzstarted,pos={(-1cm,2cm)},height=1]{1/3,2/4}
\linkpattern[tikzstarted,pos={(2cm,2cm)},height=1]{1/1,2/3,4/4}
\linkpattern[tikzstarted,pos={(5cm,2cm)},height=1]{1/1,2/2,3/4}
\linkpattern[tikzstarted,pos={(-2.5cm,0cm)},height=1]{1/3,2/2,4/4}
\linkpattern[tikzstarted,pos={(3.5cm,0cm)},height=1]{1/1,2/4,3/3}
\linkpattern[tikzstarted,pos={(0.5cm,-2cm)},height=1]{1/4,2/2,3/3}
\linkpattern[tikzstarted,pos={(0.5cm,-4cm)},height=1]{1/1,2/2,3/3,4/4}
\draw[dashed] (-1.5,3.5) -- (-2.5,2.75) (-1,3.5) -- (0.5,2.75) (-0.5,3.5) -- (6,2.75) (4.5,3.5) -- (1,2.75) (5,3.5) -- (3.5,2.75) (-2.5,1.5) -- (-1.5,0.75) (0.5,1.5) -- (-1,0.75) (1,1.5) -- (4.5,0.75) (3.5,1.5) -- (-0.5,0.75) (4,1.5) -- (5,0.75) (7,1.5) -- (5.5,0.75) (-1,-0.5) -- (2,-1.25) (5,-0.5) -- (2.5,-1.25) (2,-2.5) -- (2,-3.25);

\end{tikzpicture}

\caption{The poset of $B_N$-orbits for $N=4$. The row gives the
    dimension, computable from Theorem \ref{thm:orbdim},
    from $4$ at the top down to $0$ for the orbit $\{0\}$.}
  \label{fig:poset4}
\end{figure}

\begin{Theorem}\label{thm:orbdim}
  Let $\pi\in \SN$ be an involution, drawn as a chord diagram on
  the interval. Then the corresponding $B_N$-orbit has dimension
  \begin{align*}
    \dim B_N\cdot \pi_< 
    &=\#\text{arches}+\#\text{noncrossing pairs $\{$arch, arch$\}$ or 
      $\{$arch,half-line$\}$)}\\
    &=\text{$\#$arches $\cdot$ $(\#$arches $+$ $\#$half-lines$)-\#$crossings}.
  \end{align*}

  The maximum dimension $\lfloor N^2/4\rfloor$ is achieved iff
  $\pi$ has no crossings and at most one half-line, iff 
  $\overline{B_N\cdot \pi_<}$ is an orbital variety of $D$.
\end{Theorem}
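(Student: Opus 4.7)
The plan is to compute $\dim B_N\cdot\pi_<=\dim B_N-\dim Z_{\lie{b}_N}(\pi_<)$, where $Z_{\lie{b}_N}(\pi_<)$ is the centralizer in the upper-triangular Lie algebra. Partition $\{1,\dots,N\}$ into $A=\{i:\pi(i)>i\}$, $B=\{j:\pi(j)<j\}$, $F=\{i:\pi(i)=i\}$ (left arch ends, right arch ends, fixed points) of sizes $k,k,\ell$, and decompose $\complexes^N=V_A\oplus V_B\oplus V_F$. In these coordinates $\pi_<$ is the isomorphism $\pi_0:V_B\to V_A$, $e_j\mapsto e_{\pi(j)}$, extended by zero. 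A blockwise expansion of $X\pi_<=\pi_<X$ for $X\in\lie{b}_N$ shows that the commutation is equivalent to (a) $X_{BA}=X_{BF}=X_{FA}=0$ and (b) $X_{AA}=\pi_0 X_{BB}\pi_0^{-1}$, with the other blocks $X_{AB},X_{AF},X_{FB},X_{FF},X_{BB}$ unconstrained — subject only to the upper-triangularity of $X$, which, because $A,B,F$ are interleaved in $\{1,\dots,N\}$, is what makes the count combinatorial.

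The main task is to count the number of independent linear relations that (a) and (b) impose on the upper-triangular entries of $X$ (this number is the orbit dimension). Condition (b) supplies $k$ relations from diagonal pairings $X_{rr}=X_{\pi(r),\pi(r)}$ ($r\in A$), plus, for each unordered pair $\{r,s\}\subseteq A$ with $r<s$, either one identification (when the arches through $r,s$ are non-nested, i.e.\ $\pi(r)<\pi(s)$) or two zero-forcing relations (when they are nested, $\pi(r)>\pi(s)$, in which case the partner entry $X_{\pi(r),\pi(s)}$ is already below the diagonal). Condition (a) supplies one zero-forcing relation at each upper-triangular position $(r,s)$ with $r<s$ whose type $(\tau(r),\tau(s))$ lies in $\{(B,A),(B,F),(F,A)\}$: such positions count, respectively, parallel arch pairs ($p_\parallel$) and non-crossing (arch, half-line) pairs (jointly $k\ell-c_{af}$, with $c_{af}$ the number of arch/half-line crossings).

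Writing $c=c_{aa}+c_{af}$ for the total number of crossings and $p_{\mathrm{nested}}$ for the number of nested arch pairs, so that $p_\parallel+p_{\mathrm{nested}}+c_{aa}=\binom{k}{2}$, the total number of independent relations sums to
\[
k+(p_\parallel+c_{aa})+2p_{\mathrm{nested}}+p_\parallel+(k\ell-c_{af})
= k+2\binom{k}{2}-c_{aa}+k\ell-c_{af}
= k(k+\ell)-c,
\]
yielding the second formula of the theorem; the first is an equivalent regrouping of the same summands. For the maximum, the constraint $2k+\ell=N$ turns the formula into $k(N-k)-c$; as a concave function of $k\in\{0,1,\dots,\lfloor N/2\rfloor\}$ this is maximized at $k=\lfloor N/2\rfloor$, forcing $\ell=N-2k\in\{0,1\}$, with top value $\lfloor N^2/4\rfloor$ attained exactly when also $c=0$. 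This matches $\dim(D\cap R_N)$, so these maximum-dimensional orbits are precisely the dense orbits in the orbital varieties of $D$, i.e.\ those indexed by involutions with no crossings and at most one half-line. The main obstacle is the bookkeeping in the middle paragraph: one must verify that the $k^2$ relations from (b), symmetrized over the pairs $(r,s)$ and $(s,r)$, decompose under upper-triangularity into exactly the independent reductions described, and that these remain independent of the zero-forcing relations from (a) (which only touch entries in the disjoint blocks $BA,BF,FA$).
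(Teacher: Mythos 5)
Your route is genuinely different from the paper's. The paper explicitly declines the centralizer computation (``One can do this (as in [Me00]) by computing the commutant in $B_N$ of $\pi_<$, but we find it instructive to use Proposition~\ref{prop:moves}'') and instead embeds $\pi$ in a maximal chain of the orbit poset, shows each covering move of Proposition~\ref{prop:moves} drops the statistic $d_\pi = a(N-a)-c$ by exactly one, and anchors the top of the chain with Spaltenstein's $\dim(D\cap R_N)=\lfloor N^2/4\rfloor$; as a byproduct it gets that the listed moves really are covering relations, which feeds into Theorem~\ref{thm:poset}. Your approach is the Melnikov-style direct computation of $\dim Z_{\lie{b}_N}(\pi_<)$. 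The block decomposition by $A,B,F$ is right, the reduction to $X_{BA}=X_{BF}=X_{FA}=0$ and $X_{AA}=\pi_0 X_{BB}\pi_0^{-1}$ is right, and the entry-by-entry count under upper-triangularity is right: $(B,A)$ positions count parallel arch pairs, $(B,F)\cup(F,A)$ positions count non-crossing arch/half-line pairs, a nested $A\!\times\!A$ pair forces two entries to zero while a non-nested one identifies two free entries, and the diagonal gives $k$ identifications. The independence you flag as ``the main obstacle'' does go through because each relation is either ``$X_p=0$'' or ``$X_p=X_q$'' and the positions involved are pairwise disjoint across the types, so the relation system is a disjoint union of single vertices and single edges. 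Your route buys a self-contained, local (per pair of chords) explanation of the formula without Proposition~\ref{prop:moves} or Spaltenstein; the paper's route buys reuse of the covering moves and avoids the blockwise bookkeeping.

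One correction: you assert that the first displayed formula ``is an equivalent regrouping of the same summands,'' but as printed it is not equivalent to the second. For $\pi=(12)(34)$ with $N=4$, the first line gives $\#\text{arches}+\#\text{noncrossing pairs}=2+1=3$, whereas the orbit dimension is $4=2\cdot 2-0$ (this can be checked directly: the centralizer constraints on a $4\times4$ upper-triangular $X$ are $X_{11}=X_{22}$, $X_{33}=X_{44}$, $X_{13}=X_{24}$, $X_{23}=0$, so the orbit has dimension $4$). In general the literal first formula equals $k+\binom{k}{2}+k\ell-c$, which falls short of $k(k+\ell)-c$ by $\binom{k}{2}$. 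The dangling ``)'' in the source suggests the first line of the theorem statement was garbled; the second formula is the one the paper's own proof actually uses (``so $d_\pi = a(N-a)-c$''), and it is the one your computation establishes. So this is more a defect of the statement than of your argument, but you should not have claimed the equivalence without checking it.
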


More generally, if we require that $\pi$ has at most $k$ $2$-cycles,
then the maximum dimension $k(N-k)$ is achieved iff there are indeed
$k$ $2$-cycles and no crossings, iff $\overline{B_N\cdot \pi_<}$ is an
orbital variety for the nilpotent orbit closure
$\mathcal O = \{M : M^2 = 0, \rank M \leq k\}$.
This latter statement -- that the components of $\mathcal O\cap R_N$ are all
the same dimension -- holds for any nilpotent orbit $\mathcal O$ \cite{Sp};
in fact the dimension is $\onehalf \dim \mathcal O$.

For an involution $\pi$, write
\begin{equation*}
 \rank \pi_{[ij]} := \#\{i\leq a<b\leq j : \pi(a) = b \}, \end{equation*}
i.e., the number of complete arches sitting between positions $i$ and $j$ 
of the chord diagram.

\begin{Theorem}\cite{M2,R}\label{thm:poset}
  For two involutions $\pi,\rho\in \mathcal{S}_n$, 
  \begin{equation*} 
    \overline{B_N\cdot \pi_<} \supseteq B_N\cdot \rho_<
    \quad\Longleftrightarrow\quad
    \rank \pi_{[ij]} \geq \rank \rho_{[ij]} \quad \forall i<j. \end{equation*}
\end{Theorem}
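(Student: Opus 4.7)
The forward direction is a semi-continuity argument. For each $i < j$ let $r_{ij}(M) := \rank M_{[i,N] \times [1,j]}$, the rank of the south-west submatrix at $(i,j)$. Upper-triangularity of $b \in B_N$ and $b^{-1}$ forces
\[
(bMb^{-1})_{[i,N] \times [1,j]} \;=\; b_{[i,N]\times[i,N]} \cdot M_{[i,N]\times[1,j]} \cdot (b^{-1})_{[1,j]\times[1,j]},
\]
i.e.\ the original south-west submatrix multiplied on either side by invertible upper-triangular blocks; so $r_{ij}$ is $B_N$-invariant. The locus $\{r_{ij} \leq k\}$ is closed in $\MNC$ (vanishing of minors), making $r_{ij}$ lower-semi-continuous. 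Hence $\overline{B_N \cdot \pi_<} \supseteq B_N \cdot \rho_<$ forces $r_{ij}(\rho_<) \leq r_{ij}(\pi_<)$ for all $i<j$. Finally, the nonzero entries of $\pi_<$ in rows $[i,N]$ and columns $[1,j]$ sit at positions $(a, \pi(a))$ with $i \leq a < \pi(a) \leq j$; since $\pi$ is an involution they occupy distinct rows and distinct columns, forming a partial permutation matrix whose rank equals its number of $1$'s, namely $\rank \pi_{[ij]}$. This identifies $r_{ij}(\pi_<) = \rank \pi_{[ij]}$ and yields the stated inequality.

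For the reverse direction, my plan is to reduce to checking ``elementary moves''. I would first write down a short list of local moves $\pi \to \pi'$ on chord diagrams --- uncrossing two crossing arches into a nested pair; sliding the endpoint of an arch past an adjacent fixed point or half-line; and the remaining moves of Proposition \ref{prop:moves} --- and for each exhibit an explicit degeneration by conjugating $\pi_<$ with a one-parameter root subgroup $\{I + t E_{a,c}\} \subset B_N$ and taking $t \to 0$ or $t \to \infty$ (after rescaling, if necessary). A direct matrix calculation suffices in each case, because $\pi_<$ is supported on a small set of entries and the chosen root only affects a few of them, so one can read off $\lim_t b(t)\, \pi_<\, b(t)^{-1} \in B_N \cdot \pi'_<$ by inspection. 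This yields $\overline{B_N \cdot \pi_<} \supseteq B_N \cdot \pi'_<$ for each elementary move, and transitivity then reduces the theorem to the purely combinatorial claim that the rank order is generated by these elementary moves.

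That combinatorial claim is the main obstacle. Given $\pi \neq \rho$ with $\rank \pi_{[ij]} \geq \rank \rho_{[ij]}$ for all $i<j$, I must produce an elementary move $\pi \to \pi'$ such that $\rank \pi'_{[ij]} \geq \rank \rho_{[ij]}$ still holds everywhere, and such that the total excess $\sum_{i<j}(\rank \pi_{[ij]} - \rank \rho_{[ij]})$ strictly decreases. My strategy is to choose a minimal window $[i,j]$ at which the inequality is strict, locate a crossing or non-nested configuration of $\pi$ responsible for that excess, and perform the move from Proposition \ref{prop:moves} that locally removes it; since each elementary move changes the rank functions $r_{i'j'}$ in a monotone and spatially localized way, the inequalities with $\rho$ at all other windows are automatically preserved. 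Induction on the total excess then completes the argument, so the entire reverse direction is reduced to verifying the promised local behaviour of the moves, which is exactly what Proposition \ref{prop:moves} supplies.
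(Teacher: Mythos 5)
Your forward direction is correct and coincides with the paper's: show $r_{ij}(M) := \rank M_{\geq i, \leq j}$ is $B_N$-conjugation-invariant (your block-multiplication computation is right), invoke lower semicontinuity of rank, and identify $r_{ij}(\pi_<) = \rank\pi_{[ij]}$. No issues there.

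The reverse direction, however, has a genuine gap at its crux. You reduce, as the paper does, to the combinatorial claim: given $\pi\neq\rho$ with $\rank\pi_{[ij]}\geq\rank\rho_{[ij]}$ for all $i<j$, find a move $\pi\to\pi'$ with $\rank\pi'_{[ij]}\geq\rank\rho_{[ij]}$ still holding. But your justification --- ``since each elementary move changes the rank functions in a monotone and spatially localized way, the inequalities at all other windows are automatically preserved'' --- does not hold up. Each move of Proposition~\ref{prop:moves} \emph{decreases} $\rank\pi_{[ij]}$ by one on an entire rectangular family of windows $[i,j]$ (those fully containing one of the two arches but not touching the other curve), not at a single window. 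Applying a move that decreases rank at a window where $\rank\pi_{[ij]}=\rank\rho_{[ij]}$ immediately violates the domination, and a priori the rectangle on which a candidate move decreases rank may well spill outside the region where the inequality is strict. So it is not enough to find a move that ``locally removes'' the excess at one minimal window; one must choose a move whose entire rank-decreasing region is contained in the strict-inequality region. The paper's proof does exactly this: it locates a $c-b$-minimal window $[b,c]$ with $\rank\pi_{[bc]}>\rank\rho_{[bc]}$ (forcing an arch $b\leftrightarrow c$ in $\pi$), then considers the maximal rectangle $(a,b]\times[c,d)$ on which the strict inequality persists, and does a four-way case split on whether $a=0$, $d=N+1$, and where the covering arch or nearest half-line sits --- in each case exhibiting a specific move (1a, 1b, 2, or 3) whose decrease-rectangle stays inside. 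That case analysis is the actual content of the theorem, and your sketch replaces it with an assertion that would need to be proved. Until you supply an argument of that kind (showing that a suitable move always exists and that its rank decrement is confined to the strict region), the reverse implication is unproven.
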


\begin{proof}[Proof of $\Longrightarrow$]
  Let $M_{\geq i,\leq j}$ denote the part of $M$ southwest of $(i,j)$, i.e.,
  \begin{equation*} (M_{\geq i,\leq j})_{kl} = 
  \begin{cases}
    M_{kl} & \text{if $k\geq i$ and $l\leq j$} \\
    0 & \text{otherwise.}
  \end{cases}
  \end{equation*}
  In the case $M = \pi_<$, we have
  $ \rank (\pi_<)_{\geq i,\leq j} = \rank \pi_{[ij]}$.

  The conjugation action of $B_N$ on $M$ restricts, in the following sense, 
  to an action on each southwest part:
  \begin{equation*} (b\cdot M)_{\geq i,\leq j} 
  = (b\cdot M_{\geq i,\leq j} )_{\geq i,\leq j}. \end{equation*}
  This implies that $\rank M_{\geq i,\leq j}$ is invariant under 
  $B_N$-conjugation.

  Consequently, if $M' \in B_N\cdot M$, then
  $\rank M'_{\geq i,\leq j} = \rank M_{\geq i,\leq j}$ 
  for all $i,j$. 
  The semicontinuity of $\rank$ gives us an inequality on the closure:
  \begin{equation*} M' \in \overline{B_N\cdot M}  \quad\Longrightarrow\quad
  \rank M_{\geq i,\leq j} \geq  \rank M'_{\geq i,\leq j} \quad
  \forall i,j. \end{equation*}
  Now apply this to $M' = \rho$, $M = \pi$ to obtain the desired statement.
\end{proof}

We will prove Theorem \ref{thm:orbdim} and the other half of Theorem
\ref{thm:poset} by an analysis of the covering relations in the poset
of orbit closures (the ``moves'' from \cite{R}).
We encourage the reader to reconstruct the poset in Figure
\ref{fig:poset4} from the top down using the following Proposition.

\begin{Proposition}\label{prop:moves}
  Let $\pi\in \SN$ be an involution, with an associated chord diagram
  also called $\pi$.
  Construct a new chord diagram $\rho$ in one of three ways: 
  \begin{enumerate}
  \item If two arches in $\pi$ border a common region, but do not cross,
    make them touch and turn that into a new crossing;
  \item if an arch and a half-line in $\pi$ border a common region,
    but do not cross, make them touch and turn that into a new crossing;
  \item if an arch crosses all the half-lines, 
    and borders the unbounded region, break it into two half-lines.
  \end{enumerate}
  Then $\pi > \rho$ in the poset of orbit closures, i.e.,
  $\overline{B_N\cdot \pi_<} \supset B_N\cdot \rho_<$. (We will later
  prove these to be covering relations, and all of them.)
\end{Proposition}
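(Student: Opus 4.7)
My plan is to establish $\rho_<\in\overline{B_N\cdot\pi_<}$ in each case by exhibiting a one-parameter curve in $B_N\cdot\pi_<$ whose limit is $\rho_<$. Strictness of the containment is then automatic, since distinct involutions give distinct orbits by Theorem~\ref{thm:involutions}. The observation used throughout is that, since $\pi$ is an involution, $\pi_<$ has at most one nonzero entry in each row and each column; this keeps the conjugations I use local to the positions of the curves involved in the move.

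Move~(3) is the simplest: $\rho_<$ differs from $\pi_<$ only by zeroing the entry $(a,b)$, and since column $a$ and row $b$ of $\pi_<$ are both empty, conjugating by the diagonal $h(t)\in B_N$ with $t$ at position $a$ and $1$ elsewhere scales only the $(a,b)$ entry by $t$, and the limit $t\to 0$ yields $\rho_<$. For moves~(1) and~(2) I would use a two-step degeneration: first conjugate $\pi_<$ by a unipotent generator $I+tE_{ij}$, with $i<j$ chosen among the endpoints of the involved curves, to produce the ``new'' entries of $\rho_<$ with coefficient $\pm t$ alongside the ``old'' entries of $\pi_<$; then conjugate by a diagonal $h(\lambda)=\mathrm{diag}(\lambda^{\alpha_i})$ with weights $\alpha_i$ tuned so that as $\lambda\to\infty$ the new entries stabilize to finite nonzero values while the old entries vanish. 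A further diagonal sign change absorbs the minus sign and recovers $\rho_<$.

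Concretely, for the side-by-side subcase of move~(1) (arches $(a,b),(c,d)$ with $a<b<c<d$) the generator is $g(t)=I+tE_{bc}$, producing $\pi_<+tE_{bd}-tE_{ac}$, and the diagonal uses weights $(\alpha_a,\alpha_b,\alpha_c,\alpha_d)=(0,1,1,2)$. For the nested subcase ($a<c<d<b$) one instead needs the composition $(I+sE_{ac})(I+tE_{db})$ restricted to the curve $st=1$ (forcing the $(a,b)$ coefficient to vanish), followed by a diagonal rescaling with weights $(t,1,t^{-1},1)$ at positions $(a,b,c,d)$. Move~(2) is handled identically but with only three of the four positions involved, since the half-line contributes no entry to $\pi_<$: the generator is $I+tE_{bc}$ or $I+tE_{ca}$ according to whether the half-line lies to the right or to the left of the arch. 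The principal technical point is checking that the unipotent conjugation introduces no extraneous entries, which follows from the involutive structure of $\pi$ together with the fact that the arches and half-lines not participating in the move occupy rows and columns disjoint from those being perturbed; I expect no further conceptual obstacle, only routine case-by-case verification.
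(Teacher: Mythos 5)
Your proposal is correct and takes essentially the same approach as the paper: in each case you exhibit an explicit one-parameter family of conjugating matrices in $B_N$ whose limit carries $\pi_<$ to $\rho_<$, the only cosmetic difference being that you factor the conjugating element into a unipotent step followed by a diagonal rescaling, whereas the paper packages both into a single matrix $b(t)$ and takes $t\to 0$. One small caution: in the side-by-side case, your weights $(0,1,1,2)$ kill the old entries $E_{ab},E_{cd}$ but also kill the new ones $tE_{bd},-tE_{ac}$ if the diagonal parameter $\lambda$ is independent of $t$; they work precisely when you set $\lambda=t$ (as your nested-case diagonal $\mathrm{diag}(t,1,t^{-1},1)$ already does implicitly), or else you should use weights like $(0,1,0,1)$ to decouple the two parameters.
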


\begin{proof}
  In each of these cases, we will construct a one-parameter family of group 
  elements $b(t) \in B_N$ such that $\lim_{t\to0} b(t)\cdot \pi = \rho$,
  where $\cdot$ denotes the conjugation action.
  These $\pi$ and $\rho$ only differ in a few columns and rows, and we will
  be able to take $b(t)$ the identity outside those, making it possible
  to write down $b(t)$ in a small space. 

  1. If one arch contains the other, the relevant submatrices are
  \begin{equation*} 
  \begin{pmatrix}
    1&1& &  \\
     &t& &  \\
     & &1& 1 \\
     & & &-t \\
  \end{pmatrix}
  \cdot
  \begin{pmatrix}
    0&0&0&1 \\
     &0&1&0 \\
     & &0&0 \\
     & & &0
  \end{pmatrix}
  = 
  \begin{pmatrix}
    0&0&1&0 \\
     &0&t&1 \\
     & &0&0 \\
     & & &0
  \end{pmatrix}
  \dashrightarrow
  \begin{pmatrix}
    0&0&1&0 \\
     &0&0&1 \\
     & &0&0 \\
     & & &0
  \end{pmatrix}
  \text{as $t\to 0$.} 
  \end{equation*}
  If instead the arches are side by side:
  \begin{equation*} 
  \begin{pmatrix}
    t& & & \\
     &1&1& \\
     & &-t& \\
     & & &1    
  \end{pmatrix}
  \cdot
  \begin{pmatrix}
    0&1&0&0 \\
     &0&0&0 \\
     & &0&1 \\
     & & &0
  \end{pmatrix}
  = 
  \begin{pmatrix}
    0&t&1&0 \\
     &0&0&1 \\
     & &0&t \\
     & & &0
  \end{pmatrix}
  \dashrightarrow
  \begin{pmatrix}
    0&0&1&0 \\
     &0&0&1 \\
     & &0&0 \\
     & & &0
  \end{pmatrix}
  \text{as $t\to 0$.} 
  \end{equation*}

  2. Assume (by symmetry) that the half-line is left of the arch:
  \begin{equation*}
  \begin{pmatrix}
    1&1& \\
     &t& \\
     & &1\\
  \end{pmatrix}
  \cdot
  \begin{pmatrix}
     0&0&0 \\
      &0&1 \\
      & &0
  \end{pmatrix}
  = 
  \begin{pmatrix}
    0&0&1 \\
     &0&t \\
     & &0
  \end{pmatrix}
  \dashrightarrow
  \begin{pmatrix}
    0&0&1 \\
     &0&0 \\
     & &0
  \end{pmatrix}
  \text{as $t\to 0$.} 
  \end{equation*}
  
  3. To prove $\pi > \rho$, 
  we shall not here need to use the condition that the arch crosses all
  half-lines -- it is only included to later ensure that this is a
  covering relation.
  \begin{equation*}
  \begin{pmatrix}
    t&  \\
     &1
  \end{pmatrix}
  \cdot
  \begin{pmatrix}
    0&1 \\
     &0
  \end{pmatrix}
  = 
  \begin{pmatrix}
    0&t \\
     &0
  \end{pmatrix}
  \dashrightarrow
  \begin{pmatrix}
    0&0 \\
     &0
  \end{pmatrix}
  \text{as $t\to 0$.} 
  \end{equation*}
\end{proof}

When looking for the $B_N$-orbits covered in this poset by a given
$B_N$-orbit $B_N\cdot \pi_<$, one must be careful to consider {\em all\/}
the ways to draw the chord diagram of $\pi$, as different drawings may
make different pairs of chords adjacent. 
An example is in Figure \ref{fig:multdraw}.

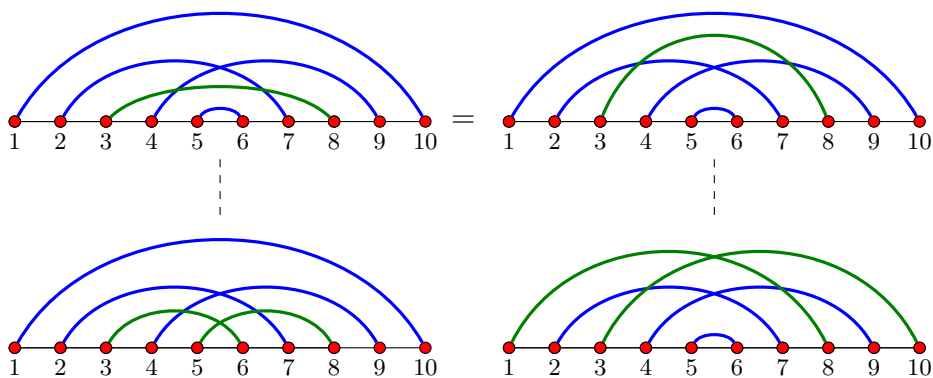
\begin{figure}[htbp]
  \centering
\begin{tikzpicture}
\linkpattern[tikzstarted,numbered,pos={(-3.5cm,0cm)}]{1/10,2/7,4/9,5/6}\linkpattern[tikzstarted,edgecolor=green!50!black,squareness=0.2,pos={(-3.5cm,0cm)}]{3/8}
\node at (3,0) {$=$};
\linkpattern[tikzstarted,numbered,pos={(3cm,0cm)}]{2/7,4/9,5/6,1/10}\linkpattern[tikzstarted,edgecolor=green!50!black,squareness=0.5,pos={(3cm,0cm)}]{3/8}
\draw[dashed] (-0.2,-0.5) -- (-0.2,-1.25) (6.3,-0.5) -- (6.3,-1.25);
\linkpattern[tikzstarted,numbered,pos={(-3.5cm,-3cm)}]{1/10,2/7,4/9}\linkpattern[tikzstarted,edgecolor=green!50!black,pos={(-3.5cm,-3cm)}]{3/6,5/8}
\linkpattern[tikzstarted,numbered,pos={(3cm,-3cm)},size=10]{2/7,4/9,5/6}\linkpattern[tikzstarted,edgecolor=green!50!black,pos={(3cm,-3cm)},squareness=0.4]{1/8,3/10}
\end{tikzpicture}
  \caption{The top two chord diagrams are of the same involution $\pi$,
    with the $3\leftrightarrow 8$ chord placed differently. In either diagram,
    the $3\leftrightarrow 8$ chord is brought into contact with 
    another chord, leading to the lower two diagrams. In those two
    we emphasize the newly crossing chords.
}
  \label{fig:multdraw}
\end{figure}

\begin{proof}[Proof of Theorem~\ref{thm:orbdim}]
  Let $d_\pi$ denote the statistic in Theorem \ref{thm:orbdim}; we wish
  to prove $d_\pi = \dim B_N\cdot \pi_<$. One can do this (as in \cite{M1})
  by computing the commutant in $B_N$ of $\pi_<$, but we find it
  instructive to use Proposition~\ref{prop:moves}.

  First we prove that for each of the moves in Proposition~\ref{prop:moves},
  $d_\pi = d_\rho + 1$. For the first two constructions it is essentially 
  obvious -- the number of arches and curves do not change, and exactly
  one crossing is created.

  For the third move, let $a$ be the number of arches, $N-2a$ the number
  of half-lines, and $c$ the number of crossings, so $d_\pi = a(N-a) - c$.
  When we break the arch, we lose $N-2a$ crossings of it by half-lines,
  so $d_\rho = (a-1)(N-(a-1)) - (c-(N-2a)) = d_\pi - 1$ as desired.
  
  We now embed $\pi$ in a maximal chain in the poset, first building
  upward (using the moves in reverse).
  Alternate between two strategies: replace every crossing with $)($,
  then when there are no crossings, join the leftmost half-line to the
  right-most, making an arch that crosses the remaining half-lines
  and does not cross any other arch twice (or indeed at all).
  This process stops at a noncrossing chord diagram $\pi^+ \geq \pi$
  with at most one half-line.

  To go downward in the poset, look first for a half-line at position $i$
  and an arch $j \leftrightarrow k$ not crossing it. By reflecting, we can 
  assume for discussion that $i<j<k$. Find such an arch with smallest $j$. 
  If there is a half-line at $j-1$ (e.g. if $i=j-1$), then we can apply 
  move \#2 to that half-line and the $j\leftrightarrow k$ arch and thus
  go downward in the poset. If there is an arch at $j-1$ and $\pi(j-1)<j-1$,
  then we can apply move \#1 to make the arches $\pi(j-1)\leftrightarrow j-1$
  and $j\leftrightarrow k$ cross. Otherwise there is an arch at $j-1$
  with $\pi(j-1)>j-1$, hence not crossing the half-line at $i$, 
  contradicting our choice of smallest $j$.

  Once that is done, every half-line crosses every arch. Now if there are
  any arches left, we can apply move \#3 to any arch touching the
  unbounded region. (Then return to the paragraph above.) This process
  stops only when there are no arches, i.e., at the identity
  permutation $\Id$.

  We have thus constructed a sequence $(\pi^+,\ldots,\pi,\ldots,\Id)$,
  each related by a move from Proposition~\ref{prop:moves},
  with $(d_{\pi^+} = \lfloor N^2/4\rfloor, \ldots, d_\pi, \ldots, d_{\Id}=0)$
  therefore decreasing by $1$ at each step. Correspondingly, we can compute 
  the sequence $(\dim B_N\cdot \pi^+_<,\ldots,\dim B_N\cdot \pi_<,\ldots,0)$ 
  of dimensions of the orbits, which must strictly decrease at each step
  (since for any variety, $\dim (\overline X \setminus X) < \dim X$).
  All that remains is to know that 
  $\dim (D\cap R_n) = \onehalf \dim D$ \cite{Sp}, straightforwardly
  computed to be $\lfloor N^2/4\rfloor$, and this is an upper bound on
  $\dim B_N\cdot \pi^+_<$.
\end{proof}

In particular, this computation of the rank function proves that the
moves in Proposition~\ref{prop:moves} are indeed covering relations.

\begin{proof}[Proof of Theorem \ref{thm:poset}, $\Longleftarrow$]
  We first analyze the effect of the moves $\pi\mapsto \rho$ 
  from Proposition~\ref{prop:moves} on the rank function $\rank \pi_{[ij]}$,
  leaving the details of the verification to the reader.
  
  For moves $1$ and $2$, $\rank \rho_{[ij]} = \rank \pi_{[ij]}$ unless
  the interval
  $[ij]$ fully contains one arch and does not contain any end of the
  other curve, in which case $\rank \rho_{[ij]} = \rank \pi_{[ij]}-1$.
  For move $3$, $\rank \rho_{[ij]} = \rank \pi_{[ij]}$ unless
  the interval $[ij]$ fully contains the arch, 
  in which case $\rank \rho_{[ij]} = \rank \pi_{[ij]}-1$.

  In particular, $\rank \rho_{[ij]} \leq \rank \pi_{[ij]}$ for all $i<j$,
  and for some $[ij]$ the inequality is strict, making inductive arguments
  possible.

  Now let $\pi \neq \rho$ be chord diagrams on the interval (possibly with
  half-lines) such that 
  $  \rank \pi_{[ij]} \geq \rank \rho_{[ij]} \quad \forall i<j. $
  We wish to show that there exists a move $\pi\mapsto \pi'$ such that
  we again have $\rank \pi'_{[ij]} \geq \rank \rho_{[ij]} \quad \forall i<j $.
  Then by Proposition \ref{prop:moves} and induction, 
  we can infer that
  $\overline{B_N\cdot \pi_<} \supset B_N\cdot \rho_<$. 

  Here it is useful to distinguish move (1a) where the
  two arches to be crossed are next to each other and move (1b) where the
  two arches are nested one inside the other.

  Let $b<c$ be such that $\rank \pi_{[bc]} > \rank \rho_{[bc]}$ with
  $c-b$ minimized. Then $\pi$ necessarily has an arch connecting $b$ and $c$.
  Next consider any pair $a\ge 0$, $d\le N+1$ such that 
  $\rank \pi_{[ij]} > \rank \rho_{[ij]}$ 
  for all $a<i\le b$ and $c\le j<d$, and such that $a$ is minimum,
  $d$ is maximum, for that condition. There are four cases,
  in each of which we use a covering
  relation from Proposition \ref{prop:moves} to construct a $\pi' \lessdot \pi$
  that is still $\geq \rho$:
\begin{itemize}
\item $\pi$ possesses an arch inside the square $[a,b[\ \times\ ]c,d]$,
i.e., $c<\pi(i)\le d$ for some $a\le i<b$. Pick $i$ 
such that $\pi(i)-i$ minimal, and
apply covering relation (1b) to
arches $b,c$ and $i,\pi(i)$.

In all other cases we assume that $\pi$ has no arch inside 
$[a,b[\ \times\ ]c,d]$ (i.e., no arch ``covers'' $(b,c)$). 
Note that this implies $\rank \pi_{[ij]}=\rank\rho_{[ij]}+1$
for $a< i\le b$, $c\le j<d$, and furthermore that one cannot
have simultaneously $a>0$ and $d<N+1$.

\item $a=0$ and $d=N+1$. Then arch $b,c$ is connected to the outside.
If there exists a half-line outside arch $b,c$, 
then apply covering relation (2) to $b,c$ and the closest half-line. 
If not, apply covering relation (3) to $b,c$.

\item $a=0$ and $d<N+1$. If there exists a half-line between $c$ and
$d$ (including $d$), apply covering relation (2) to the arch $(b,c)$ and
the closest half-line. If not apply covering relation (1a) to $(b,c)$
and $(d,\pi(d))$.

\item $a>0$ and $d=N+1$. If there exists a half-line between $b$ and
$a$ (including $a$), apply covering relation (2) to the arch $(b,c)$ and
the closest half-line. If not apply covering relation (1a) to $(b,c)$
and $(\pi(a),a)$.
\end{itemize}
\end{proof}

Since the poset is finite, and given any pair $\pi > \rho$ we found
a Proposition \ref{prop:moves} covering relation $\pi \gtrdot \pi'$
with $\pi' \geq \rho$, by induction we have the

\begin{Corollary*}[of proof]
  The covering relations in Proposition \ref{prop:moves} are all
  the covering relations.  
\end{Corollary*}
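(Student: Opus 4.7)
The plan is to show that every covering relation $\pi \gtrdot \rho$ in the poset of $B_N$-orbit closures is realized by one of the three moves in Proposition \ref{prop:moves}. Since it was already observed (right before the Corollary) that each such move is itself a covering relation, this will close the loop.

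First I would feed the covering pair $\pi \gtrdot \rho$ into the constructive procedure from the $\Longleftarrow$ direction of the proof of Theorem \ref{thm:poset}. That procedure, applied to the strict inequality $\pi > \rho$, produces an involution $\pi'$ obtained from $\pi$ by a single move of type (1a), (1b), (2), or (3) from Proposition \ref{prop:moves}, with the property that $\pi' \geq \rho$ still holds. Thus the problem reduces to showing $\pi' = \rho$.

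To pin down $\pi' = \rho$, I would invoke the dimension bookkeeping established in the proof of Theorem \ref{thm:orbdim}: each of the three moves decreases $d_\pi = \dim B_N \cdot \pi_<$ by exactly $1$, so $\dim B_N \cdot \pi'_< = \dim B_N \cdot \pi_< - 1$. On the other hand, the poset is ranked by dimension, so the assumption that $\pi \gtrdot \rho$ is a covering relation gives $\dim B_N \cdot \rho_< = \dim B_N \cdot \pi_< - 1$ as well. Combining $\pi' \geq \rho$ with $\dim B_N \cdot \pi'_< = \dim B_N \cdot \rho_<$ forces $\pi' = \rho$, which exhibits the covering relation $\pi \gtrdot \rho$ as a move from Proposition \ref{prop:moves}.

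The argument is essentially immediate once the earlier results are in hand, so I do not anticipate any serious obstacle; the real content was already absorbed by the proof of Theorem \ref{thm:poset}, together with the fact that all three move types drop dimension by exactly one. If one wanted to emphasize the inductive phrasing alluded to in the text, one could instead iterate: starting from $\pi > \rho$, successively apply the construction to build a saturated chain $\pi = \pi_0 \gtrdot \pi_1 \gtrdot \cdots \gtrdot \pi_k = \rho$ of moves; then a covering relation $\pi \gtrdot \rho$ corresponds to the case $k=1$, and the conclusion follows.
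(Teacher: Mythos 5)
Your argument is correct and follows the paper's own route: feed $\pi > \rho$ into the constructive $\Longleftarrow$ proof of Theorem \ref{thm:poset} to produce a move $\pi \gtrdot \pi'$ with $\pi' \geq \rho$, then observe that a covering relation forces $\pi' = \rho$. Your middle step via dimension bookkeeping is a valid way to conclude $\pi' = \rho$, but it is slightly more than needed: once you have $\pi > \pi' \geq \rho$ and $\pi \gtrdot \rho$ is a cover, the conclusion $\pi' = \rho$ is immediate from the definition of a covering relation, with no appeal to rankedness (which sidesteps a mild circularity worry, since rankedness of the poset by dimension is itself most transparently a consequence of this Corollary).
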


\subsection{Joseph--Melnikov polynomials}

Let $\pi \in \SN$ be an involution, and $\overline{B_N\cdot \pi_<}$ 
the corresponding orbit closure in the orbital scheme of $\{M^2=0\}$.
Then we define the {\dfn Joseph--Melnikov polynomial} $J_\pi$ to be the
multidegree of $\overline{B_N\cdot \pi_<}$ exactly as we did the
(extended) Joseph polynomials, i.e., inside the strictly upper
triangular matrices, with respect to the action of the scaling circle and 
the diagonal matrices. In particular, if $\pi$ has no crossings,
then the orbit closure is an orbital variety, and 
the Joseph--Melnikov polynomial is its Joseph polynomial.

Though we shall not make direct use of it, we point out one case 
of particular interest.
Let $\rho \in \mathcal{S}_n$ be an arbitrary permutation, and associate an 
involution $\pi$ of $1,\ldots,N=2n$ as follows:
\begin{equation*} \pi(i) = 
\begin{cases}
  n + \rho(n+1-i) & i=1,\ldots,n \\
  n+1-\rho^{-1}(i-n)   & i=n+1,\ldots,N
\end{cases}
\qquad \pi =
\begin{pmatrix}
  0 & w_0 \rho \\ 
  \rho^T w_0 & 0 
\end{pmatrix}
\end{equation*}
Such involutions were already considered in section~5 of \cite{KZJ}, forming
the so-called {\em permutation sector}.
Then $\pi_<$ is not only upper triangular, 
but supported in the upper right quarter of this $2n\times 2n$ matrix, 
where it matches the permutation matrix of $\rho$ (except for
being upside down).
The action of $B_N \subseteq \MNC$ by conjugation splits into separate
actions of $B_n \subseteq \MnC$ by left and right multiplication, 
and $\overline{B_N\cdot \pi_<}$ is linearly isomorphic 
to a {\em matrix Schubert variety}. 
Our reference for these varieties is \cite[ch. 15]{MS}.

\begin{Proposition}\label{prop:doubschub}
  Let $\rho \in \mathcal{S}_n$, and construct $\pi \in \SN$ as above. Then
  the Joseph--Melnikov polynomial $J_\pi$ is essentially the double Schubert
  polynomial $S_\rho$ of $\rho$:
  \begin{equation*} J_\pi = 
  S_\rho(\A+z_n,\ldots,\A+z_1;\ z_{n+1},\ldots,z_{2n}) 
  \prod_{1\leq i<j \leq n} (\A+z_i-z_j) 
  \prod_{n+1\leq i<j \leq 2n} (\A+z_i-z_j) 
  \end{equation*}
\end{Proposition}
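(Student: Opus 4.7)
The plan is to exploit the special block structure of $\pi_<$: since $\pi_<$ is supported in the upper-right $n\times n$ block of the $2n\times 2n$ matrix, its $B_N$-orbit closure reduces essentially to a matrix Schubert variety, while the remaining entries of $R_N^{\Delta=0}$ contribute only multiplicatively via axiom 3(b) of multidegrees.

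I would first write $b\in B_N$ in $n\times n$ blocks $\begin{pmatrix}b_1 & b_{12}\\ 0 & b_2\end{pmatrix}$ (with $b_1,b_2\in B_n$) and compute directly that conjugating a matrix of the form $\begin{pmatrix}0 & C\\ 0 & 0\end{pmatrix}$ yields $\begin{pmatrix}0 & b_1 C b_2^{-1}\\ 0 & 0\end{pmatrix}$. Hence $\overline{B_N\cdot\pi_<}$ is contained in the $T$-invariant subspace $V\subseteq R_N^{\Delta=0}$ of matrices supported in the upper-right block, and under the obvious identification $V\cong\text{Mat}_n$ it equals the closure of the $B_n\times B_n$-orbit of $C_0:=w_0\rho$ under $(b_1,b_2)\cdot C=b_1 C b_2^{-1}$. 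The $T$-weight on the $(i,j)$ entry of $V$ is $\A+z_i-z_{n+j}$.

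Applying multidegree axiom 3(b) to the flag $\overline{B_N\cdot\pi_<}\subseteq V\subseteq R_N^{\Delta=0}$ yields
\[
J_\pi \;=\; \mdeg_V\overline{B_n\cdot w_0\rho\cdot B_n}\ \cdot \prod_{\text{$T$-weights on }R_N^{\Delta=0}/V}\!\!\text{weight},
\]
and the quotient $R_N^{\Delta=0}/V$ is the direct sum of the strictly upper triangular parts of the two diagonal $n\times n$ blocks. Their $T$-weights are precisely $\A+z_i-z_j$ for $1\le i<j\le n$ and for $n+1\le i<j\le 2n$, accounting for the two product factors in the proposition.

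To compute $\mdeg_V\overline{B_n\cdot w_0\rho\cdot B_n}$ I would apply the row-reversal $w_0$ on the left: since $w_0 B_n w_0 = B_n^-$ and $w_0^2=1$, this linear automorphism of $V$ sends $\overline{B_n\cdot w_0\rho\cdot B_n}$ to the classical matrix Schubert variety $\overline{X_\rho}=\overline{B_n^-\cdot\rho\cdot B_n}$; its effect on weights is to permute the row-weights $\A+z_1,\ldots,\A+z_n$ into the order $\A+z_n,\ldots,\A+z_1$, with column-weights $z_{n+1},\ldots,z_{2n}$ unchanged. The Fulton/Knutson--Miller formula \cite[Ch.~15]{MS} then identifies $\mdeg\overline{X_\rho}$ with the double Schubert polynomial $S_\rho(x;y)$, and substituting $x_i=\A+z_{n+1-i}$, $y_j=z_{n+j}$ gives exactly the claimed formula. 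The only real obstacle is bookkeeping around conventions --- which side $B_n$ versus $B_n^-$ acts on, the sense of the $w_0$ reversal needed to match the standard matrix Schubert variety, and the ubiquitous shift by $\A$ coming from the scaling torus's uniform action on $R_N^{\Delta=0}$; once these are aligned the proposition is an immediate consequence of axiom 3(b) and the Fulton formula.
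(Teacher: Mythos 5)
Your proposal is correct and follows essentially the same route as the paper's proof: split $R_N^{\Delta=0}$ into the upper-right block $V$ plus the two diagonal triangles (accounting for the two product factors via axiom 3(b)), observe that conjugation by block-upper-triangular $b$ restricts on $V$ to the two-sided $B_n\times B_n$ action, conjugate by $w_0$ on the left to recover $B_n^-\times B_n$ and hence a standard matrix Schubert variety, and match torus weights to determine the substitution in the double Schubert polynomial. Your write-up is somewhat more explicit than the paper's (which sketches these steps and notes the nonuniqueness of the substitution), but there is no new idea or gap.
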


\begin{proof}
  One definition of the double Schubert polynomial is as the multidegree of
  $\overline{B_{n,-} \rho B_n} \subseteq \MnC$, where $B_-$ denotes the
  lower triangular matrices, with respect to the $2n$-dimensional torus
  formed from the diagonal matrices in $B_{n,-}$ and $B_n$. 
  One could separately include the action of the scaling circle, too,
  but this is not traditional, since the scalar matrices in $B_n$ (or in $B_{n,-}$)
  already give the scaling action.
  
  To compare these two multidegrees, we must first relate the two
  ambient spaces. The Joseph--Melnikov polynomial is defined using the
  space of strictly upper triangular $2n\times 2n$ matrices, 
  which we identify with the space of $n\times n$ matrices (thought of
  as the upper right corner) times two triangles' worth of matrix entries.
  Those triangles account for the latter two factors in the formula. The action by conjugation,
  when restricted to these upper-right square matrices, becomes left and right action by upper triangular
  matrices: therefore one must additionally reverse the indices $i\mapsto n+1-i$ of the rows to recover
  matrix Schubert varieties.  
  
  To compare the action of the torus in $\MNC$ plus the scaling circle
  to the action of the diagonal matrices in $B_{n,-}$ and $B_n$, it is
  easiest to look at the weight of the $(i,j)$ entry on $\MnC$:
  for the first torus the weight is $\A + z_i - z_j$, 
  for the second the weight is $x_{n+1-i} - y_j$ 
  (with respect to the usual notation, in which the multidegree is
  $S_\rho(x_1,\ldots,x_n;\ y_1,\ldots, y_n)$). This suggests the
  variable substitution we used in the given formula.

  (This substitution is not unique; for example,
  $S_\rho(z_n,\ldots,z_1;\ z_{n+1}-\A,\ldots,z_{2n}-\A)$
  would work equally well.
  The nonuniqueness can be traced to the
  fact that the torus in $\MNC$ does not act faithfully
  on the subspace containing $B_N\cdot \pi_<$.)
\end{proof}

Recall that multidegrees have an automatic positivity property
\cite{MS}: the multidegree of $X \subseteq V$ is a positive sum of
products of $T$-weights from $V$ (with no more repetition of weights than 
occurs in $V$). One reason to move beyond Joseph polynomials to the
larger family of Joseph--Melnikov polynomials is to give an 
inductive formula for them that is {\em manifestly positive\/} in this sense. 
The following is adapted from \cite{R}, which deals with the
subtler case of arbitrary $B_N$-orbits in $\{M^2=0\}$.

\junk{
First, we factor out a trivial contribution to $J_\pi$. Let $Q(\pi)$
denote the subspace of $\MMN$ 
\begin{equation*} 
  Q(\pi) := \{ M \ :\ \forall i,j, M_{ij} = 0 \text{ unless } \exists a,b
  \text{ such that } i\leq a < b\leq j, \pi(a)=b \}. \end{equation*}
That is to say, only keep those matrix entries that have a $1$ from $\pi_<$
to their southwest. It is easy to see that $B_N\cdot \pi_< \subseteq Q(\pi)$,
so we can define
\begin{equation*} 
  J'_\pi := \mdeg_{Q(\pi)} \overline{B_N\cdot \pi_<}. \end{equation*}
Then $J_\pi$ factors as $J'_\pi$ times $\mdeg_{\MMN} Q(\pi)$, 
where the latter is a product of $T$-weights.}

\begin{Theorem}\label{thm:JMformula}\cite{R}
  Let $\pi$ be an involution, and $a<b$ a minimal chord in $\pi$, 
  i.e., $\pi(a)=b$ and $\not\exists c,d$ with $a<c<d<b$, $\pi(c) = d$.

  Let $\rho$ vary over the set of involutions such that $\pi$ covers $\rho$
  in the poset of $B_N$-orbits, and there is no chord connecting $a,b$. Then
  for each such $\rho$ we have $\A+z_a-z_b \divides J_\rho$, and

\junk{  \end{equation*} J'_\pi 
  = \sum_\rho \text{\bf some annoying product of weights} \cdot 
J'_\rho \end{equation*}
  from which one can calculate $J_\pi$.}
\begin{equation*}J_\pi=\sum_\rho  {J_\rho\over \A+z_a-z_b}.\end{equation*}
\end{Theorem}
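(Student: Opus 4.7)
The plan is to apply multidegree axiom 3 with respect to the $T$-invariant hyperplane $H:=\{M\in\lien:M_{ab}=0\}\subset\lien$, whose weight is $\A+z_a-z_b$, and to read off the formula from the scheme-theoretic intersection $X\cap H$, where $X:=\overline{B_N\cdot\pi_<}$. I would first verify that $X\not\subseteq H$. Writing $M=b\pi_< b^{-1}$ with $b\in B_N$, the upper-triangularity of $b$ and $b^{-1}$ forces
\[
M_{ab}=\sum_{\substack{a\le k<l\le b\\ \pi(k)=l}} b_{ak}\,(b^{-1})_{lb},
\]
and the minimality of the chord $(a,b)$ collapses this sum to the single term $(k,l)=(a,b)$, giving $M_{ab}=b_{aa}/b_{bb}$, generically nonzero. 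Axioms 3(a) and 3(c) then yield
\[
\mdeg_{\lien}(X\cap H)\;=\;J_\pi\cdot(\A+z_a-z_b).
\]

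Next I would identify the top-dimensional components of $X\cap H$. Since $H$ is $B_N$-stable and $X\cap H$ has pure codimension one in $X$, each such component is a $B_N$-orbit closure of codimension one in $X$ contained in $H$. By Theorem \ref{thm:poset} and Proposition \ref{prop:moves} (whose moves exhaust all covering relations, by the Corollary of its proof), these are precisely the $\overline{B_N\cdot\rho_<}$ with $\pi\gtrdot\rho$, and ``contained in $H$'' translates to ``the move from $\pi$ to $\rho$ destroys the chord $(a,b)$'', i.e.\ $\rho(a)\neq b$. This matches the set of $\rho$ in the statement. Axiom 2 then yields
\[
J_\pi\cdot(\A+z_a-z_b)\;=\;\sum_\rho m_\rho\, J_\rho
\]
for positive integers $m_\rho$, while the divisibility $(\A+z_a-z_b)\divides J_\rho$ is immediate from axiom 3(b), using $\overline{B_N\cdot\rho_<}\subseteq H$.

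The remaining task, which I expect to be the main obstacle, is to prove $m_\rho=1$ for every $\rho$. For each applicable move in Proposition \ref{prop:moves}, the explicit one-parameter family $b(t)\in B_N$ constructed in its proof realizes $\rho_<=\lim_{t\to 0}b(t)\cdot\pi_<$, and inspection of the displayed matrix products shows that $M_{ab}(t):=(b(t)\cdot\pi_<)_{ab}$ vanishes to exact order $1$ in $t$. The tangent vector $\der_t|_{t=0}(b(t)\cdot\pi_<)$ is transverse to $B_N\cdot\rho_<$ inside $X$ at $\rho_<$, so on a suitable transverse slice at a generic point of $\overline{B_N\cdot\rho_<}$ the function $M_{ab}$ restricts to a local coordinate, forcing $m_\rho\le 1$; combined with $m_\rho\ge 1$ this gives $m_\rho=1$ and the claimed identity. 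The subtle point is to make the transversality precise---namely, to identify the class of $\der_t|_{t=0}(b(t)\cdot\pi_<)$ in the normal bundle of $\overline{B_N\cdot\rho_<}\subseteq X$ with a generator in the $\der/\der M_{ab}$ direction; this can be done on the open locus where $\overline{B_N\cdot\rho_<}$ lies in the smooth locus of $X$, via a $B_N$-slice argument.
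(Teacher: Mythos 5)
Your proposal follows the paper's own proof sketch in its essentials: slice $X = \overline{B_N\cdot\pi_<}$ with the hyperplane $H = \{M_{ab}=0\}$, identify the top-dimensional components of $X\cap H$ as the orbit closures $\overline{B_N\cdot\rho_<}$ covered by $\pi$ that lose the chord $(a,b)$, and reduce to a multiplicity-one claim. Two remarks, one of which is a genuine error.

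The error: you assert that ``$H$ is $B_N$-stable,'' and use this to conclude that the components of $X\cap H$ are $B_N$-orbit closures. But $H$ is \emph{not} $B_N$-stable in $\lien$: for a general strictly upper triangular $M$ with $M_{ab}=0$ and $b\in B_N$, one has $(bMb^{-1})_{ab}=\sum_{a\le k<l\le b} b_{ak}M_{kl}(b^{-1})_{lb}$, which need not vanish. What is true — and what the paper means by ``by the other condition on $a,b$, the intersection is again $B_N$-invariant'' — is that $X\cap H$ is $B_N$-invariant, and this is exactly where the minimality of the chord $(a,b)$ earns its keep. Concretely, the semicontinuity bound from the $\Longrightarrow$ direction of Theorem \ref{thm:poset}, applied with $(i,j)=(a,b-1)$ and $(i,j)=(a+1,b)$ (both of which contain no chord of $\pi$, by minimality), forces $M_{kl}=0$ for all $a\le k<l\le b$ with $(k,l)\ne(a,b)$ and all $M\in X$; hence $(bMb^{-1})_{ab}=b_{aa}M_{ab}(b^{-1})_{bb}$ on $X$, so the locus $\{M_{ab}=0\}\cap X$ is $B_N$-invariant. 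Your computation $M_{ab}=b_{aa}/b_{bb}$ on the open orbit already contains this idea in the dense case, but you need the invariance of $X\cap H$, not of $H$, to identify its components as orbit closures.

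On the multiplicity-one step: the paper itself does not carry this out, deferring to a tangent-space calculation in \cite{R}. Your sketch via the explicit curves $b(t)$ from Proposition \ref{prop:moves}, showing $M_{ab}(t)$ vanishes to exact first order and arguing transversality on the smooth locus, has the right flavor and is compatible with what the paper cites; but, as you acknowledge, it is not yet a complete argument (in particular the genericity of $\rho_<$ in the smooth locus of $X$ and the identification of the normal direction need to be pinned down). So you are not doing less than the paper here, though not more either.
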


\begin{proof}[Proof sketch.]
  Part of this is quite direct from
  the properties we used to define multidegrees.
  We slice $\overline{B_N\cdot \pi_<}$ with the hyperplane $\{ M_{ab} = 0 \}$,
  which does not contain it since $\pi(a)=b$.
  By the other condition on $a,b$, 
  the intersection is again $B_N$-invariant. 

  Hence the intersection is supported on $\bigcup_\rho \overline{B_N\cdot \rho}$,
  and it remains to check that the multiplicities are all $1$, 
  a tangent space calculation done in \cite{R}. 

  (It is interesting to note that the same construction, applied to
  more general $B_N$-orbit closures in $\{M^2=0\}$, can produce
  multiplicities $1$ or $2$.)  
\end{proof}

Combining Theorem \ref{thm:JMformula} with Proposition \ref{prop:doubschub},
one obtains an inductive positive formula for double Schubert polynomials. 
This turns out to be exactly the ``transition formula'' of Lascoux
\cite{lascoux}.

\Ex Consider the case of maximal rank in size $N=2n=4$.  There are $3$
involutions of $\{1,2,3,4\}$ without fixed points, which we denote
according to their cycles: $(12)(34)$, $(14)(23)$, $(13)(24)$. The
first two are noncrossing, while the third one has one crossing.  The
corresponding Joseph--Melnikov polynomials are
\begin{align*}
  J_{(12)(34)}&=(\A+z_2-z_3)(2\A+z_1-z_4) &\qquad M_{23}=0\quad (M^2)_{14}=0\\
  J_{(14)(23)}&=(\A+z_1-z_2)(\A+z_3-z_4) &\qquad M_{12}=M_{34}=0\\
  J_{(13)(24)}&=(\A+z_1-z_2)(\A+z_2-z_3)(\A+z_3-z_4) 
  &\qquad M_{12}=M_{23}=M_{34}=0.
\end{align*}
(These subvarieties are complete intersections, using the indicated
equations on $\{M_{ij}\}$, 
so property 3(c) of $\mdeg$ explains why these multidegrees factor.) 
The first two correspond to orbital varieties, whereas the third
corresponds to a higher codimension $B_N$-orbit. The last two form the
``permutation sector'', that is, once divided by their common factor
$(\A+z_1-z_2)(\A+z_3-z_4)$, we recover the specializations of
the double Schubert polynomials
$S_1=1$ and $S_2(\A+z_2,\A+z_1;z_3,z_4)=\A+z_2-z_3$.


\section{Specialization of multidegrees and normal cones} 
\label{sec:ncone}

In \S \ref{ssec:leading} we prove some general results about
multidegrees in normal cones, without explicit reference to the Brauer
loop scheme or orbital varieties. In \S \ref{ssec:specializing}
we give a divisibility/vanishing condition on a multidegree.
In \S \ref{ssec:application} we use these 
results to relate Brauer loop polynomials and Joseph--Melnikov polynomials,
and to prove geometrically some algebraic results from \cite{DFZJ06}.

\subsection{The leading form of a multidegree}\label{ssec:leading}

Fix two finite-dimensional (complex) vector spaces $V,W$, 
and let $\complexes^\times$
act on $V\oplus W$ with weight $0$ on $V$ and weight $1$ on $W$.
Since we will be interested in it geometrically rather than linearly,
we will usually denote this space $V\times W$.

\begin{Lemma}\label{lem:projintersect}
  Let $X \subseteq V\times W$ be a closed
  $\complexes^\times$-invariant subscheme of $V\times W$.
  Then the projection of $X$ to $V$ is $X\cap V$ 
  (and is, in particular, closed).
\end{Lemma}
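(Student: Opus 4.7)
The plan is to reduce the lemma to a direct statement about homogeneous ideals under the grading induced by the $\complexes^\times$-action. Write $A := \calO(V)$, $B := \calO(V \times W) = A \otensor \Sym(W^*)$, and observe that the weight-$1$ action of $\complexes^\times$ on $W$ grades $B$ by $W$-degree, with degree-zero piece $B_0 = A$. Since $X$ is closed and $\complexes^\times$-invariant, its ideal $I \leq B$ is homogeneous with respect to this grading.

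Next I would identify both subschemes of $V$ in question. The scheme-theoretic intersection $X \cap V$ corresponds to the ideal $(I + (W^*)B)/(W^*)B$ inside $B/(W^*)B = A$, which, because $I$ is homogeneous, equals $I \cap A$ (any $f \in I$ decomposes as $f_0 + f_1 + f_2 + \cdots$ with $f_n \in I \cap B_n$, and the image in $A$ picks out $f_0 \in I \cap A$). On the other hand, the scheme-theoretic image of $X$ under the projection $V \times W \onto V$ is, by definition, $\Spec(A/(I \cap A))$. Hence the two subschemes of $V$ agree, and the projection is automatically closed because an intersection of closed subschemes is closed.

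If one prefers a more geometric picture, the same argument can be run on points: if $(v,w) \in X$, then $\complexes^\times$-invariance gives $(v, tw) \in X$ for every $t \in \complexes^\times$, and closedness of $X$ lets one send $t \to 0$ to conclude $(v,0) \in X$; this shows $\pi(X) \subseteq X \cap V$ set-theoretically, while the reverse inclusion is trivial. The grading argument above is what upgrades this to a scheme-theoretic equality.

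The only mild obstacle is being careful about what ``projection'' means scheme-theoretically, since in general the image of a morphism of schemes is only a constructible set; here the $\complexes^\times$-action is precisely what collapses the image to its closure and identifies that closure with the intersection. Once the homogeneity of $I$ is recorded, the calculation is automatic.
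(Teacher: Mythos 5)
Your proposal is correct, and it travels a genuinely different road than the paper, whose entire proof is the two-line set-theoretic argument: for $(\vec v,\vec w)\in X$, invariance gives $(\vec v,t\vec w)\in X$ and closedness gives $(\vec v,\vec 0)=\lim_{t\to 0}(\vec v,t\vec w)\in X$, combined with the trivial inclusion $\pi(X)\supseteq X\cap V$. You include that argument as a secondary "geometric picture," but your primary vehicle is ideal-theoretic: with $B=\mathcal O(V\times W)$ graded by the $\complexes^\times$-weight, homogeneity of $I$ gives $I\cap A$ simultaneously as the ideal of $X\cap V$ and as $\ker(A\to B/I)$, the ideal of the scheme-theoretic image. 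That identification is correct, and it strengthens the lemma to a scheme-theoretic equality, which the paper neither states nor needs (Proposition \ref{prop:mdegproj} only uses the set-theoretic conclusion). One small clarification worth recording: the algebraic computation by itself only tells you that $\overline{\pi(X)}=X\cap V$; to deduce that $\pi(X)$ is \emph{literally} closed you then combine this with the trivial inclusion $\pi(X)\supseteq X\cap V$ to squeeze $X\cap V\subseteq\pi(X)\subseteq\overline{\pi(X)}=X\cap V$. Once you notice that, the algebraic route actually stands on its own and the $t\to 0$ limit is no longer load-bearing, so your closing remark that ``the $\complexes^\times$-action is what collapses the image to its closure'' slightly undersells your own argument: the collapse is already delivered by the ideal identity, not by the limit. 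In short: the paper's proof is shorter and purely topological; yours costs a bit of bookkeeping with the grading but buys the scheme-theoretic refinement.
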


\begin{proof}
  Let $\pi: V\times W \to V$ denote the projection, which acts as
  the identity on $V$.
  Then $\pi(X) \supseteq \pi(X\cap V) = X\cap V$, proving one inclusion.

  For the opposite inclusion, let $(\vec v,\vec w) \in X$, 
  with $\pi(\vec v,\vec w) = (\vec v,\vec 0)$.
  Then by the invariance, $t\cdot (\vec v,\vec w) \in X$ for all
  $t\in\complexes^\times$, and $t\cdot (\vec v,\vec w) = (\vec v, t\vec w)$.

  By the assumption that $X$ is closed, we know that the limit
  $\lim_{t\to 0} (\vec v,t\vec w) \in X$. That limit is $(\vec v,\vec 0)$.
  This proves the opposite inclusion.
\end{proof}

If we drop the invariance, the conclusions fail: let $X$ be the hyperbola
$vw=1$ in the plane, whose intersection with $w=0$ is empty, and
whose projection to the $V$-axis hits everything but $0$.

There is a normal cone implicit in the conditions in lemma
\ref{lem:projintersect}, in that $X \iso C_X (X\cap V)$. However, we
will stick to the language used in that lemma for the rest of the section.

In addition,
in the rest of this section $V,W$ will carry actions of a torus $T$,
making $V\oplus W$ a representation of $T\times \complexes^\times$.
Let $\B$ be a generator of the weight lattice of $\complexes^\times$.
Then multidegrees of $(T\times \complexes^\times)$-invariant subschemes 
of $V\times W$ are elements of the polynomial ring $(\Sym T^*)[\B]$.

In the following Proposition, we will often want to pick out those
terms of an element of $(\Sym T^*)[\B]$ carrying the leading power of $\B$.
\newcommand\initB{{\rm init}_\B}
Denote this operator by $\initB$, 
for example $\initB(\B x + \B^2 y + \B^2 z) = \B^2(y+z)$,
and call $\initB p$ the \B{\dfn -leading form of $p$}.

\begin{Proposition}\label{prop:mdegproj}
  Let $X \subseteq V \times W$ be a $T$-invariant irreducible subvariety of
  a direct sum of $T$-representations, and $\Pi$ the projection of $X$
  to $V$. (Hence $\Pi$ is closed by Lemma \ref{lem:projintersect}, 
  and irreducible.) Let $\initB \mdeg_{V\times W} X = \B^p m$,
  $m\in \Sym T^*$.

  Then $p = \dim W + \dim \Pi - \dim X$, and $m$ is a positive
  integer multiple of $\mdeg_V \Pi$.
\end{Proposition}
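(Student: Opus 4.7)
The plan is to compute $\initB\mdeg_{V\times W}(X)$ by iteratively slicing $X$ with the $r := \dim W$ coordinate hyperplanes of $W$, using multidegree axioms 3(a) and 3(b). Let $y_1, \ldots, y_r$ be coordinates on $W$ of $T \times \complexes^\times$-weights $\B + \beta_j$, and process them in some order, slicing by $\{y_j = 0\}$ at each step. Exactly one of two cases occurs: in \emph{Case A}, the current subscheme lies in $\{y_j = 0\}$, so by 3(b) the multidegree picks up a factor of $\B + \beta_j$ and we descend to a smaller ambient; in \emph{Case B}, it does not, and by 3(a) the multidegree equals that of the intersection with $\{y_j = 0\}$, whose dimension has dropped by one. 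Let $S \subseteq \{1, \ldots, r\}$ index the Case A steps and let $X_r \subseteq V$ be the final subscheme; combining the axiomatic reductions yields
$$\mdeg_{V \times W}(X) \ =\ \mdeg_V(X_r) \cdot \prod_{j \in S}(\B + \beta_j).$$
Since $\mdeg_V(X_r) \in \Sym T^*$ carries no $\B$, the $\B$-leading part of the right-hand side is $\B^{|S|}\,\mdeg_V(X_r)$; hence $p = |S|$ and the leading coefficient equals $\mdeg_V(X_r)$.

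To compute $|S|$, I would track the generic fiber $F_i \subseteq W$ of the current projection $X_i \to \Pi_i$. Each Case B step slices $F_i$ by a coordinate hyperplane not containing it (lowering $\dim F_i$ by one), while Case A occurs precisely when $F_i$ already lies in the next $\{y_j = 0\}$. Starting from $\dim F_0 = d := \dim X - \dim \Pi$ and ending with $\dim F_r = 0$ forces exactly $d$ Case B steps and $r - d$ Case A steps, giving $|S| = r - d = \dim W + \dim \Pi - \dim X = p$, which is the first claim. The image $\Pi_i$ stays equal to $\Pi$ throughout, since a dominant family of positive-dimensional fibers remains dominant after slicing by a hyperplane not containing its generic fiber.

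After all $r$ slicings, $X_r \subseteq V$ is a subscheme set-theoretically supported on $\Pi$; its generic multiplicity $\mu$ along $\Pi$ equals the multiplicity at the origin of the $d$-fold coordinate slice of the generic fiber $F$, which is the degree of $F$ (as a cone in $W$, equivalently of its projectivization in $\mathbb P(W)$). Since this is a positive integer, $\mdeg_V(X_r) = \mu \cdot \mdeg_V(\Pi)$, establishing the second claim. The main obstacle will be this last step: rigorously verifying that $X_r$ has a well-defined, order-independent generic multiplicity along $\Pi$ with no spurious components contributing. Making this precise requires combining the invariance of the multidegree under the choice of slicing order with a careful analysis of how coordinate-hyperplane slicings cut the family of fibers, particularly to ensure that Case B intersections with $F$ remain transverse enough at the generic point of $\Pi$ to yield the stated positive multiplicity.
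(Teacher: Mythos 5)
Your formula $\mdeg_{V \times W}(X) = \mdeg_V(X_r) \cdot \prod_{j \in S}(\B + \beta_j)$ does not follow from the multidegree axioms, and the gap is not, as you suggest, confined to the final multiplicity step. Axiom 3(a), which you invoke in Case B, applies only to a \emph{variety} (reduced and irreducible); but once you pass a single Case B cut, the scheme $X_1 = X \cap \{y_1 = 0\}$ is in general reducible, and its top-dimensional components can behave differently with respect to the next hyperplane --- some lying in it, some not. Then neither Case A nor Case B applies to $X_1$ as a whole, and the single-thread recursion is not defined. Concretely, take $X = \{v_1 w_1 = v_2 w_2\} \subset V \times W$ with $V = W = \complexes^2$ and $T$-weights chosen to make the equation homogeneous; here $\Pi = V$, $p = 1$, and $\initB \mdeg X = \B$. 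Cutting by $\{w_1 = 0\}$ gives the two components $\{v_2 = 0,\, w_1 = 0\}$ and $\{w_1 = w_2 = 0\}$; the second lies in $\{w_2 = 0\}$, the first does not. Your recipe (reading both steps as ``Case B'') lands at $X_2 = V$ with $S = \emptyset$ and outputs $1$, which is not $\mdeg X$.

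The paper's proof avoids exactly this pitfall by inducting on $\dim W$: after one hyperplane cut it decomposes $X \cap H$ into irreducible components via axiom 2 and applies the inductive hypothesis to each separately. The crucial step --- which a single-thread recursion has no room for --- is the dimension count showing that any component $X_i$ whose projection $\Pi_i$ is strictly smaller than $\Pi$ satisfies $\dim H + \dim \Pi_i - \dim X_i < p$, so its contribution is of strictly lower $\B$-degree and drops out of the leading form. Your tracking of a single generic fiber $F_i$ implicitly follows only the component(s) that dominate $\Pi$, but the multidegree is a \emph{sum} over all components of $X \cap H$, and the axioms give no license to discard the rest; they must be shown not to contribute, which is what the branching induction accomplishes. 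Once you repair your argument to branch over components and bound the $\B$-exponent of the non-dominating branches, it becomes essentially the paper's proof.
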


\begin{proof}
  The proof is by induction on $\dim W$. The base case $\dim W = 0$ 
  is easy enough: $X = \Pi$, $p=0$, and $m_p = 1 \mdeg_V \Pi$.

  For $\dim W>0$, let $H$ be a $T$-invariant hyperplane in $W$,
  with $\lambda$ the $T$-weight on $W/H$. 
  There are two cases to consider: $X \subseteq H$ (easy)
  or $X \not\subseteq H$ (harder).

  If $X \subseteq H$, 
  then $\mdeg_{V\times W} X = (\B + \lambda) \mdeg_{V\times H} X$,
  hence $\initB \mdeg_{V\times W} X = \B\ \initB \mdeg_{V\times H} X$.
  By the inductive hypothesis, 
  $\initB \mdeg_{V\times H} X$ is a positive multiple 
  of $\B^{\dim H + \dim \Pi - \dim X} \mdeg_V \Pi = \B^{p-1} \mdeg_V \Pi$.
  Hence $\B\, \initB \mdeg_{V\times H} X$
  is $\B \cdot \B^{p-1} \mdeg_V \Pi = \B^p \mdeg_V \Pi$ as claimed.

  If $X \not\subseteq H$, 
  then $\mdeg_{V\times W} X = \mdeg_{V\times H} (X\cap H)$.
  Let $\{X_i\}$ be the 
  components of $X\cap H$ of
  dimension $\dim X - 1$, appearing with multiplicities $m_i$,
  so $\mdeg_{V\times H} (X\cap H) = \sum_i m_i \mdeg_{V\times H} X_i$.
  Note that $\dim W - \dim X = \dim H - \dim X_i$.

  By Lemma \ref{lem:projintersect}, the projection of 
  $X\cap H$ to $V$ is $\Pi$. Hence the projection of any $X_i$ to $V$,
  call it $\Pi_i$, is contained in $\Pi$. 
  Since $\Pi$ is irreducible, either $\Pi_i = \Pi$ or is of lower dimension.
  In the latter case, 
  $\dim H + \dim \Pi_i - \dim X_i < \dim W + \dim \Pi - \dim X$,
  so by the induction hypothesis this term $m_i \mdeg_{V\times H} X_i$
  does not contribute to the $\B$-leading term of 
  $\sum_i m_i \mdeg_{V\times H} X_i$.

  So far we have shown that 
  \[ \initB\, \mdeg_{V\times W} X 
  = \initB \sum_i m_i\, \mdeg_{V\times H} X_i 
  = \sum_i m_i\, \initB\, \mdeg_{V\times H} X_i \]
  where we sum over only those $X_i$ that project onto $\Pi$.
  By the induction hypothesis, $\initB \mdeg_{V\times H} X_i$ 
  is a positive multiple of $\B^{\dim H + \dim \Pi - \dim X_i} \mdeg_V \Pi$.
  So every term in the sum has $\B$-leading term a positive multiple of
  $\B^p \mdeg_V \Pi$, and adding them together we get a positive multiple
  of $\B^p \mdeg_V \Pi$, as claimed.
\end{proof}

In fact $X$ was assumed irreducible only to ensure that $\Pi$ is
irreducible. 
\junk{
  If $\Pi$ is not irreducible, 
  then nearly the same proof shows that $\initB \mdeg_{V\times W} X$
  is a positive combination of the multidegrees of the components of $\Pi$.
}

With more use of equivariant cohomology, one can identify the positive 
integer multiple as follows (a statement we neither prove nor use). 
Pick a general point of $\Pi \subseteq V$,
and look at its preimage $P$ in $X$, thought of as a subvariety of $W$.
This preimage is automatically $\complexes^\times$-invariant, so defines
a projective variety, whose degree is the desired multiple.
In particular, the multiple is $1$ (as it will be in our application)
if and only if $P$ is a linear subspace.

\subsection{Specializing to stabilizer subgroups}\label{ssec:specializing}

If one defines the multidegree topologically (rather than by the
inductive definition from \S\ref{ssec:multidegrees}),
the following lemma is obvious.

\begin{Lemma}\label{lem:translate}
  Let $X \subseteq V$ be a $T$-invariant subscheme in a $T$-representation $V$,
  and $\vec v \in V^T$ be a $T$-invariant vector. 
  \begin{enumerate}
  \item The translate $X + \vec v$ is also $T$-invariant, with
    the same multidegree as $X$.
  \item If $X \not\ni \vec v$, then $\mdeg_V X = 0$.
  \item If $X$ contains and is smooth at $\vec v$, 
    then $\mdeg_V X = \prod ($the weights in $V / T_{\vec v} X)$.
  \end{enumerate}
\end{Lemma}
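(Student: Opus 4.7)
The plan is to translate by $-\vec v$ wherever possible, reducing everything to the case $\vec v = \vec 0$, and then work inductively using the axiomatic characterization of multidegree from \S\ref{ssec:multidegrees}. The one nontrivial ingredient is translation invariance itself (part (1)), which is not manifest in the axioms since translation by a nonzero vector is not linear and does not preserve the hyperplanes used in axiom 3; for this I will invoke the topological/cohomological characterization of $\mdeg_V X$.

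For part (1), the $T$-invariance of $X + \vec v$ is immediate: since $t\vec v = \vec v$, one has $t\cdot(x+\vec v) = tx + \vec v$. For equality of multidegrees, I would argue topologically, viewing $\mdeg_V X$ as the equivariant fundamental class $[X]\in H^*_T(V)\iso \Sym T^*$: translation by $\vec v$ is a $T$-equivariant automorphism of $V$, $T$-equivariantly isotopic to the identity through $v\mapsto v+s\vec v$, and therefore acts trivially on $H^*_T(V)$. Equivalently, one may exhibit the $T$-invariant flat family $X_s := X + s\vec v \subseteq V\times\complexes$ (with $T$ acting trivially on the parameter) and invoke flatness to propagate $\mdeg_V X_0 = \mdeg_V X_1$.

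For part (2), using (1) I reduce to $\vec v = \vec 0$. The claim becomes: if $X$ is a closed $T$-invariant subscheme of $V$ with $\vec 0\notin X$, then $\mdeg_V X = 0$. I proceed by induction on $\dim V$, with base case $V=\{0\}$ forcing $X=\emptyset$ and hence $\mdeg = 0$. For $\dim V>0$, axiom 2 lets me reduce to $X$ irreducible (any irreducible component $X_i$ still misses $\vec 0$). Using the weight-space decomposition, pick a $T$-invariant hyperplane $H\subseteq V$. If $X\subseteq H$, axiom 3(b) gives $\mdeg_V X = \mdeg_H X \cdot (\text{weight of } V/H)$, which vanishes by induction; if $X\not\subseteq H$, axiom 3(a) gives $\mdeg_V X = \mdeg_H(X\cap H)$, and $X\cap H$ is a $T$-invariant subscheme of $H$ still missing the origin, so induction again yields $0$.

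For part (3), I again translate to $\vec v = \vec 0$. Smoothness of $X$ at the $T$-fixed point $\vec 0$ makes the tangent cone $T_0 X$ a well-defined $T$-invariant linear subspace of $V$, and the classical deformation to the tangent cone provides a $T$-equivariant flat family from $X$ to $T_0 X$, so $\mdeg_V X = \mdeg_V T_0 X$. It remains to check that for any $T$-invariant linear subspace $L\subseteq V$, $\mdeg_V L$ is the product of weights of $V/L$: filter $V$ by a chain of $T$-invariant hyperplanes $L = V_r \subseteq V_{r-1}\subseteq\cdots\subseteq V_0 = V$ (built from the weight decomposition of $V/L$), and apply axiom 3(b) at each step. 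The main obstacle is part (1); once translation invariance is in hand, parts (2) and (3) are straightforward inductions on dimension using the axioms.
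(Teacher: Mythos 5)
Your proof is correct, but it travels a genuinely different road from the paper's in parts (1) and (3). For part (1) the paper's main argument is purely axiomatic: it inducts on $\dim V$, and the key observation you say is missing is that translation by $\vec v$ \emph{does} preserve any $T$-invariant linear hyperplane $H$ that contains $\vec v$ (since $\vec v\in H$ and $H$ is a linear subspace, $H+\vec v=H$; and moreover $(X+\vec v)\cap H = (X\cap H)+\vec v$). Choosing $H\ni\vec v$ makes axiom 3(a)/3(b) commute with translation, and the induction goes through with the one-dimensional trivial representation as base case. So your claim that translation invariance ``is not manifest in the axioms'' is a bit off: it is provable axiomatically, just requires choosing $H$ through $\vec v$. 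The paper only mentions the topological/equivariant-isotopy proof you give as a parenthetical remark at the very end. For part (3), the paper also stays inside the axioms, inducting on $\dim X$: it uses part (2) to reduce to the unique component of $X$ through $\vec 0$, then repeatedly slices with a $T$-invariant hyperplane transverse to $T_0 X$ and chosen to contain a complement of $T_0 X$, preserving smoothness at the origin, until $X=\{0\}$ with multidegree the product of all weights. Your degeneration-to-the-tangent-cone argument plus the linear-subspace filtration is a clean alternative, but it imports the flatness-invariance of multidegree (and, for your preferred version of (1), the equivariant-cohomology picture) as external facts rather than deriving everything from the four axioms. Both routes are sound; the paper's is more self-contained within its declared axiomatic framework, while yours is shorter once that machinery is assumed. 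Your proof of part (2) is essentially identical to the paper's.
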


\begin{proof}
  \begin{enumerate}
  \item 
    The invariance is obvious. 
    For the equality of multidegrees, we proceed by induction
    on dimension. In any dimension, we can reduce to the case that $X$
    is a variety by axiom (2) of multidegrees. Hereafter we exclude the
    trivial case $\vec v = \vec 0$ (so in particular $\dim V > 0$).
    
    If $V$ is one-dimensional (the base case), then it is the trivial
    representation, and $X$ is either $V$ or a finite set. Then the
    same is true for $X + \vec v$, giving the matching multidegrees $1$ or $0$
    respectively.
    
    If $\dim V > 1$,
    we can pick a $T$-invariant hyperplane $H$ containing $\vec v$.
    If $H \supseteq X$, then $H \supseteq (X + \vec v)$, and by axiom 
    (3b) both have zero multidegree.
    
    Otherwise $\mdeg_V X = \mdeg_H (X\cap H)$ and
    $\mdeg_V (X + \vec v) = \mdeg_H ((X+ \vec v) \cap H)$ by axiom (3a).
    Since $X \cap H, (X + \vec v) \cap H$ satisfy the conditions of the
    lemma, and are lower-dimensional schemes than $X$,
    by induction their multidegrees are the same.
  \item
    By part (1), we may replace $X$ by $X - \vec v$, reducing to the
    case $\vec v = \vec 0$. If $\dim V = 0$, then $X = \emptyset$, so
    $\mdeg_V X = 0$. If $\dim V > 0$, it contains a $T$-invariant
    hyperplane $H$. Then $X \cap H$ lives in a smaller-dimensional space
    but still does not contain $\vec 0$, so by induction
    $\mdeg_H (X\cap H) = 0$, and $\mdeg_V X$ is either equal to 
    or a multiple of that.
  \item 
    As in part (2), we reduce to the case that $\vec v = \vec 0$.
    By the result of part (2), we can shrink $X$ to the union of its
    primary components passing through the origin. By the smoothness
    hypothesis, there is only one component, so $X$ is now reduced
    and irreducible. 

    If $\dim X = 0$, then $X = \{\vec 0\}$, and $\mdeg_V X$ is
    the product of all weights on $V$. If $\dim X > 0$, then we
    can pick a $T$-invariant complement to $T_{\vec 0} X$ that is
    {\em properly} contained in $V$,
    and extend it to a $T$-invariant hyperplane $H \leq V$.
    Then $X\cap H$ is transverse at $\vec 0$, making $X\cap H$ 
    again smooth at the origin, so
    $\mdeg_V X = \mdeg_H (X \cap H) = 
    \prod ($the weights in $H / T_{\vec 0} (X \cap H)) = 
    \prod ($the weights in $V / T_{\vec 0} X)$, with the
    middle equality by induction on $\dim X$.
  \end{enumerate}
  
  (The topological proofs are as follows. The family 
  $\{X + \lambda \vec v\}_{\lambda \in [0,1]}$ is an equivariant homology
  between $X$ and $X+\vec v$, so $X$ and $X+\vec v$ define the same 
  equivariant cohomology class on $V$. Now assuming $\vec v = \vec 0$,
  the family $\{\lambda^{-1} X\}_{\lambda \in (0,1]}$ is an equivariant
  Borel-Moore homology of $X$ to its $\lambda \to 0$ limit.
  If $\vec 0 \notin X$, this limit is the empty set defining the class $0$.
  If $\vec 0 \in X$, this limit is $T_{\vec 0} X$, whose 
  associated class is the product indicated.)
\end{proof}

The following lemma (which does not involve normal cones) is stated
in a somewhat roundabout way, as the proof indicates; however, this
formulation is how it will actually be used in practice.

\begin{Lemma}\label{lem:multvanish}
  Let $X \subseteq V$ be a $T$-invariant subvariety of a $T$-representation,
  and $\vec v \in V$. Let 
  \[ S := \{t \in T : t\vec v \in \complexes \vec v \} \qquad \geq \qquad
  S_0 := \{t \in T : t\vec v = \vec v \} \]
  be the projective and affine stabilizers of $\vec v$.
  Let $\mdeg^T_V X, \mdeg^S_V X,\mdeg^{S_0}_V X$ denote the $T$-, $S$-,
  and $S_0$-multidegrees of $X$. 
  \begin{enumerate}
  \item There are natural maps $\Sym T^* \onto \Sym\ S^* \onto \Sym\ S_0^*$ from
    the specialization of variables $T^* \onto S^* \onto S_0^*$,
    and they take $\mdeg^T_V X \mapsto \mdeg^S_V X \mapsto \mdeg^{S_0}_V X$.
  \item If $\vec v \notin X$, then
    $\mdeg^S_V X$ is a multiple of the $S$-weight on $\vec v$, 
    and $\mdeg^{S_0}_V X = 0$.
  \item If $\vec v$ is a smooth point in $X$, 
    then $\mdeg^{S_0}_V X = \prod \{$ the $S_0$-weights in 
    $V / T_{\vec v} X \}$.
  \end{enumerate}
\end{Lemma}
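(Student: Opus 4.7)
The plan is to reduce all three parts to the previous Lemma \ref{lem:translate}, applied to the affine stabilizer $S_0$ (for which $\vec v$ is a genuine fixed point), together with one new ingredient: the naturality of multidegrees under restriction of the torus action, which is exactly part (1).

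For part (1), I would argue by induction on $\dim V$, following the recursive axioms of multidegree from \S\ref{ssec:multidegrees}. A $T$-invariant hyperplane $H \leq V$ is automatically $S$- and $S_0$-invariant; the condition $X \subseteq H$ does not depend on which torus is acting; and the $T$-weight of $V/H$ specializes to the corresponding $S$- and $S_0$-weights under the surjections $T^* \onto S^* \onto S_0^*$. Consequently each inductive step in the computation of $\mdeg^T_V X$ maps, under $\Sym T^* \to \Sym S^* \to \Sym S_0^*$, to the corresponding step in the computations of $\mdeg^S_V X$ and $\mdeg^{S_0}_V X$. This is pure bookkeeping, really a statement that multidegrees are functorial under restriction to a subtorus.

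For part (2), the $S_0$ statement is immediate: by definition $\vec v \in V^{S_0}$, and since $S_0 \leq T$ the variety $X$ is $S_0$-invariant, so applying Lemma \ref{lem:translate}(2) to the $S_0$-action gives $\mdeg^{S_0}_V X = 0$. Combining with part (1), $\mdeg^S_V X$ lies in the kernel of $\Sym S^* \onto \Sym S_0^*$. To finish I would identify this kernel: the quotient $S/S_0$ acts faithfully on the line $\complexes \vec v$, so sits inside $\complexes^\times$; the associated character is precisely the $S$-weight $\chi$ of $\vec v$, and it generates $S^*/S_0^*$. Hence the kernel on symmetric algebras is the principal ideal $(\chi)$, proving $\mdeg^S_V X$ is a multiple of $\chi$. (The degenerate case $S = S_0$, in which $\chi = 0$, is consistent with both claims.) Part (3) is the direct specialization of Lemma \ref{lem:translate}(3) to the $S_0$-representation $V$, the $S_0$-invariant subvariety $X$, and the $S_0$-fixed smooth point $\vec v$.

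The only step with real content is part (1), and even there the main obstacle is just to verify carefully that the axioms defining $\mdeg$ commute with restriction to a subtorus. Once that functoriality is in hand, parts (2) and (3) are formal consequences of Lemma \ref{lem:translate} together with the identification $S^*/S_0^* \cong \integers \chi$.
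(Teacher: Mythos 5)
Your proof of parts (1) and (3) is essentially the same as the paper's. Part (2), however, takes a genuinely different route, and it is worth comparing the two. The paper argues \emph{from} the $S$-statement \emph{to} the $S_0$-statement: it chooses the $S$-invariant line $L = \complexes\vec v$, a complementary $S$-invariant hyperplane $H$, and invokes \cite[Theorem 2.5]{KMY} to write $\mdeg^S_V X = d\, \mdeg^S_V \Pi$ (the ``extra'' term of that theorem vanishing because $L \not\subset X$, a consequence of $\vec v \notin X$); since $\Pi \subseteq H$, axiom 3(b) forces $\mdeg^S_V \Pi$ to be a multiple of the $S$-weight on $L$, which is $\chi$, and specializing $\chi \mapsto 0$ kills it over $S_0$. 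You argue in the opposite direction: you observe first that $\vec v$ is a genuine $S_0$-fixed point, so Lemma \ref{lem:translate}(2) applied to the torus $S_0$ directly gives $\mdeg^{S_0}_V X = 0$; then part (1) forces $\mdeg^S_V X$ into $\ker(\Sym S^* \onto \Sym S_0^*)$, and the short exact sequence $0 \to \integers\chi \to S^* \to S_0^* \to 0$ identifies that kernel with the principal ideal $(\chi)$. Your route is more elementary and self-contained --- it trades the external projection theorem of KMY for the translation lemma you already have, and makes transparent that the $S$- and $S_0$-statements are equivalent via the specialization map. One small notational slip: you write ``$\chi$ generates $S^*/S_0^*$'' when you mean it generates the \emph{kernel} of $S^* \onto S_0^*$ (equivalently, the character group $(S/S_0)^*$), but the surrounding reasoning makes the intent unambiguous. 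Both arguments implicitly assume $S$ and $S_0$ have free character lattices (e.g., are connected); neither you nor the paper addresses the non-connected case, and in the intended applications the stabilizers are subtori, so this is harmless.
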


\begin{proof}
  The image of $\mdeg_V X$ in $\Sym\, S^*$ is just the $S$-multidegree. 
  So we may as well restrict to the $S$-action from the beginning.
  By axiom (2) of multidegrees, we may assume $X$ is reduced and irreducible.

  The first statement follows trivially from the axioms inductively
  defining multidegrees. 

  Let $L$ be a one-dimensional $S$-invariant subspace containing $\vec v$.
  (If $\vec v \neq \vec 0$, then $L$ is of course unique.)
  Let $H$ be an $S$-invariant complementary hyperplane. 
  Let $\Pi \subseteq H$ be the closure of the image of the projection
  of $X$, and $d$ the degree of the projection map $X \to \Pi$.

  Now apply \cite[Theorem 2.5]{KMY}, which says that
  $\mdeg_V X = d\ \mdeg_V \Pi$, plus a term that vanishes under
  the assumption $\complexes \vec v \not\subset X$.
  Then by property (3a) of multidegrees, $\mdeg_V \Pi$ is a multiple
  of the $S$-weight on $L$, and its $S_0$-multidegree is a multiple
  of the $S_0$-weight on $L$, which is zero.

  The third statement is just an application of Lemma \ref{lem:translate}.
\end{proof}

This result gives the most information when $S$ is as large as possible,
which means $\vec v$ is very special. Unfortunately it seems that it
is then very likely to be in $X$, and not be a smooth point.
So we give a criterion for this smoothness:

\begin{Lemma}\label{lem:smoothpoint}
  Let $V$ be a scheme carrying an action of a group $G$.
  ($V$ need not be a vector space.) 
  Let $Y\subseteq V$ be an irreducible subvariety, 
  and let $X = \overline{G\cdot Y}$ (with the reduced scheme structure).

  Then a general point $y\in Y$ is a smooth point of $X$, 
  and its tangent space $T_y X$ is $\lie{g} \cdot T_y Y$.
\end{Lemma}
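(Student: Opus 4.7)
The plan is to work with the action morphism $a\colon G\times Y\to X$, $(g,y')\mapsto g\cdot y'$, whose image $G\cdot Y$ is by construction dense in $X$. First I would note that $X$ is irreducible, being the closure of the image of the irreducible variety $G\times Y$, so its smooth locus $X^\circ$ is open and dense in $X$; moreover $X^\circ$ is $G$-invariant, because $G$ acts on $X$ by automorphisms.

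For the smoothness half of the statement, I would argue that $Y$ cannot lie inside the closed proper subset $X\setminus X^\circ$: if it did, then by $G$-invariance $G\cdot Y$ would lie there as well, contradicting the density of $G\cdot Y$ in $X$. Hence $Y\cap X^\circ$ is a nonempty open subset of the irreducible variety $Y$ and therefore dense, so a general $y\in Y$ is a smooth point of $X$.

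For the tangent-space identification I would invoke generic smoothness in characteristic zero: the dominant morphism $a$ is smooth over some dense open $U\subseteq X$. Since $a$ is equivariant for the $G$-action on $G\times Y$ by left multiplication on the first factor, the preimage $a^{-1}(U)$ is $G$-invariant, so any $(g_0,y_0)\in a^{-1}(U)$ forces $(e,y_0)=g_0^{-1}\cdot(g_0,y_0)\in a^{-1}(U)$; consequently $\{y\in Y : (e,y)\in a^{-1}(U)\}$ is a nonempty, hence dense, open subset of $Y$. After shrinking further to the smooth locus of $Y$ (still dense), the differential
\[
da_{(e,y)}\colon \lie{g}\oplus T_y Y \longrightarrow T_y X,\qquad (\xi,v)\longmapsto \xi\cdot y + v,
\]
is surjective, because a smooth morphism has surjective tangent map at a smooth source point. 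Its image is by definition $\lie{g}\cdot T_y Y$, giving $T_y X = \lie{g}\cdot T_y Y$.

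The main obstacle I anticipate is purely bookkeeping: making sure that the various generically-true conditions (smoothness of $Y$ at $y$, smoothness of $X$ at $y$, and smoothness of $a$ at $(e,y)$) can all be simultaneously arranged on a single dense open of $Y$. The $G$-equivariance of both $X^\circ$ and $a^{-1}(U)$ is exactly what lets each such condition be pushed to a $G$-invariant open in $G\times Y$ that meets $\{e\}\times Y$ nontrivially, so no real obstruction arises beyond the standard appeal to generic smoothness.
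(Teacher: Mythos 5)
Your proposal is correct and proves the lemma, but it reaches the tangent-space statement by a genuinely different route than the paper does. The paper does not invoke generic smoothness; instead it restricts $Y$ to the open dense locus $Y^\circ$ of points with minimum-dimensional $G$-stabilizer, observes that a generic $y$ is then not in the closure of any strictly larger $G$-orbit meeting $Y$, and argues directly that the action map $G\times Y^\circ\to G\cdot Y^\circ$ is a submersion, from which the formula $T_y X=\lie g\cdot T_y Y$ follows. Your version instead applies the characteristic-zero generic smoothness theorem to the dominant morphism $a\colon G\times Y\to X$ and then uses $G$-equivariance to slide the smooth locus back to $\{e\}\times Y$. Your argument is closer to a standard off-the-shelf tool; the paper's argument is a little more self-contained (no appeal to generic smoothness, at the cost of the constant-stabilizer-dimension bookkeeping). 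Both prove the same thing.

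Two small points of care in your write-up. First, generic smoothness needs a smooth source, so you should restrict to $G\times Y^{\mathrm{sm}}$ \emph{before} invoking it (you say ``after shrinking further,'' which has the order reversed; the map restricted to the dense open $G\times Y^{\mathrm{sm}}$ is still dominant, so no harm done, but the order should be corrected). Second, ``$a^{-1}(U)$ is $G$-invariant'' only holds if $U$ itself is $G$-invariant; you can arrange this by replacing $U$ with $G\cdot U$ (over which $a$ is still smooth, because the smooth locus of $a$ in the source is $G$-invariant by equivariance), or, more directly, just work with the $G$-invariant smooth locus of $a$ in $G\times Y^{\mathrm{sm}}$ and observe it is open, nonempty, and thus meets $\{e\}\times Y$ in a dense open. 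With those two fixes the argument is airtight.
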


\begin{proof}
  Assume the contrary: then every point in $Y$ is a singular point
  of $X$. Since $G$ acts by automorphisms, all of $G\cdot Y$ consists
  of singular points in $X$, and by semicontinuity $X$ consists
  only of singular points. But since $X$ is reduced, its smooth 
  locus is open dense, a contradiction.

  Let $Y^\circ \subseteq Y$ be the open subset of $y\in Y$ with the
  minimum dimensional $G$-stabilizer (among points in $Y$).
  Hence $y$ is not in the closure of any $G$-orbit on $Y$. 
  Our genericity condition on $y$ will be that $y$ is a smooth 
  point of $Y^\circ$.
  So $T_y X = T_y (G\cdot Y^\circ)$. By the definition of $Y^\circ$,
  the map $G \times Y^\circ \onto G \cdot Y^\circ$ is a submersion,
  enabling us to compute the tangent spaces of the target.
\end{proof}

\begin{Proposition}\label{prop:vanishcriterion}
  Let $V$ be a representation of a group $G$,
  with $Y$ a subspace fixed pointwise by a torus $S_0$,
  i.e., $Y \leq V^{S_0}$. 
  Then 
  \[ \mdeg^{S_0}_V \overline{G\cdot Y} \neq 0 
  \quad\Longleftrightarrow\quad
  \overline{G\cdot Y} = \overline{G\cdot V^{S_0}}
  \quad\Longleftrightarrow\quad
  \overline{G\cdot Y} \supseteq V^{S_0} 
  \quad \text{(e.g. if $Y = V^{S_0}$)}. \]
  More specifically, $\mdeg^{S_0}_V \overline{G\cdot Y}
  = \prod \left( \text{the $S_0$-weights in } V/(\lie{g} \cdot Y)\right)$.
\end{Proposition}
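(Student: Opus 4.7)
The plan is to evaluate $\mdeg^{S_0}_V \overline{G\cdot Y}$ at a generic point of $Y$ using the machinery of \Ssecref{specializing}, and then unpack the condition for non-vanishing into the geometric equivalences. By \Lemref{smoothpoint}, a generic $y \in Y$ is a smooth point of $X := \overline{G\cdot Y}$ with tangent space $\lie{g}\cdot Y$, and by hypothesis $y \in V^{S_0}$. Applying \Lemref{translate}(3) to the translate $X-y$ with torus $S_0$ (equivalently \Lemref{multvanish}(3) with $\vec v = y$) yields
\[ \mdeg^{S_0}_V X \;=\; \prod\bigl(\text{$S_0$-weights in } V/(\lie{g}\cdot Y)\bigr), \]
the explicit formula claimed. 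This product is nonzero exactly when no $S_0$-weight on the quotient vanishes, i.e.\ when $V^{S_0} \subseteq \lie{g}\cdot Y$.

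The equivalence of $\overline{G\cdot Y} \supseteq V^{S_0}$ with $\overline{G\cdot Y} = \overline{G\cdot V^{S_0}}$ is formal: $\subseteq$ uses $Y \subseteq V^{S_0}$, while $\supseteq$ comes from applying $G$ and taking closure. The proposition therefore reduces to proving
\[ V^{S_0} \subseteq \lie{g}\cdot Y \;\Longleftrightarrow\; V^{S_0} \subseteq X. \]
The direction $\Leftarrow$ is immediate from the tangent functor: $V^{S_0}$ is a linear subspace passing through the smooth point $y$ of $X$, so $V^{S_0} = T_y V^{S_0} \subseteq T_y X = \lie{g}\cdot Y$.

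For the converse I would use reductivity of $S_0$ to pick an $S_0$-stable complement $N$ to $T_y X$ in $V$; since $V^{S_0} \subseteq T_y X$ by hypothesis, $N$ may be chosen as a sum of nontrivial $S_0$-weight spaces, so $N^{S_0} = 0$. Smoothness of $X$ at $y$ then identifies $X$ analytically (or \'etale-locally) near $y$ with the graph of an $S_0$-equivariant map $f : T_y X \to N$ vanishing to first order. For any $v \in V^{S_0}$ near $y$, equivariance forces $f(v) \in N^{S_0} = 0$, so $v \in X$; hence $X$ contains a Zariski-open neighborhood of $y$ in the irreducible linear space $V^{S_0}$, and closedness upgrades this to $V^{S_0} \subseteq X$. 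The main obstacle is exactly this last step — promoting a tangent-space containment into a genuine subvariety containment — where the $S_0$-equivariant graph description is essential; the rest of the argument is assembly of the lemmas of \Ssecref{specializing}.
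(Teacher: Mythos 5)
Your proof is correct, and in outline it shares the paper's skeleton: extract the explicit formula $\mdeg^{S_0}_V X = \prod(\text{$S_0$-weights in } V/(\lie{g}\cdot Y))$ from \Lemref{smoothpoint} and \Lemref{multvanish}(3), then argue the three conditions are equivalent. Where you diverge is in closing the loop. The paper never proves your intermediate equivalence $V^{S_0}\subseteq\lie g\cdot Y\iff V^{S_0}\subseteq X$; instead it runs a cycle of one-way implications: (a)$\Rightarrow$(c) comes for free from \Lemref{multvanish}(2) (if some $\vec v\in V^{S_0}$ is not in $X$, then $\mdeg^{S_0}_V X=0$), (c)$\iff$(b) is the same formal argument you give, and (b)$\Rightarrow$(a) is handled by noting that $\overline{G\cdot Y}=\overline{G\cdot V^{S_0}}$ lets one re-run the formula with $V^{S_0}$ in place of $Y$, where the product is visibly nonzero since $V/V^{S_0}$ has no trivial weights. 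Your hard direction — that $V^{S_0}\subseteq T_yX$ already forces $V^{S_0}\subseteq X$ — is a genuinely stronger local statement, and your $S_0$-equivariant-graph (or equivalently, weight-vector-generators of the completed ideal) argument is a correct proof of it; the nontrivial weight on the normal space $N$ is exactly what rules out the usual tangent-line-not-in-parabola failure. So the approach buys you a sharper geometric lemma, at the cost of having to worry about equivariant analytic/étale local normal forms. If you want to stay within the paper's toolkit, the simpler route is to observe that \Lemref{multvanish}(2), applied to any $\vec v\in V^{S_0}\setminus X$, already delivers (a)$\Rightarrow$(c) without touching tangent spaces, and then the only nontrivial remaining implication is (b)$\Rightarrow$(a), disposed of by the ``replace $Y$ by $V^{S_0}$'' trick.
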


\begin{proof}
  Since $Y \leq V^{S_0}$, we know
  $\ol{G\cdot Y} \subseteq \ol{G\cdot V^{S_0}}$.
  In general, $\overline{G\cdot A} \supseteq B$
  iff $\overline{G\cdot A} \supseteq \overline{G\cdot B}$.
  Together these establish the equivalence of the latter two conditions.

  The first condition implies the third by Lemma \ref{lem:multvanish} part 2.
  All that remains for the equivalences is to show the second implies the first.

  Before doing that, we compute 
  $\mdeg^{S_0}_V \overline{G\cdot Y}$ exactly, using 
  Lemma \ref{lem:multvanish}, part 3, with $\vec v = y$; 
  this gives the claimed formula.

  The $S_0$-weights in $V/(\lie{g} \cdot Y)$ form a subset of the
  $S_0$-weights in $V/Y$, and the condition $Y = V^{S_0}$ says that
  none of these are zero, so the product is nonzero.
\end{proof}

We will actually use very little of Proposition \ref{prop:vanishcriterion};
we felt it was worth including because many of the varieties
for which multidegrees have been computed are of this form
$\overline{G\cdot V^{S_0}}$ (e.g. \cite{KZJ,KM}).
\junk{
  A particularly interesting case of this setup occurs when $S_0$ is 
  contained inside a Borel subgroup $B_G$ of $G$, under which $Y$
  is invariant; then the natural map $G \times^{B_G} Y \to \overline{G\cdot Y}$
  is proper, and is called a {\dfn Kempf collapsing} 
  (see e.g. \cite{KempfCollapsing}).
}

\subsection{Application to the Brauer loop scheme}\label{ssec:application}

For simplicity, we assume $N$ to be even, $N=2n$.
\newcommand\hatE{{\widehat E}}
Recall from the introduction that $R_{\integers \bmod N}$ 
denotes the algebra of infinite periodic upper triangular matrices,
and contains $R_N$ as a subalgebra in a natural way.
Let $R_{\integers \bmod N}^\times =B_{\integers \bmod N} \geq R_N^\times = B_N$ denote
their multiplicative groups,
which consist of those matrices with nonzero diagonal entries.
Then the conjugation action of $B_{\integers \bmod N}$ on
$R_{\integers \bmod N}$ descends to an action on the Brauer loop scheme.

Consequently the projection map in \S\ref{ssec:orbvars},
from the Brauer loop scheme to the orbital scheme, is $B_N$-equivariant. 
This fact was used in \cite{KZJ} to determine the 
top-dimensional components of the Brauer loop scheme.

\begin{Theorem}\label{thm:PsiJM}
  Let $\pi \in \SN$ be an involution with no fixed points, and $E_\pi$
  the corresponding Brauer loop variety. Then the dimension of the
  projection of $E_\pi$ to $\overline{B_N\cdot \pi_<}$ is $n^2-c$,
  where $c$ is the number of crossings in $\pi$'s chord diagram.

  The $\B$-leading form $\initB \Psi_\pi$ of the Brauer loop
  polynomial $\Psi_\pi$ is $\B^{n^2-n-c} J_\pi$, where $J_\pi$ is the
  corresponding Joseph--Melnikov polynomial.
\end{Theorem}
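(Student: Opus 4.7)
The plan is to identify the image of the $B_N$-equivariant projection $q:\MMN^{\Delta=0}\onto R_N^{\Delta=0}$ on $E_\pi$ as $\overline{B_N\cdot\pi_<}$, to apply Proposition~\ref{prop:mdegproj} to obtain the multidegree identity up to a positive-integer multiplicity $k$, and then to verify $k=1$ by a fiber analysis.

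The map $q$ from \S\ref{ssec:orbvars} sends $\underline\pi\mapsto\pi_<$, so $\Pi:=q(E_\pi)$ is closed, irreducible and $B_N$-invariant, whence $\Pi\supseteq\overline{B_N\cdot\pi_<}$. For the reverse inclusion, the rank inequalities $r_{ij}(M)\le r_{ij}(\underline\pi)$ of Theorem~\ref{thm:compeqns}(3) in the range $1\le i<j\le N$ only involve the top block $U=q(M)$ and read $r_{ij}(U)\le r_{ij}(\pi_<)$; by Theorem~\ref{thm:poset} these cut out exactly $\overline{B_N\cdot\pi_<}$ inside $D\cap R_N$. Hence $\Pi=\overline{B_N\cdot\pi_<}$, and since $\pi$ has $n$ arches and no fixed points, Theorem~\ref{thm:orbdim} gives $\dim\Pi=n(N-n)-c=n^2-c$.

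Decompose $\MMN^{\Delta=0}=V\oplus W$ with $V:=R_N^{\Delta=0}$ and $W$ the $L$-block, each of dimension $\binom N2=n(2n-1)$; the extra $\complexes^\times$ acts with weight $\B$ on $W$ and trivially on $V$. Rothbach's equidimensionality together with $\dim E=\dim D=2n^2$ gives $\dim E_\pi=2n^2$. Proposition~\ref{prop:mdegproj} applied to $E_\pi\subseteq V\times W$ then yields
\[
\initB\Psi_\pi \ =\ k\,\B^{\,n(2n-1)+(n^2-c)-2n^2}\,J_\pi \ =\ k\,\B^{n^2-n-c}\,J_\pi
\]
for a positive integer $k$, which by the remark following that proposition equals the projective degree of the preimage $P:=\{L\in W:(U,L)\in E_\pi\}$ of a generic $U\in\Pi$.

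To conclude with $k=1$, I exhibit $P$ as a linear subspace. For fixed $U\in D\cap R_N$ the equations $(M^2)_{ij}=0$ with $M=(U,L)$ and $i<j<i+N$ split into top-block relations $(U^2)_{ij}=0$ (automatic) and ``mixed'' relations
\[
\sum_{k\in(i,N]}U_{ik}\,L_{kj}\ +\ \sum_{k\in[N,j)}L_{ik}\,U_{k-N,j-N}\ =\ 0 \qquad(i\le N<j<i+N),
\]
which are linear in $L$, so $P_U:=\{L:(U,L)\in E\}$ is a linear subspace of $W$ containing $0$. Since the generic fiber of $E_\pi\onto\Pi$ has dimension $\dim E_\pi-\dim\Pi=n^2+c$, $P=E_\pi|_U\subseteq P_U$ is an $(n^2+c)$-dimensional closed subvariety of the irreducible linear space $P_U$. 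The remaining step -- the main technical obstacle -- is to establish both the matching upper bound $\dim P_U\le n^2+c$ and the generic redundancy, on $P_U$, of the rank inequalities cutting $E_\pi$ out of $E$; given these, $P=P_U$ is linear and $k=1$. I expect both to follow inductively along the covering relations of Proposition~\ref{prop:moves}, using that each move either creates or breaks a crossing and correspondingly changes by one the rank of the linear system defining $P_U$.
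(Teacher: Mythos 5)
Your proposal follows the same route as the paper's own proof: project $E_\pi$ to $R_N^{\Delta=0}$, identify the image as $\overline{B_N\cdot\pi_<}$, and apply Proposition~\ref{prop:mdegproj} with $V = R_N^{\Delta=0}$ and $W$ the $L$-block to get the exponent $n^2-n-c$. Your argument for the reverse inclusion $q(E_\pi)\subseteq\overline{B_N\cdot\pi_<}$ (restricting the rank inequalities of Theorem~\ref{thm:compeqns}(3) to the range $1\le i<j\le N$ and invoking Theorems~\ref{thm:involutions} and~\ref{thm:poset}) is correct, and in fact spells out a step the paper's proof leaves implicit --- the paper only records $\overline{B_N\cdot\pi_<}\subseteq\Pi$ before appealing to Theorem~\ref{thm:orbdim}.

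The one real gap --- which you correctly flag as the main obstacle --- is showing the integer $k$ of Proposition~\ref{prop:mdegproj} is $1$, i.e.\ that the generic fiber $P = E_\pi|_U$ is all of the linear space $P_U = E|_U$. Your proposed route (an induction along the covering relations of Proposition~\ref{prop:moves} to bound $\dim P_U$ and show the rank inequalities become redundant on the fiber) is much more work than needed, and you do not carry it out. The paper itself silently relies on the parenthetical after Proposition~\ref{prop:mdegproj}; the clean justification is the decomposition $E = \bigsqcup_\alpha F_\alpha$ into preimages of $B_N$-orbits from~\cite{KZJ}, already recalled in the proof of Proposition~\ref{prop:Xrho}. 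For $U$ in the open orbit $B_N\cdot\pi_<$ one has $E|_U = F_\pi|_U$, which is the fiber of a \emph{vector bundle} $F_\pi\to B_N\cdot\pi_<$ and hence linear; and since $\overline{F_\pi}=E_\pi$ (the generic point $\widetilde\pi$ of Lemma~\ref{lem:genericelts} lies in $F_\pi$ and has link pattern $\pi$, hence lies in $E_\pi$ and in no other $E_\sigma$), one gets $F_\pi|_U\subseteq E_\pi|_U\subseteq E|_U = F_\pi|_U$, so $P = P_U$ is linear and $k=1$. Your computation that $P_U$ is cut out by $L$-linear conditions is correct (modulo an off-by-one: the sum should run over $k\in(N,j)$, not $[N,j)$), but it is the identity $E_\pi|_U = P_U$ that needs the $F_\alpha$-decomposition, and that handles both of the points you left open at once.
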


\begin{proof}
  Let $\Pi$ denote the image of the projection.
  By the $B_N$-equivariance, $B_N\cdot \pi_< \subseteq \Pi$, and since
  $\Pi$ is closed, $\overline{B_N\cdot \pi_<} \subseteq \Pi$.
  Then we apply Theorem \ref{thm:orbdim}.

  For the latter statement, we use the decomposition of $\MMN^{\Delta=0}$
  into $R_N$ plus its unique $T$-invariant complement;
  these will be the $V$ and $W$ of Proposition \ref{prop:mdegproj}.
  The predicted exponent $\dim W + \dim \Pi - \dim X$ is then
  ${2n \choose 2} + (n^2 - c) - 2n^2 = n^2-n-c$.  
\end{proof}

\Ex We list the Brauer loop polynomials in size $N=2n=4$:
\begin{align*}
\Psi_{(12)(34)}=&(\A+z_2-z_3)(\B+z_4-z_1)\\
&(\A^2+2\A\B+\B z_1-\A z_2-z_1z_2+\A z_3+z_2z_3-\B z_4+z_1z_4-z_3z_4)\\
\Psi_{(14)(23)}=&(\A+z_1-z_2)(\A+z_3-z_4)\\
&(\A^2+\A\B+\B^2-\B z_1+\A z_2+z_1z_2-\A z_3-z_2z_3+\B z_4-z_1z_4+z_3z_4)\\
\Psi_{(13)(24)}=&(\A+z_1-z_2)(\A+z_2-z_3)(\A+z_3-z_4)(\B+z_4-z_1)\\
\end{align*}
(As the factorizations suggest,
only the last one corresponds to a complete intersection, with the
obvious linear equations.)  We leave to the reader to check that
the $\B$-leading forms are the Joseph--Melnikov
polynomials of the example of the previous section. 
Note that the cyclic shift of indices $z_i\mapsto z_{i+1}$ (recall
that $z_{i+N}=z_i+\A-\B$) exchanges the first two polynomials and leaves the
third invariant.

\begin{Proposition}(see \cite[Lemma 2]{DFZJ06})\label{prop:specializeForIndep}
  Let $\pi,\rho$ be link patterns. Then the specialization of $\Psi_\pi$ under 
  the identifications $\{\A = \B = 0, y_i = y_{\rho(i)}\}$ is zero 
  unless $\pi=\rho$,
  in which case it is nonzero. 

  Consequently, the polynomials $\{\Psi_\pi\}$ are linearly
  independent over $\integers$.
\end{Proposition}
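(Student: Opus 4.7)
The plan is to interpret the specialization as the restriction of the multidegree from $T$ to a subtorus $S_0\subseteq T$, and then apply Lemma \ref{lem:multvanish} with a suitable test point $\vec v\in V := \MMN^{\Delta=0}$. The identifications $\A=\B=0$, $z_i=z_{\rho(i)}$ cut out such an $S_0$, under which the $T$-weight $\A+z_i-z_j$ of the coordinate $M_{ij}$ restricts to zero exactly when $j\equiv i$ or $j\equiv\rho(i)\pmod N$; since $V$ excludes the diagonal, $V^{S_0}$ is spanned by the basis vectors $e_{i,\rho(i)}$, and the specialized $\Psi_\pi$ is precisely $\mdeg^{S_0}_V E_\pi$. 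I would take the test point to be a generic $M_\lambda := \sum_i \lambda_i\,e_{i,\rho(i)}\in V^{S_0}$, whose affine stabilizer in $T$ is exactly $S_0$.

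The first key input is condition (2) of Theorem \ref{thm:compeqns}. A short computation using the periodic lift (which satisfies $\tilde\rho(\tilde\rho(i))=i+N$) gives $(M_\lambda^2)_{i,i+N} = \lambda_i\,\lambda_{\rho(i)}$. Membership $M_\lambda\in E_\pi$ thus forces $\lambda_i\,\lambda_{\rho(i)} = \lambda_{\pi(i)}\,\lambda_{\rho(\pi(i))}$ for every $i$. For generic $\lambda$, the $n$ products $\lambda_i\,\lambda_{\rho(i)}$ indexed by the $\rho$-orbits are pairwise distinct, so these relations force $\{i,\rho(i)\} = \{\pi(i),\rho(\pi(i))\}$ in $\integers/N\integers$ for every $i$; since $N$ is even and link patterns are therefore fixed-point-free, this immediately yields $\pi(i)=\rho(i)$ for all $i$, i.e., $\pi=\rho$. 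Consequently, whenever $\pi\neq\rho$, $M_\lambda\notin E_\pi$, and Lemma \ref{lem:multvanish}(2) delivers $\mdeg^{S_0}_V E_\pi = 0$.

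For the case $\pi=\rho$ I would invoke Lemma \ref{lem:multvanish}(3), which requires smoothness of $E_\rho$ at $M_\lambda$ together with the containment $V^{S_0}\subseteq T_{M_\lambda}E_\rho$. Smoothness reduces to smoothness at $\underline\rho$, since $M_\lambda = d\,\underline\rho\,d^{-1}$ for a diagonal $d\in B_{\integers\bmod N}$ and conjugation by $d$ is an automorphism of $E_\rho$; this follows from Lemma \ref{lem:smoothpoint} applied to $E_\rho = \overline{B_{\integers\bmod N}\cdot\underline\rho}$. The tangent-space containment is the point where I expect the main obstacle: I would establish it by the perturbative observation that $M_{\lambda+t\delta_i}\in E_\rho$ for small $t$, which follows from the preceding paragraph applied to the perturbed (still generic) coordinates, so every coordinate direction in $V^{S_0}$ is tangent. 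Lemma \ref{lem:multvanish}(3) then expresses $\mdeg^{S_0}_V E_\rho$ as a product of $S_0$-weights on $V/T_{M_\lambda}E_\rho$, each of which is nonzero by the very description of $V^{S_0}$, giving nonvanishing.

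Finally, the linear-independence corollary is immediate: in any integer relation $\sum_\pi c_\pi\Psi_\pi = 0$, specializing at the $\rho$-identification kills every summand except $\pi=\rho$, leaving $c_\rho\cdot(\text{nonzero}) = 0$, so $c_\rho = 0$ for every link pattern $\rho$.
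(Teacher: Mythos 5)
Your strategy is the paper's own: restrict the multidegree to the subtorus $S_0$, choose a generic test point $M_\lambda\in V^{S_0}=Y_\rho$, and apply Lemma~\ref{lem:multvanish}. The vanishing half is fine (you re-derive from Theorem~\ref{thm:compeqns}(2) what the paper imports from \cite[Thm.~5(2)]{KZJ}). The nonvanishing half has a concrete gap in the smoothness step: the assertion $M_\lambda = d\,\underline\rho\,d^{-1}$ for a \emph{diagonal} $d\in B_{\integers\bmod N}$ is false for generic $\lambda$, since it would force $\lambda_i = d_i/d_{\rho(i)}$, hence $\lambda_i\lambda_{\rho(i)}=1$ for every $i$, a codimension-$n$ condition. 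The non-conjugation circles in $T$ cannot repair this: the full $T$-orbit closure of $\underline\rho$ is only $(n+1)$-dimensional (on it the products $\lambda_i\lambda_{\rho(i)}$ are forced to be independent of $i$), whereas $M_\lambda$ ranges over the $2n$-dimensional $Y_\rho$. Moreover Lemma~\ref{lem:smoothpoint} applied with the singleton $Y=\{\underline\rho\}$ would require $E_\rho=\overline{B_{\integers\bmod N}\cdot\underline\rho}$, which is not what is available: \cite[Thm.~3]{KZJ} gives $E_\rho=\overline{B_{\integers\bmod N}\cdot Y_\rho}$, the sweep of the whole subspace. The fix is exactly what the paper packages in Proposition~\ref{prop:vanishcriterion}: apply Lemma~\ref{lem:smoothpoint} with $Y=Y_\rho$, so that $M_\lambda$ --- being a \emph{general} point of $Y_\rho$ --- is smooth on $E_\rho$. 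Your perturbation argument for the tangent-space containment, while correct, is also unnecessary: once you know $Y_\rho\subseteq E_\rho$, the linear subspace $Y_\rho$ through $M_\lambda$ lies in $T_{M_\lambda}E_\rho$ for free.

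A small, harmless slip: the affine stabilizer of $M_\lambda$ in $T$ is \emph{not} ``exactly $S_0$''. The weights $\A+z_a-z_b$ and $\B+z_b-z_a$ on the two coordinates of a pair $\{a,b\}$ vanish precisely on the subtorus $\{\A+\B=0,\ z_b-z_a=\A\ \forall\,\text{pairs}\}$, of dimension $n+1$, whereas $\dim S_0=n$. Since the parts of Lemma~\ref{lem:multvanish} you use specialize correctly to any subtorus of the affine stabilizer (part~(3) is an application of Lemma~\ref{lem:translate}, which only needs $\vec v$ fixed), your argument survives, but the word ``exactly'' should go.
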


\begin{proof}
  Let $Y_\pi = \overline{\pi T}$, the vector space of matrices with
  entries only in the same positions as the permutation matrix $\pi$.
  This $Y_\pi$ is $T$-stable, and fixed pointwise by
  $ S^\pi_0 := \{ \text{diag}(\ldots,t_i,\ldots) : t_i = t_{\pi(i)} \} . $
  By \cite[Theorem 3]{KZJ}, 
  $E_\pi = \overline{B_{\integers \bmod N} \cdot Y}$.

  Similarly define $Y_\rho$ and $S^\rho_0$, and
  let $M$ be a generic element of $Y_\rho$.
  First we prove the vanishing, where $\rho\neq \pi$.
  By \cite[Theorem 5 part 2]{KZJ}, $M \notin E_\pi$.
  Then apply Lemma \ref{lem:multvanish} part 2 to see that $\Psi_\pi$ 
  vanishes under the joint specialization $\{\A = 0, y_i = y_{\rho(i)} \}$.

  For the nonvanishing when $\pi = \rho$, we apply
  Proposition \ref{prop:vanishcriterion}. 

  Linear independence over $\integers$ is then proved by the standard
  argument: take a purported linear relation and specialize at
  $\{\A = \B = 0, y_i = y_{\rho(i)} \}$ to isolate the coefficient of $\Psi_\pi$.
  Repeat for each $\pi$.
\end{proof}

Though we shall not need it, 
it is not hard to actually compute the specialization of $\Psi_\pi$ at
$\{\A = \B = 0, y_i = y_{\rho(i)} \}$ (and is essentially done in 
\cite[Lemma 2]{DFZJ06}); it is the product $\prod (y_i - y_j)$ over
entries $(i,j)$ in the ``diagram'' of the affine permutation $\pi$,
as defined at the end of  \cite[\S 3]{KZJ}.
(It should have been called the ``Rothe diagram'' there, to 
distinguish it from the chord diagram.)

\section{Interlude: 
  Polynomial solution of the Brauer \texorpdfstring{$q$}{q}KZ equation}
\label{sec:qkz}

This section is independent from \S \ref{sec:orbvars}, \ref{sec:ncone}.
It might thus seem that we use the same notation for a priori 
unrelated quantities; but this is made in order to facilitate
the identification that will be made in \S\ref{sec:geombrauer}.

\subsection{The Brauer algebra}\label{sec:brauer}
As before, let $N$ be an even positive integer, $N=2n$.
The 
{\dfn Brauer algebra}\/ $\Br_N(\beta)$ is a $\complexes[\beta]$-algebra 
defined by generators $f_i$ and $e_i$, $i=1,\ldots,N-1$, and relations
\begin{align}\label{eqn:defbrauer}
e_i^2&=\beta e_i& e_ie_{i\pm 1}e_i&=e_i\\
\notag f_i^2&=1& f_if_{i+1}f_i&=f_{i+1}f_if_{i+1}\\
\notag f_ie_i&=e_if_i=e_i& f_i e_{i\pm 1} e_i &= f_{i\pm 1} e_i&
\notag e_i e_{i\pm 1} f_i &= e_i f_{i\pm 1}\\
\notag e_i e_j&=e_j e_i& f_i f_j&=f_j f_i& e_i f_j&=f_je_i& |i-j|>1
\end{align}
where indices take all values for which the identities make sense. It is
well-known that the 
$f_i$ generate a subalgebra isomorphic to the symmetric group 
algebra $\complexes[\mathcal{S}_N]$, whereas the $e_i$ generate a subalgebra
isomorphic to the Temperley--Lieb algebra with parameter $\beta$.

For each $i=1,\ldots,N-1$ define the $R$-matrix \cite{MR,MNR}
\begin{equation}\label{eqn:Rmat}
  \check R_i(u)
  ={\A(\A-u)+\A\, u\, e_i + (1-\beta/2)u(\A-u) f_i\over (\A+u)(\A-(1-\beta/2)u)}
\end{equation}
which is an element of $\Br_N(\beta)\otimes \complexes(\A,u)$, i.e.,
both $u$ and $\A$ are formal parameters, but we emphasize the $u$-dependence.

Using the defining relations of the Brauer algebra, one can show that the 
$\check R_i$ satisfy the {\em Yang--Baxter equation}
\begin{equation}\qquad
  \check R_i(u)\check R_{i+1}(u+v)\check R_i(v)
  =\check R_{i+1}(v)\check R_i(u+v)\check R_{i+1}(u)\qquad i=1,\ldots,N-2
\end{equation}
(as a relation in $\Br_N(\beta)\otimes \complexes(\A,u,v)$)
and the unitarity equation
\begin{equation}\qquad\qquad
  \check R_i(u)\check R_i(-u)=1\qquad\qquad i=1,\ldots,N-1.
\end{equation}

Finally we introduce a representation space for $\Br_N(\beta)$. 
Consider the vector space
$V$ with a canonical basis indexed by {\dfn link patterns}, i.e.,
involutions of
$\{1,\ldots,N\}$ without fixed points,
and draw them as pairings of points in the
upper-half plane, in such a way that they cross transversely and at
most once; such ``crossings'' in a link pattern only appear between
the pairings $(i,\pi(i))$ and $(j,\pi(j))$ if $i<j<\pi(i)<\pi(j)$ up
to switching $i\leftrightarrow\pi(i)$, $j\leftrightarrow\pi(j)$,
$(i,\pi(i))\leftrightarrow (j,\pi(j))$. Often it will be
convenient to identify $\{1,\ldots,N\}$ with $\integers/N\integers$,
in which case one can alternatively draw link patterns as pairings of
points in a disc, with the $N$ points placed in the counterclockwise
order on the boundary circle (as was done in \cite{KZJ}).

The representation of the Brauer algebra on $V\otimes \complexes[\beta]$ can be
described by the action of its generators on the canonical basis.  
The generator $f_i$ acts on a link pattern $\pi$ as the transposition
$(i,i+1)$ acting by conjugation, whereas $e_i$ acts as
\begin{equation}
e_i\cdot\pi=
\begin{cases}
\beta\, \pi &\mbox{if $\pi(i)=i+1$}\\
\pi'&\mbox{otherwise,
where $\pi'$ has cycles $(i,i+1)$,
$(\pi(i),\pi(i+1))$ and otherwise agrees with $\pi$}
\end{cases}\end{equation}
This action is best understood graphically, see Fig.~\ref{fig:lps}.
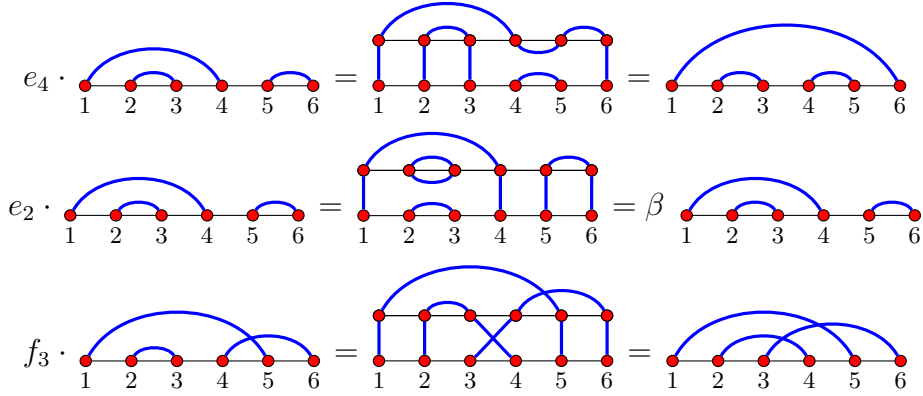
\begin{figure}
  \centering
$e_4\cdot 
\linkpattern[numbered]{1/4,2/3,5/6}
= 
\tanglelinkpattern[numbered]{1/1',2/2',3/3',4/5,4'/5',6/6'}{1/4,2/3,5/6}
= 
\linkpattern[numbered]{1/6,2/3,4/5}
$

$e_2\cdot 
\linkpattern[numbered]{1/4,2/3,5/6}
= 
\tanglelinkpattern[numbered]{1/1',2/3,2'/3',4/4',5/5',6/6'}{1/4,2/3,5/6}
=\beta\ \linkpattern[numbered]{1/4,2/3,5/6}
$

$f_3\cdot 
\linkpattern[numbered]{1/5,2/3,4/6}
=
\begin{tikzpicture}[/linkpattern/every linkpattern,baseline=-\linkpatternunit]%
\linkpattern[tikzstarted,numbered=false]{1/5,2/3,4/6}
\begin{scope}[yshift=-0.5*\linkpatternunit]
\linkpattern[tangle,,tikzstarted,size=12,numbering=halftangle,height=0.5,squareness=0]{1/1',2/2',3/4',3'/4,5/5',6/6'}
\end{scope}
\end{tikzpicture}%
= \linkpattern[numbered]{1/5,2/4,3/6}
$
  \caption{Link patterns and Brauer algebra generators acting on them.}
  \label{fig:lps}
\end{figure}

We need one more operator $r$ acting on $V$: it conjugates by the cycle
$(1 2\cdots N)$.  To avoid confusion we {\em do not\/} identify $r$
with the corresponding abstract element $f_{n-1} f_{n-2} \cdots f_2 f_1$ 
of the symmetric group (see
the remark at the end of Appendix \ref{sec:affweyl}).  
If link patterns are drawn on a
circle, then $r$ is the operator that rotates them one step counterclockwise.

\subsection{Brauer \texorpdfstring{$q$}{q}KZ equation}
Let us now introduce one more parameter $\epsilon$ and consider the
following set of equations (called rational {\dfn quantum 
Knizhnik--Zamolodchikov equation}, or difference Knizhnik--Zamolodchikov
equation, or $q$KZ equation in short):
\begin{align} \label{eqn:qkza}
\check R_i(z_{i}-z_{i+1})\Psi_N(z_1,\ldots,z_N)&=\Psi_N(z_1,\ldots,z_{i+1},z_i,\ldots,z_N)\qquad i=1,\ldots,N-1\\
r^{-1}\Psi_N(z_1,\ldots,z_N)&=\Psi_N(z_2,\ldots,z_N,z_1+\epsilon)\label{eqn:qkzb}
\end{align}
In general, 
$\Psi_N$ is a $V$-valued function of $\A,\epsilon,z_1,\ldots,z_N,\beta$, i.e.,
$\Psi_N=\sum_\pi \Psi_\pi\, \pi$, with a dependence on the parameters
that we discuss now.

We are interested here in {\em polynomial}\/ solutions 
in $\A,\epsilon,z_1,\ldots,z_N$.
On general grounds, one does not expect that there exist such
solutions of the system of equations (\ref{eqn:qkza}--\ref{eqn:qkzb})
-- that is, for generic values of the parameters $\beta$, $\A$,
$\epsilon$.  We shall see below that such a solution exists if
$\beta={2(\A-\epsilon)\over 2\A-\epsilon}$, i.e., if one considers that
$\Psi$ belongs to the quotient
$V\otimes \complexes[\A,\epsilon,z_1,\ldots,z_N,\beta]/
\left<(2\A-\epsilon)\beta-2(\A-\epsilon)\right>$.
But first we need to introduce some convenient notation and
reformulations of the $q$KZ equation.

We extend the variables $z_1,\ldots,z_N$ to an infinite set $\{ z_i,
i\in\integers\}$ by imposing the pseudo-cyclicity condition
$z_{i+N}=z_i+\epsilon$.  Define $K=\complexes[\A,\epsilon,z_i,
i\in\integers,\beta]/\<z_{i+N}-z_i-\epsilon, i\in\integers\>$.  Let
$\tau_i$ be the operator appearing in the r.h.s.\ of
Eq.~(\ref{eqn:qkza}), that in our periodic notations is the
automorphism of $K$ that exchanges variables $z_{i+kN}$ and
$z_{i+1+kN}$ for all $k\in\integers$. Similarly, define for future use
$\sigma$ to be the operator appearing in the r.h.s.\ of
Eq.~(\ref{eqn:qkzb}), that is the automorphism of $K$ that shifts
$z_i\mapsto z_{i+1}$ for all $i\in\integers$.  Finally, introduce
generators of the {\em affine\/} Brauer algebra
$\mathcal{\hat B}_N(\beta)$
 $e_i$, $f_i$ for all
$i\in\integers/N\integers$ that satisfy the same relations
(\ref{eqn:defbrauer}) as the Brauer algebra
$\hat\Br_N(\beta)$, but
with all indices modulo $N$.  In particular the $f_i$ are generators of
the affine Weyl group (of type A) $\mathcal{\hat S}_N$, 
see Appendix \ref{sec:affweyl}.  The representation 
of $\Br_N(\beta)$ on $V$ 
can be extended to $\hat\Br_N(\beta)$
in a natural way ($f_N$ being conjugation by the transposition $(1,N)$
and $e_N$ creating cycles $(1,N)$, $(\pi(1),\pi(N))$), in such a way
that (as operators on $V$) they satisfy $e_{i+1}=r e_i r^{-1}$ and
$f_{i+1}=r f_i r^{-1}$ for all $i\in\integers/N\integers$.  Then we
have the
\begin{Proposition} If $\Psi_N$ is a solution of 
  Eqs.~(\ref{eqn:qkza}--\ref{eqn:qkzb}), then
  \begin{equation}\label{eqn:qkzc}
    \check R_i(z_{i}-z_{i+1})\Psi_N=\tau_i\Psi_N\qquad i\in\integers
  \end{equation}
\end{Proposition}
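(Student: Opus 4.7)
The plan is to show that the cyclicity equation (\ref{eqn:qkzb}) allows one to shift the index in (\ref{eqn:qkza}) by one: if (\ref{eqn:qkzc}) holds at some index $i$, then it holds at $i+1$. Starting from the given range $i = 1, \ldots, N-1$, this extends the equation to all $i \in \integers$. In fact, since $\check R_i$, $\tau_i$, $e_i$, and $f_i$ are all $N$-periodic in $i$ (the first by the extension $z_{i+N} = z_i + \epsilon$, the second because the pair $\{z_{i+kN}, z_{i+1+kN}\}$ does not depend on the representative of $i$ mod $N$, and the last two by the affine Brauer algebra convention together with $r^N = 1$), a single iteration from $i = N-1$ to $i = N$ already covers the only genuinely missing case.

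First I rewrite (\ref{eqn:qkzb}) in operator form as $\sigma \Psi_N = r^{-1} \Psi_N$, where $\sigma \in \text{Aut}(K)$ is the shift $z_j \mapsto z_{j+1}$. Next, apply $\sigma$ to both sides of (\ref{eqn:qkza}) at index $i \in \{1, \ldots, N-1\}$. On the left, $\sigma$ commutes with the Brauer-algebra factor of $\check R_i$ (which acts on $V$, not $K$) and only shifts its scalar argument, producing $\check R_i(z_{i+1}-z_{i+2})$. On the right, a direct computation gives $\sigma \tau_i \sigma^{-1} = \tau_{i+1}$. So applying $\sigma$ to (\ref{eqn:qkza}) yields
\begin{equation*}
\check R_i(z_{i+1} - z_{i+2})\, \sigma \Psi_N \;=\; \tau_{i+1}\, \sigma \Psi_N.
\end{equation*}

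Finally I substitute $\sigma \Psi_N = r^{-1} \Psi_N$, use that $\tau_{i+1}$ (acting on $K$) commutes with $r^{-1}$ (acting on $V$), and multiply on the left by $r$. The result is
\begin{equation*}
r\, \check R_i(z_{i+1}-z_{i+2})\, r^{-1}\, \Psi_N \;=\; \tau_{i+1}\, \Psi_N.
\end{equation*}
The last ingredient is the affine relation $r \check R_i r^{-1} = \check R_{i+1}$, which is immediate from $r e_i r^{-1} = e_{i+1}$ and $r f_i r^{-1} = f_{i+1}$ as stated in the definition of the affine Brauer algebra. This yields (\ref{eqn:qkzc}) at index $i+1$, completing the inductive step. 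The main thing to keep straight is the bookkeeping between two commuting sets of actions --- $\sigma$ and the $\tau_j$ acting on the coefficient ring $K$, versus $r$ and the $\check R_j$ acting on the Brauer representation $V$ --- which is exactly what permits moving $r$ past $\tau_{i+1}$ at the key step, and which makes the affine extension of the $R$-matrix precisely the right tool for the wrap-around from $i = N-1$ to $i = N$.
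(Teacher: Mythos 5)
Your proof is correct and follows essentially the same route as the paper: apply the shift $\sigma$ to the finite-index equation, identify $\sigma\tau_i\sigma^{-1}=\tau_{i+1}$, substitute $\sigma\Psi_N = r^{-1}\Psi_N$ from the cyclicity equation, and commute $r$ past $\tau_{i+1}$ to recognize $r\check R_i r^{-1}=\check R_{i+1}$. The observation that a single wrap-around from $i=N-1$ to $i=N$ suffices by periodicity is a nice explicit touch that the paper leaves implicit.
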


\begin{proof}
  If $i\ne 0 \mod N$, then Eq.~(\ref{eqn:qkzc}) is nothing but
  Eq.~(\ref{eqn:qkza}).  Furthermore, our new notations make the
  following formulae valid: $\check R_{i+1}(z_{i+1}-z_{i+2})
  =\sigma r \check R_i(z_{i+1}-z_i) r^{-1}\sigma^{-1}$ and
  $\tau_{i+1}=\sigma \tau_i \sigma^{-1}$.  Eq.~(\ref{eqn:qkzb}),
  namely $r\sigma \Psi_N=\Psi_N$, then allows us to shift the index
  $i$ of Eq.~(\ref{eqn:qkza}) to arbitrary values.
\end{proof}

We need to rewrite Eqs.~(\ref{eqn:qkzc}) more explicitly. Expanding in
the basis, we find that for a given link pattern $\pi$, the
equation involves the preimage of $\pi$ under $e_i$ and $f_i$.
We are led to the following dichotomy:

\begin{itemize}
\item either $\pi(i)\ne i+1$, in which case the preimage of $\pi$ under $e_i$ 
  is empty, and a small calculation results in
  \begin{equation}\label{eqn:qkzd}
    \Psi_{f_i\cdot \pi} 
    = \Theta_i \Psi_\pi\qquad 
    \Theta_i := \frac{(\A+z_i-z_{i+1})(\A+(1-\beta/2)(z_{i+1}-z_{i}))\tau_i
      -\A(\A-z_i+z_{i+1})}{(1-\beta/2)(z_i-z_{i+1})(\A-z_i+z_{i+1})}
  \end{equation}
\item or $\pi(i)=i+1$, in which case $f_i\cdot\pi=\pi$ and we find this time
  \begin{equation}\label{eqn:qkze}
    \sum_{\pi'\ne\pi,e_i\cdot\pi'=\pi} \Psi_{\pi'}=\Delta_i \Psi_\pi\qquad 
    \Delta_i:= \frac{(\A+z_i-z_{i+1})(\A+(1-\beta/2)(z_{i+1}-z_i))(\tau_i-1)}
    {\A(z_{i}-z_{i+1})}
  \end{equation}
\end{itemize}

\newcommand\wtK{\widetilde K}
The $\Theta_i$ and $\Delta_i$ are operators on
$\wtK := K[(1-\beta/2)^{-1},\A^{-1},(\A-z_i+z_{i+1})^{-1},(z_i-z_{i+1})^{-1},
i\in \integers]$ with the following properties:
\begin{Lemma} \label{lem:affinesym}
The $\Theta_i$, $i\in\integers$, satisfy the affine Weyl group relations
\begin{equation}
\Theta_i^2=1\qquad (\Theta_i\Theta_{i+1})^3=1\qquad \Theta_i\Theta_j=\Theta_j
\Theta_i\quad |i-j|>1
\end{equation}
\end{Lemma}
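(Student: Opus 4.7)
The plan is to verify each of the three defining relations of the affine Weyl group in turn, treating $\Theta_i$ as an operator of the form
\[
\Theta_i = A(s_i)\,\tau_i + B(s_i), \qquad s_i := z_i-z_{i+1}, \qquad \gamma := 1-\beta/2,
\]
where $A(s) = \frac{(\A+s)(\A-\gamma s)}{\gamma s(\A-s)}$ and $B(s) = -\frac{\A}{\gamma s}$, all inside the ring of operators on $\wtK$.

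For the involutivity $\Theta_i^2=1$, I would exploit the commutation rule $\tau_i\cdot f(s_i) = f(-s_i)\cdot\tau_i$ to expand
\[
\Theta_i^2 = A(s_i)\,A(-s_i) + B(s_i)^2 + \bigl(A(s_i)\,B(-s_i) + B(s_i)\,A(s_i)\bigr)\,\tau_i.
\]
Since $B$ is an odd function of its argument, the $\tau_i$-coefficient vanishes automatically. The scalar identity $A(s)A(-s)+B(s)^2=1$ then reduces, after cancelling the factors $(\A\pm s)$ between numerator and denominator, to $\frac{-(\A^2-\gamma^2 s^2)+\A^2}{\gamma^2 s^2} = 1$, which is immediate.

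For the far-commutation $\Theta_i \Theta_j = \Theta_j \Theta_i$ when $|i-j|>1$, I would note that $\tau_i,\tau_j$ commute on $\wtK$, that the coefficients $A(s_i),B(s_i)$ involve only $z_i,z_{i+1}$ (and are fixed by $\tau_j$), and symmetrically. Expanding each side into four terms then gives term-by-term equality.

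The main obstacle is the braid relation $\Theta_i\Theta_{i+1}\Theta_i = \Theta_{i+1}\Theta_i\Theta_{i+1}$ (equivalent, given involutivity, to $(\Theta_i\Theta_{i+1})^3=1$). Both sides lie in the $\wtK$-linear span of the six permutations of $(z_i,z_{i+1},z_{i+2})$, and matching coefficients gives a finite system of rational-function identities in $z_i,z_{i+1},z_{i+2},\A,\gamma$ to check. The reason such cancellations occur is structural: the particular form of $A(s),B(s)$ is dictated by the ansatz one obtains by solving for the $f_i$-component of the qKZ equation $\check R_i(s_i)\Psi=\tau_i\Psi$, and the braid relation for $\Theta_i$ is the scalar shadow of the Yang--Baxter equation \eqref{eqn:Rmat} together with unitarity $\check R_i(u)\check R_i(-u)=1$, obtained by isolating the coefficient on $f_if_{i+1}f_i = f_{i+1}f_if_{i+1}$. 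In practice I would either invoke this Baxterization interpretation or carry out the six polynomial identities by direct, somewhat tedious expansion; both routes are elementary but the latter is the honest one since the existence of a polynomial solution $\Psi$ (Theorem \ref{thm:qkzsol}) will be established only later and indeed relies on this lemma.
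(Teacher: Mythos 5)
The paper gives no proof of this lemma at all (it is stated as a routine property of the $\Theta_i$), so your proposal has to stand on its own, which for the most part it does.

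Your computation of the involutivity is correct: writing $\Theta_i = A(s_i)\tau_i+B(s_i)$ with $A(s)=\frac{(\A+s)(\A-\gamma s)}{\gamma s(\A-s)}$ and $B(s)=-\frac{\A}{\gamma s}$ matches the definition, $B$ is indeed odd so the $\tau_i$-coefficient of $\Theta_i^2$ vanishes, and $A(s)A(-s)+B(s)^2=1$ reduces after cancellation of $(\A\pm s)$ exactly as you say. The far-commutation is also correct, modulo the usual (and harmless) imprecision in the paper's phrasing: for $i,j\in\integers$ one should exclude $j\equiv i\pm 1\pmod N$ from the ``$|i-j|>1$'' case, since, e.g., $\Theta_{i+N-1}=\Theta_{i-1}$ genuinely shares the variable $z_i$ with $\Theta_i$.

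The braid relation is the part you leave as a sketch. Your proposed direct route (expand $\Theta_i\Theta_{i+1}\Theta_i$ and $\Theta_{i+1}\Theta_i\Theta_{i+1}$ over the six $\tau_w$, $w\in\mathcal{S}_3$, and match coefficients) is sound, but you should be aware of a much cleaner reduction that the paper itself hints at via the alternative form \eqref{eqn:alttheta}. From $A'(s)+B'(s)=1$ one sees, after setting $\Theta'_i:=A'(s_i)\tau_i+B'(s_i)$ with $A'(s)=\frac{\A+\gamma s}{\gamma s}$, that $\Theta'_i=\tau_i-\frac{\A}{\gamma}\partial_i$; this is the classical ``shifted reflection'' operator whose Weyl-group relations are standard (and amount to $\tau$- and $\partial$-braid relations plus the mixed compatibility $\tau_i\partial_{i+1}\tau_i=\tau_{i+1}\partial_i\tau_{i+1}$, etc.). One then checks that $\Theta_i=G\,\Theta'_i\,G^{-1}$ for the single multiplication operator $G=\prod_{j<k}(\A+z_j-z_k)(\A-\gamma(z_j-z_k))$, since $\tau_i(G)/G=\frac{(\A-s_i)(\A+\gamma s_i)}{(\A+s_i)(\A-\gamma s_i)}=A'(s_i)/A(s_i)$; conjugation by a fixed $G$ then transports the braid relations from $\Theta'_i$ to $\Theta_i$ with no computation at all (the affine indices reduce to this case by the shift $\sigma$, since $\Theta_{i+1}=\sigma\Theta_i\sigma^{-1}$).

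Finally, a word on your Yang--Baxter remark. The precise statement is not that one ``isolates the coefficient of $f_if_{i+1}f_i$'' in the Brauer Yang--Baxter equation (the Brauer relations mix $e$-words and $f$-words, so this is not a well-defined coefficient without further choices); rather, the two-term truncation $c_0(u)+c_2(u)f_i$ is, up to normalization, $\A+\gamma u f_i$, which is a unitary solution of the symmetric-group Yang--Baxter equation \emph{on its own}, and $\Theta_i$ is precisely the scalar transport operator of the $q$KZ system attached to that $\mathcal{S}_N$ $R$-matrix. That said, you are right that this route is not free of circularity concerns in the flow of the paper, and that the honest proof at this point in the argument is the direct one (or, more efficiently, the conjugation to $\tau_i-\tfrac{\A}{\gamma}\partial_i$ above).
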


In what follows we shall be particularly interested in the equations
(\ref{eqn:qkzd}) involving $f_i$ and $\Theta_i$.  In view of Lemma
\ref{lem:affinesym}, one can define a group morphism $\Theta: s\mapsto
\Theta_s$ from $\mathcal{\hat S}_N$ to the invertible operators on $\wtK$
such that $f_i\mapsto \Theta_i$. Note however that the action of
$\Theta_i$ on $\Psi_\pi$ only makes sense when $\pi(i)\ne i+1$. We are
therefore led to a more relevant groupoid structure.  Define
\begin{multline}\label{eqn:defred}
\mathcal{\hat S}_{\pi,\pi'}
:= \{ s\in \mathcal{\hat S}_N, s\cdot\pi=\pi' : \exists (i_1,\ldots,i_k)
\text{ such that} \\
 s=f_{i_k}\cdots f_{i_1}\text{ and }
\ \forall \ell=1,\ldots,k\ (f_{i_{\ell-1}}\cdots f_{i_1}\cdot\pi)(i_\ell)\ne i_\ell+1
\}
\end{multline}
Graphically, $\hat{S}_{\pi,\pi'}$ is the set of ``affine permutations'' 
that map $\pi$ to $\pi'$ without creating ``tadpoles'', i.e., lines
that cross themselves as in \tikz[baseline=0]{\path[use as bounding box] (-0.5,-0.5) rectangle (0.8,0.5);\draw[/linkpattern/edge] (-0.5,-0.5) .. controls (1,1.5) and (1,-1.5) .. (-0.5,0.5);}.

\begin{Proposition} \label{prop:groupoid}
  The $(\mathcal{\hat S}_{\pi,\pi'})$ form a groupoid, 
  and $\mathcal{\hat S}_{\pi,\pi'}\ne\emptyset$ $\forall \pi,\pi'$; $\Theta$ is 
  a groupoid morphism; and if $\Psi_N$ satisfies Eqs.~(\ref{eqn:qkzd}),
  \begin{equation}\label{eqn:groupoid}
    \Psi_{\pi'}=\Theta_s \Psi_\pi \qquad 
    \forall \pi,\pi'\text{ and }\forall s\in \mathcal{\hat S}_{\pi,\pi'}
  \end{equation}
\end{Proposition}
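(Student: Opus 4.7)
The plan is to split the statement into four tasks: (a) verifying that $\mathcal{\hat S}_{\pi,\pi'}$ is closed under composition and admits identities and inverses, (b) proving these sets are nonempty, (c) noting that $\Theta$ restricts from a group morphism to a groupoid morphism, and (d) deducing \eqref{eqn:groupoid} from \eqref{eqn:qkzd}. Parts (a), (c), and (d) reduce to essentially bookkeeping once the right lemma is in place; the real content lies in (b).

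The central observation for (a) is a \emph{preservation lemma}: if a link pattern $\pi$ does not contain the arc $(i,i+1)$, then neither does $f_i\cdot\pi$. A one-line computation of the conjugation $f_i\pi f_i$ gives $(f_i\pi f_i)(i) = f_i(\pi(i+1))$, which equals $i+1$ only when $\pi(i+1)\in\{i,i+1\}$, forcing $(i,i+1)$ to already be an arc in $\pi$ (the fixed-point case being ruled out). This symmetric condition lets me reverse any expression $s=f_{i_k}\cdots f_{i_1}$ witnessing $s\in\mathcal{\hat S}_{\pi,\pi'}$ into $s^{-1}=f_{i_1}\cdots f_{i_k}\in\mathcal{\hat S}_{\pi',\pi}$; composition is concatenation of expressions, and identities are empty expressions. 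Part (c) is then immediate from \Lemref{affinesym}, which already makes $\Theta$ a group morphism on the ambient $\mathcal{\hat S}_N$, so that its restriction to composable pairs in the groupoid is automatically a groupoid morphism.

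For (b), my plan is an explicit algorithm transforming any $\pi$ into the reference pattern $\pi_0:=(1,2)(3,4)\cdots(2n{-}1,2n)$. I sweep $j=1,3,\ldots,2n{-}1$ left to right; at sweep $j$, if the current pattern already has arc $(j,j+1)$, skip; otherwise $\pi(j)=k$ with $k>j+1$, and I apply $f_{k-1},f_{k-2},\ldots,f_{j+1}$ in order. At the step applying $f_m$ (with $j+1\le m<k$), the current arc from $j$ goes to $m+1$, so $\pi(m+1)=j\ne m$ forces $\pi(m)\ne m+1$, making the move legal. After this sub-sweep the arc $(j,j+1)$ has been created, and all subsequent $f_m$'s use $m\ge j+2$, so the preservation lemma ensures the previously created small arcs are not disturbed. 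Hence $\mathcal{\hat S}_{\pi,\pi_0}\ne\emptyset$, and the general case $\mathcal{\hat S}_{\pi,\pi'}\ne\emptyset$ follows by composing with the inverse from (a).

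Finally, (d) is by induction on the length of an expression for $s\in\mathcal{\hat S}_{\pi,\pi'}$. The no-tadpole hypothesis places each step of the iteration in the first branch of the dichotomy following \eqref{eqn:qkzc}, so \eqref{eqn:qkzd} applies and gives $\Psi_{\pi_\ell}=\Theta_{i_\ell}\Psi_{\pi_{\ell-1}}$; chaining yields \eqref{eqn:groupoid}. The main obstacle, and the only place where something could genuinely fail, is the nonemptiness step: one must produce a concrete algorithm that never gets stuck and verify at every sub-step that the relevant $f_m$ is in fact legal --- straightforward with the choice above, but the principal non-formal content of the proof.
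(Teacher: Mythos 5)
Your proof is correct, and the nonemptiness step (the ``real content,'' as you rightly flag) takes a genuinely different route from the paper's. The paper's argument is shorter and nonconstructive: it picks \emph{any} word $s_0 = f_{i_k}\cdots f_{i_1}$ with $s_0\cdot\pi = \pi'$ (such a word exists because $\mathcal{S}_N$ acts transitively by conjugation on fixed-point-free involutions), and then simply deletes every ``bad'' factor $f_{i_\ell}$ for which $(f_{i_{\ell-1}}\cdots f_{i_1}\cdot\pi)(i_\ell) = i_\ell+1$; since such a factor acts trivially at its step, the deletion leaves the endpoints $\pi,\pi'$ unchanged while producing a word that manifestly satisfies the defining property. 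Your approach instead runs an explicit reduction algorithm to the base pattern $\pi_0 = (1,2)(3,4)\cdots(2n{-}1,2n)$ and then composes with inverses. Both are valid; the paper's deletion trick is slicker, while your algorithm is constructive and produces a concrete word. One further thing worth noting is that your \emph{preservation lemma} --- that $f_i$-conjugation creates or destroys the arc $(i,i+1)$ never --- is genuinely needed for the groupoid's closure under inverses, a point the paper passes over as ``non-trivial only for nonemptiness'': given $s = f_{i_k}\cdots f_{i_1} \in \mathcal{\hat S}_{\pi,\pi'}$ with $\pi_{\ell-1}(i_\ell) \neq i_\ell+1$ at each step, the reversed word visits the same chain of patterns in reverse, and legality of its $\ell$-th step requires $\pi_\ell(i_\ell) \neq i_\ell + 1$ rather than $\pi_{\ell-1}(i_\ell) \neq i_\ell+1$; these are equivalent exactly by the preservation lemma. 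So your treatment is actually a bit more complete than the paper's on that point.
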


\begin{proof}
The only non-trivial statement is that $\mathcal{\hat S}_{\pi,\pi'}\ne\emptyset$ $\forall \pi,\pi'$.
Indeed we shall show that $\mathcal{S}_{\pi,\pi'} \neq \emptyset$,
where $\mathcal{S}_{\pi,\pi'}=\mathcal{\hat S}_{\pi,\pi'}\cap \mathcal{S}_N$.
As $\mathcal{S}_N$ acts transitively by conjugation on involutions
without fixed points, we may pick $s_0\in \mathcal{S}_N$ such that
$s_0\cdot\pi=\pi'$, and any decomposition of it in terms of
generators: $s_0=f_{i_k}\cdots f_{i_1}$. Consider the new word
obtained from $f_{i_k}\cdots f_{i_1}$ by removing each $f_{i_\ell}$
such that $(f_{i_{\ell-1}}\cdots f_{i_1}\cdot\pi)(i_\ell)=
i_\ell+1$. Since in this case $f_{i_\ell}\, f_{i_{\ell-1}}\cdots
f_{i_1}\cdot\pi=f_{i_{\ell-1}}\cdots f_{i_1}\cdot\pi$, the new word
defines a permutation $s$ such that $s\cdot\pi=\pi'$ but now also
satisfies the defining property of $\mathcal{\hat S}_{\pi,\pi'}$; which is
therefore non-empty.
\end{proof}

\begin{Proposition} \label{prop:zerosol}
Let $\Psi$ be a solution of Eqs.~(\ref{eqn:qkzc}) and $\pi$ a link pattern. 
If $i, j\in \integers$, $i<j$, are such that
$\pi(\{ i,i+1,\ldots,j\}) \cap \{ i,i+1,\ldots,j\} = \emptyset$ (mod $N$ is implied), 
then $\A+z_i-z_j$ divides $\Psi_\pi$.
\end{Proposition}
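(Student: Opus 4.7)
The plan is to prove the divisibility by induction on $j-i$, using (\ref{eqn:qkzc}) at a carefully chosen index and a double application of an $f$-move to force the specialization $\Psi_\pi|_{z_j=z_i+\A}$ to equal a nontrivial multiple of itself; the desired vanishing then follows by nondegeneracy of that multiplier in the ring of polynomials.

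For the base case $j=i+1$, the hypothesis reduces to $\pi(i)\ne i+1$. I would multiply (\ref{eqn:qkzc}) at index $i$ by the denominator $(\A+u)(\A-(1-\beta/2)u)$ of $\check R_i$ (where $u=z_i-z_{i+1}$) and specialize at $u=-\A$; the left-hand side vanishes, yielding the wheel condition
\[ [2-e_i+(\beta-2)f_i]\,\Psi_N\big|_{z_{i+1}=z_i+\A}=0. \]
Extracting the coefficient of $\pi$, the $e_i$ contribution disappears (any preimage of $\pi$ under $e_i$ would pair $i$ and $i+1$, contradicting $\pi(i)\ne i+1$), giving $\Psi_\pi|_{z_{i+1}=z_i+\A}=(1-\beta/2)\,\Psi_{f_i\pi}|_{z_{i+1}=z_i+\A}$. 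Since $(f_i\pi)(i)=\pi(i+1)\ne i+1$ too, iterating yields $\Psi_\pi=(1-\beta/2)^2\Psi_\pi$ at the specialization. Using $\beta=2(\A-\epsilon)/(2\A-\epsilon)$, one finds $(1-\beta/2)^2=\A^2/(2\A-\epsilon)^2$, so $1-(1-\beta/2)^2=(\A-\epsilon)(3\A-\epsilon)/(2\A-\epsilon)^2$ is nonzero, forcing $\Psi_\pi|_{z_{i+1}=z_i+\A}=0$.

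For the inductive step $j-i>1$, apply (\ref{eqn:qkzd}) at index $a=j-1$; the no-tadpole condition $\pi(j-1)\ne j$ holds since $\pi(j-1)\notin\{i,\ldots,j\}$. Specialize the resulting identity $\Psi_{f_{j-1}\pi}=\Theta_{j-1}\Psi_\pi$ at $z_j=z_i+\A$. The term in $\Theta_{j-1}\Psi_\pi$ containing $\tau_{j-1}\Psi_\pi$ vanishes at this specialization: by the inductive hypothesis applied to the sub-interval $\{i,\ldots,j-1\}$ (which also satisfies the hypothesis), $(\A+z_i-z_{j-1})\mid\Psi_\pi$, and applying $\tau_{j-1}$ transforms this divisor into $(\A+z_i-z_j)$, which kills the term at $z_j=z_i+\A$. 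What remains is
\[ \Psi_{f_{j-1}\pi}\big|_{z_j=z_i+\A}=\frac{-\A}{(1-\beta/2)(z_{j-1}-z_i-\A)}\,\Psi_\pi\big|_{z_j=z_i+\A}. \]
A direct check shows $f_{j-1}\pi$ still satisfies the hypothesis for both $\{i,\ldots,j\}$ and $\{i,\ldots,j-1\}$, so using $\Theta_{j-1}^2=1$ from Lemma~\ref{lem:affinesym}, applying the same argument to $f_{j-1}\pi$ yields the reverse identity. Composing gives $\Psi_\pi=C^2\Psi_\pi$ at the specialization with $C=-(2\A-\epsilon)/(z_{j-1}-z_i-\A)$; since $1-C^2=[(z_{j-1}-z_i-3\A+\epsilon)(z_{j-1}-z_i+\A-\epsilon)]/(z_{j-1}-z_i-\A)^2$ has a nonzero numerator, $\Psi_\pi|_{z_j=z_i+\A}=0$, i.e.\ $(\A+z_i-z_j)\mid\Psi_\pi$.

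The main obstacle is careful bookkeeping: (a) preserving the no-tadpole condition of Proposition~\ref{prop:groupoid} when passing between $\pi$ and $f_{j-1}\pi$, so that $\Theta_{j-1}$ is genuinely involutive in both steps; (b) verifying that both $\pi$ and $f_{j-1}\pi$ inherit the hypothesis on the sub-interval $\{i,\ldots,j-1\}$ needed to invoke induction symmetrically; and (c) accurately tracking how $\tau_{j-1}$ transports the divisor $(\A+z_i-z_{j-1})\mid\Psi_\pi$ into the divisor $(\A+z_i-z_j)\mid\tau_{j-1}\Psi_\pi$. The nondegeneracy of the propagating multiplier---$(1-\beta/2)^2$ in the base case and $C^2$ in the inductive step---relies crucially on the specific value of $\beta$ forced by the existence of a polynomial solution, and is what ultimately closes the argument.
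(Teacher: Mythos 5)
Your proof is correct, but the route is genuinely different from the paper's and substantially more involved. The paper applies (\ref{eqn:qkzc}) at the \emph{left} end of the interval (index $i$) and uses the factorization $\check R_i(z_{i+1}-z_i)=(\A+z_i-z_{i+1})(F_1+F_2 f_i)+F_3 e_i$, with $F_1,F_2,F_3$ regular at $z_{i+1}=z_i+\A$. Since $\pi\notin\mathrm{Im}\,e_i$, the component at $\pi$ of $\Psi_N=\check R_i\tau_i\Psi_N$ reads $\Psi_\pi=(\A+z_i-z_{i+1})\bigl[F_1\,\tau_i\Psi_\pi+F_2\,\tau_i\Psi_{f_i\pi}\bigr]$, which gives the base case in one line and, for the inductive step, lets one apply the hypothesis to $\{i+1,\ldots,j\}$ (for both $\pi$ and $f_i\pi$, each of which inherits the hypothesis) to get $(\A+z_i-z_j)\mid\tau_i\Psi_\pi,\tau_i\Psi_{f_i\pi}$ and hence $(\A+z_i-z_j)\mid\Psi_\pi$ directly — no iteration and no scalar-nondegeneracy argument needed. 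You instead work at the \emph{right} end (index $j-1$), use the derived $\Theta$-operator form of the exchange relation, and are forced into a ``forward $+$ reverse identity'' trick ending with $(1-C^2)\Psi_\pi=0$; this is all correct (and you verify the needed hypotheses on $f_{j-1}\pi$ carefully), but it makes the argument depend on the nonvanishing of an auxiliary scalar rather than on the structure of the $R$-matrix. In particular your base case already needs $\beta\ne 4$ on top of $\beta\ne 0$, whereas the paper's factorization only needs the $R$-matrix denominator to be nonzero at $u=\A$, i.e.\ $\beta\ne 0$. The essential simplification you miss is that the \emph{untwisted} term in the qKZ recursion is not an obstacle if you keep the equation in its $\check R_i$-form: the factor $(\A+z_i-z_{i+1})$ already sits in front of it, so no ``double application'' is required.
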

\begin{proof} Induction on $j-i$.

Rewrite Eq.~(\ref{eqn:qkzc}) as $\Psi_N=\check R_i(z_{i+1}-z_i)\tau_i \Psi_N$,
and look at its component $\pi$,
noting that $\check R_i(z_{i+1}-z_i)=(\A+z_i-z_{i+1})(F_1 + F_2 f_i)+ F_3 e_i$
where $F_1,F_2,F_3$ are some rational fractions of $z_i,z_{i+1},\A$ without pole at $z_{i+1}=\A+z_i$. The hypothesis
implies $\pi(i)\ne i+1$, i.e\ $\pi \not\in Im\, e_i$,
so that we can ignore the third term.

If $j-i=1$, we conclude directly that $\A+z_i-z_{i+1} \divides \Psi_\pi$.

If $j-i>1$, we note that both $\pi$ and $f_i\cdot\pi$ satisfies the hypotheses for the interval $\{i+1,\ldots,j\}$
so that we can use the induction to conclude that $\A+z_{i+1}-z_j$ divides both $\Psi_\pi$ and $\Psi_{f_i\cdot\pi}$,
or equivalently that $\A+z_i-z_j$ divides both $\tau_i \Psi_\pi$ and $\tau_i\Psi_{f_i\cdot\pi}$, hence also $\Psi_\pi$.
\end{proof}

\subsection{Solution of \texorpdfstring{$q$}{q}KZ equation}
We can now state the main theorem of this section:
\begin{Theorem} \label{thm:qkzsol} 
  There exists a solution of Eqs.~(\ref{eqn:qkza}--\ref{eqn:qkzb})
where $\Psi\in V\otimes K/\left<(2\A-\epsilon)\beta-2(\A-\epsilon)\right>$;
more specifically, 
substituting $\beta={2(\A-\epsilon)\over 2\A-\epsilon}$, there exists
a solution of Eqs.~(\ref{eqn:qkza}--\ref{eqn:qkzb}) such that its components
 $\Psi_\pi$ are homogeneous polynomials of degree $2n(n-1)$
{\em with integer coefficients}\/
in the variables $\A,\epsilon,z_1,\ldots,z_N$.
  This solution is unique up to scaling. 
\end{Theorem}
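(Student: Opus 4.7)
The plan has three stages: uniqueness (via divisibility), explicit construction, and verification of the $e_i$-equations.

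\textbf{Uniqueness.} By Proposition~\ref{prop:groupoid}, any polynomial solution $\Psi$ is determined by a single component via $\Psi_\pi = \Theta_s \Psi_{\pi_0}$ for $s \in \mathcal{\hat S}_{\pi_0,\pi}$. Take $\pi_0$ to be the fully crossed link pattern $\pi_0(i) = i+n \pmod N$. For each $i \in \integers/N\integers$ and each $k \in \{1,\ldots,n-1\}$, the set $\{i,\ldots,i+k\}$ is disjoint from its $\pi_0$-image $\{i+n,\ldots,i+n+k\}$ modulo $N$, so by Proposition~\ref{prop:zerosol} the linear form $\A + z_i - z_{i+k}$ divides $\Psi_{\pi_0}$. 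These $N(n-1) = 2n(n-1)$ linear forms are pairwise coprime and their total degree matches the claimed degree of $\Psi_{\pi_0}$, so $\Psi_{\pi_0}$ is a scalar multiple of their product.

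\textbf{Existence.} Set
\[ \Psi_{\pi_0} := \prod_{i=1}^{N}\prod_{k=1}^{n-1}(\A + z_i - z_{i+k}), \]
using the convention $z_{i+N} = z_i + \epsilon$, and extend by $\Psi_\pi := \Theta_s \Psi_{\pi_0}$; the result is independent of $s$ by Lemma~\ref{lem:affinesym}. For polynomiality, the apparent pole of $\Theta_i$ at $z_i = z_{i+1}$ cancels because the numerator of $\Theta_i$ vanishes there, while the pole at $z_{i+1} = \A + z_i$ cancels because whenever one applies $\Theta_i$ we have $\pi(i) \neq i+1$, so Proposition~\ref{prop:zerosol} (applied to the interval $\{i,i+1\}$) forces $\A + z_i - z_{i+1}$ to divide $\Psi_\pi$. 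This cancellation propagates inductively, so every $\Psi_\pi$ is a polynomial of degree $2n(n-1)$ with integer coefficients. The cyclicity equation~\eqref{eqn:qkzb} holds by the manifest cyclic symmetry of $\Psi_{\pi_0}$, and the exchange equations~\eqref{eqn:qkzd} hold by construction.

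\textbf{The main obstacle} is verifying the $e_i$-equations~\eqref{eqn:qkze}, which is where the specialization $\beta = 2(\A-\epsilon)/(2\A-\epsilon)$ becomes essential. My approach would be to check \eqref{eqn:qkze} first for one pattern $\pi$ with $\pi(i) = i+1$ obtained from $\pi_0$ by a short sequence of $f_j$-moves, using the explicit factorized form of $\Psi_\pi$; then bootstrap to all such $\pi$ using cyclicity~\eqref{eqn:qkzb} together with the intertwining relations between the $e_j$'s and $f_j$'s in the affine Brauer algebra $\hat\Br_N(\beta)$. The specialization of $\beta$ emerges here as the unique value for which the sum on the left of~\eqref{eqn:qkze} can equal $\Delta_i\Psi_\pi$ on our explicitly constructed vector; any other value would produce a contradiction already in the smallest nontrivial case, providing both the motivation for the specific value and a concrete consistency check. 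The required identity amounts to matching the scalar produced by the $e_i$-action on the factorized formula against the explicit rational operator $\Delta_i$, a computation that, once set up, reduces to a polynomial identity that can be verified directly after the specialization.
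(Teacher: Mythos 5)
Your uniqueness argument and the derivation of the factorized form of $\Psi_{\pi_0}$ from Proposition~\ref{prop:zerosol} match the paper's proof. However, there is a genuine gap in the existence step. You assert that $\Psi_\pi := \Theta_s \Psi_{\pi_0}$ is independent of $s$ ``by Lemma~\ref{lem:affinesym}.'' This is not enough: Lemma~\ref{lem:affinesym} only ensures that $\Theta$ is a well-defined group homomorphism on $\mathcal{\hat S}_N$. The groupoid fiber $\mathcal{\hat S}_{\pi_0,\pi}$ is a coset of the nontrivial stabilizer $\mathcal{\hat S}_{\pi_0,\pi_0}$, so for the definition to be consistent you must additionally prove $\Theta_s\Psi_{\pi_0}=\Psi_{\pi_0}$ for every $s\in\mathcal{\hat S}_{\pi_0,\pi_0}$. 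This is precisely where the constraint on $\beta$ enters: the paper invokes Lemma~\ref{lem:stab} to reduce to generators $f_i f_{i+n}$ of the stabilizer, and the explicit computation of $\Theta_i\Psi_{\pi_0}-\Theta_{i+n}\Psi_{\pi_0}$ vanishes exactly when $\beta=\frac{2(\A-\epsilon)}{2\A-\epsilon}$. Without this step your $\Psi_\pi$ is not well-defined and the rest of the argument cannot proceed.

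A related conceptual misplacement: you write that ``the specialization of $\beta$ emerges'' in the verification of the $e_i$-equations~\eqref{eqn:qkze}, and sketch a plan to extract it there. In the paper's proof the value of $\beta$ is already pinned down during the well-definedness step; the $e_i$-equation check at the end is a verification that the already-constructed $\Psi$ satisfies~\eqref{eqn:qkze}, reduced by transitivity and the intertwining with the $f_j$'s to a single explicit symmetric-polynomial identity, and does not impose a new constraint. Your ``main obstacle'' paragraph is not wrong in spirit for the verification part (the reduction via cyclicity and conjugation by $\Theta_s$ with $s\in\mathcal{S}_{N-2}$ is close to what the paper does), but you cannot hope to discover the value of $\beta$ there, since without having fixed it earlier you have no consistent $\Psi$ to test.
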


\begin{proof}
  The proof will be similar to the statement of \cite{DFZJ06} that it
  generalizes.  First, introduce the ``base'' link pattern $\pi_0$:
  $\pi_0(i)=i+n,\quad i=1,\ldots,n$. Applying
  Prop.~\ref{prop:zerosol}, we find a product of $2n(n-1)$ factors in
  $\Psi_{\pi_0}$.  To ensure the correct degree, we must impose (up to
  numerical normalization):
  \begin{equation}\label{eqn:basecase}
    \Psi_{\pi_0}=\prod_{i=1}^N \prod_{j=i+1}^{i+n-1} (\A+z_i-z_j)
  \end{equation}
  or in terms of the original variables,
  $\Psi_{\pi_0}=
  \prod_{1\leq i<j\leq 2n\atop j-i< n} (\A+z_i-z_j)
  \prod_{1\leq i<j\leq 2n\atop j-i> n} (\B+z_j-z_i)$, 
  with the convenient notation $\B:=\A-\epsilon$.
  
  Next we have the following elementary lemma:
  \begin{Lemma} \label{lem:qkzuniq}
    $\Psi_N$ is determined uniquely by Eq.~(\ref{eqn:basecase}) 
    and Eqs.~(\ref{eqn:qkzd}).
  \end{Lemma}
  
  \begin{proof}
    If Eqs.~(\ref{eqn:qkzd}) are satisfied, we can apply
    Prop.~\ref{prop:groupoid} and state that
    \begin{equation}\label{eqn:defpsi}
      \Psi_{\pi}=\Theta_s \Psi_{\pi_0} \qquad 
      \forall \pi\text{ and }\forall s\in \mathcal{\hat S}_{\pi_0,\pi}
    \end{equation}
    with $\mathcal{\hat S}_{\pi_0,\pi}\ne\emptyset$. Thus, if $\Psi_{\pi_0}$
    is fixed by Eq.~(\ref{eqn:basecase}), all $\Psi_\pi$ are fixed.
  \end{proof}
  
  Now that we have proved uniqueness, we want to show existence of a
  solution. We thus consider Eqs.~(\ref{eqn:basecase}--\ref{eqn:defpsi})
  as {\em defining\/} the entry $\Psi_\pi$, and all we need to
  prove is that this definition is independent of the choice of $s$.
  Equivalently we need to show that
  \begin{equation}\label{eqn:thetacheck}
    \Psi_{\pi_0}=\Theta_s \Psi_{\pi_0}\qquad
    \forall s \in \mathcal{\hat S}_{\pi_0,\pi_0}.
  \end{equation}
  We need the additional
  
  \begin{Lemma}\label{lem:stab}
    $\mathcal{\hat S}_{\pi_0,\pi_0}$ is the subgroup of $\mathcal{\hat S}_N$
    generated by the $f_i f_{i+n}$, $i\in\integers/N\integers$.
  \end{Lemma}
  The proof is in Appendix \ref{sec:affweyl}.
  
  We thus need to show that $\Theta_i\Psi_{\pi_0}=\Theta_{i+n} \Psi_{\pi_0}$, 
  which we do using the following alternative formula for $\Theta_i$:
  \begin{equation}\label{eqn:alttheta}
    \Theta_i=-1-\Big(\frac{\A}{1-\beta/2}+z_{i+1}-z_i\Big)
    (\A+z_i-z_{i+1})
    \der_i {1\over \A+z_i-z_{i+1}}\qquad 
    \der_i:={1\over z_{i}-z_{i+1}}(1-\tau_i)
  \end{equation}
  This $\der_i$ is a {\dfn divided difference operator}; 
  it has the important property that $\der_i(f g)=f\ \der_i g$ if $f$ is
  symmetric in $z_i$, $z_{i+1}$ (equivalently, if $\der_i f=0$). 
  From Eq.~(\ref{eqn:basecase}) we deduce that
  \begin{equation*}
    \Psi_{\pi_0}=(\A+z_i-z_{i+1})(\A+z_{i+1}-z_{i+n})(\A+z_{i+1+n}-\epsilon-z_i)(\A+z_{i+n}-z_{i+n+1})\times S
  \end{equation*}
  where $S$ is symmetric in both
  $z_i\leftrightarrow z_{i+1}$ and $z_{i+n}\leftrightarrow z_{i+n+1}$. Performing the computation we find
  \begin{equation*}
    \Theta_i\Psi_{\pi_0}-\Theta_{i+n} \Psi_{\pi_0}=\frac{-\A+\A\beta+\epsilon-\beta\epsilon/2}{1-\beta/2}(\A+z_i-z_{i+1})(\A+z_{i+n}-z_{i+n+1})(z_i-z_{i+1}-z_{i+n}+z_{i+n+1})S
  \end{equation*}
  which is zero if the condition $\beta=\frac{2(\A-\epsilon)}{2\A-\epsilon}$ is satisfied.
  
  Thus, $\Psi_N$ is well-defined. Furthermore, one notes that when one
  computes $\Psi_\pi$ using Eq.~(\ref{eqn:defpsi}), i.e., by acting
  with successive $\Theta_{i_\ell}$, due to the definition of
  $\mathcal{\hat S}_{\pi_0,\pi}$, each $\Theta_{i_\ell}$ acts on a
  $\Psi_{\pi'}$ (with $\pi'=f_{i_{\ell-1}}\cdots
  f_{i_\ell}\cdot\pi_0$) such that $\pi'(i_\ell)\ne i_\ell+1$.
  Therefore we can apply Prop.~\ref{prop:zerosol}, which says that
  $\A+z_{i_\ell}-z_{i_\ell+1}\ |\ \Psi_{\pi'}$, and conclude using the
  alternative form (\ref{eqn:alttheta}) for $\Theta_i$ (with
  $\A/(1-\beta/2)=\A+\B$) that the polynomial character of
  $\Psi_\pi(z_1,\ldots,z_N,\A,\B)$ and integrality of its coefficients
  are preserved by the successive actions of the $\Theta_i$.  Since
  the operators $\Theta_i$ are also degree-preserving, all $\Psi_\pi$
  are of degree $2n(n-1)$.
  
  We now want to show that $\Psi$ is a solution of
  Eqs.~(\ref{eqn:qkza}--\ref{eqn:qkzb}).
  
  Eq.~(\ref{eqn:qkzb}) written in components is
  $\Psi_{r\cdot\pi}=\sigma \Psi_\pi$.
  
  Using $\Theta_{i+1}=\sigma \Theta_i \sigma^{-1}$,
  we find that if $\Psi_{\pi}=\Theta_s \Psi_{\pi_0}$, 
  $\Psi_{r\cdot\pi}
  =\sigma \Theta_s \sigma^{-1}\Psi_{r\cdot\pi_0}$. All that needs to
  be checked is that $r\cdot\pi_0=\pi_0$, which is obvious from its
  definition, and that $\Psi_{\pi_0}=\sigma \Psi_{\pi_0}$, which
  follows from the explicitly rotation-invariant expression
  (\ref{eqn:basecase}).
  
  To check Eq.~(\ref{eqn:qkza}), we consider separately the two cases
  of Eqs.~(\ref{eqn:qkzd}) and (\ref{eqn:qkze}).
  Eq.~(\ref{eqn:qkzd}), and more generally Eq.~(\ref{eqn:groupoid}),
  are tautologies with our definition (\ref{eqn:defpsi}) of
  $\Psi_\pi$: they express the fact that $\Theta$ is a groupoid morphism.
  
  Eq.~(\ref{eqn:qkze}) requires a calculation. By the usual rotational
  invariance argument ($r e_i r^{-1}=e_{i+1}$ and
  $\sigma\Delta_i\sigma^{-1}=\Delta_{i+1}$), one can assume that
  $i=1$.  The $f_j$, $j=3,\ldots,N-1$ generate a subgroup 
  $\mathcal{S}_{N-2}$ which acts transitively by conjugation on the link
  patterns such that $\pi(1)=2$ (i.e., involutions without fixed
  points of $N-2$ elements). By the same argument as in the proof of
  Prop.~\ref{prop:groupoid}, for any pair of such involutions
  $\mathcal{\hat S}_{\pi,\pi'}\cap \mathcal{S}_{N-2}\ne\emptyset$, and we
  thus have $s\in\mathcal{S}_{N-2}$ such that $\Psi_{\pi'}=\Theta_s
  \Psi_\pi$. Act on both sides of Eq.~(\ref{eqn:qkze}) with
  $\Theta_s$.  Using $\Delta_1\Theta_j=\Theta_j\Delta_1$ and
  $e_1^{-1}(f_j(\pi))=f_j(e_1^{-1}(\pi))$ for $j\ne 1,2,N$ , we deduce
  that equations~(\ref{eqn:qkze}) for any $\pi$ and $\pi'$ are
  equivalent.  One can then check it for a special case, the simplest
  being the following: choose $\pi$ with cycles $(1,2)$ and
  $(j,j+n-1)$, $j=3,\ldots,n+1$. Noting that link patterns in
  $e_1^{-1}(\pi)$ come in pairs $\pi_j$, $f_i\cdot\pi_j$,
  $j=3,\ldots,n+1$, where $\pi_j(1)=j$ and $\pi_j(2)=j+n-1$, we conclude that 
  $\sum_{\pi'\ne\pi, e_1\cdot\pi'=\pi}
  \Psi_{\pi'}=(1+\Theta_1)\sum_{j=3}^{n+1} \Psi_{\pi_j}$. 
  Using the expression (\ref{eqn:alttheta}) for $\Theta_1$, we find
  that Eq.~(\ref{eqn:qkze}) is equivalent to the fact that
  $\Phi:=\Psi_\pi-{2\A-\epsilon\over \A+z_1-z_2}\sum_{j=3}^{n+1} \Psi_{\pi_j}$ 
  is symmetric in $z_1$, $z_2$.  It is left to the
  reader to check that the following expression holds:
  \begin{equation}
    \Phi=
    \prod_{\scriptstyle 3\le i<j \le 2n\atop\scriptstyle j-i<n-1}(\A+z_i-z_j)
    \prod_{\scriptstyle 3\le i<j \le 2n\atop\scriptstyle j-i>n-1}(\B+z_j-z_i)
    \prod_{j=3}^{n+1} (\A+z_1-z_j)(\A+z_2-z_j)
    \prod_{i=n+2}^{2n} (\B+z_i-z_1)(\B+z_i-z_2)
  \end{equation}
  (we recall that $\B=\A-\epsilon$)
  so that in particular it is symmetric in $z_1$, $z_2$.
\end{proof}

For later reference, we restate equations (\ref{eqn:qkzd}, \ref{eqn:qkze}) with
the specialization $\beta=\frac{2(\A-\epsilon)}{2\A-\epsilon}$
and $\B = \A-\epsilon$, making $1-\beta/2 = \A/(\A+\B)$. 
We will also clear denominators, some of
which can be subsumed into the divided difference operator $\der_i$.
In the same dichotomy,
\begin{itemize}
\item if $\pi(i)\ne i+1$:
  \[
    \Psi_{f_i\cdot \pi} 
    = \Theta_i \Psi_\pi\qquad 
    \Theta_i := \frac{(\A+z_i-z_{i+1})(\A+(1-\beta/2)(z_{i+1}-z_{i}))\tau_i
      -\A(\A-z_i+z_{i+1})}{(1-\beta/2)(z_i-z_{i+1})(\A-z_i+z_{i+1})}
\]
which one can rewrite equivalently:
\begin{equation}\label{eqn:qkzdv2}
  (\A+\B + z_{i+1} - z_i)(\A+z_i-z_{i+1}) (-\partial_i) \frac{\Psi_\pi}{\A+z_i-z_{i+1}} = 
  \Psi_\pi + \Psi_{f_i\cdot \pi}. 
\end{equation}

\item if $\pi(i)=i+1$:
  \begin{equation}\label{eqn:qkzev2}
    (\A+\B+z_{i+1}-z_i)(\A+z_i-z_{i+1}) (-\der_i) \Psi_\pi
    = (\A+\B) \sum_{\pi'\ne\pi,e_i\cdot\pi'=\pi} \Psi_{\pi'}
  \end{equation}
\end{itemize}

The summands in the last sum come in pairs $\{\pi',f_i\cdot\pi'\}$.
There is a natural way to pick representatives from these pairs
\[ \varepsilon(\pi,i) := \left\{ \rho \neq \pi\ :\ e_i\cdot \rho = \pi,
\text{ and the chords emanating from $i,i+1$ cross each other} \right\}
 \]
which will be motivated by geometry in \S \ref{sec:ei}.
We can then use (\ref{eqn:qkzdv2}) to rewrite that sum:
\begin{align*}
  \sum_{\pi'\ne\pi,e_i\cdot\pi'=\pi} \Psi_{\pi'}
  &= \sum_{\pi' \in \varepsilon(\pi,i)} \left( \Psi_{\pi'} + \Psi_{f_i\cdot \pi'}\right) \\
  &= \sum_{\pi' \in \varepsilon(\pi,i)} 
(\A+\B + z_{i+1} - z_i)(\A+z_i-z_{i+1}) (-\partial_i) \frac{\Psi_\pi}{\A+z_i-z_{i+1}}
\end{align*}
So in the presence of (\ref{eqn:qkzdv2}), Equation (\ref{eqn:qkzev2})
is equivalent to (for $\pi(i)=i+1$)
  \begin{equation*}
    (-\der_i) \Psi_\pi =  (\A+\B) \sum_{\rho \in \varepsilon(\pi,i)} 
    (-\der_i) \frac{\Psi_\rho}{\A+z_i-z_{i+1}}
  \end{equation*}
We clear out the denominator by multiplying both sides by 
$(\A+z_i-z_{i+1})(\A+z_{i+1}-z_i)$, obtaining
\begin{itemize}
\item if $\pi(i)=i+1$:
  \begin{equation}
    \label{eqn:qkzev3}
    (-\der_i) (\A+z_i-z_{i+1})(\A+z_{i+1}-z_i) \Psi_\pi 
    =  (\A+\B) \sum_{\rho \in \varepsilon(\pi,i)} 
    (-\der_i) (\A+z_{i+1}-z_i) \Psi_\rho
  \end{equation}
\end{itemize}
or
\[
     (\A+z_i-z_{i+1})(\A+z_{i+1}-z_i) \Psi_\pi 
    =  (\A+\B) (\A+z_{i+1}-z_i) \sum_{\rho \in \varepsilon(\pi,i)} \Psi_\rho
\quad+\quad \text{something symmetric in $z_i,z_{i+1}$}.
\]
We will learn more about this last term (though not completely
calculate it) in \S \ref{ssec:lowerbound}.

\section{Geometric interpretation of the Brauer action}
\label{sec:geombrauer}

In \cite{KZJ} we gave a geometric derivation of the action of the
$f_i$ generators of the (affine) Brauer algebra
$\mathcal{\hat B}_N(\beta)$ on 
the Brauer loop polynomials $\{\Psi_\pi\}$ at $\A=\B$. 
Using
algebraic results of \cite{DFZJ06}, we showed that this implied an
action of the $e_i$ generators, but did not give a geometric interpretation
thereof; this will be one of the main results of this paper.

In this section we review first Hotta's construction \cite{Ho} of the action of
the $e_i$ generators on the vector space spanned by Joseph polynomials. 
We also discuss the analogue of the $f_i$ action on 
Joseph--Melnikov polynomials.

We then move beyond the orbital scheme to the Brauer loop scheme. We
review the action of the $f_i$ generators from \cite{KZJ}. 
This leads us to
identify the solution of $q$KZ equation discussed in the previous
section with the multidegrees of the irreducible components of the
Brauer loop scheme.
We then give a
direct geometric interpretation of the action of the $e_i$ generators on
the Brauer loop polynomials. 

In the diagrams of spaces in this section, we write
\[ X \sim Y \qquad \text{if 
$\dim X = \dim Y > \dim (X\setminus Y \cup Y \setminus X)$}
\]
which implies that their multidegrees are equal. (In each case we hope
and expect that actually $X=Y$, but don't prove or need this.)

\subsection{Recall: Hotta's construction of Springer representations}
\label{ssec:hotta}
Let $D$ be the closure of a nilpotent orbit of $\GLN$ (though Hotta's
construction works for other groups as well), and $D\cap R_N$ the
corresponding orbital scheme, with components $\{D_\tau\}$.
Note that each $D_\sigma \subseteq R_N^{\Delta=0}$, 
i.e., the diagonal of a nilpotent upper triangular matrix vanishes.
Fix a particular orbital variety, $D_\sigma$, which we recall to be
automatically $B_N$-invariant.

Fix $i \in \{1,2,\ldots,N-1\}$. There is a corresponding $GL_2$ subgroup
of $\GLN$, consisting of matrices $M$ that look like the identity matrix
except in entries $M_{ab}$ with $a,b\in \{i,i+1\}$. Call this subgroup
$GL_2^{(i)}$, and let $B^{(i)} := GL_2^{(i)} \cap B_N$.

\[
 GL_2^{(i)} := \left\{
\begin{pmatrix}
  1& &&&&&&\\
  &\ddots&&&&&&\\
  &&1&&&&&\\
  &&&x&y&&&\\
  &&&z&w&&&\\
  &&& & &1&&\\
  &&& & &&\ddots&\\
  &&& & &&&1
\end{pmatrix} \right\}
\qquad
B^{(i)} := \left\{
\begin{pmatrix}
  1& &&&&&&\\
  &\ddots&&&&&&\\
  &&1&&&&&\\
  &&&x&y&&&\\
  &&&0&w&&&\\
  &&& & &1&&\\
  &&& & &&\ddots&\\
  &&& & &&&1
\end{pmatrix}
  \right\}
\]

\subsubsection{Cutting then sweeping}
There are two cases. If every $M \in D_\sigma$ has $M_{i,i+1}=0$,
then $D_\sigma$ is $GL_2^{(i)}$-invariant. This is the boring case.

Otherwise $D_\sigma \cap \{M_{i,i+1}=0\}$ is codimension $1$ in
$D_\sigma$, and is itself $B_N$-invariant.  Let its geometric components
be $\{D'_{\tau}\}$ (the precise indexation being unspecified yet)
appearing with multiplicities $\{ m_{\sigma\tau} \in \naturals\}$.
Each such $D'_\tau$ is $B_N$-invariant hence $B^{(i)}$-invariant, so 
\begin{equation*} 
  \dim \left( GL_2^{(i)}\cdot D'_\tau \right)
  \leq \dim D'_\tau + \dim \left( GL_2^{(i)}/B^{(i)}\right)
  = (\dim (D\cap R_N) - 1) + 1 = \dim (D\cap R_N).
\end{equation*}
Plainly $GL_2^{(i)}\cdot D'_\tau \subseteq D$, and it is also easy
to see that $GL_2^{(i)}\cdot D'_\tau \subseteq R_N$. So if the above
dimension inequality is tight, $GL_2^{(i)}\cdot D'_\tau$ must again be
an orbital variety, say $D_\tau$ (which tells us how to index the $D'_\tau$
-- by the $D_\tau$ they sweep out to).

\begin{center}
\begin{tikzcd}
  D_\sigma \arrow{dr}{\text{cut}} & & \bigcup_\tau GL_2^{(i)}\cdot D'_\tau & \sim & 
     \bigcup_\tau D_\tau \text{ with multiplicity $m_{\sigma\tau}$} \\
   & \bigcup_\tau D'_\tau \arrow{ur}{\text{sweep}} \\
 \end{tikzcd}
\end{center}

Having pursued the geometry, we now give the corresponding multidegree
calculation. We introduce to that effect to the $GL_2^{(i)}$-invariant
hyperplane
\begin{equation*} 
  R_N^- := R_N^{\Delta=0} \cap \{M_{i,i+1} = 0\} 
\end{equation*}
and use the equality (property 3(a) from \S\ref{ssec:multidegrees})
$\mdeg_{R_N^{\Delta=0}} D_\sigma=\mdeg_{R_N^-}(D_\sigma\cap\{ M_{i,i+1}=0 \})$.
Cutting with $\{ M_{i,i+1}=0 \}$ results in the decomposition
\begin{equation}\label{eqn:hottafirst}
  \mdeg_{R_N^{\Delta=0}} D_\sigma=\mdeg_{R_N^-}(D_\sigma\cap\{ M_{i,i+1}=0 \})
  = \sum_\tau m_{\sigma\tau}\ \mdeg_{R_N^-} D'_\tau.
\end{equation}

To understand the effect of sweeping out a $B^{(i)}$-invariant variety 
using $GL_2^{(i)}$, we need the following special case of a result from
\cite{Jo,BBM}, spelled out in the present language in \cite[Lemma 1]{KZJ}.

\begin{Lemma}\label{lem:divdiff}
  Let $V \leq \MNC$ be a subspace invariant under $B_N$ and $GL_2^{(i)}$.
  Let $X$ be a variety in $V$ invariant under $B_N$ and rescaling, 
  with multidegree $\mdeg_V X$. If the generic fiber of the map 
  \begin{equation*} 
    \mu: (GL_2^{(i)} \times X)/B^{(i)} \to V, \qquad [g,x] \mapsto g\cdot x 
  \end{equation*}
  is finite over ${\rm Image}\ \mu$, 
  call its cardinality $k$; otherwise let $k=0$. 
  (The latter occurs iff $X$ is $GL_2^{(i)}$-invariant.) Then
  \begin{equation*} 
    k\ \mdeg_V ({\rm Image}\ \mu) = -\der_i \ \mdeg_V X 
  \end{equation*}
  where $\der_i$ is the divided difference operator,
  $\der_i p = (p - r_i\cdot p)/(z_i - z_{i+1})$, defined using 
  the reflection $r_i$ that switches $z_i\leftrightarrow z_{i+1}$.  
\end{Lemma}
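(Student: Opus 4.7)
The plan is to translate both sides of the identity into $T$-equivariant cohomology, where the divided difference operator $-\partial_i$ arises naturally as the push-forward along the projection $\mathbb{P}^1 = GL_2^{(i)}/B^{(i)} \to \mathrm{pt}$ via the classical BGG/Demazure formula. First I would form the twisted product $Z := GL_2^{(i)} \times^{B^{(i)}} V$, which, since $V$ is $GL_2^{(i)}$-invariant, carries two natural $T$-equivariant maps: the bundle projection $\pi : Z \to \mathbb{P}^1$ making $Z$ a $V$-bundle over $\mathbb{P}^1$, and the action map $\mu_V : Z \to V$, $[g,v] \mapsto g\cdot v$, which is a $\mathbb{P}^1$-bundle. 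The subvariety $W := GL_2^{(i)} \times^{B^{(i)}} X \subseteq Z$ is then the corresponding $X$-subbundle of $\pi$, and the map $\mu$ of the lemma is precisely $\mu_V|_W$.

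Next I would compute $\mu_*[W]$ in $T$-equivariant Borel--Moore homology in two ways. On the one hand, when the generic fiber of $\mu$ over $\mathrm{Image}(\mu)$ is finite of cardinality $k>0$, the degree formula for proper generically finite maps gives $\mu_*[W] = k \cdot [\mathrm{Image}(\mu)]$, so the left-hand side of the claimed identity is extracted. On the other hand, factoring $\mu$ through $\pi$ and integrating along the $\mathbb{P}^1$-fiber of $\pi$, the classical BGG/Demazure formula (as used in \cite{Jo,BBM} and in \cite[Lemma~1]{KZJ}) identifies the result with $-\partial_i \mdeg_V X$. In the degenerate case when $X$ is $GL_2^{(i)}$-invariant, $\mu$ has image $X$ and generic $\mathbb{P}^1$-fibers, so no top-dimensional cycle arises and $k = 0$; simultaneously the $GL_2^{(i)}$-invariance of $X$ forces $\mdeg_V X$ to be symmetric under the simple reflection $r_i$ realized by the Weyl element in $GL_2^{(i)}$, so $\partial_i \mdeg_V X = 0$ and both sides vanish, matching the convention.

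The main obstacle is verifying the BGG/Demazure identification with the correct sign. Concretely one must localize to the two $T$-fixed points of $\mathbb{P}^1 = GL_2^{(i)}/B^{(i)}$, whose tangent weights are $\pm(z_i - z_{i+1})$, and check that the restriction of the fundamental class $[W]$ at these fixed points recovers $\mdeg_V X$ at one point and its $r_i$-twist at the other, so that the equivariant localization sum $\alpha/(z_i-z_{i+1}) + r_i\alpha/(z_{i+1}-z_i)$ reproduces $-\partial_i \mdeg_V X$. The remaining bookkeeping amounts to checking that the extra rescaling $\complexes^\times$ in the hypothesis acts compatibly so that the homogeneity of multidegrees is preserved; this is routine.
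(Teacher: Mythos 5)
Your proof sketch is essentially correct, but note that the paper itself gives no proof of this lemma: it is stated as a special case of results of Joseph and of Borho--Brylinski--MacPherson, "spelled out in the present language in \cite[Lemma 1]{KZJ}." What you have written is precisely the standard equivariant-localization argument underlying those references, so there is no divergence of approach to compare, only the question of whether your sketch is sound --- and it is, modulo one sign slip.

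The sign slip: the tangent weight of $\mathbb{P}^1 = GL_2^{(i)}/B^{(i)}$ at the identity coset $eB^{(i)}$ is $z_{i+1}-z_i$ (the root of the lower-triangular entry $(i+1,i)$), not $z_i-z_{i+1}$, and the restriction of $[W]$ to that fixed fiber is $\mdeg_V X$ (because $W = \{(gB^{(i)},gx)\}$ restricts over $eB^{(i)}$ to $X$, and over $r_iB^{(i)}$ to $r_iX$). Localization then gives
\[
\mu_*[W] \;=\; \frac{\mdeg_V X}{z_{i+1}-z_i} \;+\; \frac{r_i\cdot \mdeg_V X}{z_i-z_{i+1}} \;=\; -\partial_i\,\mdeg_V X,
\]
rather than the formula you wrote (which, as written, has $z_i-z_{i+1}$ at the wrong fixed point and would yield $+\partial_i$). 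You did flag the sign as the main thing to check, so this is a bookkeeping issue in the write-up rather than a conceptual gap. One further point worth making explicit: the pushforward $\mu_*[W]=k\,[\overline{\mathrm{Image}\,\mu}]$ requires $\mu$ to be proper, which holds because $W$ is closed in $GL_2^{(i)}\times^{B^{(i)}}V \cong \mathbb{P}^1\times V$ and $\mathbb{P}^1$ is complete. The degenerate-case analysis (positive-dimensional generic fiber $\Rightarrow$ pushforward is zero, matched by $r_i$-symmetry of $\mdeg_V X$ when $X$ is $GL_2^{(i)}$-invariant) is also correct.
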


Applying the lemma amounts to
applying $-\partial_i$ to Equation~(\ref{eqn:hottafirst}).
The right hand side 
$\sum_\tau m_{\sigma\tau}\ \mdeg_{R_N^-} D'_{\tau} $ becomes
\begin{equation*}
  \sum_\tau m_{\sigma\tau} (-\partial_i) \mdeg_{R_N^-} D'_{\tau} 
  =  \sum_\tau m_{\sigma\tau}\ \mdeg_{R_N^-} D_\tau
  ={1\over\A+z_i-z_j}\sum_\tau m_{\sigma\tau}\ \mdeg_{R_N^{\Delta=0}} D_\tau
\end{equation*}
where the latter sums are over only those $\tau$ such that $D'_{\tau}$
is not $GL_2^{(i)}$-invariant (and where the last equality comes from axiom
3(b) of \S\ref{ssec:multidegrees}).
We finally obtain for\footnote{%
  We could equally well have defined $J_\sigma$ as $\mdeg_{R_N} D_\sigma$,
  incurring a factor of $A^N$. Joseph could not have, since he
  implicitly works at $A=0$, in his neglect of the dilation action.}
$J_\sigma=\mdeg_{R_N^{\Delta=0}} D_\sigma$:
\begin{equation}\label{eqn:hotta}
  -(\A+z_i-z_{i+1})\der_i J_\sigma=\sum_\tau m_{\sigma\tau} J_\tau
\end{equation}
This equation is only valid if $D_\sigma\not\subset \{M_{i,i+1}=0\}$;
however it is easy to see that in the case $D_\sigma\subset \{M_{i,i+1}=0\}$, 
it is still satisfied if one conventionally\footnote{%
  This is slightly strange in that $m_{\sigma\tau}$ is otherwise
  nonnegative, but this negativity is essentially unavoidable 
  if we want to construct matrices that square to $1$. That only happens
  for integer matrices of constant sign in the uninteresting case of 
  permutation matrices (times $\pm 1$) -- but those would not
  generate an irreducible $S_n$ representation.}
sets $m_{\sigma\sigma}=-2$, $m_{\sigma\tau}=0$ ($\sigma\ne\tau$).

Usually, Eq.~(\ref{eqn:hotta}) is rewritten as
\begin{equation*}
\left(-r_i+\A \der_i\right) J_\sigma=-J_\sigma-\sum_\tau m_{\sigma\tau} J_\tau
\end{equation*}
which shows that the $\integers$-span of the $\{J_\tau\}$
is closed under the action of the operators $\{-r_i + \A\partial_i\}$.
These operators are easily seen to satisfy the $\SN$ Coxeter relations,
but what is more, this representation is irreducible, and
each irreducible representation of $\SN$ arises from a unique nilpotent orbit.

Above, we had split the description into two cases according to whether
$D'_\sigma$ was $GL_2^{(i)}$-invariant or not, but this was only in an
attempt to aid understanding rather than mathematically necessary;
the equivariant cohomology calculation underlying Lemma \ref{lem:divdiff} 
does not actually require that one distinguish the two cases. That
calculation is based on the pushforward of the fundamental class along
the map $\mu$, and this pushforward vanishes when the generic fiber is
positive-dimensional.  Correspondingly, in that case $\mdeg_{\MNC} X$
is symmetric in $\{z_i,z_{i+1}\}$, thus annihilated by $\partial_i$.

\subsubsection{Example: The case \texorpdfstring{$D=\{M^2=0\}$}{D=\{M^2=0\}}}\label{ssec:hottasquare}

We include the results in this subsection only to illustrate
the formula above, and do not pause to give details of this calculation.

For simplicity let $N=2n$.
Then $\sigma$ is encoded by a link pattern on the interval with no crossings,
and $D_\sigma$ is $GL_2^{(i)}$-invariant iff $\sigma$ has no arch
connecting $i \leftrightarrow i+1$. 

Assume now that $\sigma$ has such an arch.
We have already considered the geometry of the hyperplane section
in Theorem \ref{thm:JMformula}; $D_\sigma \cap \{M_{i,i+1}=0\}$ has
$\overline{B_N\cdot \tau'_<}$ as a component iff $\tau'$ is constructed
from $\sigma$ 
\begin{itemize}
\item by pulling the $i\leftrightarrow i+1$ arch to touch
  some other arch, then creating a crossing there, or
\item (if there are no crossings, and no arch containing
  $i\leftrightarrow i+1$) by breaking the $i\leftrightarrow i+1$ arch
  into two vertical lines.
\end{itemize}
Moreover, these components $D'_{\tau} = \overline{B_N\cdot \tau'_<}$
show up with multiplicity $1$ (they are generically reduced).

The next step is to sweep each such $\overline{B_N\cdot \tau'_<}$ 
using $GL_2^{(i)}$. If $\tau'$ is of the second type listed above, 
then $\overline{B_N\cdot \tau'_<}$ is already $GL_2^{(i)}$-invariant.
Hence the $k$ in Lemma \ref{lem:divdiff} is $0$, and we can ignore
these components. 
For the $\tau'$ of the first type, 
$GL_2^{(i)} \cdot \overline{B_N\cdot \tau'_<} = \overline{B_N\cdot \tau_<}$,
where $\tau$ is constructed from $\tau'$ by replacing the new (unique)
crossing with $)($. In this case $k=1$.

This leads to the following characterization of the $\tau$ that arise from $\sigma$
in this way. Given an arbitrary chord diagram $\tau$ without crossings,
define $e_i \cdot \tau$ by replacing the two arches
\begin{equation*} i \leftrightarrow \tau(i), \quad i+1 \leftrightarrow \tau(i+1)
\qquad\dashrightarrow\qquad
i \leftrightarrow i+1, \quad \tau(i) \leftrightarrow \tau(i+1). \end{equation*}
Then the $\tau$ constructed above are those such that 
$\tau \neq \sigma$, $e_i\cdot \tau = \sigma$. 
We obtain finally the following recurrence for extended Joseph polynomials:
\begin{equation}\label{eqn:hottafinal}
\qquad\qquad\qquad
-(\A+z_i-z_{i+1})\der_i J_\sigma=\sum_{\tau\ne\sigma,\ e_i\cdot\tau=\sigma}
J_{\tau} 
\qquad\qquad\qquad
\textrm{if}\ \sigma(i)=i+1
\end{equation}
The Hotta construction only applies to orbital varieties, that is to
$\sigma$ noncrossing. However Eq.~(\ref{eqn:hottafinal}) still makes
sense for more general $\sigma$ (i.e., for smaller $B_N$-orbits inside $D$);
clearly the procedure outlined above works equally well if $\sigma$ has
crossings, as long as the arch $(i,i+1)$ is replaced in the preimages
$\tau$ by a pair of noncrossing arches. This condition on $\tau$, as
well as Eq.~(\ref{eqn:hottafinal}) itself, will naturally come out of
the more general Brauer construction.

In the course of the derivation of Eq.~(\ref{eqn:hottafinal}), we have
obtained the following result, related to sweeping only. First note
that in the calculation above the effect of sweeping on our multidegrees 
with respect to the upper triangle $R_N$ is given by the operator
$\tilde\der_i
:=\widehat{(\A+z_{i}-z_{i+1})}\der_i \widehat{1\over \A+z_{i}-z_{i+1}}
=\widehat{1\over \A+z_{i+1}-z_i}\partial_i \widehat{(\A+z_{i+1}-z_i)}$
(where $\widehat f$ denotes the multiplication-by-$f$ operator).
Then if $\sigma$ is any chord diagram 
such that $i$ and $i+1$ are connected to distinct arches that cross each other,
\begin{equation}\label{eqn:fiJeq}
  \qquad\qquad\qquad
  -\tilde\der_i J_\sigma=J_{\bar f_i^{-1}\cdot\sigma}
  \qquad\qquad\qquad
  \text{if $(i,\sigma(i))$ and $(i+1,\sigma(i+1))$ cross}
\end{equation}
where $\bar f_i^{-1}\cdot\sigma$ (the notation will be explained in
\S\ref{ssec:degenbrauer}) is by definition the chord diagram
in which the crossing of these two arches
is replaced with a )(.

In the permutation sector, Eq.~(\ref{eqn:fiJeq}) is nothing but the
usual recursion relation satisfied by (double) Schubert polynomials
(the extra conjugation of the divided difference operator coming from
the factors in Prop.~\ref{prop:doubschub}).

\subsubsection{An alternate construction:
  sweeping then cutting}\label{ssec:hottasweepcut}
In the Hotta construction we started with an orbital variety, 
intersected it with the hyperplane $\{M_{i,i+1}=0\}$, 
then swept out each component using $GL_2^{(i)}$ to get another orbital variety.

There is an alternate geometric construction, 
in which we start with an orbital variety, and sweep it out using $GL_2^{(i)}$. 
This is no longer upper triangular (unless $D_\sigma$ is $GL_2^{(i)}$-invariant),
but almost; the only lower-triangle entry that may appear\footnote{%
  This construction seems slightly simpler (or, requiring less art) in
  the following sense: rather than guessing in advance the condition
  $\{M_{i,i+1}=0\}$ that will work well with the sweeping operation,
  we just sweep and look through $E$'s equations for which conditions to 
  re-impose. ``It's easier to ask forgiveness than it is to get permission"
  -- Adm. Grace Hopper} 
is $M_{i+1, i}$. So we intersect with the hyperplane $\{M_{i+1\ i}=0\}$ to get
a schemy union of what turn out to be orbital varieties.

It is easiest to compare the two approaches by looking at the multidegrees.
In the cut-then-sweep approach, the geometry computed
\begin{equation*} 
  -\partial_i\ (\A + z_i - z_{i+1}) \mdeg_\MNC D_\sigma 
\end{equation*}
whereas in this sweep-then-cut approach, the geometry computes
\begin{equation*} 
  -(\A + z_{i+1} - z_i)\ \partial_i\ \mdeg_\MNC D_\sigma. 
\end{equation*}
(noting that we use here multidegrees with respect to the full space $\MNC$; 
compare also with Eq.~(\ref{eqn:hotta})).

The operators are very simply related, differing only by $2$,
\begin{equation*} -\partial_i \widehat{(\A + z_i - z_{i+1})} 
=- \widehat{(\A + z_{i+1} - z_i)} \partial_i\ -2 
 \end{equation*}
and as such we do not learn much from this new construction that was not
already evident in Hotta's cut-then-sweep construction.

The cut-then-sweep construction has reached full flower in the
construction of convolution algebras in geometric representation theory;
see e.g. \cite{CG,N}. In that more general setting we do not know
an analogue of the sweep-then-cut construction.

\subsection{The actions on the Brauer loop scheme}
\label{ssec:braueraction}

We now explore these constructions in the context of the Brauer loop
scheme.  The main difference is that $GL_2^{(i)}\cdot E$ violates {\em two} 
of $E$'s equations, one linear, one quadratic.

Most of the calculations in this section will be in the context of
infinite periodic upper triangular matrices, $R_{\integers \bmod N}$.
In this context we will use $GL_2^{(i)}$ to denote 
\begin{multline*} GL_2^{(i)} := \Big\{\wtM \in \text{Mat}_{\integers \bmod N}
    \ :\ \det \left[{M_{i,i}\ M_{i,i+1}\atop M_{i+1,i}\ M_{i+1,i+1}}\right]\neq 0,\\
    M_{jk} \neq \delta_{jk} \implies\exists a\in\integers\text{ s.t. }
j,k\in \{i+aN,i+1+aN\} \Big\}
\end{multline*}
and $B^{(i)}$ its intersection with $R_{\integers\bmod N}$.
Note that if $s \in GL_2^{(i)}$, $\wtM \in \text{Mat}_{\integers \bmod N}
$,
then
\begin{equation*} (s \wtM s^{-1})_{jk} = \wtM_{jk} 
  \qquad \text{ unless $j$ or $k$ is in $\{i,i+1\} \bmod N$}. 
\end{equation*}

Since $i$ will be fixed in this section, we could omit it from
the notation, and we will do so when considering the following notation
(used only in this section):
for $L\in \text{Mat}_{\integers\bmod N}$, let 
\begin{align*}
  L' =& \text{
    the $2\times 2$ submatrix of $L$ using rows $i,i+1$ and columns $i,i+1$,
  and}\\
  L^\# =& \text{
the $2\times 2$ submatrix of $L$ using rows $i,i+1$ and columns $i+N,i+1+N$}
\end{align*}
e.g. at $N=6$, 
\[
 L =
\begin{pmatrix}
  \ddots &&&&&& \\
  \cdots & [L'] & {b\ c\atop f\ g} & {d\ e\atop h\ i} & [L^\#] & \cdots \\
  \cdots &      & {j\ k\atop q\ r} & {l\ m\atop s\ t} & {n\ p\atop u\ v} & \cdots \\
  \cdots &  &   & {w\ x\atop \gamma\ \delta} & {y\ z\atop \epsilon\ \iota} & {\alpha\ \beta\atop \kappa\ \lambda} \\
  \cdots &&& & [L'] & \cdots \\
  &&&&&& \ddots
\end{pmatrix}
\]

We will need some lemmas about slicing and sweeping in this context.
The first one produces useful representatives for many arguments,
and was already implicitly used in \cite{KZJ}. 

\newcommand\wtrho{{\widetilde\rho}}
\begin{Lemma}\label{lem:genericelts}
  Let $\rho$ be a partial involution of $\{1,\ldots,N\}$,
  i.e. an injective partially defined function 
  such that for any $i$, if $\rho(i)$ and then $\rho(\rho(i))$ are
  defined, $\rho(\rho(i))=i$. Pick $s_1,\ldots,s_N$ generic, and
  define $\wtrho \in \text{Mat}_{\integers\bmod N}$ by
  \[ \wtrho_{jk} = 
  \begin{cases}
    s_{j'} &\text{if $\pi(j)= k\bmod N$, $k>j>k-N$, $j'= j\bmod N$} \\
    0 &\text{otherwise.}
  \end{cases}
  \]
  Construct an involution $\rho'$ from $\rho$ by ``promoting each
  leftward move to a $2$-cycle, and making all others fixed points.''
  In formulae,
  \[ \rho'(j) = 
  \begin{cases}
    \rho(j) &\text{if $\rho(j)$ is defined and $\rho(j)<j$} \\
    j' &\text{if $\rho(j')$ is defined, $\rho(j')=j$, and $j'>j$} \\
    j &\text{otherwise}      
  \end{cases}
  \]
  Then
  \begin{itemize}
  \item $\wtrho^2_{i,i+N} \neq 0$ iff $\rho(i),\rho(\rho(i))$ are both defined.
    If $\wtrho^2_{i,i+N} = \wtrho^2_{j,j+N} \neq 0$, then $i=j$ 
    or $i = \rho(j)$.
  \item Let $M$ be the $N\times N$ submatrix of $\wtrho$ using rows 
    and columns $\{1,\ldots,N\}$. Then $M^2=0$ and $M$ is upper triangular,
    so by Theorem \ref{thm:involutions}, $M$ is $B_N$-conjugate to the
    strict upper triangle of the permutation matrix of
    a unique involution; this involution is $\rho'$.
  \end{itemize}
\end{Lemma}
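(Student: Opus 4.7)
The plan is to unwind the definition of $\widetilde\rho$ directly; both bullet points reduce to elementary bookkeeping, and the only real subtlety is matching the ``leftward vs.\ rightward'' asymmetry in the definition of $\widetilde\rho$ with the ``all $2$-cycles'' description of $\rho'$.

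For the first bullet, I would fix $i \in \integers$ and expand
\[
\widetilde\rho^2_{i,i+N} = \sum_{i<j<i+N} \widetilde\rho_{ij}\,\widetilde\rho_{j,i+N}.
\]
By the defining rule of $\widetilde\rho$, the factor $\widetilde\rho_{ij}$ is nonzero only if $\rho(i') = j'$ (where $i' = i \bmod N$, $j' = j \bmod N$), in which case it equals $s_{i'}$; since $\rho$ is a partial function, $j$ is then uniquely determined by $i$. Similarly the factor $\widetilde\rho_{j,i+N}$ is nonzero only if $\rho(j') = i'$, in which case it equals $s_{j'}$. Hence the sum has at most one nonzero term, equal to $s_{i'}s_{\rho(i')}$, and this occurs precisely when both $\rho(i')$ and $\rho(\rho(i'))$ are defined. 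Setting $\widetilde\rho^2_{i,i+N} = \widetilde\rho^2_{j,j+N}$ nonzero forces $s_{i'}s_{\rho(i')} = s_{j'}s_{\rho(j')}$, and by genericity of the $s_k$'s this gives $\{i',\rho(i')\} = \{j',\rho(j')\}$ as unordered pairs, hence $i \equiv j$ or $i \equiv \rho(j) \pmod N$.

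For the second bullet, I would analyze the $N \times N$ submatrix $M$. The condition $k > j > k-N$ restricted to $j,k \in \{1,\ldots,N\}$ collapses to $j < k$, so $M$ is strictly upper triangular; moreover $M_{jk} \neq 0$ iff $\rho(j) = k$ with $j < k \leq N$, in which case $M_{jk} = s_j$ (the ``leftward moves'' $\rho(j) < j$ appear in $\widetilde\rho$ at column $\rho(j) + N > N$, outside the window, and contribute nothing to $M$). Then $M^2 = 0$ follows because a nonzero contribution to $M^2_{jk}$ would require $\rho(j) = l > j$ and $\rho(l) = k > l$, but the partial involution property forces $\rho(l) = j < l$, a contradiction. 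To identify $M$ up to $B_N$-conjugation with $\rho'_<$, I would trace the three clauses defining $\rho'$ to see that its $2$-cycles are exactly the pairs $(a,b)$ with $a < b$ and $\rho(a) = b$, so $\rho'_<$ has the same support as $M$; then a diagonal matrix $D \in B_N$ with $D_{aa} = 1$ at the smaller element of each $\rho$-pair (and at points outside any pair) and $D_{bb} = s_a$ at the larger element $b = \rho(a)$ is well-defined since the pairs are disjoint, and satisfies $DMD^{-1} = \rho'_<$. Uniqueness of the involution then comes from Theorem~\ref{thm:involutions}. The main (minor) obstacle is purely organizational: keeping the conventions straight between a nonzero entry $s_j$ coming from a rightward $\rho$-move and its partner $s_{\rho(j)}$ contributing in a different row, so that the rescaling $D$ is chosen consistently.
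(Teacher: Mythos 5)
Your proof is correct and follows the paper's (very brief) proof step for step: a one-term expansion of $\wtrho^2_{i,i+N}$ plus genericity of the $s_k$ for the first bullet, and for the second the elementary observations that $M$ is strictly upper triangular with $M^2=0$, followed by a diagonal conjugation. You are in fact more explicit than the paper, which simply asserts that $M$ and $\rho'_<$ ``have entries in the same places, so are even $T$-conjugate'' where you actually write out the diagonal matrix $D$.

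One flag worth raising. You assert that tracing the three clauses defining $\rho'$ yields $2$-cycles $\{a,b\}$, $a<b$, with $\rho(a)=b$. The clauses as literally written actually give $\rho(b)=a$: the second clause sends $a$ to the $b>a$ satisfying $\rho(b)=a$, and the first sends $b$ to $\rho(b)=a<b$. Under your reading of the definition of $\wtrho$ (the ``$\pi$'' there being read as $\rho$, so $M_{jk}\neq 0$ iff $\rho(j)=k$ with $j<k$), the supports of $M$ and $\rho'_<$ would therefore disagree whenever $\rho$ is properly partial --- e.g.\ with $N=4$ and $\rho$ defined only by $\rho(1)=3$, your $M$ has rank $1$ while the formula gives $\rho'=\mathrm{id}$, hence $\rho'_<=0$. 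So the lemma as stated carries a left/right inconsistency, and your ``trace'' silently resolves it by reading $\rho'$'s $2$-cycles in the way that makes the lemma correct for your $M$. Since the paper only ever invokes the lemma with $\rho$ a full (fixed-point-free) involution, where $\rho'=\rho$ and the distinction between leftward and rightward moves vanishes, nothing downstream is affected --- but if you were genuinely deriving the $2$-cycle condition from the displayed formula for $\rho'$, you would have landed on the opposite convention and then found the supports fail to match.
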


\begin{proof}
  First calculate
  \[ (\wtrho^2)_{j,j+N} = 
  \begin{cases}
    s_j s_{\rho(j)} &
    \text{if $\rho(j)$ and then $\rho(\rho(j))$ are both defined} \\
    0 &\text{otherwise}
  \end{cases}
  \]
  This, and the genericity of the $\{s_i\}$, imply the first two statements.
  
  That $M$ is strictly upper triangular is tautological.
  That $M^2$ is zero follows from the conditions on $\rho$. 
  Then the matrix $M$ and the strict upper triangle of $\rho'$ have
  entries in the same places, so are even $T$-conjugate, thus $B_N$-conjugate.
\end{proof}

\begin{Lemma}\label{lem:extraeqns}
  Let $X\subseteq E$, thought of inside $\text{Mat}_{\integers\bmod N}$.
  So for $L\in X$, the $2\times 2$ matrices $L',L^\#$ 
  are strictly upper triangular and upper triangular, respectively.

  Then the scheme
  \[ \big( GL_2^{(i)}\cdot X\big) 
  \cap \left\{L : L' \text{ is strictly upper triangular}, 
  (L^2)^\# \text{ is upper triangular} \right\} \]
  is contained inside $E$ scheme-theoretically, and the set
  \[ \big( GL_2^{(i)}\cdot X\big) 
  \cap \left\{L : L' \text{ is upper triangular}, 
  (L^2)^\# \text{ is upper triangular} \right\} \]
  is contained inside $E$ set-theoretically. 

  If in addition $X \subseteq \{L : L' = \left[{0\atop 0}{0\atop 0}\right] \}$,
  then the scheme
  \[ \big( GL_2^{(i)}\cdot X\big) 
  \cap \left\{L : (L^2)^\# \text{ is upper triangular} \right\} \]
  is contained inside $E$ scheme-theoretically.
\end{Lemma}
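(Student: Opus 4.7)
The plan is to reduce all three claims to an entrywise analysis of $L=sMs^{-1}$ for $s\in GL_2^{(i)}$ and $M\in X$, and to identify which defining equations of $E$ are automatic on the scheme-theoretic image $GL_2^{(i)}\cdot X$ and which must be imposed as additional conditions. The key observation is that $s\in GL_2^{(i)}$ acts as the identity off the special $2\times 2$ blocks indexed by $\{i+aN,i+1+aN\}$ for $a\in\integers$, so via the explicit formula $(sMs^{-1})_{jk}=\sum_{p,q}s_{jp}M_{pq}(s^{-1})_{qk}$, both $L_{jk}$ and $(L^2)_{jk}$ at positions with either $j$ or $k$ outside these blocks reduce to sums of products of entries $M_{pq}$ or $(M^2)_{pq}$ already forced to vanish by the defining equations of $X\subseteq E$.

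First I would carry out a case analysis on $(j,k)$, using periodicity $L_{jk}=L_{j+N,k+N}$ to cut down to finitely many positions. This analysis shows that the only positions at which an equation of $E$ is not already forced by the equations of $X$ are those inside $L'$, yielding the three equations $L_{i,i}=L_{i+1,i+1}=L_{i+1,i}=0$ cutting out $L'$ strictly upper triangular, and one position inside $(L^2)^\#$, namely $(L^2)_{i+1,i+N}=0$ (the other three positions of $(L^2)^\#$ have $k-j\geq N$ and so are unconstrained by $L^2\in\langle S^N\rangle$). Imposing these extra equations scheme-theoretically therefore cuts out exactly $E\cap GL_2^{(i)}\cdot X$, proving the first claim.

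For the second claim I would use that $L'=s'M'(s')^{-1}$, where $s'\in GL_2$ is the restriction of $s$ to the special block and $M'=\left(\begin{smallmatrix}0&M_{i,i+1}\\0&0\end{smallmatrix}\right)$ is nilpotent. Hence $L'$ is nilpotent, with $\Tr L'=\det L'=0$; if $L'$ is merely upper triangular then its diagonal entries are its eigenvalues, which must both vanish, so $L'$ is in fact strictly upper triangular. This reduces the set-theoretic claim to the scheme-theoretic one of the first part.

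For the third claim, the hypothesis $X\subseteq\{L:L'=0\}$ means $M'=0$ as a matrix-valued function on $X$, so $L'=s'\cdot 0\cdot (s')^{-1}=0$ already as a function on $GL_2^{(i)}\times X$; hence the equations cutting out $L'=0$ lie in the defining ideal of the scheme-theoretic image of $\mu\colon(s,M)\mapsto sMs^{-1}$, and the condition ``$L'$ strictly upper triangular'' of the first part is automatic and need not be separately imposed. The main obstacle I anticipate is promoting the entrywise ``automatic'' arguments from the pointwise level to the scheme-theoretic level; this should follow by verifying that the relevant coordinate functions on $\text{Mat}_{\integers\bmod N}$ pull back along $\mu$ to functions that are identically zero as regular functions on $GL_2^{(i)}\times X$ (not merely at closed points), which in turn requires careful bookkeeping of which equations from the ideal of $X$ kill which pulled-back coordinates.
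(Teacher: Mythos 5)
Your proposal is correct and follows essentially the same approach as the paper's proof: the key observation that $s\in GL_2^{(i)}$ acts trivially off the $\{i,i+1\}\ (\mathrm{mod}\ N)$ blocks, the same entrywise case analysis isolating the $3+1$ equations, the same nilpotency argument (eigenvalues of the upper-triangular $L'$ forced to vanish) for the set-theoretic part, and the same observation that $L'=s'\cdot 0\cdot(s')^{-1}=0$ for the third part. The concern you raise at the end about promoting pointwise identities to the scheme level is resolved exactly as you suggest: the relevant identities $(s\wtM s^{-1})_{jk}=\wtM_{jk}$ and $(s\wtM^2 s^{-1})_{jk}=(\wtM^2)_{jk}$ are polynomial identities pulled back along $\mu$, and the equations of $X\subseteq E$ already lie in $I(X)$.
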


\begin{proof}
It is easy to see that for $s\in GL_2^{(i)}$, 
we have $(s\cdot L)' = s' \cdot L'$, $(s\cdot L)^\# = s'\cdot L^\#$.

The matrices $s\cdot \wtM \in GL_2^{(i)}\cdot X$ 
come very close to satisfying $E$'s defining equations:
\begin{align*}
  (s \wtM s^{-1})_{jk} 
  &= \wtM_{jk} &&\text{unless $j,k \in \{i,i+1\}\ (\bmod N)$} \\
  &= 0 &&\text{for $j\geq k$, since $\wtM\in E$}\\
  (s \wtM s^{-1})^2_{\ jk} 
  &= (s \wtM^2 s^{-1})_{jk} \\
  &= \wtM^2_{jk} &&\text{unless $j,k \in \{i,i+1\}\ (\bmod N)$} \\
  &= 0 &&\text{for $j+N>k$, since $\wtM\in E$}
\end{align*}
The only ones that are satisfied on $E$ and not necessarily on
$GL_2^{(i)}\cdot X$ can be rewritten, for $L \in GL_2^{(i)}\cdot X$, 
as the $3+1$ equations
\[ L' \text{ is strictly upper triangular}, 
  (L^2)^\# \text{ is upper triangular}. \]
The first three equations are implied {\em set-theoretically} 
by the single equation $L_{i+1,i} = 0$, since $L'$ is nilpotent
(being conjugate to $\wtM'$, which is strictly upper triangular
for $\wtM\in E$).

So for any subscheme $X\subseteq E$, to intersect $GL_2^{(i)} \cdot X$ with $E$ 
it suffices set-theoretically to intersect with the hypersurfaces defined by
\[ \wtM_{i+1,i} = 0,\qquad (\wtM^2)_{\ i+1,i+N} = 0,\] 
whose multidegrees
are $\A + z_{i+1} - z_i$, $2\A + z_{i+1} - z_{i+N}=\A+\B+z_{i+1}-z_i$
respectively. 

Finally, if 
$X\subseteq \left\{\wtM: \wtM' = \left[{0\atop 0}{0\atop 0}\right] \right\}$,
then $(s\cdot \wtM)' = s'\cdot \wtM' = \left[{0\atop 0}{0\atop 0}\right]$
so the first three equations are automatic.
\end{proof}

The calculation above was significantly more complicated in \cite{KZJ},
where we did not make proper use of $R_{\integers \bmod N}$.

\subsubsection{Sweeping then cutting: the \texorpdfstring{$f_i$}{f_i} action}
\label{ssec:sweepcutF}

We now recall the results of \cite[\S 4.2]{KZJ}, 
adapted to the present discussion. We have been able to streamline
Proposition 6 (the technical heart) of that paper enough that 
we include here a complete proof, repeating some parts of 
the one from \cite{KZJ}.

Begin with a chord diagram $\pi$ with no ``little arch'' connecting
$i$ to $i+1$, and the corresponding Brauer loop variety 
$E_\pi \subseteq \MMN$. Sweep $E_\pi$ out using the $GL_2^{(i)}$ action.
Lifting its elements to $R_{\integers \bmod N}$, an element
lying over the sweep $GL_2^{(i)} \cdot E_\pi$ looks like 
\begin{equation*} 
  s \wtM s^{-1}, \qquad\text{where} 
  \quad s\in GL_2^{(i)},\quad \wtM^2 \in \< S^N \>, 
  \quad (M^2)_{i,i+N} = (M^2)_{\pi(i),\pi(i)+N}. 
\end{equation*}

The diagram of the spaces encountered in this construction:
\begin{center}
\begin{tikzcd}
\ &GL_2^{(i)}\cdot E_\pi \arrow{rd}{cut} & GL_2^{(i)} \times^{B^{(i)}} E_\pi \arrow[twoheadrightarrow]{l}\\
E_\pi\arrow{ur}{sweep}\arrow[hookrightarrow]{rr} & & 
(GL_2^{(i)}\cdot E_\pi \ \cap\ \{M : M^2_{i+1,i+N} = 0\}) \rlap{$=: F
  \sim E_\pi \cup E_{f_i\cdot \pi}$} \\
\end{tikzcd}
\end{center}

{\em Calculating the degree.}
We first show the map
$GL_2^{(i)} \times^{B^{(i)}} E_\pi \to GL_2^{(i)} \cdot E_\pi$ has degree $1$. \break
We must select an $\wtM \in GL_2^{(i)}\cdot E_\pi$ and compute the fiber 
$\left\{ [s, \wtM'] : s\cdot \wtM' = \wtM \right\}$ lying over it.
By Theorem \ref{thm:Ecomps}, for general elements $\wtM$ 
lying over $E_\pi$ we know $\wtM^2_{i,i+N},\wtM^2_{i+1,i+1+N}$ are different. 
Fix such an $\wtM = 1\cdot \wtM \in GL_2^{(i)}\cdot E_\pi$. 
Then $(\wtM^2)^\#$ is upper triangular with distinct diagonal entries,
and 
\begin{equation*} 
  \wtM^2 = s\cdot \wtM'^2 \quad\implies\quad
  (\wtM^2)^\# = \left(s\cdot \wtM'^2\right)^\# = s' \cdot (\wtM'^2)^\# 
\end{equation*}
where $(\wtM'^2)^\#$ is also upper triangular. By considering the
eigenvalues of the $2\times 2$ matrices $(\wtM^2)^\#, (\wtM'^2)^\#$,
we see their diagonals must either agree or be reversed.
Since $\wtM' \in E_\pi$, by Theorem \ref{thm:Ecomps} their diagonals
must agree. Since $s'$ conjugates a $2\times 2$ upper triangular matrix 
with distinct eigenvalues to another with the same diagonal,
$s'$ too must be upper triangular, so $s \in B^{(i)}$. 
This shows that the fiber
$\left\{ [s, \wtM'] : s\cdot \wtM' = \wtM \right\}$ over $\wtM$ is a point,
hence the degree of the map is $1$. 
In particular, $E_\pi$ is not $GL_2^{(i)}$-invariant.

{\em The necessary extra equation.} 
By this non-invariance, $\dim (GL_2^{(i)} \cdot E_\pi) > \dim E_\pi = \dim E$.
Hence the irreducible variety $GL_2^{(i)} \cdot E_\pi$ does not lie in $E$,
and by the last conclusion in Lemma \ref{lem:extraeqns}
the only equation we need re-impose is $(\wtM^2)_{i+1,i+N} = 0$.
Let $F$ denote the intersection of $GL_2^{(i)} \cdot E_\pi$ and
$\left\{ \wtM : (\wtM^2)_{i+1,i+N} = 0 \right\}$. 

Since $F$ is a
Cartier divisor in the irreducible variety $GL_2^{(i)} \cdot E_\pi$, 
its geometric components are all of codimension $1$ therein,
hence of the same dimension as $E$. So as a set, $F$ is a union of
some of the top-dimensional
components of $E$. Since $F \subseteq E$ (by Lemma \ref{lem:extraeqns}),
and $E$ is generically reduced along its top-dimensional components, 
$F$ is too.

Hence the scheme $F$ is a union of some $\{E_\rho\}$,
up to embedded components, which do not affect multidegree calculations.

{\em The geometric components of $F$.}
It is easy to see that $E_\pi$ is a component:
\begin{equation*} 
  E_\pi = 1 \cdot E_\pi \subseteq \left( GL_2^{(i)} \cdot E_\pi \right)
  \cap \left\{(\wtM^2)_{i+1,i+N} = 0\right\} 
  =: F. 
\end{equation*}
To determine which other $E_{\pi'}$ are in $F$, 
we use Theorem \ref{thm:Ecomps}, 
which characterizes the components using the functions $\{ \wtM^2_{j,j+N}\}$.
\begin{equation*} 
  (s \wtM s^{-1})^2_{j, j+N} = (s \wtM^2 s^{-1})_{j, j+N} \qquad
  = \wtM^2_{j, j+N} \text{ unless $j \in \{i,i+1\}\bmod N$} 
\end{equation*}
hence 
\begin{equation*}
  (s \wtM s^{-1})^2_{j, j+N} = (s \wtM s^{-1})^2_{\pi(j), \pi(j)+N} 
  \qquad \text{ unless $j$ or $\pi(j) \in \{i,i+1\}\bmod N$}, 
\end{equation*}
so by Theorem \ref{thm:Ecomps} (and assuming now, 
without loss of generality, that $1\leq i,j\leq N$)
\begin{equation*} 
  E_{\pi'} \subseteq F \qquad\implies\qquad
  \pi'(j) = \pi(j) \quad \text{ for } \quad j\notin \{i,i+1,\pi(i),\pi(i+1)\}. 
\end{equation*}
(This calculation, too, was more complicated in \cite{KZJ}
for not using $R_{\integers \bmod N}$.)

There are three ways to link up $\{i,i+1,\pi(i),\pi(i+1)\}$,
namely $\pi$, $f_i\cdot \pi$, and $e_i\cdot \pi$, 
but $E_{e_i\cdot \pi}$ (connecting $i\leftrightarrow i+1$)
does not satisfy $(s\wtM s^{-1})_{i, i+1} = 0$. Hence
\begin{equation*} E_{\pi'} \subseteq F
\qquad\implies\qquad
\pi' = \pi \ \text{ or }\ \pi' = f_i\cdot \pi. \end{equation*}
This leaves two possibilities for the {\em set\/} $F$:
\begin{equation*} 
  F \quad \text{ is either }\quad E_\pi \quad 
  \text{ or } \quad E_\pi \cup E_{f_i\cdot \pi}. 
\end{equation*}
To rule out $F = E_\pi$, 
we exhibit an element of $F \setminus E_\pi$.
Let $\wtM \in \text{Mat}_{\integers\bmod N}$ be the element
constructed in Lemma \ref{lem:genericelts}.
In particular $\wtM \in E_\pi$, and $\wtM \notin E_\rho$ for any $\rho\neq\pi$.
Let $s \in GL_2^{(i)}$ be the element with 
$s' = \left[ {0\atop 1}{-1\atop 0} \right]$. 
Then $(s\cdot\wtM)^2 = s\cdot\wtM^2$ is supported on the same superdiagonal, 
but with the $(i,i+N),(i+1,i+N+1)$ elements exchanged. 
So $s\cdot\wtM \in E_{f_i\cdot \pi}$, and not in $E_\pi$, as intended.

(It does not follow from this that $F = E_\pi \cup E_{f_i\cdot \pi}$
as a scheme, {\em even if} one also believes that $E$ is reduced: 
$F$ could conceivably still have embedded components along the
intersections of $E_\pi,E_{f_i\cdot\pi}$ with other components of $E$.)

{\em Multidegrees.}
That completes our geometric analysis of $F$, and we move on now 
to the consequences for multidegrees:
\begin{align}
  \mdeg F &= (\A+\B + z_{i+1} - z_i) (-\tilde\partial_i\ \mdeg E_\pi) 
  \qquad \text{by Lemma~\ref{lem:divdiff}}  \label{eqn:mdegF1} \\
  \mdeg F &= \mdeg E_\pi + \mdeg E_{f_i\cdot \pi} \label{eqn:mdegF2}
\end{align}
To use Lemma \ref{lem:divdiff} required computing its factor $k$, 
which we showed at the beginning to be $1$,
and the factor $\A+\B + z_{i+1} - z_i$ is
the weight of the equation $(M^2)_{i+1,i+N} = 0$. 
The extra conjugation 
$\tilde\partial_i
:=\widehat{1\over \A+z_{i+1}-z_i}\partial_i \widehat{(\A+z_{i+1}-z_i)}$
of $\partial_i$ by the multiplication operator, as compared to
Lemma~\ref{lem:divdiff}, comes from the fact that we consider
multidegrees with respect to upper triangular matrices.

\junk{
  First,
  \begin{equation*} 
    \mdeg F \quad \text{ is either }\quad
    \mdeg E_\pi \quad \text{ or } \quad \mdeg E_\pi + \mdeg E_{f_i\cdot \pi} 
  \end{equation*}
  because the difference between $F$ and its reduction is in lower dimension
  which does not affect the multidegree. Second, by Lemma~\ref{lem:divdiff}
  \begin{equation}
    \mdeg F = (\A+\B + z_{i+1} - z_i) (-\tilde\partial_i \mdeg E_\pi)
  \end{equation}
  (the factor $k$ required in Lemma \ref{lem:divdiff} is $1$, as we
  showed at the beginning) where the factor $\A+\B + z_{i+1} - z_i$ is
  the weight of the equation $(M^2)_{i+1,i+N} = 0$, and we recall that
  $\tilde\partial_i
  :=\widehat{1\over \A+z_{i+1}-z_i}\partial_i \widehat{(\A+z_{i+1}-z_i)}$,
  the extra conjugation of $\partial_i$ compared to
  Lemma~\ref{lem:divdiff} coming from the fact that we consider
  multidegrees with respect to upper triangular matrices.
  
  Under the first possibility, $\mdeg F = \mdeg E_\pi$, we get
  \begin{equation*} 
    \mdeg E_\pi = (\A+\B + z_{i+1} - z_i) (- \tilde\partial_i \mdeg E_\pi); 
  \end{equation*}
  applying $\tilde\partial_i$ to both sides we get
  \begin{equation*} 
    \tilde\partial_i \mdeg E_\pi = 2 \tilde\partial_i \mdeg E_\pi 
  \end{equation*}
  but $\tilde\partial_i \mdeg E_\pi = \mdeg (GL_2^{(i)} \cdot E_\pi) \neq 0$,
  since all the $T$-weights live in a half-space and the multidegree
  is a positive sum of products of weights.
  
  Hence the second possibility must hold, and 
}

We conclude that
\begin{equation}\label{eqn:fiaction}
  -(\A+\B + z_{i+1} - z_i) \tilde\partial_i \Psi_\pi = 
  \Psi_\pi + \Psi_{f_i\cdot \pi}. 
\end{equation}
which is Eq.~\eqref{eqn:qkzdv2} with $\Psi_\pi=\mdeg E_\pi$.

(The possibility $F = E_\pi$ we ruled out above can
also be excluded using multidegrees, which was how we did it
in \cite{KZJ}.
Assuming $\mdeg F = \mdeg E_\pi$ leads quickly to the equation
$\tilde\partial_i \mdeg E_\pi = 2 \tilde\partial_i \mdeg E_\pi$.
But $\tilde\partial_i \mdeg E_\pi$ is the multidegree of a 
subscheme of a representation whose $T$-weights all live in a half-space,
so cannot be zero; see lemma \ref{lem:mdegineqs} to come.)

An interesting difference between this construction and the one in
\S\ref{ssec:hottasweepcut} is the use of a {\em quadratic\/} equation 
$(M^2)_{i+1,i+N} = 0$ rather than a linear equation $M_{i+1, i} = 0$.

\subsection{Connection with the \texorpdfstring{$q$}{q}KZ equation}
We can now formulate at last a main result of this paper:
\begin{Theorem}\label{thm:main}
  The following two vector-valued polynomials 
  $\Psi=\sum_\pi \Psi_\pi
  \pi\in V[\A,\epsilon,z_1,\ldots,z_N]$ coincide: 
  \begin{enumerate}
  \item[(i)] the solution of
    Eqs.~(\ref{eqn:qkza}--\ref{eqn:qkzb}) given by Theorem \ref{thm:qkzsol} 
    with its normalization fixed by Eq.~(\ref{eqn:basecase}), and 
  \item[(ii)] the vector of multidegrees
    $\Psi_\pi=\mdeg_{\MMN^{\Delta=0}} E_\pi$ of the irreducible components 
    of the Brauer loop scheme (with the identification $\B:=\A-\epsilon$).
  \end{enumerate}
\end{Theorem}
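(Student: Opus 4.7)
The plan is to deduce (i) $=$ (ii) from the uniqueness part of Theorem \ref{thm:qkzsol}, specifically Lemma \ref{lem:qkzuniq}. Writing $\Psi^{\text{geom}}_\pi := \mdeg_{\MMN^{\Delta=0}} E_\pi$, I will verify that this vector satisfies (a) the $f_i$-type recursion \eqref{eqn:qkzd} for every link pattern $\pi$ and every $i$ with $\pi(i)\neq i+1$, and (b) the explicit product formula (\ref{eqn:basecase}) at $\pi = \pi_0$. Given (a) and (b), Lemma \ref{lem:qkzuniq} forces $\Psi^{\text{geom}}_\pi = \Psi^{q\text{KZ}}_\pi$ for all $\pi$, since both may be computed as $\Theta_s \Psi_{\pi_0}$ along any groupoid element $s\in\mathcal{\hat S}_{\pi_0,\pi}$. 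Step (a) requires no new work: the sweep-then-cut analysis of $E_\pi$ for $\pi(i)\neq i+1$ in \S\ref{ssec:sweepcutF} produces the identity (\ref{eqn:fiaction}), which (as can be checked by expanding both sides using $\partial_i = (1-\tau_i)/(z_i-z_{i+1})$) is precisely the cleared-denominator form \eqref{eqn:qkzdv2} of \eqref{eqn:qkzd} under the identification $\Psi_\pi = \Psi^{\text{geom}}_\pi$.

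For step (b), the plan is to identify $E_{\pi_0}$ with the linear subspace
\[
L := \{ M \in \MMN^{\Delta=0} \,:\, M_{ij} = 0 \text{ for all } 1 \leq i \leq N \text{ and } i < j < i + n\}.
\]
The inclusion $L \subseteq E$ is immediate from Theorem \ref{thm:compeqns}: for $M \in L$ the product $M_{ik}M_{kj}$ vanishes unless $k - i \geq n$ and $j - k \geq n$, forcing $j - i \geq 2n = N$, so that $M^2 \in \<S^N\>$; the rank bounds and the cyclic equalities $(M^2)_{i,i+N} = (M^2)_{\pi_0(i),\pi_0(i)+N}$ then reduce on $L$ to trivial identities $M_{i,i+n}M_{i+n,i+N} = M_{i+n,i+N}M_{i,i+n}$. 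Next, for generic $M \in L$ the value $(M^2)_{i,i+N} = M_{i,i+n}M_{i+n,i+N}$ depends only on the entries in the ``$s$-row'' $\{M_{i,i+n}\}_{i=1}^{N}$, and these $N$ quantities pair off generically into the $n$ distinct products $\{M_{i,i+n}M_{i+n,i+N}\}_{i=1}^n$ prescribed by the link pattern $\pi_0$; by Theorem \ref{thm:Ecomps}(2), this places the generic point of $L$ in the component $E_{\pi_0}$ and in no other component of $E$. Counting dimensions gives $\dim L = N(N-1) - N(n-1) = 2n^2 = \dim E_{\pi_0}$, so by irreducibility of $L$ and Rothbach's equidimensionality of $E$, we get $L = E_{\pi_0}$. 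Being a linear subspace, $\Psi^{\text{geom}}_{\pi_0}$ equals the product of the weights of its defining linear forms, namely $\prod_{i=1}^N \prod_{j=i+1}^{i+n-1}(\A + z_i - z_j)$ (with the cyclic convention $z_{j+N} = z_j + \A - \B$), which is exactly (\ref{eqn:basecase}).

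The chief technical hurdle is the component identification $L = E_{\pi_0}$: while $L \subseteq E$ is cheap, confining $L$ to the single component indexed by $\pi_0$ depends on the explicit computation of $(M^2)_{i,i+N}$ above together with Theorem \ref{thm:Ecomps}. After that, (a), (b), and Lemma \ref{lem:qkzuniq} combine to give the theorem. Note that neither the cyclicity condition \eqref{eqn:qkzb} nor the $e_i$ equations \eqref{eqn:qkze} are needed for the uniqueness argument; both hold a posteriori for the geometric side, and their direct geometric significance is taken up in the subsequent subsections.
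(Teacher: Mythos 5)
Your argument is essentially the paper's: invoke Lemma \ref{lem:qkzuniq} for uniqueness, verify the $f_i$-recursion via Eq.~(\ref{eqn:fiaction}) from \S\ref{ssec:sweepcutF}, and fix the normalization by computing $\Psi_{\pi_0}$. The only divergence is that you prove $E_{\pi_0}=L$ directly (via the identity $(M^2)_{i,i+N}=M_{i,i+n}M_{i+n,i+N}$ on $L$, generic distinctness of the products, and the dimension count $\dim L=2n^2$) rather than citing Prop.~5 of \cite{KZJ} as the paper does; your direct verification is correct.
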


\begin{proof}
  The multidegrees $\Psi_\pi=\mdeg_{\MMN^{\Delta=0}} E_\pi $ are by
  definition homogeneous polynomials in
  $\integers[\A,\epsilon,z_1,\ldots,z_N]$, of degree the codimension
  of the $E_\pi$, which is shown in Theorem 3 of \cite{KZJ} to be
  $2n(n-1)$. Furthermore, they satisfy Eq.~\eqref{eqn:fiaction},
  which is identical to
  Eq.~\eqref{eqn:qkzdv2}.  Thus, they fulfill all the
  hypotheses of Lemma~\ref{lem:qkzuniq} to ensure uniqueness of the
  solution of Eqs.~(\ref{eqn:qkza}--\ref{eqn:qkzb}) up to
  normalization. The latter is fixed by considering the base case
  $\pi_0(i)=i+n$: as stated in the proof of Prop.~5 of \cite{KZJ},
  $E_{\pi_0}$ is a linear variety given by the equations $M_{ij}=0$,
  $i=1,\ldots,N$, $j=i+1,\ldots,i+n-1$, hence its multidegree matches
  Eq.~(\ref{eqn:basecase}).
\end{proof}

\begin{Corollary}\label{cor:fgivese}
  The vector of multidegrees $\Psi_\pi=\mdeg_{\MMN^{\Delta=0}} E_\pi$
  of the Brauer loop varieties satisfies Equation (\ref{eqn:qkzev3})
  (the $e_i$ action).
\end{Corollary}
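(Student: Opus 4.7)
The plan is short: this corollary is essentially a repackaging of results already assembled. By Theorem \ref{thm:main}, the vector of multidegrees $\Psi=\sum_\pi \Psi_\pi\,\pi$ with $\Psi_\pi=\mdeg_{\MMN^{\Delta=0}} E_\pi$ coincides with the unique (up to the normalization fixed by (\ref{eqn:basecase})) polynomial solution of the full $q$KZ system (\ref{eqn:qkza})--(\ref{eqn:qkzb}) furnished by Theorem \ref{thm:qkzsol}. Expanding (\ref{eqn:qkza}) component-wise in the link-pattern basis and splitting according to whether $\pi(i)=i+1$ or $\pi(i)\neq i+1$ yields the two families of identities (\ref{eqn:qkzd}) and (\ref{eqn:qkze}). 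So $\Psi$ satisfies both; in particular it satisfies (\ref{eqn:qkze}), or equivalently (\ref{eqn:qkzev2}), at every link pattern $\pi$ with $\pi(i)=i+1$.

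To reach the precise form (\ref{eqn:qkzev3}) asserted by the corollary, I would simply quote the algebraic manipulation carried out at the end of Section \ref{sec:qkz}. Starting from (\ref{eqn:qkzev2}), the summands on the right-hand side pair off naturally as $\{\rho,\,f_i\cdot\rho\}$ with $\rho\in\varepsilon(\pi,i)$. The $f_i$-equation (\ref{eqn:qkzdv2}), which we already know holds for $\Psi$, rewrites each pair sum $\Psi_\rho+\Psi_{f_i\cdot\rho}$ as $(\A+\B+z_{i+1}-z_i)(\A+z_i-z_{i+1})(-\partial_i)\frac{\Psi_\rho}{\A+z_i-z_{i+1}}$. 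Substituting this and then multiplying both sides of the resulting identity by $(\A+z_i-z_{i+1})(\A+z_{i+1}-z_i)$ to clear denominators gives exactly (\ref{eqn:qkzev3}).

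There is no real obstacle; all the work has been done. The geometric content lives in \S\ref{ssec:sweepcutF}, where the $f_i$ relation (\ref{eqn:fiaction}) for the multidegrees is derived by the sweep-then-cut construction; the algebraic content lives in \S\ref{sec:qkz}, where uniqueness of the $q$KZ solution (Lemma \ref{lem:qkzuniq}) and its satisfaction of the $e_i$ relation (Theorem \ref{thm:qkzsol}) are established; the comparison is Theorem \ref{thm:main}. The point of recording Corollary \ref{cor:fgivese} separately is precisely that, at this stage, our proof of the $e_i$ relation for the Brauer loop polynomials is \emph{indirect}: it passes through the algebraic uniqueness argument of \S\ref{sec:qkz} rather than from any direct manipulation of the varieties $E_\pi$. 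This is exactly what motivates the rest of the paper, which will supply a genuinely geometric interpretation of (\ref{eqn:qkzev3}) via a cut-then-sweep construction on the components $E_\pi$.
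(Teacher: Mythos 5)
Your proposal is correct and follows exactly the implicit logic the paper intends for this corollary: Theorem \ref{thm:main} identifies the multidegree vector with the $q$KZ solution of Theorem \ref{thm:qkzsol}, which satisfies \eqref{eqn:qkze}/\eqref{eqn:qkzev2}; combined with the already-established $f_i$ equation \eqref{eqn:qkzdv2} for the multidegrees, the algebraic rewriting at the end of \S\ref{sec:qkz} yields \eqref{eqn:qkzev3}. You also correctly observe that this argument is indirect (passing through the uniqueness Lemma \ref{lem:qkzuniq} rather than the geometry of the $E_\pi$), which is precisely what \S\ref{sec:ei} is designed to remedy.
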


In the next subsection we give a geometric interpretation of this latter 
equation. 

\subsection{Geometry of the \texorpdfstring{$e_i$}{ei} action}
\label{sec:ei}

In this section we give a new geometric construction, 
promised after Corollary 4 of \cite{KZJ}, to handle the case $\pi(i)=i+1$.
The stages of the construction are:
\begin{itemize}
\item Cut $E_\pi$ with $\{M : M_{i\ i+1} = 0\}$, producing $F_1$;
\item throw away the $GL_2^{(i)}$-invariant components, giving
  $\bigcup_{\varepsilon(\pi,i)} X_\rho$ (defined below), and
\item show $X_\rho$ matches 
  $E_\rho \cap \left\{M \in E : (M^2)_{i,i+N} = (M^2)_{i+1,i+1+N}\right\}$
  up to lower-dimensional components.
\end{itemize} 

\newcommand\bsl{\backslash}

\begin{center}
\begin{tikzcd}
  E_\pi\arrow{dr}{\text{cut}} & & & & \bigcup_{\varepsilon(\pi,i)} E_\rho \arrow{dl}{\text{cut}}\\
        &E_\pi \cap H =: F_1& \supseteq & \bigcup_{\varepsilon(\pi,i)} X_\rho & &\\
      \end{tikzcd}
    \end{center}

Taking multidegrees, we will reproduce Equation (\ref{eqn:qkzev3}).
This is not quite a geometric {\em proof}, as we will only follow
the geometry close enough to get an upper bound, and invoke
Equation (\ref{eqn:qkzev3}) to show the bound is tight.

\junk{

The algebraic details of this proof, in \S\ref{ssec:lowerbound}, 
only make reference to $F_1$ and not explicitly $F_2,F_3$. 
However they only involve $\mdeg F_1$ in the form $(-\partial_i) \mdeg F_1$,
which is a sign that the computation is ``really'' about $F_2$, not $F_1$.
And since $F_2 \not\subseteq E$, it seems more natural to follow it to 
$F_3 \subseteq E$ (set containment). In \S\ref{ssec:f3} we follow
this to determine $F_3$, up to lower-dimensional embedded components.

Even to determine the dimensions of $F_1,F_2,F_3$ will require work,
as each cutting operation may or may not decrease the dimension by $1$.
It will turn out they do indeed, so $F_1,F_3$ are codimension $1$ in $E$,
and $F_2$ is the same dimension as $E$ (but is not contained in it,
even set-theoretically).  To analyze them, we will need to make use of
a certain family of codimension $1$ subvarieties of $E$.
}

\subsubsection{The subvarieties \texorpdfstring{$\{X_\rho\}$}{\{Xp\}}}\label{sec:Xrho}

Consider the link patterns $\rho \neq \pi$ such that $e_i\cdot \rho = \pi$,
and define
\begin{equation}\label{eq:defX}
  X_\rho := \left( E_\rho 
  \cap \left\{M \in E : (M^2)_{i,i+N} = (M^2)_{i+1,i+1+N}\right\} \right).
\end{equation}
Since that intersection is a Cartier divisor in the variety $E_\rho$, and the 
equation is nontrivial (it is not satisfied on the point $\widetilde\rho$
from Lemma \ref{lem:genericelts}), 
the set $X_\rho$ has pure codimension $1$ in $E$.

Each such $\rho$ agrees with $\pi$ away from $i,i+1,\pi(i),\pi(i+1)$.
There are three ways to hook up these four spots, namely $\pi$, $\rho$, and
$f_i\cdot \rho$, and as such these $\rho$ come in pairs.
Recall that $\varepsilon(\pi,i)$ denotes the set of such $\rho$
such that the chords emanating from $i,i+1$ cross each other.

\begin{Proposition}\label{prop:Xrho}
  Let $\rho\in \varepsilon(\pi,i)$.
  Then $X_\rho$ is irreducible,
  whereas $X_{f_i\cdot\rho}$ has two geometric components, one of which
  is $X_\rho$.
  Both schemes are generically reduced.
\end{Proposition}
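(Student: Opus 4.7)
The plan is to treat both statements in parallel, reducing everything to calculations on the affine slices $Y_\rho$ and $Y_{f_i\cdot \rho}$ of Lemma \ref{lem:genericelts}. Write $\rho(i) = k$, $\rho(i+1) = l$, so that the crossing hypothesis $\rho \in \varepsilon(\pi,i)$ reads $i < i+1 < k < l$, and set $f := (M^2)_{i,i+N} - (M^2)_{i+1,i+1+N}$. The periodicity $b_{i+N,i+N}=b_{i,i}$ in $B_{\integers\bmod N}$ makes each $(M^2)_{j,j+N}$ a conjugation-invariant function, so $f$ is $B$-invariant; by Theorem \ref{thm:Ecomps} it does not vanish identically on $E_\rho$ or $E_{f_i\cdot\rho}$, so $X_\rho$ and $X_{f_i\cdot\rho}$ are Cartier divisors of pure codimension $1$ in their respective irreducible ambient components.

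First I would establish $X_\rho \subseteq X_{f_i\cdot\rho}$ via Theorem \ref{thm:compeqns}. At a generic $M\in X_\rho$ the equalities $(M^2)_{i,i+N} = (M^2)_{k,k+N}$ and $(M^2)_{i+1,i+1+N} = (M^2)_{l,l+N}$ hold generically on $E_\rho$, and $f(M)=0$ forces the four values to collapse to one --- precisely what the $(M^2)$-equalities for $f_i\cdot\rho$ require. The rank conditions for $\underline{f_i\cdot\rho}$ differ from those for $\underline\rho$ only at $(a,b) = (i+1,k)$, where $r_{i+1,k}(\underline{f_i\cdot\rho}) = r_{i+1,k}(\underline\rho)+1$, so the $f_i\cdot\rho$-bound is strictly looser and automatically satisfied.

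Next, to show irreducibility of $X_\rho$ and exhibit the second component of $X_{f_i\cdot\rho}$, I restrict $f$ to the slices. On $Y_\rho$, $f|_{Y_\rho} = s_i s_k - s_{i+1} s_l$ is irreducible in the free parameters, so $X_\rho \cap Y_\rho$ is an irreducible hypersurface in $Y_\rho$; using $B$-invariance of $f$ and $E_\rho = \overline{B \cdot Y_\rho}$, we have $B\cdot(X_\rho\cap Y_\rho) = (B\cdot Y_\rho)\cap\{f=0\}$, open in $X_\rho$, and its closure is an irreducible component of $X_\rho$ of dimension $\dim E - 1$. The parallel computation on $Y_{f_i\cdot\rho}$ gives $f|_{Y_{f_i\cdot\rho}} = s_i s_l - s_{i+1} s_k$ and an irreducible component $Z := \overline{B\cdot(X_{f_i\cdot\rho}\cap Y_{f_i\cdot\rho})}$ of $X_{f_i\cdot\rho}$. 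That $Z \neq X_\rho$ follows because a generic point of $Y_{f_i\cdot\rho}$ carries a nonzero entry at $(i+1,k)$, raising $r_{i+1,k}$ above $r_{i+1,k}(\underline\rho)$ and hence lying outside $E_\rho \supseteq X_\rho$; combined with the previous paragraph this exhibits two distinct components of $X_{f_i\cdot\rho}$.

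The main obstacle will be ruling out further components: an extra component of $X_\rho$ would be a $B$-invariant codimension-$1$ closed subvariety disjoint from the dense open $B\cdot Y_\rho$, hence supported on the lower-dimensional strata $E_\rho \cap \bigcup_{\sigma\ne\rho} E_\sigma$. I plan to exclude this by running the slice argument on each codimension-$1$ substratum to verify that $f$ is not identically zero there, leveraging Theorem \ref{thm:Ecomps} to track dimensions; the same count, applied to $X_{f_i\cdot\rho}$, then confirms that its two exhibited components are the only ones. Finally, generic reducedness of both schemes follows from the explicit nonvanishing of $df|_{Y_\rho} = s_k\, ds_i + s_i\, ds_k - s_l\, ds_{i+1} - s_{i+1}\, ds_l$ and its analogue on $Y_{f_i\cdot\rho}$, propagated to generic points of each component via Lemma \ref{lem:smoothpoint}.
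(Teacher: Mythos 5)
Your plan runs parallel in spirit to the paper's (which works on the strata $F_\alpha$, the preimages of the $B_N$-orbits under the $(U,L)\mapsto U$ projection, rather than on the affine slices $Y_\rho$), but there are two genuine gaps.

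First, your derivation of the inclusion $X_\rho \subseteq X_{f_i\cdot\rho}$ uses Theorem~\ref{thm:compeqns} in the wrong direction. That theorem only gives \emph{necessary} conditions a component $E_\sigma$ satisfies; the claim that those equations \emph{define} $E_\sigma$ as a scheme is explicitly stated to be only a conjecture. Verifying that a point satisfies the $(M^2)_{j,j+N}$-equalities and rank bounds for $f_i\cdot\rho$ therefore does not place it in $E_{f_i\cdot\rho}$. The paper's Lemma~\ref{lem:inclX} instead deforms a generic point $M_0$ of $X_\rho$ along a family $M_z$ with $M_z^2=0$ that for generic $z$ has exactly the equalities of $\pi$ (so $M_z\in E_\pi$ by Theorem~\ref{thm:Ecomps}), and takes the limit $z\to 0$ using closedness; an analogous deformation handles $E_{f_i\cdot\rho}$.

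Second, the step you label "the main obstacle" --- ruling out components of $X_\rho$ disjoint from $B\cdot Y_\rho$ --- is in fact the bulk of the proof, and your sketch ("run the slice argument on each codimension-$1$ substratum") does not yet amount to an argument. It is not enough to check that $f$ is not identically zero on each substratum: the substrata already have varying dimensions, some exactly $\dim E -1$, and one must show that imposing $f=0$ (plus the rank inequalities forcing the stratum to lie in $E_\rho$) drops the dimension strictly below $\dim E - 1$. The paper achieves this by decomposing $E = \bigsqcup_\alpha F_\alpha$, using that each $F_\alpha$ is a \emph{vector bundle} over $B_N\cdot\alpha_<$, and translating the equations $(M^2)_{j,j+N}=(M^2)_{\rho(j),\rho(j)+N}$ into \emph{linear} equations on the fibers; the dimension count then proceeds stratum by stratum, with a delicate case analysis on the fixed points of $\alpha$. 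Your Jacobian-based generic reducedness argument also needs more care for a similar reason: Lemma~\ref{lem:smoothpoint} gives smoothness of $E_\rho$ only at a \emph{general} point of $Y_\rho$, and since the singular locus of $E_\rho$ is $B$-invariant of codimension possibly $1$, it could a priori contain all of $Y_\rho\cap\{f=0\}$. The paper sidesteps this by computing tangent spaces to the $\mathcal{U}_N$-orbits in $X_\rho$ directly (Lemma~\ref{lem:singorb}).
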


\junk{
\comment{AK: I think we want to ditch the $Y_\rho$ stuff here, which 
  served to study $F_3$. But we MUST show $X_\rho$ is generically reduced}

As this proposition already suggests, it will be much more convenient
hereafter to {\em redefine}\/ $X_\rho$ as the reduction of the $X_\rho$ above.
}

In fact, we conjecture that $E_\rho$ is normal,
which would imply that $X_\rho$ is reduced. Of course $X_\rho$ and its reduction
have the same multidegree, which is what most interests us.

\junk{AK doesn't see where we use $X_{f_i\cdot\rho}$ anywhere. PZJ:
  agreed, not necessary, but would be nice to mention explicitly the
  structure of $F_3$}

\begin{proof}
By cyclic invariance we can assume that $i=1$.
We shall use in this proof the $(U,L)$ decomposition of \S\ref{ssec:brbr},
as well as the equations of Thm.~\ref{thm:compeqns}.
Recall from \cite{KZJ} that the projection $(U,L)\mapsto U$
allows one, using Thm.~\ref{thm:involutions}, to decompose $E$ as a disjoint
union of preimages $F_\alpha$ 
of orbits $B_N\cdot\alpha_<$, where $\alpha$ is an involution.
Furthermore this projection makes $F_\alpha$ a {\em vector bundle} over 
$B_N\cdot\alpha_<$, implying in particular that it is irreducible.

We use this decomposition $E = \bigsqcup_\alpha F_\alpha$ into 
irreducible varieties via the following easily proved statement:
if $X\subseteq E$ is set-theoretically equidimensional, 
then exactly one $X \cap F_\alpha$ is of that dimension iff
$X$ is irreducible. 

Let $\rho\in \varepsilon(\pi,i)$, $\alpha$ be an involution and consider
$X_\rho\cap F_\alpha$. We shall show
that if $\alpha\ne\rho$, the result is empty or of dimension strictly less than
$\dim E-1=2n^2-1$. 

$F_\alpha$ is a vector bundle over $B_N\cdot\alpha_<$; its dimension
was computed in the proof of Thm.~3 of \cite{KZJ} and found
to be $2n^2-\frac{1}{2}\#\text{fixed-points}$ 
(fixed points being half-lines in the
language of \S\ref{sec:orbvars}). Let us now impose additional equations
(from those in the proof of Thm.~3 of \cite{KZJ}) on the fiber
by intersecting with $X_\rho$. We choose the particular fiber
for which the upper triangular part is $U=\alpha_<$.

Requiring any element $M\in F_\alpha$
to satisfy the rank equations (3) of Thm.~\ref{thm:compeqns}
for $E_\rho$ implies that $\alpha_<\le \rho_<$
(with respect to the order from \S\ref{ssec:posetBorbits}), 
and in particular that $\alpha(i)\ne i+1$ (this is where we use that $i=1$). 
Now suppose that $i$ and $i+1$ are not both fixed points of $\alpha$.
Then the equation $(M^2)_{i,i+N} = (M^2)_{i+1,i+1+N}$
or more explicitly $L_{i\leftrightarrow\alpha(i)}=L_{i+1\leftrightarrow\alpha(i+1)}$
($a\leftrightarrow b:=\max(a,b),\min(a,b)$)
if neither are fixed points, or $L_{i\leftrightarrow\alpha(i)}=0$
or $L_{i+1\leftrightarrow\alpha(i+1)}=0$ if one of them is,
is an additional linear equation {\em on the fiber}\/ 
which reduces its dimension 
by $1$ and keeps it reduced. If $\alpha$ has fixed points,
we are already done since the resulting dimension is less 
than $2n^2-1=\dim E-1$. So we assume in what follows
that either $\alpha$ has no fixed points at all, or $\alpha(i)=i$ and 
$\alpha(i+1)=i+1$ are the only two (all other possibilities having
dimension too low). In both cases we are already in dimension $2n^2-1$.

Now we make use of $\alpha\neq\rho$, forcing $\alpha_< < \rho_<$. 
Note that $\pi_<\ge\rho_<$ and $(f_i\cdot\rho)_<\ge\rho_<$, 
so that $\alpha \notin \{\pi, \rho, f_i\cdot \rho\}$.
Hence $\alpha$ cannot 
be equal to $\rho$ outside $\{i,i+1,\rho(i),\rho(i+1)\}$ (paying
attention to the special case where $i$ and $i+1$ are fixed points); 
there is at least one more pair $(j,\rho(j))$ distinct from these
four elements such that $\alpha(j)\ne\rho(j)$. 
It is now easy
to check that equation (2) of Thm.~\ref{thm:compeqns} for $E_\rho$
(with $i$ replaced with $j$)
produces one more linear equation on the fiber
which reduces further the dimension by $1$. We conclude
that $X_\rho\cap F_\alpha$ is a subset of a vector bundle over $B_N\cdot\alpha_<$
which is of dimension $\dim E-2$.

Finally, if $\alpha=\rho$, we have
\[
X_\rho\cap F_\rho=F_\rho
\cap \left\{M \in E : (M^2)_{i,i+N} = (M^2)_{i+1,i+1+N}\right\}
\]
and conclude immediately from the equations above
that $X_\rho\cap F_\rho$, just like $F_\rho$, is a vector bundle over
$B_N\cdot\pi_<$ and therefore irreducible. 
Now look at the decomposition
\[ X_\rho = \Coprod_\alpha (X_\rho \cap F_\alpha)
= (X_\rho \cap F_\rho) \cup \Coprod_{\alpha\neq \rho} (X_\rho \cap F_\alpha):
\]
the first piece $X_\rho \cap F_\rho$ is reduced and irreducible, 
the other pieces $X_\rho \cap F_{\alpha\neq\rho}$ are of lower dimension,
and $X_\rho$ is set-theoretically equidimensional. Putting these facts together,
we see that $X_\rho$ is irreducible, and 
contains $X_\rho \cap F_\rho$ as a dense subset.

Just because $X_\rho \cap F_\rho$ is reduced and open dense in $X_\rho$,
we cannot directly infer that $X_\rho$ is generically reduced. 
To show this, a tangent space
calculation is needed; 
it is very similar to the proof of Lemma \ref{lem:singorb} in Appendix B,
so that we skip it here.

Now we take up $X_{f_i\cdot\rho} \cap F_\alpha$, whose analysis is similar.
Note that the inequality $\alpha_<\le (f_i\cdot\rho)_<$
has two interesting solutions, namely $\alpha=\rho$ or $\alpha=f_i\cdot\rho$.
We leave the reader to check that
the same dimension computation as above shows that
if $\alpha\notin \{ \rho,f_i\cdot\rho \}$, then the dimension of
$X_{f_i\cdot\rho} \cap F_\alpha$ is strictly less than $\dim E-1$.
Again, the condition of lying in $X_{f_i\cdot\rho}$ is a linear condition
on the fibers of this vector bundle, so the intersections are reduced
and irreducible.

If $\alpha=f_i\cdot\rho$, 
\[
X_{f_i\cdot\rho}\cap F_{f_i\cdot\rho}=F_{f_i\cdot\rho}
\cap \left\{M \in E : (M^2)_{i,i+N} = (M^2)_{i+1,i+1+N}\right\}
\]
so that $Y_\rho:=\overline{X_{f_i\cdot\rho}\cap F_{f_i\cdot\rho}}$
is a geometric component of $X_{f_i\cdot\rho}$ of dimension
$\dim E-1$.

If $\alpha=\rho$, we have
\[
X_{f_i\cdot\rho}\cap F_\rho=E_{f_i\cdot\rho}\cap (X_\rho\cap F_\rho)\subset X_\rho
\quad \text{as a set}
\]
so by dimensionality, $\overline{X_{f_i\cdot\rho}\cap F_\rho}=X_\rho$ as a set.

The tangent space calculation to show generic reducedness of $Y_\rho$
is in Lemma \ref{lem:regorb} of Appendix B.
\end{proof}

\subsubsection{\texorpdfstring{$F_1$}{F_1}}\label{ssec:f1}
First we note that the equation $M_{i,i+1} = 0$ does not hold on
the point $\widetilde\pi \in E_\pi$ constructed in Lemma \ref{lem:genericelts}.
So 
\[
 F_1 := E_\pi \cap \{M_{i,i+1} = 0\} 
\]
is a Cartier divisor in the irreducible variety $E_\pi$,
hence equidimensional of dimension $\dim E - 1$, and
by axiom (3c) of multidegrees,
\[ \mdeg F_1 = (\A + z_i - z_{i+1}) \mdeg E_\pi \]
where the linear factor is the multidegree of 
the hyperplane $\{M_{i,i+1} = 0\}$.

Since that hyperplane is $B_N$-invariant
inside $R_{\integers \bmod N}$, so is $F_1$ and each of its components.

\begin{Lemma}\label{lem:XrhoinF1}
  Let $\rho\in \varepsilon(\pi,i)$.
  Then $X_\rho \subseteq F_1$.
\junk{  , and is not $GL_2^{(i)}$-invariant.
  Moreover, the map 
  $GL_2^{(i)} \times^{B_N} X_\rho \to GL_2^{(i)} \cdot X_\rho$ is birational.}
\end{Lemma}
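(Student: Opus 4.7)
The plan is to verify separately the two containments $X_\rho\subseteq\{M_{i,i+1}=0\}$ and $X_\rho\subseteq E_\pi$ whose intersection defines $F_1$; the first of these is immediate from Theorem \ref{thm:compeqns}(3) applied to $E_\rho$. The hypothesis $\rho\in\varepsilon(\pi,i)$ requires the chords of $\rho$ from $i$ and $i+1$ to be distinct (as they cross), so $\rho(i)\ne i+1$. For a strict upper triangular matrix the submatrix south-west of $(i,i+1)$ reduces to the single entry $M_{i,i+1}$; since $\underline\rho_{i,i+1}=0$ we obtain $r_{i,i+1}(\underline\rho)=0$, forcing $M_{i,i+1}=0$ throughout $E_\rho$, and in particular on $X_\rho$.

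For the substantive containment $X_\rho\subseteq E_\pi$, by Proposition \ref{prop:Xrho} the subvariety $X_\rho$ is irreducible of dimension $\dim E-1$; since $E$ is equidimensional by Rothbach, $X_\rho$ lies in some top-dimensional component $E_\tau$, and the task is to show $\tau=\pi$. At a generic point $M\in X_\rho$ the equality pattern among $\{(M^2)_{j,j+N}\}$ combines the pairs of $\rho$ with the extra relation $(M^2)_{i,i+N}=(M^2)_{i+1,i+1+N}$: it collapses $\{i,\rho(i)\}$ and $\{i+1,\rho(i+1)\}$ into a single four-element class while leaving the remaining pairs of $\rho$ (which coincide with those of $\pi$) intact. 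By Theorem \ref{thm:Ecomps}(1) this restricts the possibilities to $\tau\in\{\pi,\rho,f_i\cdot\rho\}$.

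To single out $\tau=\pi$, I would construct a one-parameter degeneration of elements of $E_\pi$ limiting to a generic point of $X_\rho$. Starting from a generic $\tilde\pi\in E_\pi$ as in Lemma \ref{lem:genericelts}, with parameters $\{s_j\}$ tuned so that $s_is_{\rho(i)}=s_{i+1}s_{\rho(i+1)}$, I would conjugate by a one-parameter unipotent element $b_t\in B_{\integers\bmod N}$ supported on rows and columns in $\{i,i+1,\rho(i),\rho(i+1)\}$ (periodized appropriately), chosen so that after rescaling the entries at the $\pi$-distinguishing positions $(i,i+1)$ and $(\rho(i),\rho(i+1))$ tend to zero in the limit, while generic entries are produced at the $\rho$-distinguishing positions $(i,\rho(i))$ and $(i+1,\rho(i+1))$. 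Since $E_\pi$ is $B$-invariant and closed, the resulting limit lies in $E_\pi$; by construction it also lies in $X_\rho$, and irreducibility of $X_\rho$ then yields $X_\rho\subseteq E_\pi$.

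The hard part will be the explicit bookkeeping of this degeneration in the periodic setting of $R_{\integers\bmod N}$: one must verify that the conjugation, the rescaling, and the limit genuinely produce a matrix in $X_\rho$, that no unintended extra nonzero entries appear beyond those at the $\rho$-distinguishing positions, and that the resulting point is generic enough in $X_\rho$ to make the irreducibility argument work. An alternative route would be to directly identify $X_\rho$ as a component of $E_\rho\cap E_\pi$ of dimension $\dim E-1$, but this too seems to require an explicit construction of some point of $E_\pi$ within $X_\rho$.
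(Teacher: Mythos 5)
Your overall plan matches the paper's: split $F_1 = E_\pi \cap \{M_{i,i+1}=0\}$ and show $X_\rho$ lies in each piece. The first containment is handled correctly (once one knows $\rho(i)\neq i+1$, the rank condition from Theorem~\ref{thm:compeqns}(3) forces $M_{i,i+1}=0$ on $E_\rho$; the paper invokes this without comment). You also correctly identify the second containment $X_\rho\subseteq E_\pi$ as the substantive point, and that it comes down to an explicit degeneration argument; this is exactly what the paper does via Lemma~\ref{lem:inclX}.

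However, there is a concrete error in your sketch, and the part you defer as ``bookkeeping'' is really the entire content. The error: you propose to start from $\tilde\pi\in E_\pi$ with parameters tuned so that $s_is_{\rho(i)}=s_{i+1}s_{\rho(i+1)}$. But from Lemma~\ref{lem:genericelts}, $(\tilde\pi^2)_{j,j+N}=s_j s_{\pi(j)}$, so $(\tilde\pi^2)_{i,i+N}=s_is_{i+1}$ and $(\tilde\pi^2)_{\rho(i),\rho(i)+N}=s_{\rho(i)}s_{\rho(i+1)}$. Since conjugation by $B_{\integers\bmod N}$ preserves the invariants $(M^2)_{j,j+N}$, the condition needed for your limit to land in $E_\rho$ is $s_is_{i+1}=s_{\rho(i)}s_{\rho(i+1)}$, not the cross-term relation you wrote. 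With the constraint as written, the limit would have $(M^2)_{i,i+N}\neq (M^2)_{\rho(i),\rho(i)+N}$ and could not lie in $E_\rho$, so the argument fails at the start.

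Beyond that fix, your degeneration runs in the opposite direction from the paper's, and the paper's direction is cleaner. You conjugate a (carefully tuned, hence non-generic) point of $E_\pi$ by a one-parameter $b_t$ and hope the limit is a generic point of $X_\rho$; this requires verifying both that the limit exists with the right support and that the orbit of the limit is dense in $X_\rho$. The paper instead takes a generic orbit representative of $X_\rho$ (the matrix $M$ from Lemma~\ref{lem:singorb}, with the constraint $t_it_{\rho(i)}=t_{i+1}t_{\rho(i+1)}$ already built in), and perturbs it to an explicit one-parameter family $M_z$ by putting $t_iz$ at position $(i,i+1)$ and $-t_{i+1}z$ at $(\rho(i),\rho(i+1))$. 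One checks by direct computation that $M_z^2=0$ and that for generic $z$ the equality pattern of $\{(M_z^2)_{j,j+N}\}$ is exactly that of $\pi$, so $M_z\in E_\pi$ by Theorem~\ref{thm:Ecomps}; closedness then gives $M_0\in E_\pi$. Since Lemma~\ref{lem:singorb} already establishes that such $M$ fill out a dense subset of $X_\rho$, this finishes the argument without any closure-of-a-limit-of-conjugates calculation. In short: the idea is right, but the proposal as written has a wrong constraint and defers the actual proof, whereas the paper's $M_z$-perturbation of the orbit representative is an elementary check.
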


\begin{proof}
  A slightly more explicit description of $X_\rho$, found in Appendix B,
  implies (Lemma \ref{lem:inclX}) that $X_\rho\subset E_\pi$.
  Since $\rho(i)\ne i+1$, $E_\rho\subseteq \{ M_{i,i+1}=0\}$.
  So $X_\rho\subseteq E_\rho\cap E_\pi \subseteq E_\pi\cap\{ M_{i,i+1}=0\}=F_1$.
  \junk{AK: so do we need that lemma or don't we? PZJ: I think we do,
    $X_\rho$ as they're defined are not obviously in $E_\pi$}
\junk{
  We know 
  \begin{align*}
    \mdeg X_\rho 
 &= \mdeg E_\rho\ \mdeg\ \{M \in E: (M^2)_{i,i+N} = (M^2)_{i+1,i+1+N}\}\\
    &= (\A+\B)\ \mdeg E_\rho   
  \end{align*}
  If $X_\rho$ were $GL_2^{(i)}$-invariant, then its multidegree would be
  symmetric in $z_i,z_{i+1}$ by Lemma \ref{lem:divdiff}, but $E_\rho$'s
  multidegree isn't by \comment{which?}.

  \comment{is it easy to show that $\partial_i E_\rho$ has some 
    monomial with coefficient $1$, rather than doing more $X_\rho$ geometry,
    in order to show the birationality claim?}
}
\end{proof}

So the $X_\rho$ are (by construction distinct) irreducible components of $F_1$.
At this point, there would seem to be three ways that the set 
$\union_{\rho \in \varepsilon(\pi,i)} X_\rho$ might be strictly a lower bound on $F_1$:
\begin{enumerate}
\item We don't know that $F_1$ is generically reduced along each $X_\rho$.
\item $F_1$ may have some other components that are not $GL_2^{(i)}$-invariant.
\item $F_1$ may have some other components that are $GL_2^{(i)}$-invariant.
\end{enumerate}
It will turn out that (1) and (2) don't actually occur, but (3) does
(see the example after Corollary \ref{cor:poserrorterm}).

\subsubsection{Inequalities on multidegrees}
\label{ssec:mdegbounds}

Since we only have a bound $F_1 \supseteq \union_{\rho \in \varepsilon(\pi,i)} X_\rho$,
it becomes natural to seek similar ``bounds'' on the multidegrees,
and we develop that theory now.

Let $W$ be a representation of a torus $T$. Assume that all weights
$\lambda_1,\ldots,\lambda_{\dim W}$ of $W$ lie in a proper cone in $T^*$.
Then we can define a proper cone in $\Sym T^*$, 
consisting of $\naturals$-combinations of squarefree monomials 
$\{\prod_{i\in S} \lambda_i : S \subseteq \{1,\ldots,\dim W\} \}$ 
in the weights in $W$. 
We can define a partial order $\leq$ 
on $\Sym T^*$ where $f\leq g$ if $g-f$ lies in this cone.

\begin{Lemma}\label{lem:mdegineqs}
  Under the framework just described,
  for any nonempty $T$-invariant scheme $X \subseteq W$,
  $\mdeg_W X > 0$.
  
  Let $X,Y \subseteq W$ be two $T$-invariant subschemes of the same
  dimension, with $X\subseteq Y$. Then $\mdeg_W X \leq \mdeg_W Y$.
  This inequality is an equality iff $Y\setminus X$ is of lower dimension
  than $X,Y$.

  If $Y$ is generically
  reduced along its top-dimensional components, then $X$ is also. 
  Assume now that $\mdeg_W X = \mdeg_W Y$. 
  If $Y$ is reduced and equidimensional, then $X = Y$.
\end{Lemma}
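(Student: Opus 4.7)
The plan is to prove each assertion by reducing to the fundamental additivity formula $\mdeg_W S = \sum_i m_i\, \mdeg_W S_i$ (axiom (2) of \S\ref{ssec:multidegrees}) together with the pointedness of the cone of $\naturals$-combinations of squarefree products of weights. For (1), positivity of $\mdeg_W X$ is already built into the axioms; for nonvanishing when $X$ is nonempty, I use that since all weights lie in a proper cone, there is a $1$-parameter subgroup of $T$ under which every point of $W$ flows to the origin, so any nonempty closed $T$-invariant $X$ contains $\vec 0$. Then I would induct on $\dim W$: given a $T$-invariant hyperplane $H$, either $X \subseteq H$ (and $\mdeg_W X = \lambda\,\mdeg_H X$ is nonzero by induction and nonzeroness of $\lambda$) or $X \not\subseteq H$ (and $\mdeg_W X = \mdeg_H(X\cap H)$ with $X\cap H \ni \vec 0$ still nonempty), so induction applies.

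For (2), each top-dimensional component $X_i$ of $X$ has dimension equal to $\dim Y$ and is contained in $Y$, so it must coincide as an irreducible closed subset with some top-dimensional component $Y_j$ of $Y$, with multiplicity in $X$ at most that in $Y$ (by comparing lengths of the local rings at the common generic point). The additivity formula then expresses $\mdeg_W Y - \mdeg_W X$ as a $\naturals$-combination of the $\mdeg_W Y_j$'s, each itself a $\naturals$-combination of squarefree monomials by (1), so $\mdeg_W X \leq \mdeg_W Y$. For (3), the key input is pointedness of the cone: picking $\ell$ in the interior of the cone dual to the weight cone (so $\ell(\lambda_i) > 0$ for every weight), each generator $\prod_{i \in S}\lambda_i$ evaluates strictly positively at $\ell$, so any vanishing $\naturals$-combination forces every coefficient to zero. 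Applied to $\mdeg_W Y - \mdeg_W X = 0$ and combined with the nonvanishing from (1), this forces every $Y_j$ to appear in $X$ as a top-dimensional component with matching multiplicity, so $Y\setminus X$ has strictly lower dimension (and the converse is immediate from additivity).

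For (4), at the generic point $\eta$ of a top-dimensional component $X_i$ of $X$, the local ring $\mathcal{O}_{X,\eta}$ is a nonzero quotient of $\mathcal{O}_{Y,\eta}$, which is a field by generic reducedness of $Y$; any nonzero quotient of a field is that field, hence reduced. For (5), reducedness and equidimensionality of $Y$ make all its multiplicities equal to $1$, and by (4) those of $X$ are at most $1$; equality of multidegrees, via (3), forces every top-dimensional component of $Y$ to appear in $X$ with matching multiplicity $1$, so $X$ and $Y$ agree set-theoretically (using equidimensionality of $Y$). Hence $X_{\mathrm{red}} = Y_{\mathrm{red}} = Y$, and combined with $X \subseteq Y$ this yields $X = Y$.

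The main technical point is the pointedness argument in (3): squarefree products of weights may satisfy nontrivial algebraic relations, so one cannot directly invoke linear independence of the generators; instead, strict positivity of the evaluation at a generic element of the cone dual to the weight cone suffices, and this is precisely why the proper-cone hypothesis on the weights is imposed.
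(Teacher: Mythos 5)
Your proof is correct and follows essentially the same route as the paper's: you fix a linear functional on $T^*$ that is strictly positive on the weights (equivalently, an interior point of the dual cone) to get pointedness, use additivity of multidegrees to compare multiplicities of common top-dimensional components, and conclude the equality case from the vanishing of a nonnegative sum of nonzero multidegrees. The minor differences are that you spell out two steps the paper leaves implicit — the induction showing $\mdeg_W X \neq 0$ for nonempty $X$ (where your observation that the contracting one-parameter subgroup forces $\vec 0 \in X$ is exactly what makes the inductive step go through when cutting by a hyperplane), and the local-ring argument for inheritance of generic reducedness — so no substantive divergence in method.
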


(This last conclusion also appears in \cite[Lemma 1.7.5]{KM} and,
stated less generally, in \cite{Mar02}.)

\begin{proof}
  The condition that the weights lie in a proper cone is equivalent
  to the existence of a linear functional $T^* \to \reals$ taking
  each weight in $W$ to a strictly positive number. That functional 
  extends to a ring homomorphism $\Sym T^* \to \reals$, in which 
  $\naturals$-combinations of monomials in these weights (other than
  the null combination) also go to strictly positive numbers.

  With this one can show that this cone only intersects its negative
  in $\{0\}$, which says that $f\leq g\leq f \implies f=g$. The other
  properties of a partial order are obvious. The fact that
  $\mdeg_W X$ is a nontrivial sum of products of monomials in $W$
  is easy to prove by induction from the axiomatic definition 
  of multidegrees.

  Let $\{Y_i\}$ be the top-dimensional components of $Y$, with 
  multiplicities $\{m_i(Y)\}$, so $\mdeg_H Y = \sum_i m_i(Y) \mdeg_H Y_i$.
  Since $X\subseteq Y$ and they have the same dimension, each
  $\dim X$-dimensional component of $X$ is a component of $Y$ 
  hence of some $Y_i$; let $m_i(X)$ be the multiplicity of $Y_i$ in $X$.
  Then $X \subseteq Y$ implies $m_i(X) \leq m_i(Y)$, so
  $\mdeg_H Y - \mdeg_H X = \sum_i (m_i(Y)-m_i(X)) \mdeg_H Y_i$.
  Since each $\mdeg_H Y_i > 0$, this sum is $\geq 0$ with equality
  iff $m_i(Y) = m_i(X)$ for each $i$, iff $X$ contains all of
  $Y$'s top-dimensional components and is generically equal to $Y$
  on each one. That is equivalent to $Y\setminus X$ being of lower dimension.

  Since $Y\supseteq X$ of the same dimension, if $Y$ is generically
  reduced along its top-dimensional components, then $X$ is also.
  If $Y$ is reduced and equidimensional, then $Y$ is the union of
  its geometric components. If $\mdeg_H X = \mdeg_H Y$, 
  then all of these components appear in $X$; 
  hence $X \supseteq Y$, but we were given $X \subseteq Y$.
\end{proof}

This combines nicely with Lemma \ref{lem:divdiff}:

\begin{Lemma}\label{lem:divdiffbound}
  Let $V \leq \MNC$ be a subspace invariant under $B_N$ and $GL_2^{(i)}$.
  Let $X$ be an equidimensional scheme in $V$ invariant under 
  $B_N$ and rescaling, with multidegree $\mdeg_V X$.
  If each geometric component of $X$ is $GL_2^{(i)}$-invariant, then 
  $\der_i\ \mdeg X = 0$. Otherwise,
  \[ (-\der_i) \ \mdeg X\ \geq\ \mdeg (GL_2^{(i)} \cdot X), \]
  where $GL_2^{(i)} \cdot X$ is given the reduced scheme structure.
  This is an equality iff the degrees $\{k_j\}$ from lemma \ref{lem:divdiff} 
  are $0$ or $1$, and each $GL_2^{(i)}$-non-invariant component $X_j$
  is generically reduced.

  In particular, $\der_i\ \mdeg X = 0$ implies each geometric component
  of $X$ is $GL_2^{(i)}$-invariant.

  Also, if $p \in \Sym T^*$ has $p \geq 0$, then $(-\der_i) p \geq 0$ also.
\end{Lemma}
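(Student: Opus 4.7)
The strategy is to decompose $X$ into its top-dimensional irreducible components, apply \Lemref{divdiff} componentwise, and then compare the resulting sum against $\mdeg_V(GL_2^{(i)}\cdot X)_{\mathrm{red}}$ using the positivity of \Lemref{mdegineqs}. Write $X=\sum_j m_j X_j$ as its cycle of distinct top-dimensional geometric components with multiplicities $m_j\ge 1$; each $X_j$ inherits the $B_N$- and rescaling-invariance of $X$ (the groups are connected), so \Lemref{divdiff} furnishes an integer $k_j\ge 0$, vanishing exactly when $X_j$ is $GL_2^{(i)}$-invariant and $k_j\ge 1$ otherwise, such that $(-\der_i)\mdeg_V X_j=k_j\mdeg_V\overline{GL_2^{(i)}\cdot X_j}$. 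Axiom~(2) of multidegrees then gives
\[
(-\der_i)\mdeg_V X\;=\;\sum_j m_j k_j\,\mdeg_V\,\overline{GL_2^{(i)}\cdot X_j}.
\]
If every $X_j$ is $GL_2^{(i)}$-invariant all summands vanish and $\der_i\mdeg_V X=0$, which is the first assertion.

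Otherwise, group the non-$GL_2^{(i)}$-invariant components by their sweep: let $Y_1,\dots,Y_r$ be the distinct irreducible varieties $\overline{GL_2^{(i)}\cdot X_j}$ arising (each of dimension $\dim X+1$), and set $S_\ell:=\{j:X_j\text{ is not }GL_2^{(i)}\text{-invariant and }\overline{GL_2^{(i)}\cdot X_j}=Y_\ell\}$. The top-dimensional components of the reduced scheme $\overline{GL_2^{(i)}\cdot X}$ are exactly the $Y_\ell$, each appearing with multiplicity $1$, since any $GL_2^{(i)}$-invariant $X_j$ persists at the strictly smaller dimension $\dim X$ and does not contribute to the multidegree. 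Hence
\[
(-\der_i)\mdeg_V X\;-\;\mdeg_V(GL_2^{(i)}\cdot X)_{\mathrm{red}}\;=\;\sum_{\ell=1}^r\Bigl(\sum_{j\in S_\ell}m_j k_j\;-\;1\Bigr)\mdeg_V Y_\ell.
\]
Each $\mdeg_V Y_\ell>0$ by the first part of \Lemref{mdegineqs}, and each bracketed coefficient is $\ge 0$ because $S_\ell$ is nonempty and $m_j,k_j\ge 1$ on $S_\ell$. This establishes the inequality, with equality iff each $S_\ell$ is a singleton $\{j\}$ satisfying $m_j=k_j=1$, matching the stated criterion (each non-invariant component is generically reduced in $X$ and has $k_j=1$).

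The ``in particular'' clause is then the contrapositive: if some $X_j$ fails to be $GL_2^{(i)}$-invariant, the right-hand side of the displayed identity is strictly positive by \Lemref{mdegineqs}, forcing $(-\der_i)\mdeg_V X>0$. For the final observation that $p\ge 0$ implies $(-\der_i)p\ge 0$, I would write $p$ as an $\naturals$-combination of squarefree monomials in the weights of $V$ and verify directly that $-\der_i$ preserves this cone on each such monomial. The main obstacle here is this latter verification: using that $V$ being $GL_2^{(i)}$-invariant forces the weights to lie in $r_i$-orbits of size at most two, one pairs each factor with its $r_i$-conjugate and applies the Leibniz rule $-\der_i(fg)=(-\der_i f)g+(r_i f)(-\der_i g)$, exploiting the vanishing of $-\der_i$ on $r_i$-symmetric expressions (in particular on products $\lambda\cdot(r_i\lambda)$) to reorganize the result as a nonnegative combination of the original monomials.
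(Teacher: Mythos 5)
Your proof of the first three assertions follows the same strategy as the paper's: decompose $X$ into its top-dimensional components $X_j$, apply \Lemref{divdiff} to each, and sum. In one respect you are more careful than the paper: the paper passes from $(-\der_i)\mdeg X=\sum_{k_j\ne0}m_jk_j\,\mdeg(GL_2^{(i)}\cdot X_j)\ge\sum_{k_j\ne0}\mdeg(GL_2^{(i)}\cdot X_j)$ directly to the stated inequality, implicitly identifying the last sum with $\mdeg(GL_2^{(i)}\cdot X)_{\mathrm{red}}$, which overcounts whenever two distinct non-invariant components sweep out to the same variety; your grouping into fibers $S_\ell$ handles this explicitly. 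However, you then assert that your equality criterion (each $S_\ell$ a singleton with $m_j=k_j=1$) ``matches the stated criterion.'' It does not: the lemma's stated criterion lacks the singleton (no-collision) condition, so yours is strictly stronger whenever collisions are possible. You should present this as a refinement of the statement, not a match.

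On the last clause you were right to flag it as the main obstacle, and in fact neither your Leibniz argument nor the paper's one-liner succeeds; the assertion ``$p\ge 0$ implies $(-\der_i)p\ge 0$'' is false as stated. Take $p=\A+z_i-z_a$ with $a\notin\{i,i+1\}$, a single weight (hence a squarefree monomial in the cone) of any $GL_2^{(i)}$-invariant $V\le\MNC$ containing the $(i,a)$ entry; then $(-\der_i)p=-1$, which is not in the positive cone. The paper's reduction to $p=\mdeg_V W$ for a coordinate subspace $W$ and then ``applying the inequality above'' silently invokes \Lemref{divdiff} for a $W$ that is in general not $B^{(i)}$-invariant, so the sweeping map $\mu$ of that lemma is not even defined --- that is precisely where the argument breaks. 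Your pairing heuristic meets the same obstruction: after factoring out the $r_i$-fixed weights and the $r_i$-symmetric products $\lambda\cdot(r_i\lambda)$, an unpaired factor of the form $\A+z_i-z_a$ can remain, on which $-\der_i$ gives $-1$. What is actually true, and all that is used later in \S\ref{ssec:lowerbound}, is the geometric version already contained in the first part of the lemma: $(-\der_i)\mdeg_V X\ge 0$ when $X$ is a $B^{(i)}$-invariant, $T$-stable, equidimensional closed subscheme of $V$.
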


\begin{proof}
  Break $X$ into its components $X_j$, with multiplicities $m_j>0$, 
  and apply Lemma \ref{lem:divdiff},
  \begin{align*}
    (-\der_i) \mdeg X 
    &= (-\der_i) \sum_{X_j} m_j\ \mdeg X_j
    = \sum_{X_j} m_j (-\der_i) \mdeg X_j \\
    &= \sum_{X_j} m_j k_j\ \mdeg (GL_2^{(i)} \cdot X_j)
    = \sum_{X_j:\ k_j\neq 0} m_j k_j\ \mdeg (GL_2^{(i)} \cdot X_j) \\
    &\geq \sum_{X_j:\ k_j\neq 0} \mdeg (GL_2^{(i)} \cdot X_j),
  \end{align*}
  using also the fact that $\mdeg (GL_2^{(i)} \cdot X_j) > 0$.
  For this to be an equality, each $k_j \neq 0$ must be equal to $1$,
  and the corresponding $m_j$ must be $1$. For the left side to be zero,
  all $k_j$ must be zero, which by Lemma \ref{lem:divdiff} says that
  the components are $GL_2^{(i)}$-invariant.

  For the last conclusion, it is enough to check the case $p$ a
  product of distant weights, the multidegree of a coordinate subspace $W$.  
  If $W$ is $GL_2^{(i)}$-invariant, then $\der_i p = 0$, and otherwise
  we apply the inequality above.
\end{proof}

\subsubsection{The multidegree of this lower bound 
  \texorpdfstring{$\union_{\rho \in \varepsilon(\pi,i)} X_\rho$}{Up Xp}}\label{ssec:lowerbound}

It will be convenient to work in and compute multidegrees relative to a
$GL_2^{(i)}$-invariant space $P_i$, 
one dimension larger than $R_{\integers\bmod N}$ in that it includes the 
possibility that the $\wtM_{i+1,i}$ entry just below the diagonal 
may be nonzero. Then for $X\subseteq R_{\integers\bmod N}$ and $T$-invariant,
\[ \mdeg_{P_i} X = (\A+z_{i+1}-z_i)\ \mdeg X \]
and in particular,
\[
 \mdeg_{P_i} E_\pi = (\A+z_{i+1}-z_i) \Psi_\pi. 
\]

Without the $-\der_i$, the left side of Equation (\ref{eqn:qkzev3}) is
\begin{align*}
  & (\A+z_i-z_{i+1})(\A+z_{i+1}-z_i) \Psi_\pi  \\
  &=  (\A+z_{i+1}-z_i) \mdeg_{P_i} E_\pi \\
  &=  \mdeg_{P_i} \left(E_\pi \cap \{ M_{i\ i+1} = 0\}\right)  
  && \text{by Axiom 3(c) of multidegrees} \\
  &\geq  \mdeg_{P_i} \bigcup_{\rho \in \varepsilon(\pi,i)} X_\rho 
  =   \sum_{\rho \in \varepsilon(\pi,i)} \mdeg_{P_i} X_\rho 
  && \text{by Lemma \ref{lem:XrhoinF1}} \\
  &=  \sum_{\rho \in \varepsilon(\pi,i)} \mdeg_{P_i} 
  \left(E_\rho \cap \{ M : (M^2)_{i,i+N} = (M^2)_{i+1,i+1+N}\}\right) 
  && \text{by Proposition \ref{prop:Xrho}}\\
  &= (\A+\B)  \sum_{\rho \in \varepsilon(\pi,i)} \mdeg_{P_i} E_\rho 
  && \text{by Axiom 3(c) of multidegrees} \\
  &= (\A+\B) (\A+z_{i+1}-z_i) \sum_{\rho \in \varepsilon(\pi,i)} \Psi_\rho 
\end{align*}
Applying $-\der_i$ to both sides, Lemma \ref{lem:divdiffbound} gets us
Equation (\ref{eqn:qkzev3}), but only as an inequality.

Invoking Corollary \ref{cor:fgivese}, which says that resulting 
inequality is tight,
we learn that $F_1$ is generically reduced along each $X_\rho$ and has
no other non-$GL_2^{(i)}$-invariant components. Conversely, a direct
proof of those geometric facts would serve as a proof of Equation
(\ref{eqn:qkzev2}) without using Equation (\ref{eqn:qkzdv2}).

\begin{Corollary}\label{cor:poserrorterm}
  If $\pi(i)=i+1$, then the polynomial
  \[ (\A+z_i-z_{i+1}) \Psi_\pi 
  \quad-\quad  (\A+\B)  \sum_{\rho \in \varepsilon(\pi,i)} \Psi_\rho
  \]
  lies in $\naturals\left[ \{\A+z_i-z_j,\B+z_j-z_i : i< j\} \right]$.
  When multiplied by $\A+z_{i+1}-z_i$, it is symmetric 
  under $z_i \leftrightarrow z_{i+1}$.
\end{Corollary}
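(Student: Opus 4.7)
The plan is to prove both claims by identifying the stated polynomial as the multidegree of a concrete $T$-invariant subscheme of $\MMN^{\Delta=0}$.

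The symmetry claim is immediate. Equation~\eqref{eqn:qkzev3}, which holds by Corollary~\ref{cor:fgivese}, says that $\der_i$ annihilates
\[ (\A + z_i - z_{i+1})(\A + z_{i+1} - z_i)\, \Psi_\pi \;-\; (\A+\B)(\A + z_{i+1} - z_i) \sum_{\rho \in \varepsilon(\pi,i)} \Psi_\rho, \]
and a polynomial is annihilated by $\der_i$ precisely when it is symmetric under $z_i \leftrightarrow z_{i+1}$.

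For the positivity claim I would start from the geometry of \S\ref{ssec:lowerbound}. Recall that
\[ (\A+z_i-z_{i+1})(\A+z_{i+1}-z_i)\Psi_\pi = \mdeg_{P_i} F_1 \quad\text{and}\quad (\A+\B)(\A+z_{i+1}-z_i)\sum_{\rho}\Psi_\rho = \mdeg_{P_i}\bigcup_\rho X_\rho, \]
and that Corollary~\ref{cor:fgivese} forces the containment $F_1 \supseteq \bigcup_\rho X_\rho$ of equidimensional schemes to be an equality up to additional $GL_2^{(i)}$-invariant top-dimensional components $\{Z_j\}$ (with multiplicities $m_j \geq 1$) of $F_1$. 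By Lemma~\ref{lem:mdegineqs},
\[ \mdeg_{P_i} F_1 \;-\; \mdeg_{P_i}\bigcup_\rho X_\rho \;=\; \sum_j m_j\, \mdeg_{P_i} Z_j. \]
Each $Z_j$ lies in $\MMN^{\Delta=0}$, a $T$-stable hyperplane of $P_i$ with complementary weight $\A+z_{i+1}-z_i$, so axiom 3(b) of multidegrees gives $\mdeg_{P_i} Z_j = (\A+z_{i+1}-z_i)\mdeg_{\MMN^{\Delta=0}} Z_j$. Dividing the displayed identity by the common factor $\A+z_{i+1}-z_i$ yields
\[ (\A+z_i-z_{i+1})\Psi_\pi \;-\; (\A+\B)\sum_{\rho \in \varepsilon(\pi,i)} \Psi_\rho \;=\; \sum_j m_j\, \mdeg_{\MMN^{\Delta=0}} Z_j, \]
and by the usual positivity property of multidegrees the right-hand side is a nonnegative $\integers$-combination of products of $T$-weights on $\MMN^{\Delta=0}$, which are exactly $\A+z_i-z_j$ and $\B+z_j-z_i$ for $i<j$.

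There is no genuine obstacle here; both conclusions reduce to the geometric facts established in \S\ref{ssec:lowerbound}, once one grants the tightness supplied by Corollary~\ref{cor:fgivese}. The only care required is in applying axiom 3(b) so that dividing out the common factor $\A+z_{i+1}-z_i$ preserves integrality and positivity of the sum, and in noticing that the symmetry statement is nothing but a rephrasing of the vanishing of $\der_i$ on the left-hand side of \eqref{eqn:qkzev3} before the $-\der_i$ is applied.
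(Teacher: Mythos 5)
Your proposal is correct and follows essentially the same route as the paper: symmetry from the tightness of \eqref{eqn:qkzev3} (i.e.\ Corollary~\ref{cor:fgivese}) making $\der_i$ kill the expression, and positivity from the containment $F_1 \supseteq \bigcup_\rho X_\rho$ together with positivity of multidegrees. The only inessential extra scaffolding is your explicit decomposition of the slack into $\sum_j m_j\,\mdeg Z_j$; the paper just notes directly that the difference is the multidegree of the closure of $F_1 \setminus \bigcup_\rho X_\rho$, which is the same thing once one has generic reducedness along the $X_\rho$.
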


\begin{proof}
  This is the $\mdeg_{R_{\integers \bmod N}}$ of the closure of 
  \[ \left( E_\pi \cap \{M_{i,i+1} = 0\} \right)   \quad\setminus \quad
  \bigcup_{\rho\in \varepsilon(\pi,i)} 
  \left( E_\rho \cap \{ (M^2)_{i,i+N} = (M^2)_{i+1,i+N+1} \} \right), \]
  which when multiplied by $\A+z_{i+1}-z_i$ (to turn it into the $\mdeg_{P_i}$)
  becomes the slack term in the chain of inequalities above.
\end{proof}

For example, if $2n=4$, $\pi = (23)(14)$, and $i=2$, 
so $\varepsilon(\pi,i)=\{(13)(24)\}$, then this set difference is
the space of matrices of the form
\[
\begin{matrix}
\ddots & &&&&&& \\
& 0 & 0 & 0 & m_{14} &        &        & &\\
&   & 0 & 0 & 0      & m_{25} &        & &\\
&   &   & 0 & 0      & m_{35} & m_{36} & &\\
&   &   &   & 0      & m_{45} & m_{36} & m_{47} &\\
&&&&&\ddots &&&
\end{matrix}
\qquad\qquad \in R_{\integers \bmod N}
\]
with multidegree (in $R_N$)
\[ (A+z_2-z_4)(A+z_3-z_4)\ \ (A+z_1-z_2)(A+z_1-z_3)\ \ (A+z_2-z_3). \]

\junk{

\subsubsection{Wrapping up}

Using Lemma \ref{lem:divdiffbound}, we conclude the inequality 
(\ref{eqn:mdegF1ineq2}), and from the analysis in \S\ref{ssec:lowerbound}
obtain Equation (\ref{eqn:qkzev2}) as an inequality. 

Since we knew by Corollary \ref{cor:fgivese} that
Equation (\ref{eqn:qkzev2}) holds on the nose, 
the inequality (\ref{eqn:mdegF1ineq2}) is also an equality.

\begin{Theorem}
  Let $F_1$ be as defined at the beginning of \S\ref{sec:ei},
  and $\{X_\rho,\ \rho\in \varepsilon(\pi,i)\}$ the components of $F_1$ defined in
  Section ~\ref{sec:Xrho}. Then
  \begin{itemize}
  \item $F_1$ is generically reduced along each $X_\rho$.
  \item All other components of $F_1$ are $GL_2^{(i)}$-invariant.
  \item The map from $GL_2^{(i)} \times^{B^{(i)}} X_\rho \onto GL_2^{(i)} \cdot X_\rho$
    is birational.
  \end{itemize}
\end{Theorem}
}

\junk{PZJ: in principle this is it, we could stop here: as we
  discussed a long time ago appropriate use of the $f_i$ equation
  allows to rewrite the $e_i$ equation entirely in terms of $F_1$,
  which is what you did above. but it really doesn't cost us much more
  (essentially, by un-junk-ing the stuff below) to describe $F_3$ as
  the union of $X_\rho$ and $X_{f_i\cdot\rho}$.}

\junk{

\subsubsection{\texorpdfstring{$F_2$}{F_2} and a multidegree calculation}\label{ssec:f2}
To build $F_2$ from $F_1$ by sweeping with $GL_2^{(i)}$,
we first throw away the $GL_2^{(i)}$-invariant components of $F_1$.
We know that at least the $\{X_\rho\}_{\rho \in \varepsilon(\pi,i)}$ components remain,
by Lemma \ref{lem:XrhoinF1}, which shows $\dim F_2 = \dim F_1 + 1 = \dim E$.
(By throwing away all of $F_1$'s $(GL_2)$-invariant components
we ensure that $F_2$ is {\em equidimensional} of dimension $\dim E$.)
So far we know
$F_2 \supseteq \union_{\rho\in \varepsilon(\pi,i)} GL_2^{(i)} \cdot X_\rho$.

To talk about $F_2$'s multidegree, we have to be precise about its
scheme structure, which is slightly tricky if $F_1$ is not reduced.
To finesse this {\em we shall simply give $F_2$ the reduced scheme structure.}
Then 
we have the inequality
\begin{align*}
  \mdeg F_2 &\geq \sum_{\rho\in \varepsilon(\pi,i)} 
  \mdeg\left( GL_2^{(i)} \cdot X_\rho \right) \\
  &= \sum_{\rho\in \varepsilon(\pi,i)} (-\partial_i) \mdeg X_\rho 
  \qquad  \text{using Lemma \ref{lem:XrhoinF1}} \\
  &= (\A+\B) \sum_{\rho\in \varepsilon(\pi,i)} (-\partial_i) \mdeg E_\rho.
\end{align*}
with equality iff $F_1$ 
has no $GL_2^{(i)}$-non-invariant components other than the $\{X_\rho\}$.

Lemma \ref{lem:divdiffbound} gives us the inequality
\[ (-\partial_i)\mdeg F_1 \geq \mdeg F_2 \]
with equality iff $F_1$ is generically reduced along each $X_\rho$
and has no other $GL_2^{(i)}$-non-invariant components.

In fact we can compute $(-\partial_i)\mdeg F_1$ directly.
...
Now
\begin{align*}
  \der_i\ \mdeg_{P_i} F_1 
  &= \der_i (\A + z_i - z_{i+1}) \mdeg_{P_i} E_\pi \\
  &= \der_i (\A + z_i - z_{i+1}) (\A+z_{i+1}-z_i)\ \mdeg E_\pi \\
  &= (\A + z_i - z_{i+1}) (\A+z_{i+1}-z_i)\ \der_i\ \mdeg E_\pi \\
  &= (\A + z_i - z_{i+1}) (\A+z_{i+1}-z_i)\ \der_i \Psi_\pi
\end{align*}
To apply equation (\ref{eqn:qkzev2}), we introduce the factor
$-(\A+\B+z_{i+1}-z_i)$:
\begin{align*}
  &  (\A+\B+z_{i+1}-z_i) (-\der_i)\ \mdeg_{P_i} F_1 \\
  &= (\A+\B+z_{i+1}-z_i) (\A + z_i - z_{i+1}) (\A+z_{i+1}-z_i)(-\der_i)\Psi_\pi\\
  &=
  (\A+z_{i+1}-z_i) (\A+\B)\sum_{\rho\ne\pi, e_i\cdot\rho=\pi} \Psi_{\rho} 
\qquad\qquad\hfill\text{using equation (\ref{eqn:qkzev2})} \\
  &=(\A+\B)\sum_{\rho\ne\pi, e_i\cdot\rho=\pi} (\A+z_{i+1}-z_i) \Psi_{\rho} \\
  &=(\A+\B)\sum_{\rho\ne\pi, e_i\cdot\rho=\pi} \mdeg_{P_i} E_{\rho}.
\end{align*}
Using Lemma \ref{lem:divdiffbound} inside $P_i$, we have
\[ (-\partial_i)\mdeg_{P_i} F_1 \geq \mdeg_{P_i} F_2, \]
so 
\begin{align*}
   (\A+\B)\sum_{\rho\ne\pi, e_i\cdot\rho=\pi} \mdeg_{P_i} E_{\rho}
   &=  (\A+\B+z_{i+1}-z_i) (-\der_i)\ \mdeg_{P_i} F_1 \\
   &\geq (\A+\B+z_{i+1}-z_i) \mdeg_{P_i} F_2 \\
   &= (\A+\B+z_{i+1}-z_i) 
   (\A+\B) \sum_{\rho\in \varepsilon(\pi,i)} (-\partial_i) \mdeg_{P_i} E_\rho
 \end{align*}
hence
\[ \sum_{\rho\ne\pi, e_i\cdot\rho=\pi} \mdeg_{P_i} E_{\rho}
   \geq
   (\A+\B+z_{i+1}-z_i) \sum_{\rho\in \varepsilon(\pi,i)} (-\partial_i) \mdeg_{P_i} E_\rho.
\]
with equality, again, iff
$F_1$ is generically reduced along each $X_\rho$
and has no other $GL_2^{(i)}$-non-invariant components.

For each $\rho \in \varepsilon(\pi,i)$ -- hence $\rho(i)\neq i+1$ -- we can use
Equation \ref{eqn:fiaction}, giving
\[ \mdeg_{P_i} E_{\rho} + \mdeg_{P_i} E_{f_i\cdot\rho} 
   =
   (\A+\B+z_{i+1}-z_i) (-\partial_i) \mdeg_{P_i} E_\rho
\]

\comment{AK wants to rearrange the above a bit so that
  equation (\ref{eqn:qkzev2}) comes in at {\em the end}, to save the day,
  and thus be given a ``geometric interpretation''.
  Then I think the following $F_3$ section can be completely junked, no?}
}

\junk{

\subsubsection{Epilogue: \texorpdfstring{$F_3$}{F_3}}\label{ssec:f3}

A priori, some of $F_2$'s components could be contained in the hypersurface 
$\left\{M : (M^2)_{i,i+N} = (M^2)_{i+1,i+1+N}\right\} $ and some not,
in which case $F_3$ would have components of dimensions $\dim E$ and
$\dim E-1$. In fact only the latter occur:

\junk{
  We will show that indeed, the dimension does change by $1$ up or down
  at each step, and so $\dim F_3 = \dim E - 1$. 
  Using Lemma \ref{lem:extraeqns},
  we will check that each geometric component of $F_3$ lies in $E$.
  However, it is {\em one dimension too small\/} to be a Brauer loop variety.
  Hence the component satisfies some (not uniquely defined) extra equation,
  $\{M : (M^2)_{i,i+N} = (M^2)_{i+1,i+1+N}\} $
  which we now determine:
}

\begin{Lemma}\label{lem:Fequation}
  Separate $E$ 
  into $E_{i\leftrightarrow i+1} \cup E_{i\not\leftrightarrow i+1}$,
  where
  \[ E_{i\leftrightarrow i+1} = \union_{\rho:\ \rho(i)=i+1} E_\rho, \qquad
  E_{i\not\leftrightarrow i+1} = \union_{\rho:\ \rho(i)\neq i+1} E_\rho. \]
  Then the set $F_3 := 
  F_2 \cap \left\{ \wtM : \left(\wtM^2\right)_{i+1,i+N} = 0 \right\}$ 
  consists of components of
  \[ \left( E_{i\leftrightarrow i+1} \cap \{M : M_{i\ i+1} = 0\} \right)
  \cup
  \left( E_{i\not\leftrightarrow i+1} \cap 
    \{M : (M^2)_{i,i+N} = (M^2)_{i+1,i+1+N}\} \right) \]
  which is equidimensional of codimension $1$ in $E$.
\end{Lemma}

\begin{proof}
  Note that 
  $\left( E_{i\leftrightarrow i+1} \cap \{M : M_{i\ i+1} = 0\} \right)$
  and 
  $\left( E_{i\not\leftrightarrow i+1} \cap 
    \{M : (M^2)_{i,i+N} = (M^2)_{i+1,i+1+N}\} \right)$
  are each codimension $1$ in $E$, because for each component $E_\rho$ 
  (of one or the other) the stated equation on $M$ does not hold at the 
  point $\wtrho$ constructed in lemma \ref{lem:genericelts}.

  However, $E_{i\leftrightarrow i+1} \subseteq 
    \{M : (M^2)_{i,i+N} = (M^2)_{i+1,i+1+N}\}$ and
  $E_{i\not\leftrightarrow i+1} \subseteq \{M : M_{i\ i+1} = 0\}$,
  so the set purportedly containing $F_3$ could equally well be described as
  \[ E
  \cap \left\{M : (M^2)_{i,i+N} = (M^2)_{i+1,i+1+N}\right\}
  \cap \{M : M_{i\ i+1} = 0\}. \]
  Since $E_\pi$ lies in the intersection of the first two terms,
  $F_1$ lies in the intersection of all three.
  Lemma \ref{lem:extraeqns} tells us that $F_2 \subseteq E$ as a set
  (and hence $F_3 \subseteq E$ as well). 
  To see the other two conditions, 
  we look at $A',(A^2)^\#$ for $A\in F_{0\ldots 3}$: \\
  \centerline{
    \begin{tabular*}{3in}{c|c c c c}
      & $F_0$ & $F_1$ & $F_2$ & $F_3$ \\
      \hline \\
      $A'$   
      & $\left[{0\ *\atop 0\ 0}\right]$
      & $\left[{0\ 0\atop 0\ 0}\right]$
      & $\left[{0\ 0\atop 0\ 0}\right]$
      & $\left[{0\ 0\atop 0\ 0}\right]$
      \\
      \\
      $(A^2)^\#$ 
      & $\left[{a\ b\atop 0\ a}\right]$
      & $\left[{a\ b\atop 0\ a}\right]$
      & $X$
      & $\left[{a\ b\atop 0\ a}\right]$       \\
      \\
    \end{tabular*}
  }

  \noindent
  where $X$ is conjugate to $\left[{a\ b\atop 0\ a}\right]$, and hence
  has repeated eigenvalues. When we impose once more that its lower
  left entry is zero (cutting $F_2$ down to $F_3$), we recover the
  condition $\{M : (M^2)_{i,i+N} = (M^2)_{i+1,i+1+N}\}$, as pictured
  in the table.
  This elementwise calculation proves the conditions claimed of the set $F_3$.

  Finally, since $F_3$ is the same dimension as the set
  \[ E
  \cap \{M : (M^2)_{i,i+N} = (M^2)_{i+1,i+1+N}\}
  \cap \{M : M_{i\ i+1} = 0\} \]
  that contains it, $F_3$'s underlying set is a union of components
  of this larger set.
\end{proof}

It is not hard to tighten this upper bound on $F_3$ to
$ F_1 \ \cup\ \left( \union_{\rho:\ \rho\neq\pi, e_i\cdot\rho = \pi} E_\rho\right)
\cap 
\{M : (M^2)_{i,i+N} = (M^2)_{i+1,i+1+N}\}$, 
but the weaker result is already enough to derive

\begin{Corollary}\label{cor:mdegf3}
  \[ \mdeg_{P_i} F_3 
  \ =\ (\A+\B+z_{i+1}-z_i)\ \mdeg_{P_i} F_2
  \ =\ (\A+\B)\sum_{\rho\ne\pi, e_i\cdot\rho=\pi} \mdeg_{P_i} E_{\rho}.\]
\end{Corollary}

\begin{proof}
  Break $F_2$ into its components $X_\rho$
  and intersect with $\{M : (M^2)_{i,i+N} = (M^2)_{i+1,i+1+N}\}$.
  We have just shown that no component is contained in this hypersurface,
  so applying properties (2), (3c), (2) we obtain the first equality.
  The second equality was obtained in \S\ref{ssec:mdegbounds}.
\end{proof}

This last is also the multidegree of
$\left(\union_{\rho:\ \rho\neq\pi, e_i\cdot\rho = \pi} E_\rho\right)
\cap \left\{ M : (M^2)_{i,i+N} = (M^2)_{i+1,i+1+N} \right\}$, a union of
some of the components of the scheme in Lemma \ref{lem:Fequation}.
Our final goal is to show that $F_3$ is indeed equal to that union,
and more specifically,
\junk{
  \subsubsection{Lower bounds on $F_2,F_3$}
  To match these up,
  the final geometric argument is to show
}
\begin{equation}
  \label{eq:finalgeom}
  \left(GL_2^{(i)} \cdot X_\rho\right) 
  \cap \left\{M : (M^2)_{i+1,i+N}=0\right\}
  \ = \ X_{f_i\cdot\rho} 
  \ = \ X_\rho\ \cup \text{ another component $Y_\rho$}
\end{equation}
for all $\rho\in \varepsilon(\pi,i)$. 
\junk{
This will 
complete the circle
\begin{align*}
  &  \sum_{\rho\in \varepsilon(\pi,i)}
  \mdeg_{P_i} \left(
    \left(GL_2^{(i)}\cdot X_\rho\right) \cap 
    \{M : (M^2)_{i+1,i+N}=0\} \right) \\
  &\geq
  \sum_{\rho\in \varepsilon(\pi,i)}
  \mdeg_{P_i} \left(
    E_\rho \cap \left\{M :   (M^2)_{i,i+N} = (M^2)_{i+1,i+1+N} \right\}
  \right) \\
  &=
  \sum_{\rho\in \varepsilon(\pi,i)}
   (\A+\B)\ \mdeg_{P_i} E_\rho 
   \quad=\quad (\A+\B) 
  \sum_{\rho\in \varepsilon(\pi,i)} \mdeg_{P_i} E_\rho
\end{align*}
determining $\mdeg_{P_i} F_3$, and showing that all the inequalities
we have encountered on multidegrees are in fact equalities.
\comment{AK is a little confused by what came when; maybe this will
  resolve once 5.4.5 is moved earlier}
}
$X_\rho$ is contained in $GL_2^{(i)}\cdot X_\rho$,
and since the right-hand side is contained in $E$, it is contained in
$\left\{M : (M^2)_{i+1,i+N}=0\right\}$. 
So it remains to prove
\[
GL_2^{(i)} \cdot X_\rho 
\quad\supseteq\quad 
Y_\rho.
\]

We use the description of a dense subset inside $Y_\rho$ given in Appendix B. 
More precisely, we exhibit an element of $GL(2)_i$ and an element of $X_\rho$
which produce a representative of each orbit of Lemma \ref{lem:regorb}.
Consider such an orbit representative $M=\underline{f_i\cdot\rho}\,t$, that is
in the rows and columns of interest,
\[
\kbordermatrix{
&i&i+1&\rho(i)&\rho(i+1)&i+N&i+N+1&\rho(i)+N\\
i&0& 0 & 0 & t_i  & \cdots \\
i+1& & 0 & t_{i+1} & 0 & 0 & \cdots \\
\rho(i)& &   &  0  & 0     & 0 & t_{\rho(i)} & \cdots \\
\rho(i+1)& &   &     & 0     &  t_{\rho(i+1)}& 0 & 0\\
}
\]
Now define $M'$ to be equal to $M$ everywhere except:
\[
\kbordermatrix{
&i&i+1&\rho(i)&\rho(i+1)&i+N&i+N+1&\rho(i)+N\\
i&0& 0 & t_{i+1} & 2t_i  & \cdots \\
i+1& & 0 &  0  & -t_i& 0 & \cdots \\
\rho(i)& &   &  0  & 0     & t_{\rho(i)} & 2 t_{\rho(i)} & \cdots \\
\rho(i+1)& &   &     & 0     & 0         & -t_{\rho(i+1)} & 0\\
}
\]
and $P$ to be the identity matrix except
\[
\kbordermatrix{
&i&i+1\\
i      &0& -1 \\
i+1    &1&  2 \\
}
\]
Let us check that $M'\in X_\rho$. $M'{}^2=0$ by direct computation.
Furthermore, the upper triangle of $M'$ (the first four columms in
the region of interest described above) has same non-zero entries
as $\underline\rho$ except at the irrelevant entry
$(i,\rho(i+1))$, so satisfies the
same rank conditions as $\underline\rho$, so is in its $B_N$-orbit.
And $(M'{}^2)_{i,i+N} =t_{i+1}t_{\rho(i)}=t_i t_{\rho(i+1)}=(M'{}^2)_{i+1,i+1+N}$.
Thus, $M'\in F_\rho \cap \{ M: (M^2)_{i,i+N}=(M^2)_{i+1,i+1+N}\}
\subset X_\rho$.
One then simply computes,
using $t_i t_{\rho(i+1)}=t_{i+1}t_{\rho(i)}$, that $PM'P=M$.

\junk{
\comment{This next corollary is stated for general $a<\pi(a)$, 
  and should only be for $i,i+1$. anyway I think we can skip this corollary}

\begin{Corollary}\label{cor:brauersliceredux} 
  Let $\pi$ be a link pattern and $\{a,\pi(a)\}$ a link.
  Assume $a<\pi(a)<a+n$ and that $\not\exists b$ such that $a<b<\pi(b)<\pi(a)$,
  as in Lemma xxx.
  Recall that if $X := E_\pi \cap \{M : M_{a,\pi(a)} = 0\}$, then
  \begin{itemize}
  \item $X$ has one component $X_\pi$ on which
    $ (M^2)_{i,i+N} = (M^2)_{i+1,i+1+N} = 0 $
    holds, and
  \item the other components $\{X_\rho\}$ correspond to those
    link patterns $\rho \neq \pi$ 
    such that $\rho=\pi$ away from $\{1,\pi(1),\rho(1),\rho(\pi(1))\}$.
  \end{itemize}

  Then each geometric component of $X$ is generically reduced,
  and the map $GL_2^{(i)} \times^{B^{(i)}} X_\rho \onto GL_2^{(i)} \cdot X_\rho$
  is degree $1$.
\end{Corollary}

If $E_\pi$ is normal, as we conjecture, then $X$ is actually reduced;
but this is immaterial to our multidegree calculations. 

..........

\comment{PZJ: I unjunk this equation to avoid undefined reference...}
  \begin{equation}\label{eqn:eiaction}
    (\A+\B+z_{i+1}-z_i)(A+z_i-z_{i+1}) (-\der_i) \Psi_\pi
    = (\A+\B) \sum_{\pi'\ne\pi,e_i\cdot\pi'=\pi} \Psi_{\pi'}
  \end{equation}
}

}

\subsection{From Brauer loop polynomials 
  back to Joseph--Melnikov polynomials}
\label{ssec:degenbrauer}
Finally, we comment on the connection between the results of
\S~\ref{ssec:hotta} concerning Joseph--Melnikov polynomials
$J_\pi$ and those of \S~\ref{ssec:braueraction} concerning Brauer
loop polynomials $\Psi_\pi$.  Recall that these two sets of
multidegrees are related by Theorem~\ref{thm:PsiJM}. Thus taking the
$\B\to\infty$ limit in the equations
(\ref{eqn:qkzdv2},\ref{eqn:qkzev2}) satisfied by the $\Psi_\pi$
should result in equations satisfied by the $J_\pi$.

Let us first consider Eq.~(\ref{eqn:qkzdv2}). For any link pattern
$\pi$ (i.e., fixed-point-free involution), there are
only two possibilities: either $f_i\cdot\pi$ has one more crossing
than $\pi$, or one fewer. If it has one more crossing, then the right
hand side is of lower degree in $\B$ than the left hand side, and we
find that $\tilde\der_i J_\pi=0$.  If it has one fewer crossing, then
taking $\B$-leading terms on both sides of the equation results in
Eq.~(\ref{eqn:fiJeq}) with the identification $f_i\cdot\pi=\bar
f_i^{-1}\cdot\pi$.  (The meaning of this strange change of notation
will be explained in the final paragraph.)

Next start from Eq.~(\ref{eqn:qkzev2}), valid for any link pattern
$\pi$, and send $\B$ to infinity. In order to compare degrees in $\B$
of the various terms in the sum over $\pi'$, $e_i\cdot\pi'=\pi$, we
need to compute their number of crossings. But since $e_i$ cannot
create crossings, the number of crossings of $\pi'$ is greater or
equal to that of $\pi$.  Thus, the $\B$-leading term of
Eq.~(\ref{eqn:qkzev2}) is exactly Eq.~(\ref{eqn:hottafinal}), in
which one must sum over $\pi'$ that are preimages of $\pi$ and have
the same number of crossings as $\pi$. This is equivalent to the
prescription given in the text after Eq.~(\ref{eqn:hottafinal}).

Finally, from the algebraic point of view, note that $\B\to\infty$
corresponds to the parameter $\beta$ of the Brauer algebra 
$\Br_N(\beta)$
being sent to the value $2$. This is a degenerate situation in which the
$R$-matrix given by Eq.~(\ref{eqn:Rmat}) loses its term proportional
to $f_i$, and so doing becomes the rational Temperley--Lieb $R$-matrix
(see \cite{DFZJ05b,RTVZJ} for a related discussion of the Temperley--Lieb
$q$KZ equation). This strongly suggests that a more interesting point
of view is to replace $f_i$ by $\bar f_i:=(1-\beta/2)f_i$ and only
then take the limit $\beta\to 2$. The resulting algebra, the
{\dfn degenerate Brauer ($\beta=2$) algebra} $\bar\Br_N(2)$, 
is given by generators $e_i$,
$\bar f_i$, $i=1,\ldots,N-1$ and relations
\begin{align}
e_i^2&=2 e_i& e_ie_{i\pm 1}e_i&=e_i\\
\notag \bar f_i^2&=0& \bar f_i\bar f_{i+1}\bar f_i&=\bar f_{i+1}\bar f_i\bar f_{i+1}\\
\notag \bar f_ie_i&=e_i\bar f_i=0& \bar f_i e_{i\pm 1} e_i &= \bar f_{i\pm 1} e_i&
\notag e_i e_{i\pm 1} \bar f_i &= e_i \bar f_{i\pm 1}\\
\notag e_i e_j&=e_j e_i& \bar f_i \bar f_j&=\bar f_j \bar f_i& e_i \bar f_j&=\bar f_je_i& |i-j|>1
\end{align}
Its action on linear combinations of link patterns is the same as
usual, with the additional rule that if a link pattern $\pi$ is such
that the arches coming out of $i$ and $i+1$ cross, then $\bar f_i\cdot\pi=0$. 
It now has a non-trivial $R$-matrix and this way, the various
equations satisfied by the $J_\pi$ are its $q$KZ equation.
(This also explains the notation $\bar f_i^{-1}\cdot\pi$
used before, since $\bar f_i$, contrary to $f_i$, is not an involution or even invertible:
by $\bar f_i^{-1}\cdot\pi$ we mean the unique preimage of $\pi$ when it exists).
Note in particular that the $\bar f_i$ generate a subalgebra called the nil-Hecke algebra,
which was discussed in a similar context in \cite{FK}.

\appendix
\section{The affine Weyl group \texorpdfstring{$\mathcal{\hat S}_N$}{SN}}\label{sec:affweyl}
The affine Weyl group of type A, denoted here $\mathcal{\hat S}_N$, 
is defined by generators
$f_i$, $i\in \integers/N\integers$, and relations
\begin{equation}
  f_i^2=1\qquad (f_if_{i+1})^3=1\qquad f_i f_j=f_j f_i\qquad j\ne i-1,i+1
\end{equation}
It is a semi-direct product $\mathcal{S}_N \ltimes \integers^{N-1}$, as
will be made explicit now.

First define an alternative description of $\mathcal{\hat S}_N$ which is
particularly convenient for our purposes.  Let $\star$ be the canonical
projection from $\integers$ to $\integers/N\integers$.  Consider the
group $F_N$ of invertible maps $\phi$ from $\integers$ to $\integers$
such that $\phi(i+N)=\phi(i)+N$ for all $i\in\integers$, endowed with
composition.  
(This group appeared also in \cite{ER}, where it is the
``group of juggling patterns with period $N$''.)
Then it is easy to show that there exists an injective
morphism $\iota$ from $\mathcal{\hat S}_N$ to $F_N$ such that
\begin{equation}\label{eqn:injec}
\iota(f_i): j\mapsto 
\begin{cases}
j&j^\star\ne i,i+1\\
j+1&j^\star=i\\
j-1&j^\star=i+1
\end{cases}
\end{equation}
Its image is precisely the maps $\phi\in F_N$ such that 
$\sum_{i=1}^N (\phi(i)-i)=0$ (``juggling patterns with $0$ balls'').
From now on we identify $\mathcal{\hat S}_N$ with its image in $F_N$.

Next define the projection $p$ from $\mathcal{\hat S}_N$ to $\mathcal{S}_N$
viewed as the group of permutations of $\integers/N\integers$.  Any
$\phi\in F_N$ has a unique factorization $\star \phi=p(\phi) \star$
with $p(\phi)\in \mathcal{S}_N$, and in particular by restriction we get
a map $p: \mathcal{\hat S}_N\to \mathcal{S}_N$.

The kernel of $p$ is made of maps $\phi$ such that $\phi(i)=i\mod N$
for all $i\in \integers$.  Thus it is isomorphic to $(\integers^N,+)$
via $\phi\mapsto ((\phi(1)-1)/N,\ldots,(\phi(N)-N)/N)$. Restricting to
$\mathcal{\hat S}_N$ we obtain a subgroup $\{ (k_1,\ldots,k_N):
\sum_{i=1}^N k_i=0 \}$ of $\integers^N$ isomorphic to
$\integers^{N-1}$.

It is now an easy exercise to conclude that
$\mathcal{\hat S}_N\cong \mathcal{S}_N \ltimes \integers^{N-1}$.  We may
choose as a particular subgroup isomorphic to $\mathcal{S}_N$ the one
generated by $f_1,\ldots,f_{N-1}$.

In this paper, particular subsets $\mathcal{\hat S}_{\pi,\pi'}$ of $\mathcal{\hat S}_N$ are defined, see Eq.~(\ref{eqn:defred});
here $\pi$ and $\pi'$ are two involutions of $\integers/N\integers$ without fixed points, on which elements $s$ of $\mathcal{\hat S}_N$
act by conjugation by $p(s)$. Using our alternative description of $\mathcal{\hat S}_N$ we can find a much simpler characterization
of $\mathcal{\hat S}_{\pi,\pi'}$:
\begin{Proposition}\label{prop:altgroupoid}
\begin{equation*}
  \mathcal{\hat S}_{\pi,\pi'}=\{ s\in \mathcal{\hat S}_N, s\cdot\pi=\pi'
  \ \big|\  \forall i,j\in\integers,
  \pi(i^\star)=j^\star\ \text{and}\ i<j \Rightarrow s(i)<s(j)\}
\end{equation*}
\end{Proposition}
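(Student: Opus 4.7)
Let $\mathcal{T}_{\pi,\pi'}$ denote the right-hand side; we prove the two inclusions.

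A preliminary observation is that the chord condition admits a compact reformulation. Given a canonical lift $(i_0,j_0)$ of a chord of $\pi$ with $0 < j_0 - i_0 < N$, applying the condition to the auxiliary lift $(j_0, i_0 + N)$ yields the upper bound $s(j_0) - s(i_0) < N$, so altogether the chord condition is equivalent to requiring the two-sided bound $0 < s(j_0) - s(i_0) < N$ at every canonical chord lift. Because any canonical chord lift of $\pi'$ is then necessarily the image under $s$ of a unique canonical chord lift of $\pi$, this reformulation implies that $\mathcal{T}$ is closed under inverse, hence is a groupoid; this closure property is needed for the reverse inclusion.

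For the forward inclusion $\mathcal{\hat S}_{\pi,\pi'} \subseteq \mathcal{T}_{\pi,\pi'}$, we induct on the length of a no-tadpole expression $s = f_{i_k}\cdots f_{i_1}$. Assuming the partial product $s_{\ell-1} = f_{i_{\ell-1}}\cdots f_{i_1}$ satisfies the chord condition and writing $\pi_{\ell-1} := s_{\ell-1}\cdot\pi$, we check $s_\ell = f_{i_\ell}\, s_{\ell-1}$ does too: for any canonical chord lift $(a,b)$ of $\pi$, a brief case analysis on the residues of $s_{\ell-1}(a), s_{\ell-1}(b)$ modulo $N$ shows that applying $f_{i_\ell}$ preserves both inequalities $0 < s_\ell(b) - s_\ell(a) < N$. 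The no-tadpole hypothesis $\pi_{\ell-1}(i_\ell) \neq i_\ell + 1$ excludes precisely the degenerate scenario $\{s_{\ell-1}(a)^\star, s_{\ell-1}(b)^\star\} = \{i_\ell, i_\ell + 1\}$, in which the swap by $f_{i_\ell}$ would reverse the chord's orientation.

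For the reverse inclusion, we induct on the Coxeter length $\ell(s)$. Given $s \in \mathcal{T}_{\pi,\pi'}$ nonidentity, it suffices to find a left descent $i$ of $s$ with $\pi'(i) \neq i + 1$ and $f_i s \in \mathcal{T}_{\pi, f_i\cdot\pi'}$; one then prepends $f_i$ to the no-tadpole word provided by the induction hypothesis, the final tadpole check reducing to the computation that $(f_i\cdot\pi')(i) = i+1 \Leftrightarrow \pi'(i) = i+1$. Existence of such an $i$ is the crux: arguing by contradiction, if every left descent of $s$ satisfied $\pi'(i) = i+1$, then using the juggling-pattern description that $f_i$ is a left descent of $s$ iff $s^{-1}(i) > s^{-1}(i+1)$ as integers in $\integers$, applying the chord condition for $s^{-1} \in \mathcal{T}_{\pi',\pi}$ to the chord $(i, i+1)$ of $\pi'$ would yield $0 < s^{-1}(i+1) - s^{-1}(i) < N$, a direct contradiction to the descent inequality. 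Verifying $f_i s \in \mathcal{T}_{\pi, f_i\cdot\pi'}$ is another residue case analysis, now using $\pi'(i) \neq i+1$ to exclude the degenerate case.

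The main obstacle is setting up the interplay between the chord condition and the juggling-pattern characterization of affine left descents. Once the chord condition is restated as the two-sided bound $0 < s(j_0) - s(i_0) < N$ and the closure of $\mathcal{T}$ under inverse is in hand, both directions reduce to elementary case analyses on residues mod $N$.
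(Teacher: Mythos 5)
Your proof is correct, and the forward inclusion $\mathcal{\hat S}_{\pi,\pi'}\subseteq\mathcal{T}_{\pi,\pi'}$ (with $\mathcal{T}$ denoting the right-hand side) follows the same induction on word length as the paper's. The reverse inclusion, however, takes a genuinely different route. The paper picks an arbitrary \emph{minimum-length} word $s=f_{i_k}\cdots f_{i_1}$ and shows it is automatically tadpole-free: if a tadpole occurred at step $\ell$, the relative order of a chord's two endpoints $w_m(i),w_m(j)$ would flip at step $\ell$; since the chord condition on $s$ forces the initial and final orders to agree, the set of ``flip'' steps has even cardinality, and deleting two of them yields a shorter word for $s$, contradicting minimality. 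You instead build the tadpole-free word constructively by induction on Coxeter length, at each stage prepending an arbitrary left descent $f_i$. Making this work requires first showing $\mathcal{T}$ is closed under inverse — which you extract from the two-sided reformulation $0<s(j_0)-s(i_0)<N$ of the chord condition at canonical lifts — so that a left descent $i$ with $\pi'(i)=i+1$ would contradict the chord condition for $s^{-1}$ applied to the chord $(i,i+1)$ of $\pi'$. Both arguments are sound. The paper's parity trick is shorter and avoids the inverse-closure lemma; your version is more explicit and shows that \emph{every} left descent is a valid first letter of a tadpole-free word, i.e., a greedy descent algorithm always works. One small imprecision in your forward direction: the excluded scenario $\{s_{\ell-1}(a)^\star,s_{\ell-1}(b)^\star\}=\{i_\ell,i_\ell+1\}$ ``reverses the chord's orientation'' only when $s_{\ell-1}(a)^\star=i_\ell$ (difference $1$); in the other case the difference is $N-1$ and $f_{i_\ell}$ pushes it to $N+1$, violating the upper bound rather than the sign. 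Both subcases are nonetheless excluded by the same tadpole computation, so no gap results.
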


\begin{proof}
  Let $s=f_{i_k}\cdots f_{i_1}$ be an element of $\mathcal{\hat S}_{\pi,\pi'}$ 
as in Eq.~(\ref{eqn:defred}).  We prove by
  induction on $k$ that $s$ satisfies the property of
  Prop.~\ref{prop:altgroupoid} (the induction is for all $\pi'$
  simultaneously).  It is trivial at $k=0$; to go from $k-1$ to $k$,
  write $s=f_{i_k} s'$ with $s'=f_{i_{k-1}}\cdots f_{i_1}$, and pick a
  pair of integers $i,j$ with $\pi(i^\star)=j^\star$. Due to the
  induction hypothesis we know that $s'(i)<s'(j)$ and want to apply
  $f_{i_k}$ to both sides of the inequality. Since the effect of
  $f_{i_{k}}$ is only to increase/decrease by $1$ (c.f.
  Eq.~(\ref{eqn:injec})), and it only affects $i_k$ and $i_k+1$, we
  have automatically $s(i)<s(j)$ unless $s'(i)^\star=i_k$ and
  $s'(j)=s'(i)+1$. But this contradicts the defining property in
  Eq.~(\ref{eqn:defred}) at $\ell=k$: indeed we would have
  $(f_{i_{k-1}}\cdots f_{i_1}\cdot\pi)(i_k)
  =(s'\cdot\pi)(i_k)=p(s')(\pi(p(s')^{-1}(i_k)))
  =p(s')(\pi(i^\star))=p(s')(j^\star)=i_{k+1}$.
  
  Conversely, assume $s$ satisfies the property of
  Prop.~\ref{prop:altgroupoid}, and write a decomposition
  $s=f_{i_k}\cdots f_{i_1}$ of {\em minimum length\/} $k$.  We claim
  this word satisfies Eq.~(\ref{eqn:defred}). To abbreviate let us
  denote $w_m=f_{i_m}\cdots f_{i_1}$, and assume there is a step
  $\ell$ such that $(w_{\ell-1}\cdot \pi)(i_\ell)= i_\ell+1$.  In
  other words there is a pair $i,j$ such that $j^\star=\pi(i^\star)$
  and $w_{\ell-1}(i)=i_\ell$, $w_{\ell-1}(j)=i_\ell+1$ -- hence also
  $w_\ell(i)=i_{\ell}+1$, $w_\ell(j)=i_\ell$.  Consider now $S=\{
  m=1,\ldots,k : (w_{m-1}(i)-w_{m-1}(j))(w_m(i)-w_m(j))<0\}$.  $S$ is
  non-empty since $\ell\in S$. Furthermore, the property of
  Prop.~\ref{prop:altgroupoid} implies that $w_0(i)-w_0(j)=i-j$ and
  $w_k(i)-w_k(j)=s(i)-s(j)$ have same sign; therefore $S$ has even
  cardinality.  We may then pick a pair of distinct elements, say
  $\ell,\ell'\in S$ and remove them from the word: it is simple to
  check that the new word
  $f_{i_k}\cdots\widehat{f_{i_\ell}}\cdots\widehat{f_{i_{\ell'}}}\cdots
  f_{i_1}$ still equals $s$, which contradicts the hypothesis of
  minimum length of the original word.
\end{proof}

We can now obtain the 
\begin{Corollary*} (Lemma \ref{lem:stab})
  $\mathcal{\hat S}_{\pi_0,\pi_0}$ is the subgroup of $\mathcal{\hat S}_N$
  generated by the $f_i f_{i+n}$, $i\in\integers/N\integers$.
\end{Corollary*}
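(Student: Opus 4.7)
The plan is to use the reformulation of $\mathcal{\hat S}_{\pi_0,\pi_0}$ given in Proposition \ref{prop:altgroupoid}: an element $s \in \mathcal{\hat S}_N$ lies in $\mathcal{\hat S}_{\pi_0,\pi_0}$ iff $s\cdot\pi_0=\pi_0$ and $s(i)<s(i+n)$ for all $i\in\integers$ (the only nontrivial case of the condition, since $N$-periodicity of $s$ subsumes all shifts $j=i+n+kN$, $k\geq 1$). For the easy inclusion $\langle f_if_{i+n}\rangle\subseteq\mathcal{\hat S}_{\pi_0,\pi_0}$, I would simply note that for $n\geq 2$ the generators $f_i$ and $f_{i+n}$ commute, so $g_i:=f_if_{i+n}$ acts on $\integers$ by moving $j\mapsto j\pm 1$ for $j^\star\in\{i,i+1,i+n,i+n+1\}$ and fixing the rest; an inspection then shows $g_i(j+n)=g_i(j)+n$, which makes both the conjugation condition on $\pi_0$ and the inequality trivial.

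The heart of the proof is the opposite inclusion, for which the key observation is a rigidity statement: \emph{any} $s\in\mathcal{\hat S}_{\pi_0,\pi_0}$ must satisfy $s(i+n)=s(i)+n$ on the nose. To derive this, I would unpack $s\cdot\pi_0=\pi_0$ (i.e., $p(s)\pi_0=\pi_0 p(s)$ in $\mathcal{S}_N$) to obtain $s(i+n)-s(i)\equiv n\pmod{2n}$. The strict inequality $s(i+n)-s(i)>0$ forces this difference to lie in $\{n,3n,5n,\ldots\}$. Combining with the identity $(s(i+n)-s(i))+(s(i+2n)-s(i+n))=s(i+N)-s(i)=N=2n$, both summands are positive, both are $\geq n$, and they sum to $2n$, so each must equal exactly $n$.

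Once this rigidity is in hand, restriction defines a map $\psi:\mathcal{\hat S}_{\pi_0,\pi_0}\to F_n$, $\psi(s)(i):=s(i)$, which automatically satisfies the $n$-periodicity $\psi(s)(i+n)=\psi(s)(i)+n$ required to land in $F_n$. A short calculation splitting the sum $\sum_{i=1}^N(s(i)-i)=0$ into two halves and using $s(i+n)=s(i)+n$ shows $\sum_{i=1}^n(\psi(s)(i)-i)=0$, so in fact $\psi(s)\in\mathcal{\hat S}_n$. Conversely, any $\bar s\in\mathcal{\hat S}_n$ extends uniquely to a function $\integers\to\integers$ satisfying $s(i+n)=s(i)+n$ (automatically in $\mathcal{\hat S}_N$ and in $\mathcal{\hat S}_{\pi_0,\pi_0}$), so $\psi$ is a group isomorphism.

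It remains to identify $\psi(g_i)$. Using the explicit formula (\ref{eqn:injec}), the action of $g_i=f_if_{i+n}$ on $j\in\integers$ sends $j\mapsto j+1$ when $j\bmod n\equiv i\bmod n$ (which occurs when $j\bmod N\in\{i,i+n\}$) and $j\mapsto j-1$ when $j\bmod n\equiv (i+1)\bmod n$, fixing $j$ otherwise. But this is precisely the $i$-th simple reflection $\bar f_i$ of $\mathcal{\hat S}_n$ under the formula (\ref{eqn:injec}) applied in $F_n$. Since $\bar f_1,\ldots,\bar f_n$ generate $\mathcal{\hat S}_n$, their $\psi$-preimages $f_if_{i+n}$ ($i\in\integers/N\integers$, noting $g_{i+n}=g_i$) generate $\mathcal{\hat S}_{\pi_0,\pi_0}$. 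The main obstacle is the rigidity step: without the inequality condition $s(i)<s(i+n)$ one only gets $s(i+n)\equiv s(i)+n\pmod{2n}$, and without the $\pi_0$-preservation one would have no control at all, so both Propositions \ref{prop:altgroupoid}-hypotheses are essential and must be combined via the arithmetic identity above.
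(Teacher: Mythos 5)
Your proof is correct, and takes a genuinely different route from the paper's. The paper works through the semidirect product decomposition $\mathcal{\hat S}_N\cong\mathcal{S}_N\ltimes\integers^{N-1}$: it first uses the centralizer $\mathcal{S}_n\ltimes(\integers/2\integers)^n$ of $\pi_0$ in $\mathcal{S}_N$ and the $f_if_{i+n}$ (viewed as transpositions of cycles) to reduce to the case where $p(s)$ preserves each cycle, only \emph{then} deduces $\tilde k_i=\tilde k_{i+n}$ from the inequality, and finally kills the remaining lattice part with explicit words $T_i=U_iU_{i+1}^{-1}$ in the $f_j$'s. Your argument instead establishes the rigidity $s(i+n)=s(i)+n$ up front, for \emph{every} $s\in\mathcal{\hat S}_{\pi_0,\pi_0}$, by combining the congruence $s(i+n)-s(i)\equiv n\pmod{2n}$ (equivalent to $s\cdot\pi_0=\pi_0$) with the positivity $s(i+n)-s(i)>0$ and the telescoping identity $(s(i+n)-s(i))+(s(i+2n)-s(i+n))=N=2n$. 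This immediately yields an isomorphism $\psi:\mathcal{\hat S}_{\pi_0,\pi_0}\to\mathcal{\hat S}_n$ by restriction, under which $g_i=f_if_{i+n}$ (for $n\geq2$, so that $f_i$ and $f_{i+n}$ commute) maps to the $i$-th affine generator of $\mathcal{\hat S}_n$; generation follows at once. Your version is cleaner conceptually in that it avoids both the semidirect-product bookkeeping and the need to exhibit the translation words $T_i$, at the cost of the slightly more delicate arithmetic step with the congruence; the paper's two-stage reduction is closer to the standard ``finite Weyl group plus coroot lattice'' template.
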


\begin{proof}
That the $f_i f_{i+n}$ belong to $\mathcal{\hat S}_{\pi_0,\pi_0}$ is elementary. 

Conversely, consider $s\in\mathcal{\hat S}_{\pi_0,\pi_0}$. By successive
multiplications by $f_i f_{i+n}$ we want to reduce it to the identity.
Apply Prop.~\ref{prop:altgroupoid}:
\begin{equation}\label{eqn:stab}
  \mathcal{\hat S}_{\pi_0,\pi_0}
  =\{ s\in\mathcal{\hat S}_N, s\cdot\pi_0=\pi_0 
  : \forall i\in\integers\ s(i)<s(i+n)\}
\end{equation}

The first required property is that $p(s)$ commute with the involution
$\pi_0$. We know that the group of such permutations is isomorphic to
$\mathcal{S}_n \ltimes (\integers/2\integers)^n$ where $\mathcal{S}_n$
permutes the $n$ cycles and the $\integers/2\integers$ permute the two
elements of each cycle. In this formulation, $f_i f_{i+n}$ can be
viewed as the elementary transposition $(i,i+1)$ of $\mathcal{S}_n$.
Therefore, by successive multiplications by $f_i f_{i+n}$ one can
assume that $p(s)$ preserves each cycle of $\pi_0$, i.e., $s(i)=i\mod
n$ for all $i$.

Define the integers $\tilde k_i=(s(i)-i)/n$. 
According to Eq.~(\ref{eqn:stab}), $\tilde k_i<1+\tilde k_{i+n}<2+\tilde k_i$ 
and therefore $\tilde k_i=\tilde k_{i+n}$.
Similarly as before, $s\mapsto (\tilde k_1,\ldots,\tilde k_n)$ provides
an injective morphism from the $s\in\mathcal{\hat S}_{\pi_0,\pi_0}$ such that $p(s)$ preserves each cycle of $\pi_0$ to the $n$-uplets
$(\tilde k_1,\ldots,\tilde k_n)$ such that $\sum_{i=1}^n \tilde k_i=0$.

We finally multiply $s$ by elements of the form $T_i=U_i U_{i+1}^{-1}$, $i=1,\ldots,n$, where 
$U_i=f_{i+n}f_i\,f_{i+n-1}f_{i-1}\cdots f_{i+1}f_{i+n+1} \,f_i f_{i+n}$.
The $T_i$ also preserve
each cycle of $\pi_0$ and correspond to the values $\tilde k_i=\tilde k_{i+n}=+1$, $\tilde k_{i+1}=\tilde k_{i+n+1}=-1$ and the other
$\tilde k_j=0$. This clearly allows to reduce to $\tilde k_i=0$, i.e., $s=1$.
\end{proof}

{\em Remark:} In the alternative description of $\mathcal{\hat S}_N$ one
could introduce the extra map $r: i\mapsto i+1$ (``the standard
$1$-ball juggling pattern''). This would be the
proper abstract element corresponding to the operator $r$ on $V$
introduced in the text, such that $r f_i r^{-1}=f_{i+1}$.

\section{More on \texorpdfstring{$X_\rho$}{Xp}}
In \S\ref{sec:ei} the following varieties
are introduced
\[
X_\rho=E_\rho
\cap \left\{M : (M^2)_{i,i+N} = (M^2)_{i+1,i+1+N}\right\}
\]
In this appendix,
$\rho$ will always be a link pattern such that $\rho(i)\ne i+1$ and the chords
coming out of $i$ and $i+1$ {\em cross}. 

Our main 
goal is to describe explicitly a dense subset of orbits inside $X_\rho$.
This is slightly tricky because these orbits do not have the simple
structure that is found in generic orbits of $E_\pi$ 
(cf Prop.~3 of \cite{KZJ} and Lemma \ref{lem:regorb} below)
i.e., they do not possess representatives which are permutation matrices.
We define $\mathcal U_N=\{M\in\MMN: M_{ii}=1\}$.

\begin{Lemma}\label{lem:singorb}
Consider matrices $M\in E$ of the form
\[
\kbordermatrix{
&i&i+1&\rho(i)&\rho(i+1)&i+N&i+N+1&\rho(i)+N\\
i&0& 0 & t_i & 0  & \cdots \\
i+1& & 0 &  0  & t_{i+1}& 0 & \cdots \\
\rho(i)& &   &  0  & 0     & t_{\rho(i)} & x & \cdots \\
\rho(i+1)& &   &     & 0     & 0         & t_{\rho(i+1)} & y\\
}
\]
in the rows and columns for which we chose
$\mathrm{mod}\ N$ representatives of the form $i<i+1<\rho(i)<\rho(i+1)<i+N$,
where the parameters satisfy $t_i t_{\rho(i)}=t_{i+1}t_{\rho(i+1)}\ne0$;
and whose other non-zero entries are $M_{k,l}=t_k$ where
$\rho(k)=l$ and $k<l<k+N$. Then the union of their orbits
by conjugation by $\mathcal U_N$ is a dense subset of $X_\rho$.
\end{Lemma}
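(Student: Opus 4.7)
The plan is to verify that each described matrix $M$ lies in $X_\rho$, and then to conclude density by combining the irreducibility of $X_\rho$ from Proposition~\ref{prop:Xrho} with a dimension count.

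Membership $M \in X_\rho$ is checked against the equations of Theorem~\ref{thm:compeqns} for $E_\rho$, together with the extra condition $(M^2)_{i,i+N} = (M^2)_{i+1,i+1+N}$. The $U$-block of $M$ is a torus rescaling of $\rho_<$, so $M$ already lies in the vector bundle $F_\rho$ over $B_N \cdot \rho_<$ from the proof of Proposition~\ref{prop:Xrho}; in particular the rank conditions of Theorem~\ref{thm:compeqns}(3) are inherited from $\underline\rho$, as $x$ and $y$ live in the $L$-block. The equation $M^2 = 0$ in $\MMN$ is a short explicit calculation: the involution property of $\rho$ handles the $N$ standard entries $t_k$, and the products involving $x$ at $(\rho(i), i+N+1)$ or $y$ at $(\rho(i+1), \rho(i)+N)$ all land in columns $\geq j+N$, hence vanish in $\MMN = R_{\integers\bmod N}/\langle S^N \rangle$. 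The equations $(M^2)_{k,k+N} = (M^2)_{\rho(k),\rho(k)+N}$ reduce to $t_k t_{\rho(k)} = t_{\rho(k)} t_k$, and the defining equation of $X_\rho$ reads precisely $t_i t_{\rho(i)} = t_{i+1} t_{\rho(i+1)}$ --- the imposed relation.

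For density, Proposition~\ref{prop:Xrho} gives $X_\rho$ irreducible of dimension $\dim E - 1 = 2n^2 - 1$, so it suffices to show $\dim(\mathcal{U}_N \cdot \mathcal{M}) = 2n^2 - 1$, where $\mathcal{M}$ denotes the family of matrices specified in the lemma. A count gives $\dim \mathcal{M} = N + 1 = 2n+1$ (from $N$ scalars $t_k$ subject to the single quadratic relation, plus $x$ and $y$). The cleanest way to finish is to work in the dense open subvariety $X_\rho \cap F_\rho$ of $X_\rho$ and show, for a generic $M_0$ there, that some $u \in \mathcal{U}_N$ conjugates $M_0$ into $\mathcal{M}$. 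I would proceed in two stages: first use the unipotent part of $\mathcal{U}_N$ acting on the first $N \times N$ block to bring the $U$-part of $M_0$ from an arbitrary $B_N$-conjugate of $\rho_<$ to the standard torus rescaling $\underline\rho \cdot t$; then use the remaining $\mathcal{U}_N$-freedom (coordinates affecting the $L$-block) to eliminate all superfluous $L$-entries subject to the linear $F_\rho$-equations over $\underline\rho \cdot t$, leaving only $x$ and $y$ free.

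The main obstacle lies in this second normalization: the $\mathcal{U}_N$-coordinates act on the $L$-block in a coupled fashion, so the entries must be zeroed in a carefully chosen order that does not re-create previously killed entries. The strategy is to exhibit a triangular system of $\mathcal{U}_N$-coordinates adapted to the chord structure of $\rho$, so that successive conjugations eliminate $L$-entries one at a time. Once arranged, the matching of counts --- $\mathcal{U}_N$-coordinates used against $L$-entries killed, combined with $\dim \mathcal{M}$ --- yields $\dim(\mathcal{U}_N \cdot \mathcal{M}) = 2n^2 - 1$, and density follows from irreducibility of $X_\rho$.
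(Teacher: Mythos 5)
Your membership check is fine and matches the paper's: the $U$-block is a torus rescaling of $\rho_<$, so $M \in F_\rho$, and the extra equality $(M^2)_{i,i+N}=(M^2)_{i+1,i+1+N}$ becomes $t_it_{\rho(i)}=t_{i+1}t_{\rho(i+1)}$, so $M\in X_\rho \cap F_\rho \subseteq X_\rho$. (Your claim that the rank conditions of Theorem~\ref{thm:compeqns}(3) are ``inherited'' because $x,y$ are in the $L$-block is slightly off --- those rank conditions are conditions on submatrices of $M$, not of $U$ --- but the conclusion is correct because the paper has already established $X_\rho\cap F_\rho = F_\rho\cap\{(M^2)_{i,i+N}=(M^2)_{i+1,i+1+N}\}$, so no separate verification is needed.)

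For density you take a different route from the paper. The paper computes $\dim(\mathcal{U}_N\text{-orbit})=2(n^2-n-1)$ via the infinitesimal stabilizer, checks each orbit meets $\mathcal M$ in a single point, and adds $\dim\mathcal M = 2n+1$; you instead try to conjugate a generic $M_0\in X_\rho\cap F_\rho$ directly into $\mathcal M$ and count. Either route is legitimate, but your proof contains a genuine gap at exactly the step you flag as ``the main obstacle'': you never carry out the second normalization, and in particular you never justify why the normal form must retain \emph{two} free parameters $x,y$ at those specific positions, rather than zero, one, or more. The paper's explanation for this is the heart of the lemma and is wholly absent from your write-up: when one solves the linearized stabilizer equation $MP=PM$ (with $P$ strictly upper triangular) and groups the equations by pairs of chords of $\rho$, each pair of chords gives four independent equations --- \emph{except} the pair $\{(i,\rho(i)),(i+1,\rho(i+1))\}$, whose $4\times 4$ linear system degenerates precisely on the locus $t_it_{\rho(i)}=t_{i+1}t_{\rho(i+1)}$ and contributes only two independent equations. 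That is why the generic orbit dimension drops by $2$ compared to the permutation-matrix case treated in Prop.~3 of \cite{KZJ}, and correspondingly why two extra parameters $x,y$ must survive in the slice. Without this computation (or an equivalent analysis of the residual action on the $L$-fiber over $\underline\rho\, t$), your final ``matching of counts'' is a placeholder, not a proof; the triangular ordering you hope to exhibit would in fact fail to reach $x,y$ on the constraint locus, which is the phenomenon the lemma is recording. The paper also checks uniqueness of the representative in each orbit, which you do not need on your route, but you do need the degeneracy analysis, and that is what is missing.
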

\begin{proof}
First consider the projection $(U,L)\mapsto U$ 
(we may assume by cyclic invariance $i=1$, which means that
on the picture of the lemma, the projection
corresponds to keeping the first 4 columns of the matrix). The resulting
matrix $U$ has the same non-zero entries as $\rho_<$ and so is in
$B_N\cdot \rho_<$. Furthermore
$(M^2)_{i,i+N} =t_i t_{\rho(i)}$ and $(M^2)_{i+1,i+1+N}=t_{i+1}t_{\rho(i+1)}$,
so these two quantities are equal. The matrices of the lemma
therefore belong to $F_\rho \cap \{ M: (M^2)_{i,i+N}=(M^2)_{i+1,i+1+N}\}
\subset X_\rho$. $X_\rho$ is
by definition stable by conjugation by $\mathcal U_N$, 
so their orbits sit inside $X_\rho$.

Next we compute the dimension of a single (generic) orbit.
For that we consider the infinitesimal stabilizer of $\mathcal U_N$ on
$M$ of the form of the lemma. The equation $M P = P M$ (where $P$
is strictly upper triangular) takes
the same form as in Thm.~4 of \cite{KZJ} 
(correcting a small error in the proof), namely
for each pair of chords of $\rho$ we have associated equations:
$\{j,\rho(j)\}$ and $\{k,\rho(k)\}$:
  \begin{enumerate}
  \item The chords $\{j,\rho(j)\}$ and $\{k,\rho(k)\}$ do not cross each
    other:
\linkpattern[shape=circle,alias=false,numbering={1/k,2/\rho(k),3/j,4/\rho(j)}]{1/2,3/4} 
in which case we can choose representatives $j<\rho(j)< k<\rho(k)<j+N$
and we find:
    \begin{align*}
      t_{\rho(k)} P_{jk} + t_{\rho(j)} P_{\rho(j)\rho(k)} &= 0 \\
      t_{\rho(j)} P_{kj} + t_{\rho(k)} P_{\rho(k)\rho(j)} &= 0\\
      t_{\rho(j)} P_{\rho(j)k} &= 0 \\
      t_{\rho(k)} P_{\rho(k)j} &= 0\\
      t_{\rho(k)} P_{\rho(j)k} &= 0 \\
      t_{\rho(j)} P_{\rho(k)j} &= 0
    \end{align*}
    (note that they form groups of two, related by a rotation of
    $180^\circ$ or equivalently exchange of $j$ and $k$).  
    For generic (i.e., non-zero) $t$'s there are four independent equations.
  \item The chords $\{ j,\rho(j)\}$ and $\{ k,\rho(k)\}$ cross each other:
\linkpattern[shape=circle,alias=false,numbering={1/\rho(j),2/\rho(k),3/j,4/k}]{1/3,2/4} 
    in which case we can choose representatives
    $j< k<\rho(j)<\rho(k)<j+N$ and we find
    \begin{align*}
      t_j P_{j,k} + t_k P_{\rho(j),\rho(k)} &= 0\\
      t_k P_{k,\rho(j)} + t_{\rho(j)} P_{\rho(k),j} &=0 \\
      t_{\rho(j)} P_{\rho(j)\rho(k)} + t_{\rho(k)} P_{j,k} &=0 \\
      t_{\rho(k)} P_{\rho(k)j} + t_j P_{k,\rho(j)} &=0
    \end{align*}
    (all these equations are obtained from each other by rotation of
    $90^\circ$, which is the symmetry of the diagram). 
    If $\{i,i+1\}\not\subset \{j,k,\rho(j),\rho(k)\}$, then generically, 
    $t_j t_{\rho(j)}\ne t_k t_{\rho(k)}$ and the linear system is non-degenerate,
    so that there are exactly four independent equations.

    However, if $\{i,i+1\}\subset\{j,k,\rho(j),\rho(k)\}$
    (note that by hypothesis the chords coming from $i,i+1$ are crossing),
    then we have $t_it_{\rho(i)}=t_{i+1}t_{\rho(i+1)}$ and the linear
    system becomes degenerate, so that we find only two independent equations.
\end{enumerate}

  The conclusion is that each pair of chords contributes $4$
  equations, except one of them that contributes $2$, hence a total
  of $4\times n(n-1)/2-2=2(n^2-n-1)$. The dimension of an orbit
  is the dimension of the group minus the dimension of the stabilizer,
  which is precisely this number of equations, that is $2(n^2-n-1)$.

Next we check that each orbit possesses a unique representative of the
form of the lemma. Write $M P=P M'$, $P\in \mathcal U_N$.
It is perhaps useful to write out explictly $M P-PM'$
in the rows and columns of interest:
\[
\kbordermatrix{\gdef\s{\scriptstyle}\gdef\sx#1#2{{\s #1\atop\s\qquad #2}}
&\rho(i)&\rho(i+1)&i+N&i+N+1&\rho(i)+N\\
i&\s t_i-t'_i &\sx{P_{\rho(i),\rho(i+1)}t_i}{-P_{i,i+1}t'_{i+1}}  & \cdots \\
i+1& \s 0  &\s t_{i+1}-t'_{i+1}&\sx{P_{\rho(i+1),i}t_{i+1}}{-P_{i+1,\rho(i)}t'_{\rho(i)}} & \cdots \\
\rho(i)&  \s 0  & \s 0     &\s t_{\rho(i)}-t'_{\rho(i)} &\sx{x-x'+
P_{i,i+1}t_{\rho(i)}}{-P_{\rho(i),\rho(i+1)}t_{\rho(i+1)}} & \cdots \\
\rho(i+1)&     & \s 0     & \s 0         &\s t_{\rho(i+1)}-t'_{\rho(i+1)} &\sx{y-y'+
P_{i+1,\rho(i)}t_{\rho(i+1)}}{-P_{\rho(i+1),i}t'_i}\\
}
\]
In fact the entry $(k,\rho(k))$ reads $t_k=t'_k$, for all $k$.
Now if we use once again the relation $t_i t_{\rho(i)}=t_{i+1}t_{\rho(i+1)}$
we find that the remaining four non-trivial equations simplify so that
$x=x'$, $y=y'$. So the representatives are unique.

Finally, the space of matrices of the form of the lemma is
$2n+2$ parameters minus $1$ equation, that is $2n+1$.
So the union of orbits has dimension $2(n^2-n-1)+2n+1=2n^2-1=\dim X_\rho$
and we conclude by irreducibility of $X_\rho$ that it is dense.
\end{proof}

From this we immediately deduce
\begin{Lemma}\label{lem:inclX}
\[
X_\rho \subseteq
 E_\pi \cap E_\rho \cap E_{f_i\cdot\rho}
\]
where $\pi$ and $f_i\cdot\rho$ are the link patterns
obtained from $\rho$ by ``uncrossing'' the chords coming from $i,i+1$
in the two possible ways.
\end{Lemma}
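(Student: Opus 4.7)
The plan is to reduce the inclusion to a pointwise verification on the dense family of orbit representatives produced by Lemma~\ref{lem:singorb}. Since $E_\pi$, $E_\rho$ and $E_{f_i\cdot\rho}$ are closed and $B_{\integers\bmod N}$-invariant, and since $\mathcal{U}_N\subseteq B_{\integers\bmod N}$, it suffices to check that every matrix $M$ of the form described in Lemma~\ref{lem:singorb} lies in each of the three components. Membership in $E_\rho$ is immediate from the definition $X_\rho\subseteq E_\rho$, so the real work is to prove $M\in E_\pi$ and $M\in E_{f_i\cdot\rho}$.

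Write $a=\rho(i)$ and $b=\rho(i+1)$, so $i<i+1<a<b<i+N$ and the two crossing chords of $\rho$ are $(i,a)$ and $(i+1,b)$. The cycles of $\pi$ involving these indices are $(i,i+1)$ and $(a,b)$ (side by side), while those of $f_i\cdot\rho$ are $(i,b)$ and $(i+1,a)$ (nested). In both cases, passing from $\pi$ (resp.\ from $f_i\cdot\rho$) to $\rho$ is exactly one of the elementary ``moves'' of Proposition~\ref{prop:moves}: the side-by-side instance of move (1) in the first case, and the nested instance in the second. For each such move, the proof of Proposition~\ref{prop:moves} already exhibits a one-parameter family $B(s)\in B_N$ of upper-triangular matrices whose conjugation collapses the relevant permutation matrix as $s\to 0$.

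To prove $M\in E_\pi$, I would extend this family periodically to an element of $B_{\integers\bmod N}$ and apply it, together with a chosen diagonal rescaling, to a generic element $N_\pi\in Y_\pi$ with free parameters at the four ``local'' positions $(i,i+1),(i+1,i+N),(a,b),(b,a+N)$ and fixed values $t_k$ at the remaining positions $(k,\rho(k))$. The periodicity of the conjugation makes the same $4\times 4$ computation act simultaneously on the ``$U$-corner'' (moving the side-by-side arches $(i,i+1),(a,b)$ of $\pi$ to the crossing $\rho$-positions $(i,a),(i+1,b)$) and on the ``$L$-corner'' (moving the paired arches $(i+1,i+N),(b,a+N)$ to the $\rho$-positions $(a,i+N),(b,i+N+1)$, while producing a residual entry at $(a,i+N+1)$ and retaining the entry at $(b,a+N)$). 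Tuning the constant part of the diagonal rescaling and the four parameters of $N_\pi$ should then realize, in the $s\to 0$ limit, the full six-parameter family $(t_i,t_{i+1},t_a,t_b,x,y)$ subject to $t_it_a=t_{i+1}t_b$ described in Lemma~\ref{lem:singorb}. The construction of the degeneration establishing $M\in E_{f_i\cdot\rho}$ is completely analogous, using the nested instance of move (1) in place of the side-by-side instance.

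The main obstacle I expect is bookkeeping: choosing the diagonal weights inside $B(s)$ so that all six target parameters appear finitely and nontrivially in the limit, and verifying that the distant entries $t_k$ at positions $(k,\rho(k))$ for $k\notin\{i,i+1,a,b\}$ are preserved without introducing poles in $s$. Once these scales are correctly chosen, the core computation reduces to the $4\times 4$ calculation already present in the proof of Proposition~\ref{prop:moves}, performed twice because of the periodicity in $B_{\integers\bmod N}$.
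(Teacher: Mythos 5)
Your reduction to the $\mathcal U_N$-orbit representatives of Lemma~\ref{lem:singorb}, and the observation that $E_\pi$ and $E_{f_i\cdot\rho}$ play symmetric roles, both match the paper. But the core of your argument goes in the opposite direction from the paper's. The paper \emph{deforms outward}: it writes down a one-parameter family $M_z$ obtained from $M_0$ by adding two extra entries $t_iz$ at $(i,i+1)$ and $-t_{i+1}z$ at $(\rho(i),\rho(i+1))$, checks $M_z^2=0$ directly (using $t_it_{\rho(i)}=t_{i+1}t_{\rho(i+1)}$), computes the coincidence pattern of $\{(M_z^2)_{k,k+N}\}$ and applies Theorem~\ref{thm:Ecomps} to conclude $M_z\in E_\pi$ for generic $z$, whence $M_0\in E_\pi$ by closedness. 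You instead propose to \emph{degenerate inward}: write $M_0$ as an $s\to0$ limit of $b(s)N_\pi b(s)^{-1}$ with $N_\pi\in Y_\pi$, using the conjugating family $B(s)$ from Proposition~\ref{prop:moves}. Both routes use closedness of $E_\pi$, but the paper's avoids having to exhibit the conjugators explicitly, and that turns out to matter.

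There is a genuine gap in the degeneration, which you flag as "bookkeeping" but which I don't think can be fixed within your stated construction. Take the $E_\pi$ inclusion with the side-by-side instance of move~(1): there the $4\times4$ block of $B(s)$ has its single off-diagonal entry at $(i+1,\rho(i))$, so rows $j\geq\rho(i)$ of $B(s)$ (and all their periodic translates) are purely diagonal. Hence rows $\rho(i)$ and $\rho(i+1)$ of $B(s)N_\pi B(s)^{-1}$ are scalar multiples of the corresponding rows of $N_\pi B(s)^{-1}$. Row $\rho(i)$ of $N_\pi$ carries its single entry at column $\rho(i+1)$, and since $B(s)^{-1}$ has no off-diagonal entries in columns $\equiv i$ or $i+1\pmod N$, right-multiplication by $B(s)^{-1}$ cannot push that entry into columns $i+N$ or $i+N+1$. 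So the entries of $B(s)N_\pi B(s)^{-1}$ at $(\rho(i),i+N)$, $(\rho(i),i+N+1)$ and $(\rho(i+1),i+N+1)$ are identically zero in $s$, whereas $M_0$ has them equal to $t_{\rho(i)}$, $x$, $t_{\rho(i+1)}$. Including a diagonal rescaling and tuning the four parameters of $N_\pi$ cannot create entries in column positions that are identically zero; one needs a conjugator with off-diagonal entries in columns $\equiv i,i+1$ (for instance at $(\rho(i+1),i+N)$), which is a genuinely different element of $B_{\integers\bmod N}$ than what Proposition~\ref{prop:moves} supplies. Constructing such a $b(s)$ together with a compatible path in $Y_\pi$ that reproduces all six parameters $(t_i,t_{i+1},t_{\rho(i)},t_{\rho(i+1)},x,y)$ of $M_0$ subject to the quadratic relation is the actual content of the proof under your approach, and the proposal does not carry it out. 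The paper's outward deformation sidesteps this entirely.
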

\begin{proof}
$X_\rho\subset E_\rho$ by definition.
The other two inclusions are obtained by checking them on the 
orbit representatives
of lemma \ref{lem:singorb}. From this point of view $E_\pi$ and $E_{f_i\cdot\rho}$
play strictly identical roles, so we do the reasoning for $E_\pi$ only.
Consider the one-parameter family of matrices $M_z$ which are equal to
$M$ in lemma \ref{lem:singorb} except:
\[
\kbordermatrix{
&i&i+1&\rho(i)&\rho(i+1)&i+N&i+N+1&\rho(i)+N\\
i&0& t_i z & t_i & 0  & \cdots \\
i+1& & 0 &  0  & t_{i+1}& 0 & \cdots \\
\rho(i)& &   &  0  & -t_{i+1}z     & t_{\rho(i)} & x & \cdots \\
\rho(i+1)& &   &     & 0     & 0         & t_{\rho(i+1)} & y\\
}
\]
Using once more $t_i t_{\rho(i)}=t_{i+1}t_{\rho(i+1)}$, we check that $M_z^2=0$.
For $z=0$ we recover the matrix of the lemma. Furthermore,
\begin{align*}
(M_z^2)_{i,i+N}&=t_i t_{\rho(i)}\\
(M_z^2)_{i+1,i+N+1}&=t_{i+1} t_{\rho(i+1)}\\
(M_z^2)_{\rho(i),\rho(i)+N}&=t_i t_{\rho(i)}-y z t_{i+1}\\
(M_z^2)_{\rho(i+1),\rho(i+1)+N}&=t_{i+1} t_{\rho(i+1)}-y z t_{i+1}
\end{align*}
so for generic $z, y, t_k$ we have the following coincidences
of $(M_z^2)_{k,k+N}$ (and no other):
$(M_z^2)_{i,i+N}=(M_z^2)_{i+1,i+1+N}$,
$(M_z^2)_{\rho(i),\rho(i)+N}=(M_z^2)_{\rho(i+1),\rho(i+1)+N}$
and $(M_z^2)_{k,k+N}=(M_z^2)_{\rho(k),\rho(k)+N}$ for $k\ne i,i+1,\rho(i),\rho(i+1)$. These are exactly the chords of $\pi$.
According to Thm.~\ref{thm:Ecomps}, this implies that $M_z\in E_\pi$.
Since $E_\pi$ is closed, $M_0\in E_\pi$.
\end{proof}
In fact, it is a consequence of the conjecture that equations of Thm.~\ref{thm:compeqns} define $E_\pi$,
that $X_\rho$ is equal to the intersection of any pairs of
the three components $E_\pi,E_\rho,E_{f_i\cdot\rho}$.

We now briefly discuss $X_{f_i\cdot\rho}$. 
Recall that Prop.~\ref{prop:Xrho} says that it has two components,
one being $X_\rho$ and the other one called $Y_\rho$.
\begin{Lemma}\label{lem:regorb}
The irreducible set $\mathcal U_N\cdot \{ \underline{f_i\cdot\rho}\, t: t\in T\ 
\text{and}\ t_i t_{\rho(i+1)}=t_{i+1}t_{\rho(i)}\}$ is dense in $Y_\rho$.
\end{Lemma}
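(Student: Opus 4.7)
The plan is to verify three things: (i) the described set is irreducible, (ii) it is contained in $Y_\rho$, and (iii) a dimension count forces density.

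For (i), the parameter locus $\{t\in T : t_i t_{\rho(i+1)} = t_{i+1}t_{\rho(i)}\}$ is cut out in $T$ by the single Laurent polynomial $t_i t_{\rho(i+1)} - t_{i+1}t_{\rho(i)}$, which is irreducible (it is a $2\times 2$ determinant in four of the coordinates), so this hypersurface is irreducible. The set in question is its image under the morphism $(P,t)\mapsto P\,\underline{f_i\cdot\rho}\,t\,P^{-1}$ with irreducible domain $\mathcal U_N \times \{t : t_i t_{\rho(i+1)} = t_{i+1}t_{\rho(i)}\}$, hence is itself irreducible.

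For (ii), I would first note that since $f_i\cdot\rho$ has chords $i\leftrightarrow\rho(i+1)$ and $i+1\leftrightarrow\rho(i)$, a direct calculation gives, for $M = \underline{f_i\cdot\rho}\,t$, the values $(M^2)_{i,i+N} = t_i\,t_{\rho(i+1)}$ and $(M^2)_{i+1,i+1+N} = t_{i+1}\,t_{\rho(i)}$. The imposed constraint is therefore exactly the defining equation of $X_{f_i\cdot\rho}$ inside $E_{f_i\cdot\rho}$, and since the upper triangular part of $M$ is $(f_i\cdot\rho)_<$, we have $M\in F_{f_i\cdot\rho}\cap X_{f_i\cdot\rho}$. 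By the proof of Proposition \ref{prop:Xrho}, the closure of this last intersection is precisely $Y_\rho$, so $M\in Y_\rho$. Finally, since $\mathcal U_N$ is connected and $Y_\rho$ is a geometric component of the $\mathcal U_N$-invariant scheme $X_{f_i\cdot\rho}$, the whole $\mathcal U_N$-orbit remains inside $Y_\rho$.

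For (iii), I would invoke Proposition 3 of \cite{KZJ}, which says that $\mathcal U_N\cdot\{\underline{f_i\cdot\rho}\,t : t\in T\}$ is dense in $F_{f_i\cdot\rho}$ and that each $\mathcal U_N$-orbit therein has a unique representative of the form $\underline{f_i\cdot\rho}\,t$. Since $f_i\cdot\rho$ has no fixed points, $\dim F_{f_i\cdot\rho} = 2n^2$, so the generic $\mathcal U_N$-orbit has codimension $N$ in $F_{f_i\cdot\rho}$. Imposing the single codimension-$1$ condition on $t$ cuts the orbit family by codimension exactly $1$, yielding total dimension $2n^2-1$. As $Y_\rho$ is an irreducible component of the pure codimension-$1$ subscheme $X_{f_i\cdot\rho}\subseteq E$, it too has dimension $2n^2-1$, and our irreducible subset of matching dimension must be dense in it.

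The main obstacle I foresee is ensuring that the algebraic condition on $t$ cuts the orbit family by codimension exactly $1$, and not by more due to a degeneracy such as occurred in the crossing-chord case of Lemma \ref{lem:singorb}; but here we are working in $F_{f_i\cdot\rho}$, where the chords emanating from $i$ and $i+1$ do not cross, so the relevant stabilizer analysis falls in the non-degenerate case of Prop.~3 of \cite{KZJ}, and the single equation on $t$ really does impose codimension $1$.
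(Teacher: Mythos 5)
Your proof is correct and follows essentially the same route as the paper's: verify containment in $Y_\rho$ via the $(M^2)_{j,j+N}$ computation, then count dimensions by invoking Proposition 3 of \cite{KZJ} for the orbit structure in $F_{f_i\cdot\rho}$. The only cosmetic difference is that you derive the generic orbit dimension $2n(n-1)$ indirectly from $\dim F_{f_i\cdot\rho}=2n^2$ and uniqueness of $t$-representatives, whereas the paper sketches a direct stabilizer computation; and you make explicit (as the paper only hints) that the key point preventing the degeneracy of Lemma \ref{lem:singorb} is that the chords from $i$ and $i+1$ no longer cross in $f_i\cdot\rho$, so the $t$-constraint imposes honest codimension $1$ on the orbit family.
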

\begin{proof}
First note that 
\[
\{ \underline{f_i\cdot\rho}\, t: t\in T\ 
\text{and}\ t_i t_{\rho(i+1)}=t_{i+1}t_{\rho(i)}\}\subseteq
F_{f_i\cdot\rho}
\cap \left\{M : (M^2)_{i,i+N} = (M^2)_{i+1,i+1+N}\right\}
=
X_{f_i\cdot\rho}\cap F_{f_i\cdot\rho}
\]
so that it is a subset of $Y_\rho$.
The rest of the proof is strictly identical to that of Prop.~3 of \cite{KZJ}
(or of Lemma \ref{lem:singorb} but without the added complication of
``irregular'' orbits)
and we shall only sketch it here.

We first compute the dimension of a single orbit via
that of the infinitesimal stabilizer of $\mathcal U_N$ on $\underline{f_i\cdot\rho}\,t$
and find $2n(n-1)$. We then check that each $\mathcal U_N$-orbit contains a unique
element of the form $\underline{f_i\cdot\rho}\,t$. Finally,
we compute
$\dim \{ \underline{f_i\cdot\rho}\,t, t\in T\ 
\text{and}\ t_i t_{\rho(i)}=t_{i+1}t_{\rho(i+1)}\}=2n-1$ and find
that the total dimension is $2n(n-1)+2n-1=2n^2-1=\dim Y_\rho$.
We conclude by irreducibility of $Y_\rho$.
\end{proof}

\end{document}